\DeclareMathOperator{\interior}{int}
\DeclareMathOperator{\exterior}{ext}
\DeclareMathOperator{\Hom}{Hom}
\DeclareMathOperator{\Id}{Id}
\DeclareMathOperator{\Stab}{Stab}
\DeclareMathOperator{\supp}{supp}
\DeclareMathOperator{\SL}{SL}
\DeclareMathOperator{\U}{U}
\DeclareMathOperator{\Lip}{Lip}
\DeclareMathOperator{\proj}{proj}
\DeclareMathOperator{\diam}{diam}
\DeclareMathOperator{\len}{len}
\let\Pr\relax
\DeclareMathOperator{\Pr}{Pr}
\DeclareMathOperator{\Ad}{Ad}
\DeclareMathOperator{\Span}{span}
\DeclareMathOperator{\Fix}{Fix}
\newcommand{\C}{\mathbb{C}}
\newcommand{\N}{\mathbb{N}}
\newcommand{\R}{\mathbb{R}}
\newcommand{\Z}{\mathbb{Z}}
\newcommand{\rank}{\mathsf{r}}
\newcommand{\LieG}{\mathfrak{g}}
\newcommand{\LieA}{\mathfrak{a}}
\newcommand{\LieN}{\mathfrak{n}}
\newcommand{\LieK}{\mathfrak{k}}
\newcommand{\LieM}{\mathfrak{m}}
\newcommand{\LieP}{\mathfrak{p}}
\newcommand{\Fboundary}{\mathcal{F}}
\newcommand{\Gboundary}{\partial\Gamma}
\newcommand{\involution}{\mathsf{i}}
\newcommand{\growthindicator}{\psi_\Gamma}
\newcommand{\limitset}{\Lambda_\Gamma}
\newcommand{\limitcone}{\mathcal{L}_\Gamma}
\newcommand{\BMS}{m_\mathsf{v}^\mathrm{BMS}}
\DeclareFontFamily{U}{mathb}{\hyphenchar\font45}
\DeclareFontShape{U}{mathb}{m}{n}{
	<5> <6> <7> <8> <9> <10> gen * mathb
	<10.95> mathb10 <12> <14.4> <17.28> <20.74> <24.88> mathb12
}{}
\DeclareSymbolFont{mathb}{U}{mathb}{m}{n}
\DeclareMathSymbol{\bigast}{2}{mathb}{"06}
\def\XXint#1#2#3{{\setbox0=\hbox{$#1{#2#3}{\int}$}
		\vcenter{\hbox{$#2#3$}}\kern-.5\wd0}}
\theoremstyle{plain}
\newtheorem{theorem}{Theorem}[section]
\newtheorem{proposition}[theorem]{Proposition}
\newtheorem{lemma}[theorem]{Lemma}
\newtheorem{corollary}[theorem]{Corollary}
\theoremstyle{definition}
\newtheorem{definition}[theorem]{Definition}
\theoremstyle{remark}
\newtheorem{remark}[theorem]{Remark}
\setlist[enumerate,1]{ref=(\arabic*)}
\setlist[enumerate,2]{ref=(\theenumi)(\alph*)}
\setlist[enumerate,3]{ref=(\theenumi)(\theenumii)(\roman*)}
\setlist[enumerate,4]{ref=(\theenumi)(\theenumii)(\theenumiii)(\Alph*)}
\newlist{alternative}{enumerate}{4}     
\setlist[alternative,1]{label=(\arabic*), ref=(\arabic*)}
\setlist[alternative,2]{label=(\alph*), ref=(\thealternativei)(\alph*)}
\setlist[alternative,3]{label=(\roman*), ref=(\thealternativei)(\thealternativeii)(\roman*)}
\setlist[alternative,4]{label=(\Alph*), ref=(\thealternativei)(\thealternativeii)(\thealternativeiii)(\Alph*)}
\Crefname{enumi}{Property}{Properties}
\Crefname{alternativei}{Alternative}{Alternatives}
\Crefname{subsection}{Subsection}{Subsections}
\begin{document}
	\selectlanguage{english}
	
	
	\title[Local mixing of one-parameter diagonal flows]{Local mixing of one-parameter diagonal flows on Anosov homogeneous spaces}
	
	\author{Michael Chow}
	\address{Department of Mathematics, Yale University, New Haven, Connecticut 06511}
	\email{mikey.chow@yale.edu}
	
	\author{Pratyush Sarkar}
	\address{Department of Mathematics, Yale University, New Haven, Connecticut 06511}
	\email{pratyush.sarkar@yale.edu}
	
	\date{\today}
	
	\begin{abstract}
		Let $G$ be a connected semisimple real algebraic group and $\Gamma < G$ be a Zariski dense Anosov subgroup with respect to a minimal parabolic subgroup. We prove local mixing of the one-parameter diagonal flow $\{\exp(t\mathsf{v}) : t \in \R\}$ on $\Gamma \backslash G$ for any interior direction $\mathsf{v}$ of the limit cone of $\Gamma$ with respect to the Bowen--Margulis--Sullivan measure associated to $\mathsf{v}$. More generally, we allow a class of deviations to this flow along a direction $\mathsf{u}$ in some fixed subspace transverse to $\mathsf{v}$. We also obtain a uniform bound for the correlation function which decays exponentially in $\|\mathsf{u}\|^2$. The precise form of the result is required for several applications such as the asymptotic formula for the decay of matrix coefficients in $L^2(\Gamma \backslash G)$ proved by Edwards--Lee--Oh.
	\end{abstract}
	
	\maketitle
	
	\selectlanguage{english}
	
	\setcounter{tocdepth}{1}
	\tableofcontents

	\section{Introduction}
	Let $G$ be a connected semisimple real algebraic group of real rank $\rank$. Fix a minimal parabolic subgroup $P < G$ and a Langlands decomposition $P = MAN^-$ where $A$ is a maximal real split torus, $N^-$ is a maximal horospherical subgroup, and $M$ is a compact subgroup commuting with $A$. Let $N^+ < G$ be the opposite horospherical subgroup. Let $\Fboundary = G/P$ be the corresponding Furstenberg boundary and $\Fboundary^{(2)}$ be the unique open $G$-orbit in $\Fboundary \times \Fboundary$. Let $\Gamma < G$ be a Zariski dense \emph{Anosov subgroup} with respect to $P$. The Anosov property (with respect to any parabolic subgroup of $G$) was first introduced by Labourie \cite{Lab06} for surface groups and later generalized by Guichard--Wienhard \cite{GW12} for Gromov hyperbolic groups. They also showed that a Zariski dense discrete subgroup of $G$ which is Gromov hyperbolic is Anosov with respect to $P$ if it admits a continuous $\Gamma$-equivariant map $\zeta$ from the Gromov boundary $\Gboundary$ to $\Fboundary$ such that $(\zeta(x), \zeta(y)) \in \Fboundary^{(2)}$ for all $x \neq y \in \Gboundary$ \cite[Corollary 4.16]{GW12}. The quotient space $\Gamma \backslash G$ is called an \emph{Anosov homogeneous space}. Let $\LieA$ be the Lie algebra of $A$ and $\LieA^+ \subset \LieA$ be a choice of a closed positive Weyl chamber. For any $v \in \LieA$, we denote $a_v = \exp(v) \in A$. For any unit vector $\mathsf{v} \in \LieA^+$, consider the one-parameter diagonal flow on $\Gamma \backslash G$ given by the right translation action of $\{a_{t\mathsf{v}}: t \in \mathbb R\}$. We are interested in obtaining a local mixing theorem for this flow in the sense of the following definition originally due to Oh--Pan \cite[Definition 1.4]{OP19}.
	
	\begin{definition}[Local mixing]
		We say that a continuous map $a: \Gamma \backslash G \times \mathbb R \to \Gamma \backslash G$ has the \emph{local mixing property} with respect to a Borel measure $\mathsf{m}$ on $\Gamma \backslash G$ if there exists a positive continuous \emph{normalizing function} $\alpha: \R \to \R$ such that for all $\phi_1, \phi_2 \in C_{\mathrm{c}}(\Gamma \backslash G)$, we have
		\begin{align*}
			\lim_{t \to +\infty} \alpha(t) \int_{\Gamma \backslash G} \phi_1(a(x, t)) \phi_2(x) \, d\mathsf{m}(x) = \mathsf{m}(\phi_1) \mathsf{m}(\phi_2).
		\end{align*}
	\end{definition}
	
	For each $\mathsf{v}$ in the interior of the limit cone $\limitcone \subset \LieA^+$ introduced by Benoist \cite{Ben97}, there exists a corresponding Bowen--Margulis--Sullivan (BMS) measure $\BMS$ on $\Gamma \backslash G$ \cite{Qui02a} (see \cref{subsec:BMSmeasures}). Let $\mathfrak{Y}_\Gamma$ be the set of $P^\circ$-minimal subsets of $\Gamma \backslash G$, where $P^\circ$ is the identity component of $P$. Fix any $\mathcal{E}_0 \in \mathfrak{Y}_\Gamma$ and define the normal subgroup $M_\Gamma = \Stab_M(\mathcal{E}_0) < M$ with respect to the right translation action \cite{GR07,LO20a}. Note that $\#\mathfrak{Y}_\Gamma \leq [M : M^\circ]$. Let $\Omega = \supp(\BMS) = \{[g] \in \Gamma \backslash G: g^\pm \in \limitset\}$. Fix $\Omega_0 = \Omega \cap \mathcal{E}_0$ and define $\Omega_{[m]} = \Omega_0m$ for all $[m] \in M_\Gamma \backslash M$. Then, the restrictions $\BMS|_{\Omega_{[m]}}$ for all $[m] \in M_\Gamma \backslash M$ are precisely the $A$-ergodic components of $\BMS$ as shown by Lee--Oh \cite{LO20a}. We now present the main theorem of this paper regarding local mixing of the one-parameter diagonal flow on $\Omega_{[m]}$ with respect to $\BMS$ for all $[m] \in M_\Gamma \backslash M$.
	
	\begin{theorem}
		\label{thm:LocalMixingNoPsi}
		Let $\mathsf{v} \in \interior(\limitcone)$. There exists $\kappa_\mathsf{v} > 0$ such that for all $\phi_1, \phi_2 \in C_{\mathrm{c}}(\Gamma \backslash G)$, we have 
		\begin{multline*}
			\lim_{t \to +\infty} t^{\frac{\rank - 1}{2}} \int_{\Gamma \backslash G} \phi_1(x a_{t\mathsf{v}}) \phi_2(x) \, d\BMS(x) \\
			= \kappa_\mathsf{v}\sum_{[m] \in M_\Gamma \backslash M} \BMS\bigr|_{\Omega_{[m]}}(\phi_1) \cdot \BMS\bigr|_{\Omega_{[m]}}(\phi_2).
		\end{multline*}
	\end{theorem}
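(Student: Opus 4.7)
The plan is to combine a Markov coding of the Gromov geodesic flow with a vector-valued local central limit theorem for an $\LieA$-valued H\"older cocycle, together with an $M$-twisted transfer operator analysis to lift Sambarino's $M$-invariant asymptotic to arbitrary continuous observables. First I would fix a $P^\circ$-minimal subset $Y \in \mathfrak{Y}_\Gamma$ and use the boundary map $\zeta$ together with the Hopf parametrization to realize $Y$ as a principal $M$-bundle over a flow space modeled on $(\Gboundary^{(2)} \times \LieA)/\Gamma$, on which $A$ acts by translation in the $\LieA$-factor. In these coordinates $\BMS\bigr|_Y$ factors, up to Haar measure on $M$, as the product of a $\Gamma$-invariant measure on $\Gboundary^{(2)}$ built from Quint's $\mathsf{v}$-Patterson--Sullivan densities with Lebesgue measure on $\LieA$. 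A Bowen--Series style coding of the $\Gamma$-action on $\Gboundary$, refined via the Iwasawa projection, then turns the flow on $Y$ into a suspension over a topologically mixing subshift of finite type $(\Sigma, \sigma)$ equipped with a H\"older roof $\tau : \Sigma \to \R_{>0}$ and a H\"older $\LieA$-valued cocycle $\Theta : \Sigma \to \LieA$ whose periods recover the Jordan projections of primitive closed $A$-orbits.

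Next I would rewrite the correlation $\int_Y \phi_1(x \exp(t\mathsf{v})) \phi_2(x) \, d\BMS(x)$ in symbolic coordinates. Decomposing along the Peter--Weyl basis of $L^2(M)$ reduces the integral, for each $\rho \in \widehat{M}$, to twisted Birkhoff sum expressions of the schematic form $\sum_n \int e^{-\delta_\mathsf{v} S_n\tau} F_\rho(S_n \Theta - t\mathsf{v}) \, d\nu$, where $\delta_\mathsf{v} = \growthindicator(\mathsf{v})$ and $\nu$ is the equilibrium state for the potential $-\delta_\mathsf{v} \tau$. A vector-valued local central limit theorem for $S_n \Theta$, whose asymptotic covariance is non-degenerate on the hyperplane transverse to $\mathsf{v}$ by strict concavity of $\growthindicator$ at the interior direction $\mathsf{v}$, then produces a Gaussian density in the $(\rank - 1)$ transverse directions, simultaneously yielding the $t^{-(\rank - 1)/2}$ decay rate and the explicit form of $\kappa_\mathsf{v}$ as the value of that Gaussian at the origin. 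Summing the independent contributions of each $Y \in \mathfrak{Y}_\Gamma$, which are the $A$-ergodic components of $\BMS$ by Lee--Oh, assembles the full asymptotic.

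The main obstacle is handling non-$M$-invariant observables, which is precisely what forces Sambarino's original proof to restrict to $M$-invariant functions, since the Markov coding is intrinsically a coding of the $MA\backslash G / \Gamma$-dynamics. I would address this by establishing uniform spectral bounds, both in the complex dual parameter $s \in \LieA^*_\C$ and in the irreducible representation $\rho \in \widehat{M}$, for the family of twisted Ruelle transfer operators $\mathcal{L}_{s, \rho}$ acting on H\"older sections of the associated bundle over $\Sigma$. A uniform spectral gap on strips $\{\|\image(s)\| \le R\}$ together with effective $\rho$-aperiodicity of the $\LieA$-period spectrum, which ultimately rests on Zariski density of $\Gamma$ and Benoist's results on the Jordan projection, then feeds into a Paley--Wiener inversion alongside the vector-valued LLT for each $M$-isotypic component and reassembles the asymptotic for general $\phi_i \in C_\mathrm{c}(\Gamma \backslash G)$. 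Verifying this uniform-in-$\rho$ spectral estimate, which is what genuinely extends Sambarino's theorem beyond the $M$-invariant setting, is where I expect the bulk of the technical work to lie.
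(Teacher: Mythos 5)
Your overall architecture --- Markov coding, an $\LieA$-valued cocycle, Peter--Weyl decomposition, twisted transfer operators, and a local-limit-theorem style Fourier inversion --- matches the paper's, and you correctly locate the difficulty in the non-$M$-invariant observables. But there is a genuine gap in how you propose to handle the $M$-direction. A $P^\circ$-minimal subset $Y$ is a principal $M_\Gamma$-bundle over $\supp\bigl(\BMS\bigr) \subset \Gamma\backslash G/M$, not a principal $M$-bundle, where $M_\Gamma \lhd M$ is the stabilizer of the $\Gamma$-minimal subsets of $G/P^\circ$ and may be a proper subgroup of $M$. Consequently the isotypic decomposition must run over $\widehat{M}_\Gamma$, and the aperiodicity you need --- that no nontrivially twisted transfer operator has a unimodular eigenvalue --- is \emph{false} as stated for $\rho \in \widehat{M}$: any nontrivial character of $M/M_\Gamma$ pulled back to $M$ yields a twisted operator with eigenvalue $1$, which is exactly why the limit in the theorem is a sum over $Y \in \mathfrak{Y}_\Gamma$ rather than $\BMS(\phi_1)\,\BMS(\phi_2)$. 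Making your scheme work requires (i) constructing a global section of the bundle over the Markov section whose associated holonomy cocycle takes values in $M_\Gamma$ and, on periodic orbits based in a fixed rectangle, equals the $M$-part $\lambda^M(\gamma)$ of the generalized Jordan projection, and (ii) proving that the closed group generated by the pairs (first return vector, holonomy) over periodic orbits is all of $\LieA \times M_\Gamma$. Step (ii) does not follow from Benoist's density of the Jordan projections in $\LieA$ alone; it needs the Guivarc'h--Raugi / Lee--Oh theorem that $\overline{\langle\lambda^{AM}(\Gamma)\rangle} = AM_\Gamma$, plus a separate argument (the paper uses $\R$-ergodicity of the flow on the $M_\Gamma$-extension to produce a dense orbit in $\Sigma^+ \times M_\Gamma$) because the symbolic holonomy and $\lambda^M$ are conjugate only in $M$, not a priori in $M_\Gamma$, so one cannot simply pass to an abelianization. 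None of this is visible in your outline.

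Two smaller points. First, the ``uniform-in-$\rho$ spectral estimate'' you flag as the main technical burden is not needed for this qualitative statement: since $M_\Gamma$ is compact, Peter--Weyl together with Stone--Weierstrass lets you approximate $\phi_1, \phi_2$ by finite linear combinations of matrix coefficients and treat each representation separately via dominated convergence; uniformity in $\rho$ only becomes relevant for effective versions. Second, a Bowen--Series coding of $\Gboundary$ is not available for a general word hyperbolic $\Gamma$; the coding here comes from showing that the $\psi_{\mathsf{v}}$-translation flow is H\"older conjugate to a H\"older reparametrization of the Gromov geodesic flow (a metric Anosov flow) and then transferring a Pollicott Markov section to rectangles adapted to the horospherical foliations --- this conjugacy, proved via the Morse property and a higher-rank shadow lemma, is itself a substantial step that your sketch assumes for free.
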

	
	Such local mixing results are of interest not only from a dynamical point of view but also due to several applications such as orbit counting, equidistribution, and measure classification results \cite[etc.]{DRS93,EM93,Oh14,MMO14,Win15,Sam15,OP19,ELO20,ELO22a}. We also mention a recent new application of \cref{thm:LocalMixingNoPsi} to the Hopf--Tsuji--Sullivan dichotomy \cite{BLLO21}. Indeed, for applications to orbit counting and equidistribution results as in \cite{Sam15,ELO20,ELO22a}, the following more refined version of \cref{thm:LocalMixingNoPsi} is required. For any $\mathsf{v} \in \interior(\limitcone)$, we denote by $\psi_\mathsf{v} \in \LieA^*$ the unique linear form such that $\psi_\mathsf{v} \geq \growthindicator$ and $\psi_\mathsf{v}(\mathsf{v}) = \growthindicator(\mathsf{v})$, where $\growthindicator$ is the growth indicator function introduced by Quint \cite{Qui02a} (see \cref{subsec:LimitSetAndLimitCone}).
	
	\begin{theorem}
		\label{thm:LocalMixing}
		Let $\mathsf{v} \in \interior(\limitcone)$. There exist $\kappa_\mathsf{v} > 0$ and an inner product $\langle \cdot, \cdot \rangle_*$ on $\LieA$ such that for all positive continuous functions $r: \R \to \R$ with $\lim\limits_{t \to +\infty}\frac{r(t)}{t} \in [0, +\infty]$ and $\lim\limits_{t \to +\infty}\frac{r(t)^2}{t} =: \ell \in [0, +\infty]$, $\mathsf{u} \in \ker\psi_\mathsf{v}$, and $\phi_1, \phi_2 \in C_{\mathrm{c}}(\Gamma \backslash G)$, we have
		\begin{multline*}
			\lim_{t \to +\infty} t^{\frac{\rank - 1}{2}} \int_{\Gamma \backslash G} \phi_1(x a_{t\mathsf{v} + r(t)\mathsf{u}}) \phi_2(x) \, d\BMS(x) \\
			= \kappa_\mathsf{v} e^{-\ell I(\mathsf{u})} \sum_{[m] \in M_\Gamma \backslash M} \BMS\bigr|_{\Omega_{[m]}}(\phi_1) \cdot \BMS\bigr|_{\Omega_{[m]}}(\phi_2)
		\end{multline*}
		with the convention that $+\infty\cdot 0 = 0$ in the exponent, where $I: \ker\psi_\mathsf{v} \to \R_{\geq 0}$ is defined by $I(\mathsf{u}) = \langle\mathsf{u}, \mathsf{u}\rangle_* - \frac{\langle \mathsf{u}, \mathsf{v} \rangle_*^2}{\langle \mathsf{v}, \mathsf{v}\rangle_*}$ for all $\mathsf{u} \in \ker\psi_\mathsf{v}$. Moreover:
		\begin{enumerate}
			\item\label{itm:UniformBoundOnCones} there exists $\kappa_\mathsf{v}(\phi_1, \phi_2) > 0$ such that if $\ell \in (0, +\infty]$, then there exist $T_r > 0$ and $\ell' \in (0, \ell)$ such that the left hand side in absolute value is bounded above by $\kappa_\mathsf{v}(\phi_1, \phi_2) e^{-\ell' I(\mathsf{u})}$ for all $(t, \mathsf{u}) \in [T_r, +\infty) \times \ker\psi_\mathsf{v}$ with $t\mathsf{v} + r(t)\mathsf{u} \in \LieA^+$;
			\item\label{itm:UniformBoundOnCompacts} for all compact subsets $\mathcal{K} \subset \ker\psi_\mathsf{v}$, the convergence of the left hand side is uniform in $\mathsf{u} \in \mathcal{K}$.
		\end{enumerate}
	\end{theorem}
	
	\begin{remark}
		See \cref{thm:LocalMixingForErgodicComponent,rem:ExponentialDecayAlongRays,rem:OnProofsOfProperties,rem:ConstantKappa} for further details.
	\end{remark}
	
	In particular, the following is a theorem of Edwards--Lee--Oh regarding an asymptotic formula for the decay of matrix coefficients in $L^2(\Gamma \backslash G)$ for compactly supported continuous functions (see \cite[Theorem 7.12]{ELO20} and \cite[Theorems 3.1 and 3.4]{ELO22b}). They prove it by developing a higher rank version of Roblin's transverse intersection argument \cite{Rob03} (see also \cite{OS13,MO15}) and using \cref{thm:LocalMixing} as input. We fix some Haar measure on $G$ which induce a $G$-invariant measure on $\Gamma \backslash G$ and Burger--Roblin measures $m_{\involution(\mathsf{v})}^\mathrm{BR}$ and $m_{\mathsf{v}}^{\mathrm{BR}_*}$ corresponding to each $\mathsf{v} \in \interior(\limitcone)$ as defined in \cite[Section 3]{ELO20} and \cite[Section 2]{ELO22b}. Denote by $\rho$ half the sum of the positive roots with respect to $\mathfrak{a}^+$.
	
	\begin{theorem}[{\citealp[Theorem 3.4]{ELO22b}}]
		\label{thm:decayofmatrixcoefficients}
		Using the same notation as in \cref{thm:LocalMixing}, we have
		\begin{multline*}
			\lim_{t \to +\infty} t^{\frac{\rank - 1}{2}}e^{(2\rho - \psi_\mathsf{v})(t\mathsf{v} + r(t)\mathsf{u})} \int_{\Gamma \backslash G} \phi_1(x a_{t\mathsf{v} + r(t)\mathsf{u}}) \phi_2(x) \, dx \\
			= \kappa_\mathsf{v} e^{-\ell I(\mathsf{u})} \sum_{[m] \in M_\Gamma \backslash M} m_{\involution(\mathsf{v})}^\mathrm{BR}\bigr|_{\Omega_{[m]}N^+}(\phi_1)\cdot m_{\mathsf{v}}^{\mathrm{BR}_*}\bigr|_{\Omega_{[m]}N^-}(\phi_2).
		\end{multline*}
		Moreover, \cref{itm:UniformBoundOnCones,itm:UniformBoundOnCompacts} in \cref{thm:LocalMixing} also hold.
	\end{theorem}
	
	\begin{remark}
		The uniformity statement in \cite[Theorem 3.4]{ELO22b} was written before \cref{itm:UniformBoundOnCones,itm:UniformBoundOnCompacts} in \cref{thm:LocalMixing} appeared, but with this new input, their proof gives \cref{thm:decayofmatrixcoefficients}.
	\end{remark}
	
	In the rank one case, Zariski dense Anosov subgroups coincide with Zariski dense convex cocompact subgroups and \cref{thm:LocalMixingNoPsi} was obtained by Babillot \cite{Bab02} for $M$-invariant functions and by Winter \cite{Win15} in general. Their results hold more generally whenever the BMS measure is finite. For coabelian subgroups of convex cocompact subgroups, local mixing was proved by Oh--Pan \cite{OP19}.
	
	When $\Gamma$ is a Schottky subgroup of $\SL_n(\mathbb R)$, and $\mathsf{v}$ is along a special direction called the maximal growth direction, \cref{thm:LocalMixing} was established by Thirion \cite{Thi07,Thi09} for $M$-invariant functions using renewal theory \cite{Bab88}. His result holds slightly more generally for Zariski dense Ping-Pong subgroups which are not necessarily Anosov as parabolic elements are allowed. Extending Thirion's argument for $M$-invariant functions, Sambarino \cite{Sam15} generalized the result  (without any mention of \cref{itm:UniformBoundOnCones,itm:UniformBoundOnCompacts})  when $\Gamma$ is the fundamental group of a compact negatively curved manifold (however, see \cref{rem:Modulus1EigenvalueProblem} about the proof).
	
	\subsection*{On the proofs}
	The one-parameter diagonal flow $\{a_{t\mathsf{v}}: t \in \mathbb R\}$ on $\Gamma \backslash G$ can be better understood by studying a translation flow induced by the linear form $\psi_\mathsf{v}$. Our starting point is the work of Bridgeman--Canary--Labourie--Sambarino \cite{BCLS15} on the construction of a metric Anosov flow associated to a projective Anosov representation of $\Gamma$ which is also conjugate to a reparametrization of the Gromov geodesic flow associate to $\Gamma$. We use techniques from \cite{BCLS15,BCLS18} and properties specific to Zariski dense Anosov subgroups, such as the Morse property \cite{KLP17} and an analogue of Sullivan's shadow lemma \cite{Thi07, LO20b}, to provide a detailed proof that the translation flow is conjugate to a reparametrization of the same Gromov geodesic flow. This metric Anosov flow has a Markov section which can be transferred to a suitable Markov section for the translation flow with respect to the projection of the horospherical foliations on $\Gamma \backslash G$. This allows us to use the techniques of symbolic dynamics and thermodynamic formalism.
	
	We introduce the \emph{first return vector map} $\mathsf{K}: \Sigma \to \LieA$ which is a generalization of the first return time map for the Markov section. It is cohomologous to the map $K$ in \cite[Section 3]{Sam15}, but $\mathsf{K}$ factors through $\Sigma^+ \to \LieA$. We also introduce the holonomy $\vartheta: \Sigma \to M_\Gamma$ which factors through $\Sigma^+ \to M_\Gamma$, where $M_\Gamma$ is a normal subgroup of $M$ of finite index such that $P_\Gamma := M_\Gamma AN$ is the stabilizer of the $P^\circ$-minimal subsets of $\Gamma \backslash G$. Both $\mathsf{K}$ and $\vartheta$ are defined using a section $F: R \to \Omega_0 := Y \cap \supp\bigl(\BMS\bigr)$ corresponding to $\Gamma \backslash G \to \Gamma \backslash G/M \supset \limitset^{(2)} \times \LieA \to \limitset^{(2)} \times \R$ over the Markov section, where $Y$ is a $P^\circ$-minimal subset of $\Gamma \backslash G$. The section $F$ requires a careful construction in order to ensure that the holonomy is a $M_\Gamma$-valued map. We use $\mathsf{K}$ and $\vartheta$ to define the transfer operators with holonomy.
	
	We then prove the key technical \cref{thm:SpectralBound} which provides bounds and constraints on the spectra of the transfer operators with holonomy. Together with perturbation theory, this allows us to analytically extend its Neumann series. The proofs of the spectral properties of the transfer operators with holonomy rely on a few density results regarding $\mathsf{K}$ and $\vartheta$ which take into account the fact that $M_\Gamma$ may be different from $M$. The density results are proved using the density of the subgroup generated by the generalized length spectrum \cite{GR07, LO20a} and the $A$-ergodic decomposition of BMS measures \cite{LO20a}.
	
	\begin{remark}
		\label{rem:Modulus1EigenvalueProblem}
		The density results are not sufficient to eliminate all the modulus $1$ eigenvalues of the transfer operators with holonomy in \cref{thm:SpectralBound}. This problem also exists without holonomy (when proving \cref{thm:LocalMixing} for $M$-invariant functions as in \cite{Sam15}, including for arbitrary cocycles). For the Iwasawa cocycle however, \cite[Proposition 3.5]{Qui05}, which is stronger than the density of the subgroup generated by the length spectrum, is available to eliminate all modulus $1$ eigenvalues \cite[Pages 157--158]{Thi07} (but not when including holonomy). Thus, a complete analysis is needed in this paper to ensure that the exceptional eigenvalues do not cause problems.
	\end{remark}
	
	Another result on the transfer operators with holonomy that we need is the second order expansion of their maximal simple eigenvalue. These spectral properties of the transfer operators with holonomy are then used to prove the main theorem by carrying out the necessary Fourier analysis in a similar fashion as in \cite[Appendix A]{Thi07} and \cite{OP19}.
	
	\subsection*{Organization of the paper}
	In \cref{sec:Preliminaries}, we establish basic notations, introduce the one-parameter diagonal flow, and recall the construction of BMS measures. 
	In \cref{sec:AnosovSubgroups}, we recall the definition of a Zariski dense Anosov subgroup and properties of BMS measures that are specific to Zariski dense Anosov subgroups, including the $A$-ergodic decomposition of BMS measures. We also project the one-parameter diagonal flow to a translation flow which is the primary object of study in \cref{sec:CodingTheTranslationFlow}. 
	
	In \cref{sec:CodingTheTranslationFlow}, we prove that the translation flow is conjugate to a reparametrization of the Gromov geodesic flow associated to $\Gamma$ and construct a Markov section which provides a suitable coding for the flow. In \cref{sec:FirstReturnVectorMapAndHolonomy}, we introduce the first return vector map and the holonomy and prove some essential properties. This is used in \cref{sec:SpectraOfTheTransferOperatorsWithHolonomy} to define the transfer operators with holonomy and study their spectral properties. In \cref{sec:LocalMixing}, we use these properties to prove the main theorem. In \cref{sec:TopologicalMixing}, we give an application to topological mixing of the one-parameter diagonal flow.
	
	\subsection*{Acknowledgements}
	We are grateful to our advisor, Hee Oh, for suggesting this problem and being a source of inspiration for us throughout the work. We also thank her for the numerous helpful discussions about her recent works and the surrounding literature and many invaluable suggestions that greatly improved the quality of our writing. In particular, we thank her for emphasizing the importance of obtaining the second bound in \cref{thm:LocalMixing} which is crucial for certain applications to counting problems and pointing out an application of our result which is included in \cref{sec:TopologicalMixing}. We also thank Minju Lee, Andr\'{e}s Sambarino, and Le\'{o}n Carvajales for useful correspondences. Finally, we thank the referees for their thorough review and many helpful comments.
	
	\section{Preliminaries}
	\label{sec:Preliminaries}
	Let $G$ be a connected semisimple real algebraic group, i.e., a Lie group which is the identity component of the group of real points of a semisimple linear algebraic group defined over $\mathbb R$. We denote its Lie algebra by $\LieG$ and its real rank by $\rank$. Let $\Theta: \LieG \to \LieG$ be a Cartan involution and $\LieG = \LieK \oplus \LieP$ be the associated eigenspace decomposition corresponding to the eigenvalues $+1$ and $-1$ respectively. Let $K < G$ be the maximal compact subgroup whose Lie algebra is $\mathfrak{k}$. Let $\LieA \subset \LieP$ be a maximal abelian subalgebra and $\Phi \subset \LieA^*$ be its restricted root system. Note that $\dim(\LieA) = \rank$. Identifying $\LieA \cong \LieA^*$ via its Killing form, let $\Phi^\pm \subset \Phi$ be a choice of sets of positive and negative roots and $\LieA^+ \subset \LieA$ be the corresponding closed positive Weyl chamber. We have the associated restricted root space decomposition
	\begin{align*}
		\LieG = \LieA \oplus \LieM \oplus \LieN^+ \oplus \LieN^- = \LieA \oplus \LieM \oplus \bigoplus_{\alpha \in \Phi} \LieG_\alpha
	\end{align*}
	where $\LieM = Z_{\LieK}(\LieA) \subset \LieK$ and $\LieN^\pm = \bigoplus_{\alpha \in \Phi^\mp} \LieG_\alpha$. Define the Lie subgroups of $G$ by
	\begin{align*}
		A &= \exp(\LieA), & N^\pm &= \exp(\LieN^\pm).
	\end{align*}
	Let $M = Z_K(A)$ which need not be connected. Define the closed subset $A^+ = \exp(\LieA^+) \subset A$. Denote
	\begin{align*}
		a_v = \exp(v) \in A \qquad \text{for all $v \in \LieA$}.
	\end{align*}
	We fix a reference point $o = [K]$ in the associated symmetric space $G/K$. We fix a left $G$-invariant and right $K$-invariant Riemannian metric on $G$ and denote the corresponding inner product and norm on any of its tangent spaces by $\langle \cdot, \cdot \rangle$ and $\|\cdot\|$ respectively. This induces a left $G$-invariant and right $K$-invariant metric $d$ on $G$. We use the same notations for inner products, norms, and metrics induced on any other quotient spaces. The Riemannian metric on $G$ also induces an inner product and norm on $\LieA$, which we denote by $\langle \cdot, \cdot \rangle$ and $\|\cdot\|$ respectively, which is invariant under the Weyl group $N_K(A)/M$.
	
	The \emph{one-parameter diagonal flow} $\mathcal{W}_v: G \times \mathbb R \to G$ in any nontrivial direction $v \in \LieA^+$ is defined by the right translation action $\mathcal{W}_{v, t}(g) = ga_{tv}$ for all $g \in G$ and $t \in \mathbb R$. It is said to be \emph{regular} if $v \in \interior(\LieA^+)$. Whenever this action descends to some quotient of $G$, we use the same terminology and notation.
	
	Note that $N^\pm$ are the expanding and contracting \emph{horospherical subgroups}, i.e.,
	\begin{align}
		\label{eqn:HorosphericalSubgroups}
		N^\pm = \left\{n^\pm \in G: \lim_{t \to \pm \infty} a_{tv} n^\pm a_{-tv} = e\right\}
	\end{align}
	for any $v \in \interior(\LieA^+)$.
	
	Let $P^\pm = MAN^\pm$ which are opposing minimal parabolic subgroups of $G$. With respect to the choice $P = P^-$, we have the \emph{Furstenberg boundary} of $G$ denoted by
	\begin{align*}
		\Fboundary = G/P \cong K/M.
	\end{align*}
	Let $w_0 \in K$ be a representative of the element in the Weyl group $N_K(A)/M$ such that $\Ad_{w_0}(\LieA^+) = -\LieA^+$. For all $g \in G$, denote
	\begin{align*}
		g^+ &= g[P] \in \Fboundary, & g^- &= gw_0[P] \in \Fboundary.
	\end{align*}
	Then $\Fboundary^{(2)} = G\cdot(e^+,e^-) \subset \Fboundary \times \Fboundary$ is the unique open $G$-orbit.
	
	\begin{remark}
		Since $P^+ = w_0Pw_0^{-1}$, given $(g^+, g^-) \in \Fboundary^{(2)}$ for some $g \in G$ and any other $(x, g^-), (g^+, y) \in \Fboundary^{(2)}$, there are unique elements $h \in N^+$ and $n \in N^-$ such that
		\begin{align}
			\label{eqn:UniqueElementsInN^+AndN^-}
			((gh)^+, (gh)^-) &= (x, g^-), & ((gn)^+, (gn)^-) &= (g^+, y).
		\end{align}
	\end{remark}
	
	\begin{definition}[Iwasawa cocycle]
		The Iwasawa decomposition is given by the $C^\infty$-diffeomorphism $K \times \LieA \times N^- \rightarrow G$ defined by $(k,v,n) \mapsto ka_vn$. The \emph{Iwasawa cocycle} $\sigma: G \times \Fboundary \rightarrow \LieA$ gives the unique element $\sigma(g,\xi)$ such that $gk \in Ka_{\sigma(g,\xi)}N^-$ and satisfies the cocycle relation $\sigma(gh,\xi) = \sigma(g,h\xi) + \sigma(h,\xi)$ for all $g, h \in G$ and $\xi = kM \in \Fboundary \cong K/M$.
	\end{definition}
	
	\begin{definition}[Busemann function]
		The \emph{Busemann function} $\beta: \Fboundary \times G/K \times G/K \to \LieA$ is defined by
		\begin{align*}
			\beta_\xi(x, y) = \sigma(g^{-1}, \xi) - \sigma(h^{-1}, \xi)
		\end{align*}
		for all $\xi \in \Fboundary$, $x = go \in G/K$, and $y = ho \in G/K$.
	\end{definition}
	
	We often write $\beta_\xi(g, h) = \beta_\xi(go, ho)$ for all $g, h \in G$ to ease notation. For all $\xi \in \Fboundary$ and $x, y, z \in G/K$ and $g \in G$, the Busemann function satisfies the properties
	\begin{enumerate}
		\item $\beta_\xi(e, g) = -\sigma(g^{-1}, \xi)$,
		\item $\beta_{g\xi}(gx, gy) = \beta_\xi(x, y)$,
		\item $\beta_\xi(x, z) = \beta_\xi(x, y) + \beta_\xi(y, z)$
	\end{enumerate}
	which are derived from the properties of the Iwasawa cocycle.
	
	Define a left $G$-action on $\Fboundary^{(2)} \times \LieA$ by
	\begin{align}
		\label{eqn:G-ActionOnHopfParametrization}
		g \cdot (x,y,v) = (gx,gy,v + \beta_x(g^{-1}, e)) = (gx,gy,v + \sigma(g,x))
	\end{align}
	for all $g \in G$ and $(x, y, v) \in \Fboundary^{(2)} \times \LieA$. Then $\Stab_G(e^+,e^-,0) = M$ and we obtain the following diffeomorphism.
	
	\begin{definition}[Hopf parametrization]
		The \emph{Hopf parametrization} is a left $G$-equivariant diffeomorphism $G/M \to \Fboundary^{(2)} \times \LieA$ defined by
		\begin{align*}
			gM \mapsto (g^+, g^-, \beta_{g^+}(e, g)) = (g^+, g^-, \sigma(g, e^+)).
		\end{align*}
	\end{definition}
	
	For all $v \in \interior(\LieA^+)$, the one-parameter diagonal flow on $G/M$ in the Hopf parametrization is then $\mathcal{W}_v: \Fboundary^{(2)} \times \LieA \times \mathbb R \to \Fboundary^{(2)} \times \LieA$ defined by $\mathcal{W}_{v, t}(x, y, w) = (x, y, w + tv)$ for all $(x, y, w) \in \Fboundary^{(2)} \times \LieA$ and $t \in \mathbb R$.
	
	\subsection{Limit set and limit cone}
	\label{subsec:LimitSetAndLimitCone}
	Let $\Gamma < G$ be a Zariski dense discrete subgroup henceforth.
	
	\begin{definition}[Limit set]
		The \emph{limit set} $\limitset \subset \Fboundary$ of $\Gamma$ is defined by
		\begin{align*}
			\limitset = \{\xi \in \Fboundary : \exists \{\gamma_n\}_{n \in \N} \subset \Gamma, (\gamma_n)_*\nu \to \delta_\xi\}
		\end{align*}
		where $\nu$ denotes the unique $K$-invariant probability measure on $\Fboundary$ and $\delta_\xi$ denotes the Dirac measure at $\xi$. It is the unique minimal nonempty closed $\Gamma$-invariant subset of $\Fboundary$ \cite{Ben97}.
	\end{definition}
	
	We say that $g \in G$ is \emph{hyperbolic} if $g$ is semisimple and conjugates into $A^+$. Any element $g\in G$ can be written uniquely as a commuting product $g = g_{\mathrm{h}}g_{\mathrm{e}}g_{\mathrm{u}}$, where $g_{\mathrm{h}}$ is hyperbolic, $g_{\mathrm{e}}$ is elliptic, and $g_{\mathrm{u}}$ is unipotent. We say that $g \in G$ is \emph{loxodromic} if $g_{\mathrm{h}}$ conjugates into $\interior(A^+)$, in which case $g_{\mathrm{e}}$ simultaneously conjugates into $M$ and $g_{\mathrm{u}} = e$. Let $\mu: G \to \LieA^+$ denote the \emph{Cartan projection}, i.e., $\mu(g) \in \LieA^+$ is the unique element such that $g \in Ka_{\mu(g)}K$ for all $g \in G$. Let $\lambda: G \to \LieA^+$ denote the \emph{Jordan projection}, i.e., $\lambda(g) \in \LieA^+$ is such that $g_{\mathrm{h}}$ is conjugate to $a_{\lambda(g)}$ for all $g \in G$. We record a basic but useful lemma which relates the Jordan projection with the Busemann function (see \cite[Lemma 3.5]{LO20b} for a proof).
	
	\begin{lemma}
		Let $g \in G$ be a loxodromic element whose attracting fixed point is $\xi \in \Fboundary$. Then $\beta_\xi(x, gx) = \lambda(g)$ for all $x \in G/K$.
	\end{lemma}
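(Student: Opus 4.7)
The strategy is to conjugate $g$ into a standard form, reduce the computation to the base point, and then invoke the Iwasawa decomposition directly.

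Since $g$ is loxodromic, there exists $h \in G$ such that $h^{-1}g_{\mathrm{h}}h = a_{\lambda(g)}$, and by the commuting decomposition (with $g_{\mathrm{u}} = e$ and $g_{\mathrm{e}}$ simultaneously conjugating into $M$), we may arrange $h^{-1}gh = a_{\lambda(g)} m$ for some $m \in M$ commuting with $a_{\lambda(g)}$. The attracting fixed point of $a_{\lambda(g)}m$ on $\Fboundary$ is $e^+$, so the attracting fixed point of $g$ is $\xi = h^+ = h \cdot e^+$.

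Next I would use $G$-equivariance (property (2)) of the Busemann function to translate the problem to the origin. Writing $y = h^{-1}x \in G/K$, one has
\begin{align*}
	\beta_\xi(x, gx) = \beta_{h\cdot e^+}(hy, ghy) = \beta_{e^+}(y, h^{-1}ghy) = \beta_{e^+}(y, a_{\lambda(g)}m \cdot y).
\end{align*}
Both $a_{\lambda(g)}$ and $m$ lie in $P = MAN^-$ and hence fix $e^+$. So by the cocycle property (3) and $G$-equivariance applied to $a_{\lambda(g)}m$,
\begin{align*}
	\beta_{e^+}(y, a_{\lambda(g)}my)
	&= \beta_{e^+}(y,e) + \beta_{e^+}(e, a_{\lambda(g)}m) + \beta_{e^+}(a_{\lambda(g)}m, a_{\lambda(g)}my) \\
	&= \beta_{e^+}(y,e) + \beta_{e^+}(e, a_{\lambda(g)}m) + \beta_{(a_{\lambda(g)}m)^{-1}e^+}(e,y) \\
	&= \beta_{e^+}(e, a_{\lambda(g)}m),
\end{align*}
using $\beta_{e^+}(y,e) + \beta_{e^+}(e,y) = 0$. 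This shows the value is independent of $x$.

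It remains to compute $\beta_{e^+}(e, a_{\lambda(g)}m) = -\sigma((a_{\lambda(g)}m)^{-1}, e^+) = -\sigma(m^{-1}a_{-\lambda(g)}, e^+)$. Applying the Iwasawa cocycle relation and using that $a_{-\lambda(g)}\cdot e^+ = e^+$ gives
\begin{align*}
	\sigma(m^{-1}a_{-\lambda(g)}, e^+) = \sigma(m^{-1}, e^+) + \sigma(a_{-\lambda(g)}, e^+) = 0 + (-\lambda(g)) = -\lambda(g),
\end{align*}
since $m^{-1} \in M \subset K$ gives trivial Iwasawa $\LieA$-component, while $a_{-\lambda(g)} \in A$ gives $\LieA$-component $-\lambda(g)$ directly from the Iwasawa decomposition $K \times \LieA \times N^- \to G$. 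Therefore $\beta_\xi(x,gx) = \lambda(g)$, as claimed. There is no serious obstacle here — the only delicate point is selecting a single conjugator $h$ that simultaneously diagonalizes $g_{\mathrm{h}}$ into $A^+$ and sends $g_{\mathrm{e}}$ into $M$, which is exactly what is encoded in the statement that $g$ is loxodromic.
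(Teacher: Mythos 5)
Your proof is correct. The paper does not give its own argument for this lemma — it simply cites \cite[Lemma 3.5]{LO20b} — and your computation is the standard one that such a reference would contain: conjugate $g$ to $a_{\lambda(g)}m \in AM$, use equivariance and the cocycle identity to kill the dependence on $x$, and read off $\sigma(a_{-\lambda(g)}, e^+) = -\lambda(g)$ and $\sigma(m^{-1}, e^+) = 0$ from the Iwasawa decomposition. The one step you assert without proof, that $e^+ = [P]$ is the \emph{attracting} (rather than repelling) fixed point of $a_{\lambda(g)}m$ so that $\xi = h^+$, is genuinely needed (otherwise one would land on $\involution(\lambda(g))$ instead), but it follows immediately from the characterization of $N^+$ as the expanding horospherical subgroup, so this is not a gap.
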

	
	\begin{definition}[Limit cone]
		The \emph{limit cone} $\limitcone \subset \LieA^+$ of $\Gamma$ is the unique minimal closed cone containing $\lambda(\Gamma)$. It is a convex subset with nonempty interior \cite{Ben97}.
	\end{definition}
	
	The \emph{growth indicator function} $\growthindicator : \LieA^+ \to \mathbb R \cup \{-\infty\}$ of $\Gamma$ is defined by
	\begin{align*}
		\growthindicator(v) = \|v\| \inf_{\text{open cones }\mathcal{C}\subset \LieA^+} \tau_\mathcal{C} \qquad \text{for all $v \in \LieA^+$}
	\end{align*}
	where $\tau_\mathcal{C}$ is the abscissa of convergence of $t \mapsto \sum_{\gamma \in \Gamma, \mu(\gamma) \in \mathcal{C}} e^{-t\|\mu(\gamma)\|}$. Identifying $\mathbb R \cong \mathbb R^*$, the growth indicator function is simply the critical exponent $\delta_\Gamma$ in the $\rank = 1$ case. Quint \cite{Qui02b} showed that $\growthindicator$ is homogeneous of degree $1$, concave, upper semicontinuous, and satisfies $\growthindicator|_{\exterior(\limitcone)} = -\infty$, $\growthindicator|_{\limitcone} \ge 0$, and $\growthindicator|_{\interior(\limitcone)} > 0$.
	
	\subsection{Patterson--Sullivan measures}
	We say that $\psi \in \LieA^*$ is \emph{tangent to $\growthindicator$ at $\mathsf{v} \in \limitcone$} if $\psi \geq \growthindicator$ and $\psi(\mathsf{v}) = \growthindicator(\mathsf{v})$. Define
	\begin{align*}
		D_\Gamma^\star = \{\psi \in \LieA^*: \psi \text{ is tangent to }\growthindicator \text{ at some } \mathsf{v} \in \interior(\LieA^+) \cap \limitcone\}.
	\end{align*}
	By the works of Quint \cite[Theorems 8.1 and 8.4]{Qui02a} building on \cite{Pat76,Sul79,Alb99}, we can make the following definition.
	
	\begin{definition}[Patterson--Sullivan measure]
		Let $\psi \in D_\Gamma^\star$. Any Borel probability measure $\nu_\psi$ on $\limitset$ is called a $(\Gamma,\psi)$-Patterson-Sullivan (PS) measure if
		\begin{equation*}
			\frac{d\gamma_*\nu_\psi}{d\nu_\psi}(\xi) = e^{\psi(\beta_\xi(e, \gamma))} \qquad \text{for all $\gamma \in \Gamma$ and $\xi \in \mathcal{F}$}.
		\end{equation*}
	\end{definition}
	
	\subsection{Bowen--Margulis--Sullivan measures}
	\label{subsec:BMSmeasures}
	We introduce the \emph{opposition involution} defined by $\involution = -\Ad_{w_0}: \LieA \to \LieA$. It is an involution with the properties
	\begin{enumerate}
		\item $\involution(\LieA^+)=\LieA^+$,
		\item $\involution(\lambda(g)) = \lambda(g^{-1})$ and $\involution(\mu(g)) = \mu(g^{-1})$ for all $g \in G$,
		\item $\growthindicator \circ \involution = \growthindicator$ and $\involution(\limitcone) = \limitcone$.
	\end{enumerate}
	An immediate consequence is that $\psi \circ \involution \in D_\Gamma^\star$ for all $\psi \in D_\Gamma^\star$.
	
	Choose a $(\Gamma, \psi)$-PS measure $\nu_\psi$ and a $(\Gamma, \psi \circ \involution)$-PS measure $\nu_{\psi \circ \involution}$. Via the Hopf parametrization, a \emph{Bowen--Margulis--Sullivan (BMS) measure} $m_{\nu_\psi,\nu_{\psi\circ\involution}}^{\mathrm{BMS}}$ is a locally finite Borel measure on $G/M$ defined by
	\begin{align*}
		dm_{\nu_\psi,\nu_{\psi\circ\involution}}^{\mathrm{BMS}}(gM) = e^{\psi\left(\beta_{g^+}(e,g)\right)} e^{(\psi \circ \involution)\left(\beta_{g^-}(e,g)\right)} \, d\nu_\psi(g^+) \, d\nu_{\psi \circ\involution}(g^-) \, dv
	\end{align*}
	where $dv$ denotes any choice of Lebesgue measure on $\LieA$. Then $m_{\nu_\psi,\nu_{\psi\circ\involution}}^{\mathrm{BMS}}$ is left $\Gamma$-invariant and right $A$-invariant \cite[Lemma 3.6]{ELO20}. We define induced measures all of which we call BMS measures and also denote by $m_{\nu_\psi,\nu_{\psi\circ\involution}}^{\mathrm{BMS}}$ by abuse of notation. We can lift $m_{\nu_\psi,\nu_{\psi\circ\involution}}^{\mathrm{BMS}}$ to a right $M$-invariant measure on $G$ by using the probability Haar measure on $M$. By left $\Gamma$-invariance, $m_{\nu_\psi,\nu_{\psi\circ\involution}}^{\mathrm{BMS}}$ descends to a measure on $\Gamma \backslash G$. By right $M$-invariance, $m_{\nu_\psi,\nu_{\psi\circ\involution}}^{\mathrm{BMS}}$ descends to a measure on $\Gamma \backslash G/M$. When $\psi$ is tangent to $\growthindicator$ at $\mathsf{v} \in \interior(\LieA^+)$, we use $\BMS$ to denote $m_{\nu_\psi,\nu_{\psi\circ\involution}}^{\mathrm{BMS}}$ for some choice of $\nu_\psi$ and $\nu_{\psi \circ \involution}$.
	
	\section{Anosov subgroups}
	\label{sec:AnosovSubgroups}
	The Anosov property (with respect to any parabolic subgroup of $G$) was first introduced by Labourie \cite{Lab06} for surface groups and later generalized by Guichard--Wienhard \cite{GW12} for Gromov hyperbolic groups (cf. \cite{KLP17,GGKW17,Wie18}). Although \cite[Definition 2.10]{GW12} is a general one, we prefer to make the following succinct definition which is more practical for our purposes. It is possible due to \cite[Corollary 4.16]{GW12} since $\Gamma < G$ is a Zariski dense discrete subgroup and $P < G$ is a minimal parabolic subgroup throughout the paper. Let $\Gboundary$ denote the Gromov boundary of $\Gamma$.
	
	\begin{definition}[Anosov subgroup]
		Let $\Gamma < G$ be a Zariski dense discrete subgroup. We say that $\Gamma$ is an \emph{Anosov subgroup} if it is a Gromov hyperbolic group and admits a continuous $\Gamma$-equivariant boundary map $\zeta: \Gboundary \to \Fboundary$ such that $(\zeta(x), \zeta(y)) \in \Fboundary^{(2)}$ for all $x, y \in \Gboundary$ with $x \neq y$.
	\end{definition}
	
	\begin{remark}
		The boundary map $\zeta$ is H\"{o}lder continuous by \cite[Proposition 3.2]{Lab06} and \cite[Theorem 6.1]{BCLS15}.
	\end{remark}
	
	Henceforth, we assume that $\Gamma$ is a Zariski dense Anosov subgroup and denote by $\zeta: \Gboundary \to \Fboundary$ its boundary map. Note that $\zeta$ is a homeomorphism onto $\zeta(\Gboundary) = \limitset$. The quotient $\Gamma \backslash G$ is called an \emph{Anosov homogeneous space}.
	
	\subsection{PS and BMS measures for Anosov subgroups}
	\label{subsec:PSandBMSmeasure}
	The following \cref{thm:PSDensityForAnosovSubgroups} was proved by Quint \cite[Proposition 3.2 and Theorem 4.7]{Qui03} when $\Gamma$ is a Schottky subgroup. In general, \cref{thm:PSDensityForAnosovSubgroups} follows from \cite[Corollaries 3.12, 3.13, and 4.9]{Sam14a} in light of \cite{BCLS15} using the Pl\"{u}cker representation (see also \cite[Propositions 4.6 and 4.11]{PS17}).
	
	\begin{theorem}
		\label{thm:PSDensityForAnosovSubgroups}
		We have
		\begin{enumerate}
			\item\label{itm:LimitConeInInteriorLieA+} $\limitcone \subset \interior(\LieA^+) \cup \{0\}$;
			\item\label{itm:NonTrivialElementsInGammaLoxodromic} $\gamma$ is loxodromic for all nontrivial $\gamma \in \Gamma$;
			\item\label{itm:ConcaveAnalyticOnLimitCone} $\growthindicator$ is an analytic strictly concave function on $\interior(\limitcone)$ and in particular, for all $v \in \mathbb \interior(\limitcone)$, there exists a unique $\psi_v \in D_\Gamma^\star$ tangent to $\growthindicator$ along $v$;
			\item\label{itm:TagentAtAInteriorVector} $D_\Gamma^\star = \{\psi \in \LieA^*: \psi \text{ is tangent to }\growthindicator \text{ at some } v \in \interior(\limitcone)\}$;
			\item\label{itm:PositiveOnLimitCone} $\psi|_{\limitcone\setminus\{0\}} > 0$ for all $\psi \in D_\Gamma^\star$.
		\end{enumerate}
	\end{theorem}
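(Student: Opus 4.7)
The plan is to reduce all five claims to known results about projective Anosov representations via Pl\"{u}cker embeddings, and then to assemble them using the thermodynamic formalism of Bridgeman--Canary--Labourie--Sambarino together with Sambarino's analysis of growth indicators. Concretely, for each fundamental weight $\chi$ of $(\LieA, \LieA^+)$ the composition of an Anosov embedding with a representation $G \to \GL(V_\chi)$ makes $\Gamma$ projective Anosov, and the collection of such representations encodes the full $\LieA$-valued Cartan, Jordan, and Busemann data of $\Gamma$.

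For \cref{itm:LimitConeInInteriorLieA+} and \cref{itm:NonTrivialElementsInGammaLoxodromic}, I would invoke the uniform regularity of the Cartan projection enjoyed by $P$-Anosov subgroups \cite{KLP17, GGKW17}: there exist an open cone $\mathcal{C} \subset \interior(\LieA^+)$ and a constant $c > 0$ with $\mu(\gamma) \in \mathcal{C}$ and $d(\mu(\gamma), \partial\LieA^+) \geq c |\gamma|_\Gamma - O(1)$ once the word length $|\gamma|_\Gamma$ is sufficiently large. Applying this estimate to the sequence $\gamma^n$ and using $\lambda(\gamma) = \lim_{n \to \infty} \frac{1}{n}\mu(\gamma^n)$ places $\lambda(\gamma)$ in $\interior(\LieA^+)$ for every nontrivial $\gamma$, which gives \cref{itm:NonTrivialElementsInGammaLoxodromic}; \cref{itm:LimitConeInInteriorLieA+} then follows from the definition of $\limitcone$ as the closure of the cone generated by $\lambda(\Gamma)$.

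The main technical step is \cref{itm:ConcaveAnalyticOnLimitCone}, and this is where I expect the principal obstacle. Following Sambarino, one constructs an $\LieA$-valued H\"{o}lder cocycle $c$ on a subshift of finite type coding the Gromov geodesic flow of $\Gamma$, with periods equal to $\lambda(\gamma)$ on the orbit associated to each conjugacy class $[\gamma]$. Each $\psi \in \LieA^*$ then produces a H\"{o}lder potential $\psi \circ c$ whose topological pressure $P(\psi)$ depends real-analytically on $\psi$ by classical Ruelle--Bowen theory, and is strictly convex provided the asymptotic variance of $\psi \circ c - P(\psi)$ is nondegenerate for every nonzero $\psi$. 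By Livsic's theorem this nondegeneracy reduces to the cohomological linear independence of the components of $c$, which in turn is a consequence of Zariski density of $\Gamma$ applied to the Jordan projections $\lambda(\Gamma)$; this Livsic-type verification in the higher rank setting is the delicate point. Real-analyticity and strict concavity of $\growthindicator$ on $\interior(\limitcone)$ then follow by Legendre duality, since $\growthindicator$ is recovered as the Legendre transform of $P$.

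Finally, \cref{itm:TagentAtAInteriorVector} and \cref{itm:PositiveOnLimitCone} are formal consequences of (1)--(3). For \cref{itm:TagentAtAInteriorVector}, strict concavity and strict positivity of $\growthindicator$ on $\interior(\limitcone)$ combined with $\growthindicator|_{\partial\limitcone \setminus \{0\}} = 0$ force any $\psi \in D_\Gamma^\star$ to achieve tangency $\psi(v) = \growthindicator(v)$ at some $v \in \interior(\limitcone)$, since a boundary tangency would contradict $\psi \geq \growthindicator$. For \cref{itm:PositiveOnLimitCone}, at such a tangent point $\mathsf{v}$ we have $\psi(\mathsf{v}) = \growthindicator(\mathsf{v}) > 0$, and strict concavity of $\growthindicator$ together with $\psi \geq \growthindicator \geq 0$ on $\limitcone$ prevents $\psi$ from vanishing on any ray of $\limitcone \setminus \{0\}$.
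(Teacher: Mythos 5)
Your overall route---passing to projective Anosov representations via the Pl\"{u}cker/fundamental-weight construction, using uniform regularity of the Cartan projection for items (1) and (2), and running Sambarino-style thermodynamic formalism for item (3)---is exactly the route the paper points to: it gives no proof of its own and instead cites \cite{Sam14a} in light of \cite{BCLS15} via the Pl\"{u}cker representation (see also \cite{PS17}). For items (1)--(3) your sketch is sound at the level of detail expected here; in particular you correctly use the uniform cone containment for (1) rather than the pointwise statement, since the closure of a family of rays each lying in $\interior(\LieA^+)$ need not avoid the walls. (One shared caveat: your word-length estimate is vacuous on torsion elements, so item (2) as literally stated presupposes $\Gamma$ torsion-free---but the theorem statement itself carries the same implicit assumption.)

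The genuine gap is in item (4), which is not the "formal consequence of (1)--(3)" you claim. Your argument is that a tangency at a point $v_0 \in \partial\limitcone \setminus \{0\}$ "would contradict $\psi \geq \growthindicator$" because $\growthindicator$ vanishes there. First, the vanishing $\growthindicator|_{\partial\limitcone\setminus\{0\}} = 0$ is itself a nontrivial theorem about Anosov subgroups, not something established by (1)--(3). Second, even granting it, there is no contradiction: a linear form supporting the convex cone $\limitcone$ along a boundary ray satisfies $\psi(v_0) = 0 = \growthindicator(v_0)$ and can still dominate $\growthindicator$ on all of $\limitcone$. In a two-dimensional cross-section with $\growthindicator$ corresponding to a concave $f(\theta)$ on $[0,\pi/2]$ vanishing at the endpoints, the form $\psi = c\sin\theta$ dominates $f$ whenever $c \geq \sup_\theta f(\theta)/\sin\theta$ and is tangent \emph{only} at $\theta = 0$ unless that supremum is infinite. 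What rules out such $\psi$ is the vertical tangency of $\growthindicator$ at $\partial\limitcone$, which again requires proof. The clean argument---and the one in the sources the paper cites---uses the machinery you already built in step (3): $\psi$ is tangent to $\growthindicator$ if and only if $P(-\psi\circ c) = 0$, and differentiating the pressure identifies the tangency direction with the normalized barycenter $\int c \, dm_{-\psi\circ c}$ of the cocycle against the equilibrium state; since that measure has full support and the periods of $c$ generate a dense subgroup of $\LieA$ by Benoist's theorem, this barycenter lies in $\interior(\limitcone)$. Your argument for item (5) is fine once (4) is secured this way (the segment from the interior tangent point to a putative zero of $\psi$ on $\limitcone\setminus\{0\}$ forces $\psi = \growthindicator$ on an open subset of $\interior(\limitcone)$, contradicting strict concavity), but as written it inherits the gap in (4).
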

	
	\medskip
	
	For the rest of the paper, we fix $\mathsf{v} \in \interior(\limitcone)$ and the unique associated $\psi := \psi_\mathsf{v} \in D_\Gamma^\star$. The regular one-parameter diagonal flow $\mathcal{W}_{\mathsf{v}}$ is the primary object of study in this paper. Also fix a $(\Gamma, \psi)$-PS measure $\nu_\mathsf{v} := \nu_\psi$, a $(\Gamma, \psi \circ \involution)$-PS measure $\nu_\mathsf{\involution(\mathsf{v})} := \nu_{\psi \circ \involution}$, and the corresponding BMS measure $\BMS = m_{\nu_\psi,\nu_{\psi\circ\involution}}^{\mathrm{BMS}}$. Let $\Omega:=\supp\bigl(\BMS\bigr) \subset \Gamma \backslash G$. For convenience, we normalize $\mathsf{v}$ so that $\psi(\mathsf{v}) = 1$.
	
	\begin{remark}
		Although we do not use this, it was shown in \cite[Theorem 7.9]{ELO20} that a consequence of \cref{thm:LocalMixing} is the uniqueness of $\nu_\mathsf{v}$ for all $\mathsf{v} \in \interior(\limitcone)$.
	\end{remark}
	
	\subsection{\texorpdfstring{The vector bundle $\pi_\psi: \check{\Omega} \to \mathcal{X}$ and the translation flow}{The vector bundle and the translation flow}}
	\label{subsec:TheVectorBundleAndTheTranslationFlow}
	Define $\limitset^{(2)} = (\limitset \times \limitset) \cap \Fboundary^{(2)}$ and $\Gboundary^{(2)} = \{(x, y) \in \Gboundary \times \Gboundary: x \neq y\}$. Note that the Anosov property immediately implies $\limitset^{(2)} = \{(x, y) \in \limitset \times \limitset: x \neq y\}$. Observe that $\limitset^{(2)} \times \LieA$ is a vector bundle with typical fiber $\ker\psi$ via the projection map $\pi_\psi: \limitset^{(2)} \times \LieA \to \limitset^{(2)} \times \mathbb R$ defined by
	\begin{align*}
		\pi_\psi(x, y, v) = (x, y, \psi(v)) \qquad \text{for all $(x, y, v) \in \limitset^{(2)} \times \LieA$}.
	\end{align*}
	We define a left $\Gamma$-action on $\limitset^{(2)} \times \mathbb R$ by
	\begin{align}
		\label{eqn:Gamma-ActionOnLimitSet^2xR}
		\gamma \cdot (x, y, t) &= (\gamma x, \gamma y, t + \psi(\beta_x(\gamma^{-1}, e))) \qquad \text{for all $\gamma \in \Gamma$, $(x, y, t) \in \limitset^{(2)} \times \mathbb R$}
	\end{align}
	so that $\pi_\psi$ is $\Gamma$-equivariant. We can then define
	\begin{align*}
		\check{\Omega} &= \Gamma \backslash \bigl(\limitset^{(2)} \times \LieA\bigr), & \mathcal{X} &= \Gamma \backslash \bigl(\limitset^{(2)} \times \mathbb R\bigr).
	\end{align*}
	
	The following is an immediate consequence of \cref{itm:TranslationFlowConjugateToGromovGeodesicFlow} in \cref{thm:TranslationFlowConjugateToGromovGeodesicFlow} whose statement and proof is deferred to \cref{sec:CodingTheTranslationFlow}.
	
	\begin{theorem}
		The $\Gamma$-action on $\limitset^{(2)} \times \mathbb R$ is properly discontinuous and cocompact. Consequently, $\mathcal{X}$ is a compact Hausdorff topological space.
	\end{theorem}
	
	By $\Gamma$-equivariance, $\pi_\psi$ descends to a projection map $\pi_\psi: \check{\Omega} \to \mathcal{X}$ with which $\check{\Omega}$ is a priori an \emph{affine} bundle with typical fiber $\ker\psi$. In fact, it is a trivial vector bundle $\check{\Omega} \cong \mathcal{X} \times \ker\psi$ due to the following argument. It is well-known that global sections can be constructed on affine bundles using a partition of unity. The fiberwise translation action of $\ker\psi$ on $\limitset^{(2)} \times \LieA$ commutes with the left $\Gamma$-action on $\limitset^{(2)} \times \LieA$ and so it descends to a $\ker\psi$-action on $\check{\Omega}$. Hence, $\check{\Omega}$ is a principal $\ker\psi$-bundle with a global section which implies triviality.
	
	Using the Hopf parametrization, $\check{\Omega} \subset \Gamma \backslash (\Fboundary^{(2)} \times \LieA)$ coincides with $\supp\bigl(\BMS\bigr) \subset \Gamma \backslash G/M$ which is invariant under $\mathcal{W}_{\mathsf{v}}$. Moreover, the left $G$-invariant metric $d$ on $G/M \cong \Fboundary^{(2)} \times \LieA$ restricts to a left $\Gamma$-invariant metric $d$ on $\limitset^{(2)} \times \LieA$ and descends to a metric $d$ on $\check{\Omega}$. We endow $\mathcal{X}$ with a metric $d$ using an embedding $\mathcal{X} \cong \mathcal{X} \times \{0\} \subset \mathcal{X} \times \ker\psi \cong \check{\Omega}$ which lifts to a left $\Gamma$-invariant metric $d$ on $\limitset^{(2)} \times \mathbb R$.
	
	We also define the locally finite Borel measure $m_\mathsf{v}$ on $\limitset^{(2)} \times \mathbb R$ by
	\begin{align*}
		dm_\mathsf{v}(\xi, \eta, t) = e^{\psi([\xi, \eta])} \, d\nu_\mathsf{v}(\xi) \, d\nu_{\involution(\mathsf{v})}(\eta) \, dt
	\end{align*}
	where $[\xi, \eta] = \beta_{g^+}(e, g) + \involution(\beta_{g^-}(e, g))$ and $g \in G$ is any element with $g^+ = \xi$ and $g^- = \eta$ (see \cite[Definition 3.8]{LO20b}) and $dt$ denotes the Lebesgue measure on $\mathbb R$. By left $\Gamma$-invariance, it descends to a measure $m_\mathsf{v}$ on $\mathcal{X}$. For convenience, we normalize all the BMS measures and $m_\mathsf{v}$ so that $m_\mathsf{v}(\mathcal{X}) = 1$ and
	\begin{align}
		\label{eqn:ProductOfBMSAndLebesgue}
		d\BMS\bigr|_{\check{\Omega}} = dm_\mathsf{v} \, dv
	\end{align}
	(see the proof of \cite[Corollary 4.9]{LO20b} and \cite[Proposition 3.5]{Sam15}) where $dv$ denotes the Lebesgue measure on $\ker\psi$ which is compatible with $\langle \cdot, \cdot \rangle_\psi$ (see \cref{sec:FirstReturnVectorMapAndHolonomy} for the definition).
	
	The one-parameter diagonal flow $\mathcal{W}_{\mathsf{v}}$ projects down via $\pi_\psi$ to the \emph{translation flow} $\mathcal{W}: \bigl(\limitset^{(2)} \times \mathbb R\bigr) \times \mathbb R \to \limitset^{(2)} \times \mathbb R$ defined by $\mathcal{W}_t(x, y, s) = (x, y, s + t)$ for all $(x, y, s) \in \limitset^{(2)} \times \mathbb R$ and $t \in \mathbb R$. We use the same terminology for the induced flow $\mathcal{W}: \mathcal{X} \times \mathbb R \to \mathcal{X}$.
	
	\subsection{Ergodic decomposition of the BMS measure}
	\label{subsec:ErgodicDecomposition}
	We discuss the ergodic properties of BMS measure in the Anosov setting. For fundamental groups of compact negatively curved manifolds, \cite[Theorem 3.2]{Sam14b} implies that the BMS measure on $\Gamma \backslash G$ is $AM$-ergodic \cite[Corollary 4.9]{LO20b}. For general Zariski dense Anosov subgroups, \cite[Corollary 4.9]{LO20b} also holds using \cref{cor:TranslationFlowHasMarkovSection} and \cite[Appendix A.2]{Car21}. Unlike when $\rank = 1$ \cite{Sul84,Che02,CI99,Win15}, when $\rank \geq 2$, the BMS measure on $\Gamma \backslash G$ is not necessarily $A$-ergodic. Recently, Lee--Oh \cite[Theorems 1.1(2) and 4.4(2)]{LO20a} characterized the $A$-ergodic decomposition of the BMS measure which we recall in \cref{thm:ErgodicDecomposition}.
	
	Let $M^\circ$ and $P^\circ$ be the identity components of $M$ and $P$ respectively. Note that $P^\circ = M^\circ AN^-$. Let $\Fboundary^\circ = G/P^\circ$. Each $g \in G$ such that $g^+ \in \limitset$ generates a $\Gamma$-minimal subset $\overline{\Gamma gP^\circ} \subset \Fboundary^\circ$. Let $\mathcal{Y}_\Gamma$ denote the set of $\Gamma$-minimal subsets of $\Fboundary^\circ$ and fix any $\Lambda_0 \in \mathcal{Y}_\Gamma$. Define
	\begin{align*}
		M_\Gamma = \{m \in M : \Lambda_0m = \Lambda_0\} < M.
	\end{align*}
	Then, $M_\Gamma$ is a normal closed subgroup of $M$ containing both $M^\circ$ and $[M,M]$ and is independent of the choice of $\Lambda_0$ \cite{GR07}.
	Let $P_\Gamma = M_\Gamma AN^-$, $\tilde{\mathcal{E}}_{[m]} \subset G$ be the left $\Gamma$-invariant and right $P_\Gamma$-invariant closed subset such that $\tilde{\mathcal{E}}_{[m]}/P^\circ = \Lambda_0m$, and $\Omega_{[m]} = \Gamma \backslash \tilde{\mathcal{E}}_{[m]} \cap \Omega$ for all $[m] \in M_\Gamma \backslash M$. Denote $\Omega_0 = \Omega_{[e]}$. Note that $\Omega_{[m]} = \Omega_0m$ and
	\begin{align}
		\label{eqn:Omega_mProjectsToE}
		\pi\bigl(\Omega_{[m]}\bigr) = \check{\Omega} \qquad \text{for all $[m] \in M_\Gamma \backslash M$}
	\end{align}
	where $\pi: \Gamma \backslash G \to \Gamma \backslash G/M$ is the quotient map.
	
	\begin{theorem}[{\citealp[Theorem 1.1(2)]{LO20a}}]
		\label{thm:ErgodicDecomposition}
		We have the $A$-ergodic decomposition $\BMS = \sum_{[m] \in M_\Gamma \backslash M} \BMS\bigr|_{\Omega_{[m]}}$.
	\end{theorem}
	
	For all $m \in M$, we have $\BMS\bigr|_{\Omega_{[m]}} = m_*\BMS\bigr|_{\Omega_0}$ where $m_*$ denotes the pushforward by $m$ using the right translation action of $M$. Hence, it suffices to prove \cref{thm:LocalMixing} for $\BMS\bigr|_{\Omega_0}$. We denote $\mathsf{m} = \BMS\bigr|_{\Omega_0}$ henceforth for brevity.
	
	We recall some facts from \cite[Section 2]{LO20a} (cf. \cite[Section 3]{Dan20}) for details. The Hopf parametrization of $G/M$ can be lifted to a $G$-equivariant Borel isomorphism $G \to \Fboundary^{(2)} \times A \times M$. Restricting to $\supp\bigl(\BMS\bigr) \subset G$ and using $\psi$, we get a $\Gamma$-equivariant Borel map $\Psi: \supp\bigl(\BMS\bigr) \to \limitset^{(2)} \times \R \times M$. The $\Gamma$-action on $\limitset^{(2)} \times \R \times M$ is such that the projection map $\limitset^{(2)} \times \R \times M \to \limitset^{(2)} \times \R$ is $\Gamma$-equivariant. Moreover, we have the identity
	\begin{align}
		\label{eqn:PsiConjugation}
		\Psi(\gamma ga_vm) = \mathcal{W}_{\psi(v)}(\gamma \Psi(g)m) \qquad \text{for all $\gamma \in \Gamma$, $v \in \LieA$, $m \in M$, $g \in \supp\bigl(\BMS\bigr)$}
	\end{align}
	whenever $g$ and $\gamma g$ are contained in the open dense subset $N^+P \subset G$, where we use the right translation action of $M$ and by abuse of notation $\mathcal{W}$ denotes the translation action of $\mathbb R$.
	
	On $\limitset^{(2)} \times \R \times M$, we put the $\Gamma$-invariant measure
	\begin{equation*}
		d\hat{m}_\mathsf{v} = e^{\psi\left(\beta_{g^+}(e,g)\right)} e^{(\psi \circ \involution)\left(\beta_{g^-}(e,g)\right)} \, d\nu_\mathsf{v}(g^+) \, d\nu_{\involution(\mathsf{v})}(g^-) \, dt \, dm,
	\end{equation*}
	which descends to a finite measure $\hat{m}_\mathsf{v}$ on $\mathcal{Z} := \Gamma \backslash \bigl(\limitset^{(2)} \times \R \times M\bigr)$. By left $\Gamma$-invariance, $\Psi$ descends to a Borel map also denoted by $\Psi: \Omega \to \mathcal{Z}$. Setting $\mathcal{Z}_{[m]} := \Psi(\Omega_{[m]})$ gives a measurable partition $\{\mathcal{Z}_{[m]} : [m] \in M_\Gamma \backslash M\}$ for $\mathcal{Z}$. We also use the same notation for the $\R$- and $M$-actions induced on $\mathcal{Z}$. 
	
	\begin{proposition}[{\citealp[Proposition 4.8]{LO20a}}]
		\label{pro:R-ergodicity}
		We have the $\R$-ergodic decomposition $\hat{m}_\mathsf{v} = \sum_{[m] \in M_\Gamma \backslash M}\hat{m}_\mathsf{v}|_{\mathcal{Z}_{[m]}}$. Consequently, there exists a dense $\R_{> 0}$-orbit in $\mathcal{Z}_{[e]}$.
	\end{proposition}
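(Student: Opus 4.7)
The strategy is to deduce the $\R$-ergodic decomposition of $(\mathcal{Z}, \hat{m}_\mathsf{v})$ from the $A$-ergodic decomposition of $(\tilde{\Omega}, \BMS)$ furnished by \cref{thm:ErgodicDecomposition}, exploiting that $\Psi\colon \tilde{\Omega} \to \mathcal{Z}$ realizes $\tilde{\Omega}$ as a trivial $\ker\psi$-bundle over $\mathcal{Z}$. Concretely, choosing a measurable section of $\psi\colon \LieA \to \R$, the Hopf-with-$M$ parametrization identifies $\tilde{\Omega}$ with $\mathcal{Z} \times \ker\psi$ so that $\Psi$ becomes the first projection. The $A$-action then splits as $\R\mathsf{v} \oplus \ker\psi$: the $\R\mathsf{v}$-factor descends to the translation flow $\mathcal{W}$ on $\mathcal{Z}$ via the identity $\Psi(ga_v) = \mathcal{W}_{\psi(v)}(\Psi(g))$ (using the normalization $\psi(\mathsf{v}) = 1$), while $\ker\psi$ acts only on the fiber. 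Substituting $v = t\mathsf{v} + v'$ with $v' \in \ker\psi$ in the defining formula of $\BMS$ produces the disintegration
$$d\BMS\bigr|_{\tilde{\Omega}} = d\hat{m}_\mathsf{v} \, dv'$$
under the trivialization. Since each $\tilde{\Omega}_{[m]}$ is $A$-invariant, it is in particular $\ker\psi$-saturated, so $\tilde{\Omega}_{[m]}$ inherits the product form $\mathcal{Z}_{[m]} \times \ker\psi$; disjointness of $\{\mathcal{Z}_{[m]}\}_{[m] \in M_\Gamma \backslash M}$ follows from the right $M$-equivariance $\tilde{\Omega}_{[m]} = \tilde{\Omega}_0 m$ together with $\Stab_M(\mathcal{Z}_{[e]}) = M_\Gamma$.

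Granted this setup, $\R$-ergodicity of $\hat{m}_\mathsf{v}\bigr|_{\mathcal{Z}_{[m]}}$ is immediate: any bounded $\R$-invariant measurable function $f$ on $\mathcal{Z}_{[m]}$ pulls back via $\Psi$ to a bounded $A$-invariant function on $\tilde{\Omega}_{[m]}$ (it is $\R\mathsf{v}$-invariant by naturality and $\ker\psi$-invariant since it is constant along $\Psi$-fibers), and must therefore be constant a.e.\ by \cref{thm:ErgodicDecomposition}. Fubini on the product $\tilde{\Omega}_{[m]} \cong \mathcal{Z}_{[m]} \times \ker\psi$ then forces $f$ to be constant $\hat{m}_\mathsf{v}\bigr|_{\mathcal{Z}_{[m]}}$-a.e. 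Combining this with the partition $\{\mathcal{Z}_{[m]}\}$ and the induced measure decomposition $\hat{m}_\mathsf{v} = \sum_{[m]} \hat{m}_\mathsf{v}\bigr|_{\mathcal{Z}_{[m]}}$ yields the claimed $\R$-ergodic decomposition.

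For the dense-orbit consequence, $\hat{m}_\mathsf{v}\bigr|_{\mathcal{Z}_{[e]}}$ is a finite $\R$-ergodic measure whose topological support is precisely $\mathcal{Z}_{[e]} = \Psi(\tilde{\Omega}_{[e]})$, so standard ergodic theory produces a full-measure set of points with dense two-sided $\R$-orbit in $\mathcal{Z}_{[e]}$. Poincar\'{e} recurrence (valid by finiteness of the invariant measure) then upgrades this to the existence of a dense $\R_{>0}$-orbit: given $p$ with dense orbit that is also recurrent under $\mathcal{W}$, for any nonempty open $U \subset \mathcal{Z}_{[e]}$ and any $T > 0$ choose $s \in \R$ with $\mathcal{W}_s(p) \in U$, use recurrence to find $\tau > T - s$ with $\mathcal{W}_\tau(p)$ sufficiently close to $p$, and conclude $\mathcal{W}_{\tau+s}(p) \in U$ with $\tau + s > T$. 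The main technical subtlety is verifying the disintegration $d\BMS\bigr|_{\tilde{\Omega}} = d\hat{m}_\mathsf{v} \, dv'$ and the cleanness of the trivialization, in particular that $\ker\psi$-translations leave the $M$-coordinate of $\Psi$ fixed (which uses that the Iwasawa $M$-component of $a_v \in A$ is trivial and that $A$ centralizes $M$).
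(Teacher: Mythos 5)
The paper does not actually prove this proposition --- it is imported verbatim from \cite{LO20a}, exactly as \cref{thm:ErgodicDecomposition} is --- so your derivation of the $\R$-ergodic decomposition from the $A$-ergodic one is genuinely a different (and more self-contained) route rather than a reproduction. The core of your reduction is correct: an $\R$-invariant function on $\mathcal{Z}_{[m]}$ pulls back under $\Psi$ to an $A$-invariant function on $\tilde{\Omega}_{[m]}$ (it is $\R\mathsf{v}$-invariant by the equivariance $\Psi(ga_v) = \mathcal{W}_{\psi(v)}(\Psi(g))$ and $\ker\psi$-invariant because $\Psi$ collapses $\ker\psi$-orbits), and Fubini in the fiber direction transfers ``constant a.e.''\ back down since $\ker\psi$ has positive Lebesgue measure. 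Two points are looser than they should be. First, the identification $\tilde{\Omega}_{[m]} \cong \mathcal{Z}_{[m]} \times \ker\psi$ is not induced by the splitting $\LieA = \R\mathsf{v} \oplus \ker\psi$ alone: the $\Gamma$-action translates the $\ker\psi$-coordinate by $\proj_{\ker\psi}(\sigma(\gamma, x))$, so over the quotient one a priori has only an \emph{affine} $\ker\psi$-bundle --- the very issue the paper resolves for $\pi_\psi\colon \Omega \to \mathcal{X}$ via a global section. Your Fubini step survives because the product formula $d\BMS = d\hat{m}_\mathsf{v} \, dv'$ does hold over a Borel fundamental domain, which is all that is needed, but that is the statement to invoke rather than a naive global product. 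Second, the ``consequently'' clause presupposes a topology on $\mathcal{Z}_{[e]}$ in which the flow is continuous and $\supp\bigl(\hat{m}_\mathsf{v}|_{\mathcal{Z}_{[e]}}\bigr) = \mathcal{Z}_{[e]}$; since the $\Gamma$-action on $\limitset^{(2)} \times \R \times M$ is only given by a Borel formula (via $\beta^M$ on a dense open set), this is not automatic and is part of what the citation to \cite{LO20a} supplies. Granting those two points, your ergodicity-plus-Poincar\'{e}-recurrence upgrade from dense two-sided orbits to a dense $\R_{>0}$-orbit is standard and correct, and the approach has the virtue of making the logical dependence on \cref{thm:ErgodicDecomposition} explicit.
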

	
	\section{Coding the translation flow}
	\label{sec:CodingTheTranslationFlow}
	In this section, we will review metric Anosov flows and construct an appropriate Markov section on $\mathcal{X}$ which will provide the symbolic coding used throughout this paper. We will also review symbolic dynamics and thermodynamic formalism.
	
	\subsection{Metric Anosov flows and Markov sections}
	\label{subsec:MetricAnosovFlowsAndMarkovSections}
	We first recall metric Anosov flows from \cite[Subsection 3.2]{BCLS15} which is based on Smale flows from \cite{Pol87}. We will use similar definitions and notations. Let $\phi: \mathcal{Y} \times \mathbb R \to \mathcal{Y}$ be an arbitrary (continuous) flow on an arbitrary metric space $(\mathcal{Y}, d)$. Let $W$ be an equivalence relation on $\mathcal{Y}$. The \emph{leaf} through a point $x \in \mathcal{Y}$ is the equivalence class of $x$ which we denote by $W(x)$. This provides a partition $\mathcal{Y} = \bigsqcup_{x \in Y} W(x)$ where $Y$ is some set of representatives for each equivalence class. For all $x \in \mathcal{Y}$ and $\epsilon > 0$, we also denote $W_\epsilon(x) = \{y \in W(x): d(x, y) < \epsilon\}$.
	
	\begin{definition}[Foliation]
		An equivalence relation $W$ is called a \emph{foliation} of $\mathcal{Y}$ if for all $x \in \mathcal{Y}$, there exists a homeomorphism $\varphi_x = (\varphi_x^1, \varphi_x^2): O_x \to V_x^1 \times V_x^2$ called a \emph{chart} from an open neighborhood $O_x$ of $x$ to the product of two topological spaces $V_x^1$ and $V_x^2$ such that
		\begin{enumerate}
			\item for all $w, z \in O_x \cap O_y$, we have $\varphi_x^2(w) = \varphi_x^2(z)$ if and only if $\varphi_y^2(w) = \varphi_y^2(z)$;
			\item $z \in W(w)$ if and only if there exists a sequence $\{w_j\}_{j = 1}^n$ for some $n \in \mathbb N$ with $w_1 = w$ and $w_n = z$ such that for all $j \in \{1, 2, \dotsc, n - 1\}$, we have $w_{j + 1} \in O_{w_j}$ and $\varphi_{w_j}^2(w_{j + 1}) = \varphi_{w_j}^2(w_j)$.
		\end{enumerate}
	\end{definition}
	
	\begin{remark}
		There is a natural foliation $W^0$ provided by the orbits of the flow $\phi$, i.e., for all $x, y \in \mathcal{Y}$, we have $y \in W^0(x)$ if and only if $y = \phi_t(x)$ for some $t \in \mathbb R$.
	\end{remark}
	
	Let $W$ be a foliation. Let $x \in \mathcal{Y}$. Both subsets of the form $V_x^1 \times \{v\} \subset V_x^1 \times V_x^2$ and their pullbacks $\varphi_x^{-1}(V_x^1 \times \{v\}) \subset O_x$ for some $v \in V_x^2$ are called \emph{plaques} of the chart $\varphi_x$. Both the open subsets $V \times \{v\} \subset V_x^1 \times V_x^2$ of the plaques and their pullbacks $\varphi_x^{-1}(V \times \{v\}) \subset O_x$ for some open set $V \subset V_x^1$ and $v \in V_x^2$ are called \emph{plaque open sets}. The \emph{plaque topology} of $W(x)$ is the topology generated by the plaque open sets in $W(x)$.
	
	\begin{definition}[Local product structure]
		\label{def:LocalProductStructure}
		A pair of foliations $(W, W')$ has a \emph{local product structure} if there exists $\epsilon > 0$ such that for all $x \in \mathcal{Y}$, there exists a homeomorphism $[\cdot, \cdot]: W_\epsilon(x) \times W'_\epsilon(x) \to O_x$ where $O_x \subset \mathcal{Y}$ is a neighborhood of $x$ such that $[\cdot, \cdot]^{-1}$ is a chart for both $W$ and $W'$.
	\end{definition}
	
	\begin{remark}
		From the above definition, it follows that for all $x \in \mathcal{Y}$, there is a unique local intersection $[y, W'_\epsilon(x)] \cap [W_\epsilon(x), z] = \{[y, z]\}$ of plaque open sets for all $y \in W_\epsilon(x)$ and $z \in W'_\epsilon(x)$.
	\end{remark}
	
	A foliation $W$ is \emph{equivariant} under $\phi$ if $\phi_t(W(x)) = W(\phi_t(x))$ for all $x \in \mathcal{Y}$ and $t \in \mathbb R$. We say that $W$ is \emph{transverse} to $\phi$ if it is equivariant under $\phi$ and there exists $\epsilon > 0$ such that for all $x \in \mathcal{Y}$, there exists a chart $\varphi_x: O_x \to V_x^1 \times V_x^2 = V_x^1 \times (-\epsilon, \epsilon) \times \tilde{V}_x^2$ for $W$ such that $\phi_t((\varphi_x)^{-1}(y, s, v)) = (\varphi_x)^{-1}(y, s + t, v)$ for all $(y, s, v) \in V_x^1 \times (-\epsilon, \epsilon) \times \tilde{V}_x^2$ and $t \in \mathbb R$ with $s + t \in (-\epsilon, \epsilon)$. Note that in this case there exists $\epsilon > 0$ such that for all $x \in \mathcal{Y}$, we have $W_\epsilon(x) \cap \bigcup_{t \in (-\epsilon, \epsilon)} \phi_t(x) = \{x\}$. If $\mathcal{Y}$ is a topological manifold, it can be deduced from definitions that the second condition is equivalent to the given definition of transversality.
	
	Given a foliation $W$ transverse to $\phi$, we define the corresponding \emph{central foliation} $W^{\mathrm{c}}$ such that for all $x, y \in \mathcal{Y}$, we have $y \in W^{\mathrm{c}}(x)$ if and only if $\phi_t(y) \in W(x)$ for some $t \in \mathbb R$.
	
	\begin{definition}[Metric Anosov flow]
		\label{def:MetricAnosovFlow}
		The flow $\phi: \mathcal{Y} \times \mathbb R \to \mathcal{Y}$ is called a \emph{metric Anosov flow (with respect to $(W^{\mathrm{su}}, W^{\mathrm{ss}})$)} if there exist $\epsilon > 0$, $C > 0$, $\eta > 0$, and a pair of foliations $(W^{\mathrm{su}}, W^{\mathrm{ss}})$ transverse to $\phi$ such that denoting $W^{\mathrm{wu}} = (W^{\mathrm{su}})^{\mathrm{c}}$ and $W^{\mathrm{ws}} = (W^{\mathrm{ss}})^{\mathrm{c}}$, we have
		\begin{enumerate}
			\item $(W^{\mathrm{wu}}, W^{\mathrm{ss}})$ and $(W^{\mathrm{ws}}, W^{\mathrm{su}})$ have local product structures;
			\item for all $x \in \mathcal{Y}$, $y \in W_\epsilon^{\mathrm{su}}(x)$, and $z \in W_\epsilon^{\mathrm{ss}}(x)$, we have
			\begin{align*}
				d(\phi_{-t}(x), \phi_{-t}(y)) &\leq Ce^{-\eta t}d(x, y), & d(\phi_t(x), \phi_t(z)) &\leq Ce^{-\eta t}d(x, z)
			\end{align*}
			for all $t \geq 0$, called the \emph{Anosov property}.
		\end{enumerate}
	\end{definition}
	
	Let $(W^{\mathrm{su}}, W^{\mathrm{ss}})$ be a pair of foliations transverse to $\phi$. Denote $W^{\mathrm{wu}} = (W^{\mathrm{su}})^{\mathrm{c}}$ and $W^{\mathrm{ws}} = (W^{\mathrm{ss}})^{\mathrm{c}}$ and suppose $(W^{\mathrm{wu}}, W^{\mathrm{ss}})$ and $(W^{\mathrm{ws}}, W^{\mathrm{su}})$ have local product structures with some constant $\epsilon_0 > 0$ in \cref{def:LocalProductStructure}. Denote by $[\cdot, \cdot]$ only the map provided by the local product structure of $(W^{\mathrm{wu}}, W^{\mathrm{ss}})$. Subsets $U \subset W_{\epsilon_0}^{\mathrm{su}}(x)$ and $S \subset W_{\epsilon_0}^{\mathrm{ss}}(x)$ for some $x \in \mathcal{Y}$ are called \emph{proper} if $U = \overline{\interior(U)}$ and $S = \overline{\interior(S)}$, where the interiors and closures are taken in the respective plaque topologies. For any proper subsets $U \subset W_{\epsilon_0}^{\mathrm{su}}(x)$ and $S \subset W_{\epsilon_0}^{\mathrm{ss}}(x)$ containing some $x \in \check{\Omega}$, we call
	\begin{align*}
		R = [U, S] = \{[u, s] \in \mathcal{Y}: u \in U, s \in S\} \subset \mathcal{Y}
	\end{align*}
	a \emph{rectangle of size $\hat{\delta}$ (with respect to $(W^{\mathrm{su}}, W^{\mathrm{ss}})$)} if $\diam(R) \leq \hat{\delta}$ for some $\hat{\delta} > 0$, and $x$ the \emph{center} of $R$. For any rectangle $R = [U, S]$, we can extend the map $[\cdot, \cdot]$ to $[\cdot, \cdot]: R \to R$ defined by $[v_1, v_2] = [u_1, s_2]$ for all $v_1 = [u_1, s_1] \in [U, S]$ and $v_2 = [u_2, s_2] \in [U, S]$.
	
	\begin{remark}
		The map $[\cdot, \cdot]$ then satisfies the properties
		\begin{enumerate}
			\item $[x, x] = x$,
			\item $[[x, y], z] = [x, z]$,
			\item $[x, [y, z]] = [x, z]$
		\end{enumerate}
		for all $x, y, z \in R$ for any rectangle $R$ as in \cite{Pol87}.
	\end{remark}
	
	\begin{definition}[Complete set of rectangles]
		\label{def:CompleteSetOfRectangles}
		A set $\mathcal{R} = \{R_1, R_2, \dotsc, R_N\} = \{[U_1, S_1], [U_2, S_2], \dotsc, [U_N, S_N]\}$ for some $N \in \mathbb N$ consisting of rectangles of size $\hat{\delta}$ with respect to $(W^{\mathrm{su}}, W^{\mathrm{ss}})$ in $\mathcal{Y}$ is called a \emph{complete set of rectangles of size $\hat{\delta}$ (with respect to $(W^{\mathrm{su}}, W^{\mathrm{ss}})$)} if:
		\begin{enumerate}
			\item \label{itm:MarkovProperty1} $R_j \cap R_k = \varnothing$ for all $1 \leq j, k \leq N$ with $j \neq k$;
			\item \label{itm:MarkovProperty2} $\mathcal{Y} = \bigcup_{j = 1}^N \bigcup_{t \in [0, \hat{\delta}]} \phi_t(R_j)$.
		\end{enumerate}
	\end{definition}
	
	We fix $\mathcal{R} = \{R_1, R_2, \dotsc, R_N\} = \{[U_1, S_1], [U_2, S_2], \dotsc, [U_N, S_N]\}$ to be a complete set of rectangles of some size $\hat{\delta} \in (0, \epsilon_0)$ in $\mathcal{Y}$. We define
	\begin{align*}
		R &= \bigsqcup_{j = 1}^N R_j, & U &= \bigsqcup_{j = 1}^N U_j.
	\end{align*}
	Define the first return time map $\tau: R \to \mathbb R$ by
	\begin{align*}
		\tau(u) = \inf\{t \in \mathbb R_{>0}: \phi_t(u) \in R\} \qquad \text{for all $u \in R$}.
	\end{align*}
	We denote the constants $\overline{\tau} = \sup_{u \in R} \tau(u)$ and $\underline{\tau} = \inf_{u \in R} \tau(u)$ for convenience. Define the Poincar\'{e} first return map $\mathcal{P}: R \to R$ by
	\begin{align*}
		\mathcal{P}(u) = \phi_{\tau(u)}(u) \qquad \text{for all $u \in R$}.
	\end{align*}
	Let $\sigma = (\proj_U \circ \mathcal{P})|_U: U \to U$ be its projection where $\proj_U: R \to U$ is the projection defined by $\proj_U([u, s]) = u$ for all $[u, s] \in R$. We define the \emph{cores}
	\begin{align*}
		\hat{R} &= \{u \in R: \mathcal{P}^k(u) \in \interior(R) \text{ for all } k \in \mathbb Z\}, \\
		\hat{U} &= \{u \in U: \sigma^k(u) \in \interior(U) \text{ for all } k \in \mathbb Z_{\geq 0}\}.
	\end{align*}
	We note that the cores are both residual subsets (complements of meager subsets) of $R$ and $U$ respectively.
	
	\begin{definition}[Markov section]
		\label{def:MarkovSection}
		Let $\hat{\delta} > 0$ and $N \in \mathbb N$. We call a complete set of rectangles $\mathcal{R}$ of size $\hat{\delta}$ with respect to $(W^{\mathrm{su}}, W^{\mathrm{ss}})$ in $\mathcal{Y}$ a \emph{Markov section (with respect to $(W^{\mathrm{su}}, W^{\mathrm{ss}})$)} if in addition to \cref{itm:MarkovProperty1,itm:MarkovProperty2} in \cref{def:CompleteSetOfRectangles}, the following property
		\begin{enumerate}
			\setcounter{enumi}{2}
			\item $[\interior(U_k), \mathcal{P}(u)] \subset \mathcal{P}([\interior(U_j), u])$ and $\mathcal{P}([u, \interior(S_j)]) \subset [\mathcal{P}(u), \interior(S_k)]$ for all $u \in R$ such that $u \in \interior(R_j) \cap \mathcal{P}^{-1}(\interior(R_k)) \neq \varnothing$, for all $1 \leq j, k \leq N$
		\end{enumerate}
		called the \emph{Markov property}, is satisfied. This can be understood pictorially in \cref{fig:MarkovProperty}.
	\end{definition}
	
	Observe that for a Markov section $\mathcal{R}$ in $\mathcal{Y}$, the map $\tau$ is constant on $[u, S_j]$ for all $u \in U_j$ and $1 \leq j \leq N$.
	
	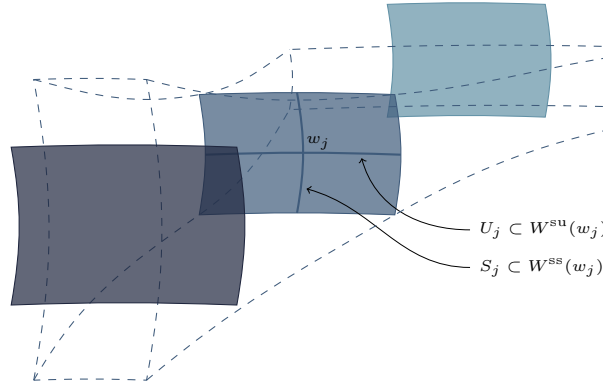
\begin{figure}
		\definecolor{front}{RGB}{31, 38, 62}
		\definecolor{middle}{RGB}{63,91,123}
		\definecolor{back}{RGB}{98,145,166}
		\begin{tikzpicture}
			\coordinate (E) at (2.7,1.5);
			\coordinate (F) at (2.7,3);
			\coordinate (G) at (4.8,3);
			\coordinate (H) at (4.8,1.5);
			
			\draw[back, fill = back, fill opacity=0.7] (E) to[out=80,in=-80] (F) to[out=2,in=178] (G) to[out=-80,in=80] (H) to[out=178,in=2] (E) -- cycle;
			
			\coordinate (A) at (0.2,0.2);
			\coordinate (B) at (0.2,1.8);
			\coordinate (C) at (2.8,1.8);
			\coordinate (D) at (2.8,0.2);
			
			\draw[middle, fill = middle, fill opacity=0.7] (A) to[out=80,in=-80] (B) to[out=2,in=178] (C) to[out=-80,in=80] (D) to[out=178,in=2] (A) -- cycle;
			
			\coordinate (AB_Mid) at (0.28,1);
			\coordinate (CD_Mid) at (2.88,1);
			
			\coordinate (Uj_Label) at (3.8, 0);
			\coordinate (Sj_Label) at (3.8, -0.5);
			
			\node[above right, font=\tiny] at (1.5, 0.95) {$w_j$};
			
			\draw[middle,thick] (AB_Mid) to[out=2,in=178] (CD_Mid);
			\node[right, font=\tiny] at (Uj_Label) {$U_j \subset W^{\mathrm{su}}(w_j)$};
			\draw[->, very thin] (Uj_Label) to[out=180,in=-75] ($0.8*(CD_Mid) + 0.2*(AB_Mid) + (0, -0.05)$);
			
			\coordinate (BC_Mid) at (1.5,1.825);
			\coordinate (AD_Mid) at (1.5,0.225);
			
			\draw[middle,thick] (AD_Mid) to[out=80,in=-80] (BC_Mid);
			\node[right, font=\tiny] at (Sj_Label) {$S_j \subset W^{\mathrm{ss}}(w_j)$};
			\draw[->, very thin] (Sj_Label) to[out=180,in=-15] ($0.8*(AD_Mid) + 0.2*(BC_Mid) + (0.13, 0)$);
			
			\coordinate (A') at (1.4,1.6);
			\coordinate (B') at (1.4,2.4);
			\coordinate (C') at (7,2.4);
			\coordinate (D') at (7,1.6);
			
			\draw[middle,dashed] (A') to[out=80,in=-80] (B') to[out=2,in=178] (C') to[out=-80,in=80] (D') to[out=178,in=2] (A') -- cycle;
			
			\coordinate (A'') at (-2,-2);
			\coordinate (B'') at (-2,2);
			\coordinate (C'') at (-0.5,2);
			\coordinate (D'') at (-0.5,-2);
			
			\draw[middle,dashed] (A'') to[out=80,in=-80] (B'') to[out=2,in=178] (C'') to[out=-80,in=80] (D'') to[out=178,in=2] (A'') -- cycle;
			
			\draw[middle,dashed] (A'') to[out=60,in=215] (A) to[out=35,in=240] (A');
			\draw[middle,dashed] (B'') to[out=-15,in=190] (B) to[out=10,in=215] (B');
			\draw[middle,dashed] (C'') to[out=-15,in=185] (C) to[out=5,in=180] (C');
			\draw[middle,dashed] (D'') to[out=40,in=210] (D) to[out=30,in=190] (D');
			
			\coordinate (I) at (-2.3,-1);
			\coordinate (J) at (-2.3,1.1);
			\coordinate (K) at (0.7,1.1);
			\coordinate (L) at (0.7,-1);
			
			\draw[front, fill = front, fill opacity=0.7] (I) to[out=80,in=-80] (J) to[out=2,in=178] (K) to[out=-80,in=80] (L) to[out=178,in=2] (I) -- cycle;
		\end{tikzpicture}
		\caption{The Markov property.}
		\label{fig:MarkovProperty}
	\end{figure}
	
	We now recall a theorem of Pollicott \cite{Pol87} generalizing the works of Bowen and Ratner \cite{Bow70,Rat73} on the existence of Markov sections.
	
	\begin{theorem}
		\label{thm:MarkovSectionForMetricAnosovFlow}
		For all metric Anosov flows with respect to $(W^{\mathrm{su}}, W^{\mathrm{ss}})$ on a compact metric space, there exists a Markov section with respect to $(W^{\mathrm{su}}, W^{\mathrm{ss}})$.
	\end{theorem}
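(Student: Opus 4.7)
The plan is to adapt Bowen's classical construction of Markov partitions, as generalized by Pollicott, to the metric Anosov setting. The argument should proceed in three stages: first, build a finite cover by small flow boxes modeled on rectangles; second, pass to a disjoint complete set of rectangles; and third, refine this collection to achieve the Markov property.

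For the first stage, I fix a small $\hat{\delta} \in (0, \epsilon_0)$ where $\epsilon_0$ controls both local product structures in \cref{def:MetricAnosovFlow}. For each $x \in \mathcal{Y}$, the local product structure supplies a rectangle $[U_x, S_x]$ with $U_x$ a proper neighborhood of $x$ in $W^{\mathrm{su}}_{\hat{\delta}}(x)$ and $S_x \subset W^{\mathrm{ss}}_{\hat{\delta}}(x)$, and the corresponding flow box $\bigcup_{t \in [0, \hat{\delta}]} \phi_t([U_x, S_x])$ is open in $\mathcal{Y}$ by transversality of the foliations to $\phi$. Compactness of $\mathcal{Y}$ then yields a finite subcover by such flow boxes centered at points $x_1, \ldots, x_N$.

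For the second stage, I make the rectangles pairwise disjoint by a boundary-cutting procedure: whenever two rectangles overlap, I cut each along the stable and unstable plaques passing through intersection points with the other. Since rectangles are products $[U,S]$, this produces smaller rectangles with pairwise disjoint interiors while preserving the property that their forward flow-orbits of length $\hat{\delta}$ still cover $\mathcal{Y}$. The output of this step is a complete set of rectangles in the sense of \cref{def:CompleteSetOfRectangles}, together with a well-defined Poincar\'{e} first return map $\mathcal{P}$ on the core.

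The main obstacle is the third stage, enforcing the Markov property. For every pair of indices $(j,k)$ and every $u \in \interior(R_j) \cap \mathcal{P}^{-1}(\interior(R_k))$, the Markov property demands that the stable plaque through $u$ in $R_j$ be carried by $\mathcal{P}$ into the stable plaque through $\mathcal{P}(u)$ in $R_k$, and dually for unstable plaques. I would achieve this by iteratively saturating each $R_j$: replace the stable side $S_j$ by its intersection with the pullbacks $\mathcal{P}^{-n}(S_k)$ for $n \geq 0$, and replace the unstable side $U_j$ by its intersection with the images $\mathcal{P}^n(U_k)$ for $n \geq 0$, cutting whenever these saturations meet the existing boundary. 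The Anosov property in \cref{def:MetricAnosovFlow} supplies exponential contraction of stable plaques under forward iterates of $\mathcal{P}$ and of unstable plaques under backward iterates, so the cuts produced at deep levels become arbitrarily fine; combined with compactness and a pigeonhole argument on the finitely many coarse rectangle types, the process should stabilize after finitely many effective refinement steps into a finite collection of rectangles satisfying the Markov property. The delicate points will be to verify that each refined piece remains a genuine rectangle with proper interior (so that the hypothesis $R = \overline{\interior(R)}$ is preserved under cutting), that the union of their short-time forward orbits continues to cover $\mathcal{Y}$, and that the refinement terminates; all of this relies on the coherent interplay between the two local product structures $(W^{\mathrm{wu}}, W^{\mathrm{ss}})$ and $(W^{\mathrm{ws}}, W^{\mathrm{su}})$ and on the transversality of $W^{\mathrm{su}}, W^{\mathrm{ss}}$ to the flow.
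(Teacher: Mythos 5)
First, a remark on what you are being compared against: the paper does not prove this statement at all --- it quotes it as a theorem of Pollicott \cite{Pol87}, which adapts the Bowen--Ratner construction of Markov partitions to metric Anosov (Smale) flows. Your first two stages are sound in outline and match that classical argument: covering $\mathcal{Y}$ by flow boxes built from the local product structure plus compactness, and then passing to a pairwise disjoint complete set of rectangles by cutting along stable and unstable plaques, is how the construction begins (modulo the routine but necessary checks that the cut pieces remain proper, that the flow-thickenings still cover $\mathcal{Y}$, and that the first return time stays bounded below --- points you at least flag).

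The genuine gap is in your third stage. The saturation you describe --- intersecting $S_j$ with $\mathcal{P}^{-n}(S_k)$ for all $n \geq 0$ and $U_j$ with $\mathcal{P}^{n}(U_k)$ --- does not terminate: each new level $n$ introduces cuts along genuinely new plaques, since the preimages of the stable boundaries under $\mathcal{P}^{n}$ accumulate as $n \to \infty$ precisely because of the exponential contraction you invoke, so the limiting object is an infinite (typically Cantor-like) partition rather than a finite Markov section. The ``pigeonhole on finitely many coarse rectangle types'' cannot rescue this, because the pieces produced at depth $n$ are indexed by length-$n$ itineraries, of which there are exponentially many. The actual content of Bowen's and Pollicott's proof is to avoid iteration altogether: one performs a \emph{single} subdivision of each rectangle according to the finitely many combinatorial positions of a point relative to the sets $\{x \in R_j : \mathcal{P}(x) \in R_k\}$ and their stable and unstable saturations inside $R_j$ (the sets traditionally denoted $T^1_{jk}, T^2_{jk}, T^3_{jk}$), and then proves a separate propagation lemma --- using the local product structure and the rectangle identities $[[x,y],z] = [x,z]$ from \cref{subsec:MetricAnosovFlowsAndMarkovSections} --- showing that this one-step refinement already satisfies the Markov property of \cref{def:MarkovSection} for the first return map, whence it holds for all iterates by the cocycle structure of $\mathcal{P}$. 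Without that one-step combinatorial subdivision and the accompanying lemma, your stage three defers the entire difficulty of the theorem to an unsupported stabilization claim.
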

	
	\subsection{Markov section for the translation flow}
	\label{subsec:MarkovSectionForTheTranslationFlow}
	In this subsection, we return to the translation flow and show existence of a Markov section.
	
	We need to first introduce reparametrizations of flows for our discussion.
	
	\begin{definition}[Reparametrization]
		Let $\mathcal{Y}$ be a metric space and $\phi: \mathcal{Y} \times \mathbb R \to \mathcal{Y}$ be a flow. We say that a flow $\psi: \mathcal{Y} \times \mathbb R \to \mathcal{Y}$ is a \emph{reparametrization} of $\phi$ if it is of the form $\psi_t(x) = \phi_{\alpha(x, t)}(x)$ for all $x \in \mathcal{Y}$ and $t \in \mathbb R$, where $\alpha: \mathcal{Y} \times \mathbb R \to \mathbb R$ is a continuous map, which we also call a \emph{reparametrization}, satisfying
		\begin{enumerate}
			\item $\alpha(x, t) > 0$ for all $x \in \mathcal{Y}$ and $t > 0$,
			\item $\alpha(x, s + t) = \alpha(\psi_s(x), t) + \alpha(x, s)$ for all $x \in \mathcal{Y}$ and $s, t \in \mathbb R$ called the \emph{cocycle condition}.
		\end{enumerate}
		Moreover, $\phi$ is itself a reparametrization of $\psi$ for some continuous map $\alpha^*: \mathcal{Y} \times \mathbb R \to \mathbb R$ which we call the \emph{inverse} of $\alpha$. It satisfies
		\begin{enumerate}
			\item $(\alpha^*)^* = \alpha$,
			\item $\alpha(x, \alpha^*(x, t)) = \alpha^*(x, \alpha(x, t)) = t$ for all $(x, t) \in \mathcal{Y} \times \mathbb R$.
		\end{enumerate}
		We say that a reparametrization is Lipschitz (resp. H\"{o}lder) if $\alpha(\cdot, t)$ is Lipschitz (resp. H\"{o}lder) continuous for all $t \in \mathbb R$.
	\end{definition}
	
	\begin{remark}
		\label{rem:ReparametrizationRegularity}
		Suppose $\alpha(x, \cdot)$ is differentiable for all $x \in \mathcal{Y}$ and the partial derivative in the second coordinate $\partial_2\alpha$ is positive continuous. Then clearly $\alpha(x, t) = \int_0^t f(\phi_s(x)) \, ds$ for all $(x, t) \in \mathcal{Y} \times \mathbb R$ for the positive continuous function $f = \partial_2\alpha(\cdot, 0)$. Moreover, $\alpha^*$ has the same regularity and hence $\alpha^*(x, t) = \int_0^t f^*(\phi_s(x)) \, ds$ for all $(x, t) \in \mathcal{Y} \times \mathbb R$ for the positive continuous function $f^* = \partial_2\alpha^*(\cdot, 0)$. If we further assume that the reparametrization is Lipschitz (resp. H\"{o}lder), then its inverse reparametrization is also Lipschitz (resp. H\"{o}lder), and $f$ and $f^*$ are also Lipschitz (resp. H\"{o}lder) continuous.
	\end{remark}
	
	We now recount the Gromov geodesic flow associated to $\Gamma$ and its essential properties in \cref{thm:GromovGeodesicFlow}. It was introduced by Gromov \cite[Subsection 8.3]{Gro87} and further developed by Champetier \cite{Cha94} and Mineyev \cite{Min05}. We refer to \cite[Theorem 5.1]{CP02} for a more detailed theorem.
	
	\begin{theorem}
		\label{thm:GromovGeodesicFlow}
		There exists a proper geodesic Gromov hyperbolic metric space $(\tilde{\mathcal{G}}, d)$ associated to the Gromov hyperbolic group $\Gamma$ with a left $(\Gamma \times \mathbb Z/2\mathbb Z)$-action by isometries and a flow $\phi: \tilde{\mathcal{G}} \times \mathbb R \to \tilde{\mathcal{G}}$ called the \emph{Gromov geodesic flow} with the properties:
		\begin{enumerate}
			\item $\phi_t \circ \gamma = \gamma \circ \phi_t$ and $\phi_t \circ \iota = \iota \circ \phi_{-t}$ for all $t \in \mathbb R$, $\gamma \in \Gamma$, and the generator $\iota \in \mathbb Z/2\mathbb Z$;
			\item \label{itm:Gamma-Action} the $\Gamma$-action is properly discontinuous and cocompact, and all its orbit maps $\Gamma \to \tilde{\mathcal{G}}$ are quasi-isometries so that $\partial\tilde{\mathcal{G}} \cong \Gboundary$ are homeomorphic;
			\item \label{itm:R-Action} the $\mathbb R$-action given by the flow $\phi$ is free, all its orbit maps $\mathbb R \to \tilde{\mathcal{G}}$ are quasi-isometric embeddings, and it induces a $(\Gamma \times \mathbb Z/2\mathbb Z)$-equivariant homeomorphism $\tilde{\mathcal{G}}/\mathbb R \cong \Gboundary^{(2)}$ where we use the diagonal left $\Gamma$-action on $\Gboundary^{(2)}$ and $\iota(x, y) = (y, x)$ for all $(x, y) \in \Gboundary^{(2)}$ and the generator $\iota \in \mathbb Z/2\mathbb Z$.
		\end{enumerate}
	\end{theorem}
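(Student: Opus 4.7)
The plan is to follow the classical construction of the Gromov geodesic flow initiated in Gromov~\cite{Gro87} and made precise by Champetier~\cite{Cha94} and Mineyev~\cite{Min05}, as summarized in~\cite[Theorem~5.1]{CP02}. Topologically, I would set $\tilde{\mathcal{G}} \cong \Gboundary^{(2)} \times \mathbb{R}$ with the $\mathbb{R}$-action given by translation in the last coordinate, the generator $\iota$ of $\mathbb{Z}/2\mathbb{Z}$ acting by $(x,y,t) \mapsto (y,x,-t)$, and the $\Gamma$-action extending the diagonal action on $\Gboundary^{(2)}$ by an appropriate Busemann-type cocycle correction in the $\mathbb{R}$-coordinate. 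With these choices, property~(1), the freeness of the $\mathbb{R}$-action, and the homeomorphism $\tilde{\mathcal{G}}/\mathbb{R} \cong \Gboundary^{(2)}$ in~(3) all hold by fiat.

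The substantive work is to equip $\tilde{\mathcal{G}}$ with a proper geodesic hyperbolic metric which is simultaneously $(\Gamma \times \mathbb{Z}/2\mathbb{Z})$-invariant and for which the $\mathbb{R}$-orbit maps are uniform quasi-isometric embeddings. My approach would be to first fix a proper geodesic hyperbolic space $X$ on which $\Gamma$ acts properly discontinuously, cocompactly, and by isometries, such as a Rips complex of a Cayley graph of $\Gamma$, so that $\partial X \cong \Gboundary$ canonically (this identification is the source of~(2)). I would then transfer the metric of $X$ to $\tilde{\mathcal{G}}$ via Mineyev's symmetric join and double-difference construction: using a $(\Gamma \times \mathbb{Z}/2\mathbb{Z})$-equivariant family of idealized biinfinite geodesics in $X$ parametrized by $\Gboundary^{(2)} \times \mathbb{R}$, one compares parametrizations via a Gromov-product cocycle on $\Gboundary^{(2)} \times \Gboundary^{(2)}$ to produce a metric on $\tilde{\mathcal{G}}$. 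The resulting space is $\Gamma$-equivariantly quasi-isometric to $X$, which yields properness, hyperbolicity, properly discontinuous cocompactness of the $\Gamma$-action, and the quasi-isometric embedding property of flowlines by standard coarse geometry.

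The main obstacle will be upgrading the natural \emph{quasi}-cocycle produced by Gromov products to a genuine $(\Gamma \times \mathbb{Z}/2\mathbb{Z})$-equivariant cocycle, so that $\Gamma$ acts by honest isometries rather than quasi-isometries and so that the flow and the involution commute with the group action exactly. This is the content of Mineyev's homological bicombing and his ``mean'' construction, which replaces the ambiguous choice of geodesic between two boundary points by a canonical averaged object depending H\"{o}lder-continuously on $(x,y) \in \Gboundary^{(2)}$; the H\"{o}lder control here is what eventually propagates to the H\"{o}lder regularity of $\zeta$ cited in the paper. Once this upgrade is carried out, the verification of all the listed properties reduces to routine checks and is exactly the content of~\cite[Theorem~5.1]{CP02}, to which the present statement defers.
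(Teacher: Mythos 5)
The paper does not prove this theorem; it states it as a recollection of known results and defers entirely to Gromov \cite{Gro87}, Champetier \cite{Cha94}, Mineyev \cite{Min05}, and the summary in \cite[Theorem 5.1]{CP02}. Your proposal is an accurate outline of exactly that literature construction and ends by deferring to the same reference, so it takes essentially the same approach as the paper.
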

	
	By \cref{itm:R-Action} in \cref{thm:GromovGeodesicFlow}, $\tilde{\mathcal{G}} \cong \Gboundary^{(2)} \times \mathbb R$ are homeomorphic and the Gromov geodesic flow $\phi: (\Gboundary^{(2)} \times \mathbb R) \times \mathbb R \to \Gboundary^{(2)} \times \mathbb R$ is simply by $\mathbb R$-translations on the last factor. We denote the left $\Gamma$-action on $\tilde{\mathcal{G}} \cong \Gboundary^{(2)} \times \mathbb R$ by ${}_*$. We denote by $\backslash{}_*$ quotients by ${}_*$. By the properties of the $\Gamma$-action, the metric $d$ descends to the quotient and we get a compact metric space $(\mathcal{G}, d)$ where $\mathcal{G} = \Gamma \backslash{}_* \tilde{\mathcal{G}}$. The Gromov geodesic flow also descends to $\phi: \mathcal{G} \times \mathbb R \to \mathcal{G}$.
	
	We need to discuss the construction of the metric $d$ on $\tilde{\mathcal{G}}$ for later reference. There exists a locally finite open cover $\{(\gamma {}_* U_j, \gamma {}_* x_j)\}_{\gamma \in \Gamma, 1 \leq j \leq j_0}$ of $\tilde{\mathcal{G}} \cong \Gboundary^{(2)} \times \mathbb R$, for some $j_0 \in \mathbb N$, with distinguished points $x_j \in U_j$ and such that $\gamma {}_* U_j \cap \gamma' {}_* U_j = \varnothing$ for all distinct $\gamma, \gamma' \in \Gamma$ and $1 \leq j \leq j_0$. Let $\gamma, \gamma' \in \Gamma$ and $1 \leq j \leq j_0$. We assume that the open set is a flow box $U_j \cong B_j \times (0, \epsilon_j)$ where $B_j \subset \Gboundary^{(2)}$ is precompact and the second factor is parametrized according to the Gromov geodesic flow $\phi$. Fix the metric $d_{e, j} = d_{x_j} \times d_{x_j} \times d_{\mathbb R}$ on $U_j$ where $d_{x_j}$ is the visual metric on $\Gboundary$ from the viewpoint of $x_j$ and $d_{\mathbb R}$ is the Euclidean metric on $\mathbb R$, and then fix $d_{\gamma, j} = \gamma_*d_{e, j}$ on $\gamma {}_* U_j$. By \cite[Lemma 4.10]{Cha94}, we can assume that the parametrization $U_j \cong B_j \times (0, \epsilon_j)$ is such that $d_{\gamma, j}$ and $d_{\gamma', j'}$ are equivalent on the intersection $\gamma {}_* U_j \cap \gamma' {}_* U_{j'}$ whenever it is nonempty. Now, by \cite[Lemma 4.11]{Cha94} (cf. \cite[Lemma 5.2]{BCLS15}), a left $\Gamma$-invariant metric $d$ can be constructed from the family of metrics $\{d_{\gamma, j}\}_{\gamma \in \Gamma, 1 \leq j \leq j_0}$ such that it is equivalent to $d_{\gamma, j}$ on $\gamma {}_* U_j$.
	
	Let $\rho: \Gamma \to \SL_n(\mathbb R)$ be a projective Anosov representation. In \cite{BCLS15} a metric space $(\tilde{\mathcal{G}}_\rho, d_\rho)$ was introduced equipped with a left $\Gamma$-action by isometries and a $\Gamma$-equivariant flow $\phi^\rho: \tilde{\mathcal{G}}_\rho \times \mathbb R \to \tilde{\mathcal{G}}_\rho$. We then obtain a metric space $(\mathcal{G}_\rho, d_\rho)$ where $\mathcal{G}_\rho = \Gamma \backslash \tilde{\mathcal{G}}_\rho$. The flow $\phi^\rho$ descends to a flow $\phi^\rho: \mathcal{G}_\rho \times \mathbb R \to \mathcal{G}_\rho$. We will call both of these flows the \emph{$\rho$-geodesic flow}. The following theorem summarizes \cite[Propositions 4.1 and 5.7]{BCLS15}.
	
	\begin{theorem}
		\label{thm:rho-GeodesicFlowConjugateToGromovGeodesicFlowAndMetricAnosov}
		Let $\rho: \Gamma \to \SL_n(\mathbb R)$ be a projective Anosov representation. Then, we have:
		\begin{enumerate}
			\item \label{itm:ConjugateToGromovGeodesicFlow} the $\rho$-geodesic flow on $\mathcal{G}_\rho$ is H\"{o}lder conjugate to a H\"{o}lder reparametrization of the Gromov geodesic flow on $\mathcal{G}$;
			\item \label{itm:MetricAnosov} the $\rho$-geodesic flow on $\mathcal{G}_\rho$ is a metric Anosov flow.
		\end{enumerate}
	\end{theorem}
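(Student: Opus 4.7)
The plan is to follow the strategy of \cite{BCLS15}, constructing $(\tilde{\mathcal{G}}_\rho, d_\rho, \phi^\rho)$ directly as a cocycle reparametrization of $(\tilde{\mathcal{G}}, d, \phi)$ and then reading off the metric Anosov structure from the Anosov property of $\rho$. Using the $\Gamma$-equivariant H\"{o}lder continuous boundary maps $\xi: \Gboundary \to \mathbb{P}(\R^n)$ and $\theta: \Gboundary \to \mathbb{P}((\R^n)^*)$ associated to $\rho$, one obtains a H\"{o}lder continuous cocycle $c_\rho: \Gamma \times \Gboundary \to \R$ (e.g.\ $c_\rho(\gamma, x) = \log\tfrac{\|\rho(\gamma)v\|}{\|v\|}$ for a unit $v$ spanning $\xi(x)$). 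Identifying $\tilde{\mathcal{G}} \cong \Gboundary^{(2)} \times \R$ via \cref{itm:R-Action} of \cref{thm:GromovGeodesicFlow}, the action that defines $\tilde{\mathcal{G}}_\rho$ is $\gamma {}_*(x,y,t) = (\gamma x, \gamma y, t - c_\rho(\gamma, x))$, which commutes with $\R$-translation in the last coordinate. This exhibits $\mathcal{G}_\rho = \Gamma \backslash{}_*(\Gboundary^{(2)} \times \R)$ as the same space as $\mathcal{G}$ but with a different $\Gamma$-action differing by the H\"{o}lder cocycle $c_\rho$.

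For \cref{itm:ConjugateToGromovGeodesicFlow}, I would first check that the two $\Gamma$-actions are cohomologous via a continuous transfer function: since both actions are cocompact and proper with the same orbit data, a standard Livsic/coboundary argument (or, as in \cite[Proposition~4.1]{BCLS15}, a direct construction using a compactly supported partition of unity on a fundamental domain) yields a H\"{o}lder continuous $\Gamma$-equivariant map $\tilde{\mathcal{G}} \to \tilde{\mathcal{G}}_\rho$ of the form $(x,y,t) \mapsto (x, y, t + u(x,y))$. Conjugating $\phi^\rho$ by this map produces a reparametrization $\phi_{\alpha(\cdot,t)}$ of $\phi$, where $\alpha(z, t) = \int_0^t f(\phi_s(z))\,ds$ for a positive H\"{o}lder continuous speed $f$ built from the infinitesimal version of $c_\rho$ (positivity is where projective Anosovness enters, since the cocycle grows linearly along orbits). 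H\"{o}lder continuity of $\alpha$ descends from H\"{o}lder continuity of $\xi, \theta, c_\rho$, and $u$, and quotienting by $\Gamma$ gives the desired H\"{o}lder conjugacy on $\mathcal{G}_\rho$.

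For \cref{itm:MetricAnosov}, I would define the strong unstable and strong stable foliations on $\tilde{\mathcal{G}}_\rho \cong \Gboundary^{(2)} \times \R$ by
\begin{align*}
W^{\mathrm{su}}(x,y,t) &= \{(x', y, t) : x' \in \Gboundary \setminus \{y\}\}, \\
W^{\mathrm{ss}}(x,y,t) &= \{(x, y', t) : y' \in \Gboundary \setminus \{x\}\},
\end{align*}
which are preserved by $\Gamma$ and transverse to $\phi^\rho$, and then descend them to $\mathcal{G}_\rho$. Local product structure for $(W^{\mathrm{wu}}, W^{\mathrm{ss}})$ and $(W^{\mathrm{ws}}, W^{\mathrm{su}})$ is immediate from the product structure $\Gboundary^{(2)} \times \R$. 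The contraction estimate required in \cref{def:MetricAnosovFlow} reduces, after unwinding the cocycle reparametrization, to showing that two points $(x,y,t), (x,y',t) \in W^{\mathrm{ss}}$ satisfy $d_\rho(\phi^\rho_T(x,y,t), \phi^\rho_T(x,y',t)) \leq Ce^{-\eta T}$ for $T \geq 0$, and symmetrically for $W^{\mathrm{su}}$. This is precisely where the projective Anosov hypothesis gives exponential contraction: the singular value gap forces the angle between $\xi(y)$ and $\xi(y')$, measured via the visual metrics used in the construction of $d$, to contract exponentially as the flow time increases, with rate controlled by the gap function appearing in the definition of $\rho$.

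The main obstacle is the last step: matching the exponential contraction rate coming from the Anosov singular value gap with the geometric metric $d_\rho$ on $\tilde{\mathcal{G}}_\rho$. The metric $d_\rho$ is built patch-by-patch from visual metrics $d_{x_j}$ and the Euclidean metric on $\R$ (as recalled above from \cite{Cha94}), and these visual metrics depend on basepoints; keeping the constants $C, \eta$ uniform across the $\Gamma$-translates and verifying the contraction in charts requires exactly the technical estimates of \cite[Proposition~5.7]{BCLS15}, which compare visual distance to Cartan projection data along quasi-geodesic orbits. Once this uniform contraction is established, the metric Anosov property follows directly from the definition.
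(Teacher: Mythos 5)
First, a caveat: the paper does not actually prove this theorem. It is stated explicitly as a summary of \cite[Propositions 4.1 and 5.7]{BCLS15}, so there is no in-paper argument to compare yours against; what can be judged is whether your sketch is a correct account of the BCLS15 method (which the paper does adapt, in detail, in its proof of \cref{thm:TranslationFlowConjugateToGromovGeodesicFlow}). On that score there are two concrete errors. For \cref{itm:ConjugateToGromovGeodesicFlow}, a $\Gamma$-equivariant map of the form $(x,y,t) \mapsto (x,y,t+u(x,y))$ cannot exist: equivariance between the two actions would force the sum of the two translation cocycles to be a coboundary, which is impossible because both cocycles have strictly positive periods over loxodromic elements (for the Gromov flow the period of $\gamma$ is its stable translation length in $\Gamma$, for the $\rho$-flow it is a logarithm of eigenvalue data of $\rho(\gamma)$, and neither vanishes). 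Moreover, such a map commutes with the $\R$-translation in the last coordinate, so conjugating by it would return the flow on the nose rather than a reparametrization, contradicting your own next sentence. The correct mechanism is to build a $\Gamma$-equivariant Lipschitz section $\hat{u}$ of an associated $\R_{>0}$-bundle, prove that the lifted flow on that bundle is contracting using the Anosov property, and take the orbitwise \emph{nonlinear} map $(x,y,t) \mapsto (x,y,\hat{u}(x,y,t))$; positivity of the speed is exactly the inequality $\partial_t \log \hat{u} \geq \eta > 0$ coming from the contraction. This is precisely the structure of the paper's proof of \cref{thm:TranslationFlowConjugateToGromovGeodesicFlow}, which you could have mirrored.

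For \cref{itm:MetricAnosov}, the strong unstable foliation you write down, with the third coordinate held fixed, is not $\Gamma$-invariant: with the action $\gamma(x,y,t) = (\gamma x, \gamma y, t - c_\rho(\gamma,x))$, the set $\{(x',y,t) : x' \neq y\}$ is sent to $\{(\gamma x', \gamma y, t - c_\rho(\gamma,x'))\}$, whose third coordinate varies with $x'$, so the leaf is not carried to a leaf of the same form and the foliation does not descend to $\mathcal{G}_\rho$. Only the strong stable leaf can be taken with constant flow coordinate; the strong unstable leaf must let the flow coordinate vary with the first boundary point. Compare the paper's own \cref{eqn:StrongUnstableAndStrongStableFoliations} together with \cref{lem:FoliationsWellDefined}, where exactly this asymmetry appears. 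In the BCLS15 model the correct leaves are cut out by fixing the vector $u \in \xi(x)$ (stable) or the covector $\varphi \in \theta(y)$ (unstable) in the bundle of pairs with $\varphi(u) = 1$, after which the contraction does come from the singular value gap as you say. So the overall architecture of your sketch is right, but both of the steps you treat as routine fail as written.
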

	
	\begin{remark}
		Existence of a projective Anosov representation of $\Gamma$ is guaranteed using the \emph{Pl\"{u}cker representation}, i.e., $\tilde{\rho}: G \to \SL(V)$ defined by $\tilde{\rho}(g) = \bigwedge^m \Ad_g$ for all $g \in G$ where $V = \bigwedge^m \LieG$ and $m = \dim(\LieN^+) = \#\Phi^+$. We fix $\rho = \tilde{\rho}|_{\Gamma}$ henceforth.
	\end{remark}
	
	Analogous to \cref{itm:ConjugateToGromovGeodesicFlow} in \cref{thm:rho-GeodesicFlowConjugateToGromovGeodesicFlowAndMetricAnosov}, we have the following theorem. This result can also be found in \cite[Proposition A.1]{Car21} by Carvajales. We provide an independent and more detailed proof using techniques from \cite{BCLS15,BCLS18}, the Morse property, and an analogue of Sullivan's shadow lemma.
	
	\begin{theorem}
		\label{thm:TranslationFlowConjugateToGromovGeodesicFlow}
		We have the following:
		\begin{enumerate}
			\item 
			\label{itm:TranslationFlowConjugateToGromovGeodesicFlow}
			the translation flow on $\mathcal{X}$ is conjugate to a reparametrization of the Gromov geodesic flow on $\mathcal{G}$;
			\item moreover, the conjugating map and the reparametrization are H\"{o}lder.
			\label{itm:HolderReparametrization}
		\end{enumerate}
	\end{theorem}
	
	To avoid interruptions, we will now obtain an appropriate Markov section for the translation flow before we prove \cref{thm:TranslationFlowConjugateToGromovGeodesicFlow} at the end of this subsection. A consequence of \cref{itm:ConjugateToGromovGeodesicFlow} in \cref{thm:rho-GeodesicFlowConjugateToGromovGeodesicFlowAndMetricAnosov} and \cref{thm:TranslationFlowConjugateToGromovGeodesicFlow} is that $\mathcal{G}_\rho$ and $\mathcal{X}$ are compact. Thus, using \cref{itm:MetricAnosov} in \cref{thm:rho-GeodesicFlowConjugateToGromovGeodesicFlowAndMetricAnosov} and recalling \cref{thm:MarkovSectionForMetricAnosovFlow}, we conclude that the $\rho$-geodesic flow on $\mathcal{G}_\rho$ has a Markov section with respect to the foliations coming from the metric Anosov property.
	
	Fix such a Markov section on $\mathcal{G}_\rho$ of size $\hat{\delta}_\rho > 0$. Using the homeomorphism $\mathcal{G}_\rho \to \mathcal{X}$ which conjugates the $\rho$-geodesic flow to the translation flow, denote the image of the Markov section in $\mathcal{X}$ by $\mathcal{R}^* = \{R_1^*, R_2^*, \dotsc, R_N^*\}$ and the image foliations by $W^{\rho, \mathrm{su}}$, $W^{\rho, \mathrm{ss}}$, $W^{\rho, \mathrm{wu}}$, and $W^{\rho, \mathrm{ws}}$. Observe that $G$ has two foliations given by the right $N^+$-orbits and the right $N^-$-orbits called the \emph{horospherical foliations}. Since $M \subset P^- = N_G(N^-)$ and $M \subset P^+ = N_G(N^+)$, they descend to well-defined horospherical foliations on $G/M$. Using the Hopf parametrization and the projection map $\pi_\psi$, the horospherical foliations induce the natural \emph{strong unstable} and \emph{strong stable} foliations $W^{\mathrm{su}}$ and $W^{\mathrm{ss}}$ on $\mathcal{X}$ defined by
	\begin{align}
		\label{eqn:StrongUnstableAndStrongStableFoliations}
		\begin{aligned}
			W^{\mathrm{su}}(\Gamma (x, y, t)) &= \bigl\{\Gamma\bigl((gh)^+, g^-, \psi\bigl(\beta_{(gh)^+}(e, gh)\bigr)\bigr): h \in N^+, (gh)^+ \in \limitset\bigr\}, \\
			W^{\mathrm{ss}}(\Gamma (x, y, t)) &= \bigl\{\Gamma\bigl(g^+, (gn)^-, \psi\bigl(\beta_{(gn)^+}(e, gn)\bigr)\bigr): n \in N^-, (gn)^- \in \limitset\bigr\}
		\end{aligned}
	\end{align}
	for any choice of $g \in G$ with $g^+ = x$, $g^- = y$, and $\psi(\beta_{g^+}(e, g)) = t$, for all $\Gamma (x, y, t) \in \mathcal{X}$. The following lemma ensures that $W^{\mathrm{su}}$ and $W^{\mathrm{ss}}$ are indeed well-defined.
	
	\begin{lemma}
		\label{lem:FoliationsWellDefined}
		Let $\Gamma (x, y, t) \in \mathcal{X}$. Let $g_1, g_2 \in G$ with $g_1^+ = g_2^+ = x$, $g_1^- = g_2^- = y$, and $\psi\bigl(\beta_{g_1^+}(e, g_1)\bigr) = \psi\bigl(\beta_{g_2^+}(e, g_2)\bigr) = t$. Then:
		\begin{enumerate}
			\item\label{itm:StrongUnstable} for all $h_1, h_2 \in N^+$ with $(g_1h_1)^+ = (g_2h_2)^+$, we have 
			\begin{align*}
				\psi\bigl(\beta_{(g_1h_1)^+}(e, g_1h_1)\bigr) = \psi\bigl(\beta_{(g_2h_2)^+}(e, g_2h_2)\bigr);
			\end{align*}
			\item\label{itm:StrongStable} $\psi\bigl(\beta_{(g_1n)^+}(e, g_1n)\bigr) = t$ for all $n \in N^-$.
		\end{enumerate}
	\end{lemma}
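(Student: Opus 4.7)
The plan is to first reduce the hypothesis on $g_1$ and $g_2$ to a concrete relation between them, and then to use the cocycle properties of the Busemann function together with Iwasawa-type computations to check the two identities directly.

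First I would note that the Hopf parametrization gives a diffeomorphism $G/M \to \Fboundary^{(2)} \times \LieA$ sending $gM$ to $(g^+, g^-, \beta_{g^+}(e,g))$. Since $g_1^+ = g_2^+$ and $g_1^- = g_2^-$, the elements $g_1 M$ and $g_2 M$ lie in the same fiber over $(x,y) \in \Fboundary^{(2)}$, hence $g_2 = g_1 a_w m$ for some $w \in \LieA$ and $m \in M$. A short computation using the basic Busemann properties, $G$-equivariance $\beta_{g\xi}(gx,gy) = \beta_\xi(x,y)$, and the Iwasawa cocycle relation (plus the observations $\sigma(m^{-1},e^+) = 0$ and $\sigma(a_{-w},e^+) = -w$) shows $\beta_{g_2^+}(e,g_2) = \beta_{g_1^+}(e,g_1) + w$. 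The equality of the $\psi$-components therefore forces $w \in \ker\psi$.

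For \cref{itm:StrongUnstable}, I would use that $N^+$ is normalized by $MA$ to rewrite
\begin{equation*}
g_2 h_2 = g_1 a_w m h_2 = g_1 (a_w m h_2 m^{-1} a_{-w})\,a_w m = g_1 h'\, a_w m
\end{equation*}
for some $h' \in N^+$. The hypothesis $(g_1 h_1)^+ = (g_2 h_2)^+$, combined with $a_w m \in P^-$ and the fact that $N^+ \cap P^- = \{e\}$, then yields $h' = h_1$, so $g_2 h_2 = g_1 h_1 a_w m$. Repeating the Busemann computation from the previous paragraph with $g_1 h_1$ in place of $g_1$ gives $\beta_{(g_2 h_2)^+}(e, g_2 h_2) = \beta_{(g_1 h_1)^+}(e, g_1 h_1) + w$, and applying $\psi$ and using $\psi(w) = 0$ yields the desired identity.

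For \cref{itm:StrongStable}, since $n \in N^- \subset P^-$, we have $(g_1 n)^+ = g_1^+$. Using the additivity of Busemann functions and $G$-equivariance,
\begin{equation*}
\beta_{(g_1 n)^+}(e, g_1 n) = \beta_{g_1^+}(e, g_1) + \beta_{g_1^+}(g_1, g_1 n) = \beta_{g_1^+}(e, g_1) + \beta_{e^+}(e, n),
\end{equation*}
and the Iwasawa decomposition $n = e \cdot a_0 \cdot n \in K A N^-$ forces $\sigma(n^{-1}, e^+) = 0$, so $\beta_{e^+}(e,n) = 0$. Applying $\psi$ then gives the claim (even before applying $\psi$, in fact).

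The only mild obstacle is keeping track of cocycle signs and the convention $g^+ = g[P^-]$ versus $g[P^+]$; once the correct normalizations are in place the computations are routine.
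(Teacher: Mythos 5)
Your proposal is correct and follows essentially the same route as the paper: reduce the hypotheses to $g_2 = g_1 a_w m$ with $\psi(w)=0$, transfer this relation to $g_1h_1$ versus $g_2h_2$, and conclude via the cocycle/equivariance properties of the Busemann function, with part \ref{itm:StrongStable} being the same direct computation. The only (cosmetic) difference is that you identify $h_1$ with the $AM$-conjugate of $h_2$ using $N^+\cap P^- = \{e\}$ rather than matching Iwasawa components as the paper does; both are valid.
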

	
	\begin{proof}
		Let $\Gamma (x, y, t) \in \mathcal{X}$. Let $g_1, g_2 \in G$ be as in the lemma. We first show \cref{itm:StrongUnstable}. Suppose $h_1, h_2 \in N^+$ with $(g_1h_1)^+ = (g_2h_2)^+$. We have $g_1 = g_2am$ for some $am \in AM$ from the hypothesis $g_1^+ = g_2^+$ and $g_1^- = g_2^-$. Thus
		\begin{align*}
			\psi\bigl(\beta_{g_1^+}(e, g_1)\bigr) = \psi\bigl(\beta_{g_2^+}(e, g_2am)\bigr) = \psi\bigl(\beta_{g_2^+}(e, g_2)\bigr) + \psi\bigl(\beta_{g_2^+}(g_2, g_2am)\bigr).
		\end{align*}
		Using the hypothesis $\psi\bigl(\beta_{g_1^+}(e, g_1)\bigr) = \psi\bigl(\beta_{g_2^+}(e, g_2)\bigr)$ above gives
		\begin{align}
			\label{eqn:BusemannOf_a_Vanishes}
			\psi\bigl(\beta_{e^+}(e, a)\bigr) = \psi\bigl(\beta_{e^+}(e, am)\bigr) = 0.
		\end{align}
		Note that $(g_1h_1)^- = g_1^- = g_2^- = (g_2h_2)^-$. Since we also have $(g_1h_1)^+ = (g_2h_2)^+$, we conclude $g_1h_1 = g_2h_2a'm'$ for some $a'm' \in AM$. This implies $m \cdot a \cdot h_1 = m' \cdot a' \cdot (a'm')^{-1}h_2a'm'$. Recalling the Iwasawa decomposition, we have $a = a'$, $m = m'$, and $h_1 = (a'm')^{-1}h_2a'm'$. Thus, using \cref{eqn:BusemannOf_a_Vanishes}, we have
		\begin{align*}
			\psi\bigl(\beta_{(g_1h_1)^+}(e, g_1h_1)\bigr) &= \psi\bigl(\beta_{(g_2h_2)^+}(e, g_2h_2am)\bigr) \\
			&= \psi\bigl(\beta_{(g_2h_2)^+}(e, g_2h_2)\bigr) + \psi\bigl(\beta_{(g_2h_2)^+}(g_2h_2, g_2h_2am)\bigr) \\
			&= \psi\bigl(\beta_{(g_2h_2)^+}(e, g_2h_2)\bigr) + \psi\bigl(\beta_{e^+}(e, am)\bigr) \\
			&= \psi\bigl(\beta_{(g_2h_2)^+}(e, g_2h_2)\bigr).
		\end{align*}
		
		Now we show \cref{itm:StrongStable}. For all $n \in N^-$, we have
		\begin{align*}
			\psi\bigl(\beta_{(g_1n)^+}(e, g_1n)\bigr) &= \psi\bigl(\beta_{g_1^+}(e, g_1n)\bigr) \\
			&= \psi\bigl(\beta_{g_1^+}(e, g_1)\bigr) + \psi\bigl(\beta_{g_1^+}(g_1, g_1n)\bigr) \\
			&= t + \psi\bigl(\beta_{e^+}(e, n)\bigr) \\
			&= t.
		\end{align*}
	\end{proof}
	
	The foliations $W^{\mathrm{su}}$ and $W^{\mathrm{ss}}$ are transverse to $\mathcal{W}$. We then obtain the corresponding \emph{weak unstable} and \emph{weak stable} foliations $W^{\mathrm{wu}} = (W^{\mathrm{su}})^{\mathrm{c}}$ and $W^{\mathrm{ws}} = (W^{\mathrm{ss}})^{\mathrm{c}}$ on $\mathcal{X}$ and $(W^{\mathrm{wu}}, W^{\mathrm{ss}})$ and $(W^{\mathrm{ws}}, W^{\mathrm{su}})$ have local product structures with some constant $\epsilon_0 > 0$ which works for \cref{def:LocalProductStructure} and for transversality. Fix some $\hat{\delta} \in (0, \epsilon_0)$. Note that if $\hat{\delta}_\rho$ is sufficiently small, then $\mathcal{R}^*$ will already satisfy all the properties required for a Markov section for the translation flow $\mathcal{W}$ with an appropriately modified first return time map. However, the rectangles of $\mathcal{R}^*$ are with respect to $(W^{\rho, \mathrm{wu}}, W^{\rho, \mathrm{ss}})$ rather than $(W^{\mathrm{wu}}, W^{\mathrm{ss}})$. We will modify the rectangles of $\mathcal{R}^*$ to ensure the latter. From definitions, for all $x \in \mathcal{X}$, we have a coordinate chart $\varphi_x: O_x \to \tilde{V}_x^1 \times (-\epsilon_0, \epsilon_0) \times V_x^2$ from the local product structure of $(W^{\mathrm{wu}}, W^{\mathrm{ss}})$. Let $\proj_x: \tilde{V}_x^1 \times (-\epsilon_0, \epsilon_0) \times V_x^2 \to \tilde{V}_x^1 \times \{0\} \times V_x^2$ denote the projection map given by $(v_1,t,v_2) \mapsto (v_1,v_2)$ and denote by $\proj_x^{\mathrm{c}}: \tilde{V}_x^1 \times (-\epsilon_0, \epsilon_0) \times V_x^2 \to (-\epsilon_0, \epsilon_0)$ the projection given by $(v_1,t,v_2) \mapsto t$. Choose any $x_k \in R_k^*$ and define
	\begin{align*}
		R_k &= \varphi_{x_k}^{-1}(\proj_{x_k}(\varphi_{x_k}(R_k^*))) \qquad \text{for all $1 \leq k \leq N$}, \\
		\mathcal{R} &= \{R_1, R_2, \dotsc, R_N\}.
	\end{align*}
	By compactness of $\mathcal{X}$, we can ensure that $\diam(R_k) < \hat{\delta}$ for all $1 \leq k \leq N$ by requiring $\hat{\delta}_\rho$ be sufficiently small. From \cite[Subsection 5.2]{BCLS15}, we can directly deduce that $W^{\mathrm{wu}} = W^{\rho, \mathrm{wu}}$ and $W^{\mathrm{ws}} = W^{\rho, \mathrm{ws}}$. Hence the above definition truly gives rectangles with respect to $(W^{\mathrm{wu}}, W^{\mathrm{ss}})$. Moreover, the Markov property will be automatically satisfied once we ensure that $\mathcal{R}$ is a complete set of rectangles of size $\hat{\delta}$ with respect to $(W^{\mathrm{wu}}, W^{\mathrm{ss}})$. Now, note that by the construction of Markov sections (see \cite{Bow70,Rat73,Pol87}), we can first ensure a positive lower bound for $\underline{\tau} > 0$ and then independently choose a uniform upper bound for $\diam(R_k^*)$ for all $1 \leq k \leq N$. Thus, again by compactness of $\mathcal{X}$, we can also ensure that $\sup_{x \in R_k}\diam(\proj_x^{\mathrm{c}}(\varphi_x(R_k))) < \frac{1}{4}\underline{\tau}$ and $\tau(x_k) + \frac{1}{2}\underline{\tau} < \hat{\delta}$ for all $1 \leq k \leq N$ by requiring $\hat{\delta}_\rho$ be sufficiently small. This guarantees \cref{itm:MarkovProperty1,itm:MarkovProperty2} in \cref{def:CompleteSetOfRectangles}. This completes the modification and $\mathcal{R}$ is the desired Markov section on $\mathcal{X}$ with respect to $(W^{\mathrm{su}}, W^{\mathrm{ss}})$. We record this in the following corollary.
	
	\begin{corollary}
		\label{cor:TranslationFlowHasMarkovSection}
		The translation flow on $\mathcal{X}$ has a Markov section with respect to $(W^{\mathrm{su}}, W^{\mathrm{ss}})$, where $W^{\mathrm{su}}$ and $W^{\mathrm{ss}}$ are as in \cref{eqn:StrongUnstableAndStrongStableFoliations}.
	\end{corollary}
	
	We now provide a proof of \cref{thm:TranslationFlowConjugateToGromovGeodesicFlow}.
	
	\begin{proof}[Proof of \cref{thm:TranslationFlowConjugateToGromovGeodesicFlow}]
		First we prove \cref{itm:TranslationFlowConjugateToGromovGeodesicFlow}. The left $\Gamma$-action on $\tilde{\mathcal{G}} \cong \Gboundary^{(2)} \times \mathbb R$ induces a left $\Gamma$-action on $\limitset^{(2)} \times \mathbb R$ via $\zeta$ which we also denote by ${}_*$. It is different from the left $\Gamma$-action on $\limitset^{(2)} \times \mathbb R$ from \cref{subsec:TheVectorBundleAndTheTranslationFlow} which was denoted by $\cdot$. Similarly, the Gromov geodesic flow $\phi: \tilde{\mathcal{G}} \times \mathbb R \to \tilde{\mathcal{G}}$ induces a flow $\phi: \bigl(\limitset^{(2)} \times \mathbb R\bigr) \times \mathbb R \to \limitset^{(2)} \times \mathbb R$ via $\zeta$.
		
		Let $\tilde{L} = \bigl(\limitset^{(2)} \times \mathbb R\bigr) \times \mathbb R_{>0}$ be the trivial principal $\mathbb R_{>0}$-bundle. We define a left $\Gamma$-action on $\tilde{L}$ by principal bundle automorphisms
		\begin{align*}
			\gamma {}_* (x, y, s, v) = \bigl(\gamma {}_* (x, y, s), ve^{\psi(\beta_x(\gamma^{-1}, e))}\bigr)
		\end{align*}
		for all $\gamma \in \Gamma$ and $(x, y, s, v) \in \bigl(\limitset^{(2)} \times \mathbb R\bigr) \times \mathbb R_{>0}$. Then, we have the quotient $L = \Gamma \backslash{}_* \tilde{L} \cong \Gamma \backslash{}_* \bigl(\limitset^{(2)} \times \mathbb R\bigr) \times \mathbb R_{>0}$ by a similar reasoning as in \cref{subsec:TheVectorBundleAndTheTranslationFlow}. We have a natural map $\exp: \limitset^{(2)} \times \mathbb R \to \limitset^{(2)} \times \mathbb R_{>0}$ induced by $\exp: \mathbb R \to \mathbb R_{>0}$ whose inverse we denote by $\log: \limitset^{(2)} \times \mathbb R_{>0} \to \limitset^{(2)} \times \mathbb R$. Note that the left $\Gamma$-action ${}_*$ on $\tilde{L}$ restricts to a left $\Gamma$-action ${}_*$ on the $\limitset^{(2)} \times \mathbb R_{>0}$ factor which is isomorphic to the left $\Gamma$-action $\cdot$ on $\limitset^{(2)} \times \mathbb R$ via $\log$. We also trivially lift the flow $\phi$ to a flow $\hat{\phi}: \tilde{L} \times \mathbb R \to \tilde{L}$ defined by $\hat{\phi}_t(x, y, s, v) = (\phi_t(x, y, s), v)$ for all $(x, y, s, v) \in \bigl(\limitset^{(2)} \times \mathbb R\bigr) \times \mathbb R_{>0}$ and $t \in \mathbb R$, which is automatically left $\Gamma$-equivariant and hence descends to a flow $\hat{\phi}: L \times \mathbb R \to L$.
		
		Using a partition of unity subordinate to the cover $\{\Gamma {}_* U_j\}_{1 \leq j \leq j_0}$, we can equip the principal $\mathbb R_{>0}$-bundle $\tilde{L}$ with a left $\Gamma$-invariant continuous norm $\|\cdot\|$ by viewing the typical fibers as subsets $\mathbb R_{>0} \subset \mathbb R$, by which we simply mean that the unique left $\Gamma$-equivariant section $u: \limitset^{(2)} \times \mathbb R \to \tilde{L}$ with $\|u(x, y, s)\| = 1$ for all $(x, y, s) \in \limitset^{(2)} \times \mathbb R$ is continuous. It is convenient to define $\hat{u}: \limitset^{(2)} \times \mathbb R \to \mathbb R_{>0}$ such that $u(x, y, s) = (x, y, s, \hat{u}(x, y, s))$ for all $(x, y, s) \in \limitset^{(2)} \times \mathbb R$. Then, using the map $\log: \limitset^{(2)} \times \mathbb R_{>0} \to \limitset^{(2)} \times \mathbb R$, we see that in fact $\log \circ \hat{u}$ is continuous. Moreover, we can ensure that $\hat{u}(x, y, \cdot)$ is continuously differentiable. By left $\Gamma$-invariance of the norm $\|\cdot\|$, it descends to a continuous norm $\|\cdot\|$ on $L$.
		
		We argue that the flow $\hat{\phi}: L \times \mathbb R \to L$ is \emph{discretely contracting}, i.e., there exist $T, \eta > 0$ such that
		\begin{align}
			\label{eqn:DiscretelyContracting}
			\|\hat{\phi}_T\bigl(\Gamma {}_* (X, v)\bigr)\| \leq e^{-\eta}\|\Gamma {}_* (X, v)\| \qquad \text{for all $\Gamma {}_* (X, v) \in L$}.
		\end{align}
		Let $X \in \limitset^{(2)} \times \mathbb R$. Denote by $\tilde{X} \in \tilde{\mathcal{G}}$ the point corresponding to $X$ via $\zeta^{-1}$. Fix a compact fundamental domain $\tilde{D} \subset \tilde{\mathcal{G}}$ containing $\tilde{X}$ and denote the corresponding compact set via $\zeta$ by $D \subset \limitset^{(2)} \times \mathbb R$. Let $\{\gamma_j\}_{j \in \mathbb Z_{\geq 0}} \subset \Gamma$ be a geodesic ray with respect to the word metric such that $\gamma_0 = e \in \Gamma$ and $\gamma_\infty = \tilde{X}^+ \in \Gboundary$. Define the orbit map $\Theta: \Gamma \to \tilde{\mathcal{G}}$ by $\Theta(\gamma) = \gamma {}_* \tilde{X}$ for all $\gamma \in \Gamma$. By \cref{itm:Gamma-Action} in \cref{thm:GromovGeodesicFlow}, $\{\Theta(\gamma_j)\}_{j \in \mathbb Z_{\geq 0}} \subset \Gamma$ is a sequence along a quasi-geodesic ray with $\Theta(\gamma_\infty) = \tilde{X}^+$. But $[0, +\infty) \to \tilde{\mathcal{G}}$ defined by $t \mapsto \phi_t(\tilde{X})$ is also a quasi-geodesic ray with $\phi_\infty(\tilde{X}) = \tilde{X}^+$. So, there exists a sequence $\{T_{X, j}\}_{j \in \mathbb N} \subset \mathbb R_{>0}$ such that $d(\phi_{T_{X, j}}(\tilde{X}), \gamma_j {}_* \tilde{X})$ is uniformly bounded for all $j \in \mathbb N$. 
		Thus, there exists a finite subset $F' \subset \Gamma$ and a sequence $\{f_j'\}_{j \in \mathbb N} \subset F'$ such that $\phi_{T_{X, j}}(X) \in \gamma_j f_j' {}_* D$. Note that $\lim_{j \to \infty} \gamma_j f_j' {}_* X = X^+$. By \cite[Lemmas 4.5.1 and 4.5.2]{Ben97}, we can fix a finite subset $F \subset \Gamma$ and $C_1 > 0$ such that for all $\gamma \in \Gamma$, there exists $f \in F$ such that $\|\mu(\gamma f) - \lambda(\gamma f)\| \leq C_1$. For all $j \in \mathbb N$, let $f_j \in F$ such that
		\begin{align*}
			\|\mu(\gamma_j f_j) - \lambda(\gamma_j f_j)\| \leq C_1.
		\end{align*}
		We still have $\lim_{j \to \infty} \gamma_j f_j {}_* X = X^+$. Now, using the Morse property \cite[Proposition 5.12]{LO20b} of Kapovich--Leeb--Porti \cite[Proposition 5.16]{KLP17}, we can conclude that $\lim_{j \to \infty} \gamma_j o = X^+$ conically (cf. \cite[Proposition 7.4]{LO20b}). Thus, there exist $R > 0$ such that $X^+ \in O_R(o, \gamma_j f_j o) \subset \Fboundary$ for all $j \in \mathbb N$. Using the analogue of Sullivan's shadow lemma due to Lee--Oh \cite[Lemma 5.7]{LO20b} and Thirion \cite{Thi07}, there exists $C_2 > 0$ corresponding to $R$ such that
		\begin{align*}
			\|\beta_{X^+}(o, \gamma_j f_jo) - \mu(\gamma_j f_j)\| \leq C_2
		\end{align*}
		for all $j \in \mathbb N$. Combining the above inequalities and applying $\psi$, there exists $C_3 > 0$ such that 
		\begin{align*}
			|\psi(\beta_{X^+}(\gamma_j f_j, e)) + \psi(\lambda(\gamma_j f_j))| \leq C_3 \qquad \text{for all } j \in \mathbb N.    
		\end{align*} Now, $(\gamma_j f_j)^{-1} {}_* \phi_{T_{X, j}}(X) \in f_j^{-1} f_j' {}_* D$ where $f_j^{-1} f_j'$ is in the finite set $F^{-1} \cdot F'$. Thus, by continuity on a compact set, we can fix $C_4 = \sup_{X' \in F^{-1} \cdot F' {}_* D} \frac{\|(X', 1)\|}{\|(X, 1)\|}$. Then, recalling that $\phi_t(X)^+ = X^+$ for all $t \in \mathbb R$, we have
		\begin{align*}
			\bigl\|\hat{\phi}_{T_{X, j}}(X, v)\bigr\| &= \|(\phi_{T_{X, j}}(X), v)\| = \|(\gamma_j f_j)^{-1} {}_* (\phi_{T_{X, j}}(X), v)\| \\
			&= \left\|\left((\gamma_j f_j)^{-1} {}_* \phi_{T_{X, j}}(X), ve^{\psi\bigl(\beta_{X^+}(\gamma_j f_j, e)\bigr)}\right)\right\| \\
			&= e^{\psi\bigl(\beta_{X^+}(\gamma_j f_j, e)\bigr)} \|((\gamma_j f_j)^{-1} {}_* \phi_{T_{X, j}}(X), v)\| \\
			&\leq C_4e^{C_3}e^{-\psi(\lambda(\gamma_j f_j))}
		\end{align*}
		for all $j \in \mathbb N$. Since $\lim_{j \to \infty} \|\lambda(\gamma_j f_j)\| = +\infty$, there exists a sufficiently large $j \in \mathbb N$ such that setting $T_X = T_{X, j}$ and $\eta_X = \frac{1}{2}\psi(\lambda(\gamma_j f_j)) > 0$ by \cref{itm:PositiveOnLimitCone} in \cref{thm:PSDensityForAnosovSubgroups}, we have
		\begin{align*}
			\bigl\|\hat{\phi}_{T_X}(X, v)\bigr\| \leq e^{-\eta_X}.
		\end{align*}
		Descending to the quotient, we have shown that for all $\Gamma {}_* X \in \Gamma \backslash{}_* \bigl(\limitset^{(2)} \times \mathbb R\bigr)$, there exists $T_{\Gamma {}_* X}, \eta_{\Gamma {}_* X} > 0$ such that
		\begin{align*}
			\bigl\|\hat{\phi}_{T_{\Gamma {}_* X}}\bigl(\Gamma {}_* (X, v)\bigr)\bigr\| \leq e^{-\eta_{\Gamma {}_* X}} \|(X, v)\| \qquad \text{for all $v \in \mathbb R_{>0}$}.
		\end{align*}
		The discretely contracting property in \cref{eqn:DiscretelyContracting} follows by a compactness argument.
		
		Repeating the averaging trick as in \cite[Lemma 4.3]{BCLS15}, we can modify the norm $\|\cdot\|$ such that the flow $\hat{\phi}: L \times \mathbb R \to L$ is \emph{continuously contracting}, i.e., there exists $\eta > 0$ such that
		\begin{align*}
			\|\hat{\phi}_t\bigl(\Gamma {}_* (X, v)\bigr)\| \leq e^{-\eta t}\|\Gamma {}_* (X, v)\| \qquad \text{for all $\Gamma {}_* (X, v) \in L$ and $t \geq 0$}.
		\end{align*}
		Note that the averaging process preserves the regularity of $u$ and $\hat{u}$. We now show that the partial derivative in the third coordinate $\partial_3(\log \circ \hat{u})$ is positive which implies that $\partial_3\hat{u}$ is also positive. Using the continuous contraction property and the definition of $u$, we have
		\begin{align*}
			e^{\eta t} \leq \frac{\|u(x, y, s + t)\|}{\|\hat{\phi}_t(u(x, y, s))\|} = \frac{\|(x, y, s + t, \hat{u}(x, y, s + t))\|}{\|(x, y, s + t, \hat{u}(x, y, s))\|} = \frac{\hat{u}(x, y, s + t)}{\hat{u}(x, y, s)}
		\end{align*}
		for all $(x, y, s) \in \limitset^{(2)} \times \mathbb R$ and $t \in \mathbb R$. Hence
		\begin{align*}
			\partial_3(\log \circ \hat{u})(x, y, s) = \lim_{t \to 0} \frac{(\log \circ \hat{u})(x, y, s + t) - (\log \circ \hat{u})(x, y, s)}{t} \geq \eta > 0
		\end{align*}
		for all $(x, y, s) \in \limitset^{(2)} \times \mathbb R$.
		
		We now define the space of orbits of $\hat{\phi}$ by $\tilde{J} = \tilde{L}/\mathbb R = \limitset^{(2)} \times \mathbb R_{>0}$ which is also a principal $\mathbb R_{>0}$-bundle. Define $J = \Gamma \backslash{}_* \tilde{J}$. We define a flow $\kappa: \tilde{J} \times \mathbb R \to \tilde{J}$ by $\kappa_t(x, y, v) = (x, y, ve^t)$ for all $(x, y, v) \in \tilde{J}$ and $t \in \mathbb R$. The flow $\kappa$ is left $\Gamma$-equivariant and descends to a flow $\kappa: J \times \mathbb R \to J$. Then, $\log$ is a conjugation between the translation flow $\mathcal{W}$ and the flow $\kappa$. Taking quotients, the translation flow $\mathcal{W}$ on $\mathcal{X}$ is conjugate to the flow $\kappa$ on $J$.
		
		Since $\zeta$ is continuous, we need to show that the flow $\kappa$ on $J$ is conjugate to a reparametrization of the flow $\phi$ on $\Gamma \backslash{}_* \bigl(\limitset^{(2)} \times \mathbb R\bigr)$ to prove \cref{itm:TranslationFlowConjugateToGromovGeodesicFlow}. Recall the section $u: \limitset^{(2)} \times \mathbb R \to \tilde{L}$ and its associated function $\hat{u}: \limitset^{(2)} \times \mathbb R \to \mathbb R_{>0}$. We define the left $\Gamma$-equivariant orbit preserving continuous map $\Psi: \limitset^{(2)} \times \mathbb R \to \tilde{J}$ by
		\begin{align*}
			\Psi(x, y, s) = (x, y, \hat{u}(x, y, s)) \qquad \text{for all $(x, y, s) \in \limitset^{(2)} \times \mathbb R$}.
		\end{align*}
		It suffices to show that $\Psi$ is a homeomorphism. Recalling that $\partial_3\hat{u}$ is positive, we conclude that $\Psi$ is injective along orbits of $\phi$ and hence injective everywhere. We now show that $\Psi$ is a proper map. To obtain a contradiction, suppose $\Psi$ is not proper. As $\limitset^{(2)} \times \mathbb R$ and $\tilde{J}$ are metrizable, there exists a sequence $\{(x_j, y_j, s_j)\}_{j \in \mathbb N}$ which escapes all compact subsets of $\limitset^{(2)} \times \mathbb R$ such that $\lim_{j \to \infty} \Psi(x_j, y_j, s_j) = (x, y, v)$ for some $(x, y, v) \in \tilde{J}$. We can assume that either $\{s_j\}_{j \in \mathbb N}$ is negative and $\lim_{j \to \infty} s_j = -\infty$ or $\{s_j\}_{j \in \mathbb N}$ is positive and $\lim_{j \to \infty} s_j = +\infty$. Then, we have $\lim_{j \to \infty} \hat{u}(x_j, y_j, s_j) = v$ and hence
		\begin{align*}
			\lim_{j \to \infty} \hat{\phi}_{-s_j}(u(x_j, y_j, s_j)) = \lim_{j \to \infty} (x_j, y_j, 0, \hat{u}(x_j, y_j, s_j)) = (x, y, 0, v).
		\end{align*}
		Taking norms above and using the continuously contracting property, we either have $\lim_{j \to \infty} \bigl\|\hat{\phi}_{-s_j}(u(x_j, y_j, s_j))\bigr\| \leq e^{\eta s_j} = 0$ or $\lim_{j \to \infty} \bigl\|\hat{\phi}_{-s_j}(u(x_j, y_j, s_j))\bigr\| \geq e^{\eta s_j} = +\infty$ both of which contradict $\|(x, y, 0, v)\| \in \mathbb R_{> 0}$. Thus, $\Psi$ is a proper continuous injection. Now, $\Psi$ restricted to orbits of $\phi$ surjects onto fibers of $\tilde{J}$ since proper continuous injections $\mathbb R \to \mathbb R$ are surjections (and a fortiori homeomorphisms). Thus, $\Psi$ is a proper continuous bijection. This is sufficient to conclude that $\Psi$ is in fact a homeomorphism since $\tilde{J}$ is first countable and Hausdorff. 
		
		Now we prove \cref{itm:HolderReparametrization}. First we construct left $\Gamma$-invariant metrics on $\tilde{J}$ and $\tilde{L}$. Having established \cref{itm:TranslationFlowConjugateToGromovGeodesicFlow}, we can use the left $\Gamma$-invariant metric $d$ on $\limitset^{(2)} \times \mathbb R$ from \cref{subsec:TheVectorBundleAndTheTranslationFlow}. It induces a left $\Gamma$-invariant metric $d$ on $\limitset^{(2)} \times \mathbb R_{>0}$ so that $\exp$ is an isometry. This gives a metric $d$ on $\tilde{J}$ which descends to a metric $d$ on $J$. We use the locally finite pointed cover $\{(\gamma {}_* U_j, \gamma {}_* x_j)\}_{\gamma \in \Gamma, 1 \leq j \leq j_0}$. For all $\gamma \in \Gamma$ and $1 \leq j \leq j_0$, by interchanging the last two factors, we can fix the metric $d_{e, j} = d \times d_{\mathbb R}$ on $U_j \times \mathbb R_{>0} \cong B_j \times \mathbb R_{>0} \times (0, \epsilon_j)$, and then fix $d_{\gamma, j} = \gamma_*d_{e, j}$ on $\gamma {}_* (U_j \times \mathbb R_{>0})$. We can now repeat the construction outlined previously as in \cite{Cha94}. We thus obtain a left $\Gamma$-invariant metric $d$ on $\tilde{L}$ which is equivalent to $d_{\gamma, j}$ on $\gamma {}_* (U_j \times \mathbb R_{>0})$ for all $\gamma \in \Gamma$ and $1 \leq j \leq j_0$. It descends to a metric $d$ on $L$.
		
		Then, the norm $\|\cdot\|$ on $\tilde{L}$ is Lipschitz, meaning that the $\Gamma$-equivariant section $u$ is Lipschitz continuous. Consequently, $\log \circ \hat{u}$ is Lipschitz continuous and the norm $\|\cdot\|$ on $L$ is Lipschitz. Thus, $\Psi$ is Lipschitz continuous. We conclude that the flow $\kappa$ on $J$ is Lipschitz conjugate to \emph{some} reparametrization of the flow $\phi$ on $\Gamma \backslash{}_* \bigl(\limitset^{(2)} \times \mathbb R\bigr)$. Since $\zeta$ is H\"{o}lder continuous, it suffices to show that the reparametrization is \emph{Lipschitz}. At the level of covers, let $\alpha, \alpha^*: (\limitset^{(2)} \times \mathbb R) \times \mathbb R \to \mathbb R$ be the reparametrization and its inverse reparametrization. Note that they satisfy $\alpha(\gamma {}_* X, t) = \alpha(X, t)$ and $\alpha^*(\gamma {}_* X, t) = \alpha^*(X, t)$ for all $X \in \limitset^{(2)} \times \mathbb R$, $t \in \mathbb R$, and $\gamma \in \Gamma$. Then for all $X \in \limitset^{(2)} \times \mathbb R$ and $t \in \mathbb R$, we have $\kappa_{\alpha^*(X, t)}(\Psi(X)) = \Psi(\phi_t(X))$ which implies $\hat{u}(X)e^{\alpha^*(X, t)} = \hat{u}(\phi_t(X))$ from definitions and hence
		\begin{align*}
			\alpha^*(X, t) = \log(\hat{u}(\phi_t(X))) - \log(\hat{u}(X)).
		\end{align*}
		Recalling that the flow $\phi$ is simply by $\mathbb R$-translations on the last factor and that $\partial_3(\log \circ \hat{u})$ is positive continuous, we conclude that $\alpha^*(X, \cdot)$ is differentiable for all $X \in \limitset^{(2)} \times \mathbb R$ and $\partial_2\alpha^*$ is positive continuous. Thus, by \cref{rem:ReparametrizationRegularity}, the Lipschitz regularity of $\alpha$ follows from the Lipschitz regularity of $\alpha^*$. In fact, by compactness of $\Gamma \backslash{}_* \bigl(\limitset^{(2)} \times \mathbb R\bigr) \cong \mathcal{G}$, it suffices to show that $\alpha^*(\cdot, t)$ is \emph{locally} Lipschitz continuous for all $t \in \mathbb R$. But this follows from the fact that $\phi_t$ is a locally bi-Lipschitz homeomorphism for all $t \in \mathbb R$ and $\log \circ \hat{u}$ is Lipschitz continuous.
	\end{proof}
	
	\subsection{Symbolic dynamics}
	\label{subsec:SymbolicDynamics}
	The main purpose of Markov sections is to provide a coding for flows so that techniques from symbolic dynamics and thermodynamic formalism can be exploited. Henceforth, we let $\mathcal{R}$ be a Markov section for the translation flow on $\mathcal{X}$ with respect to $(W^{\mathrm{su}}, W^{\mathrm{ss}})$  given by \cref{cor:TranslationFlowHasMarkovSection} and we use the same associated notations from \cref{subsec:MetricAnosovFlowsAndMarkovSections}. Denote $\mathcal A = \{1, 2, \dotsc, N\}$ to be the \emph{alphabet} for the coding given by the Markov section $\mathcal{R}$ for the translation flow on $\mathcal{X}$. Define the $N \times N$ \emph{transition matrix} $T$ by
	\begin{align*}
		T_{j, k} =
		\begin{cases}
			1, & \interior(R_j) \cap \mathcal{P}^{-1}(\interior(R_k)) \neq \varnothing \\
			0, & \text{otherwise}
		\end{cases}
		\qquad
		\text{for all $1 \leq j, k \leq N$}.
	\end{align*}
	The transition matrix $T$ is \emph{topologically mixing} \cite[Theorem 4.3]{Rat73}, i.e., there exists $N_T \in \mathbb N$ such that $T^{N_T}$ consists only of positive entries. Define the spaces of bi-infinite and infinite \emph{admissible sequences}
	\begin{align*}
		\Sigma &= \{(\dotsc, x_{-1}, x_0, x_1, \dotsc) \in \mathcal A^{\mathbb Z}: T_{x_j, x_{j + 1}} = 1 \text{ for all } j \in \mathbb Z\}, \\
		\Sigma^+ &= \{(x_0, x_1, \dotsc) \in \mathcal A^{\mathbb Z_{\geq 0}}: T_{x_j, x_{j + 1}} = 1 \text{ for all } j \in \mathbb Z_{\geq 0}\}
	\end{align*}
	respectively. We will use the term \emph{admissible sequences} for finite sequences as well in the natural way. For all $x \in \Sigma$, we call the entries $x_j$ for all $j \geq 0$ the future coordinates and the entries $x_j$ for all $j < 0$ the past coordinates. For any fixed $\beta_0 \in (0, 1)$, we endow $\Sigma$ with the metric $d$ defined by $d(x, y) = \beta_0^{\inf\{|j| \in \mathbb Z_{\geq 0}: x_j \neq y_j\}}$ for all $x, y \in \Sigma$. We similarly endow $\Sigma^+$ with a metric which we also denote by $d$.
	
	\begin{definition}[Cylinder]
		For all $k \in \mathbb Z_{\geq 0}$, for all admissible sequences $x = (x_0, x_1, \dotsc, x_k)$, we define the corresponding \emph{cylinders} to be
		\begin{align*}
			\mathtt{C}_U[x] &= \{u \in U: \sigma^j(u) \in \interior(U_{x_j}) \text{ for all } 0 \leq j \leq k\} \\
			\mathtt{C}[x] &= \{y \in \Sigma^+: y_j = x_j \text{ for all } 0 \leq j \leq k\}
		\end{align*}
		with \emph{length} $\len(\mathtt{C}_U[x]) = \len(\mathtt{C}[x]) = \len(x) = k$. We will denote cylinders simply by $\mathtt{C}_U$ or $\mathtt{C}$ (or other typewriter style letters) when we do not need to specify the corresponding admissible sequence.
	\end{definition}
	
	By a slight abuse of notation, let $\sigma$ also denote the shift map on $\Sigma$ or $\Sigma^+$. There exist natural continuous surjections $\eta: \Sigma \to R$ and $\eta^+: \Sigma^+ \to U$ defined by $\eta(x) = \bigcap_{j = -\infty}^\infty \overline{\mathcal{P}^{-j}(\interior(R_{x_j}))}$ for all $x \in \Sigma$ and $\eta^+(x) = \bigcap_{j = 0}^\infty \overline{\sigma^{-j}(\interior(U_{x_j}))}$ for all $x \in \Sigma^+$. Define $\hat{\Sigma} = \eta^{-1}(\hat{R})$ and $\hat{\Sigma}^+ = (\eta^+)^{-1}(\hat{U})$. Then the restrictions $\eta|_{\hat{\Sigma}}: \hat{\Sigma} \to \hat{R}$ and $\eta^+|_{\hat{\Sigma}^+}: \hat{\Sigma}^+ \to \hat{U}$ are bijective and satisfy $\eta|_{\hat{\Sigma}} \circ \sigma|_{\hat{\Sigma}} = \mathcal{P}|_{\hat{R}} \circ \eta|_{\hat{\Sigma}}$ and $\eta^+|_{\hat{\Sigma}^+} \circ \sigma|_{\hat{\Sigma}^+} = \sigma|_{\hat{U}} \circ \eta^+|_{\hat{\Sigma}^+}$.
	
	Assume $\beta_0 \in (0, 1)$ is sufficiently close to $1$ so that $\eta$ and $\eta^+$ are Lipschitz continuous \cite[Lemma 2.2]{Bow73}. Let $L(\Sigma, \R)$ denote the space of Lipschitz continuous functions $f: \Sigma \to \mathbb R$ which is a Banach space with the norm $\|f\|_{\Lip} = \|f\|_\infty + \Lip(f)$ where $\Lip(f) = \sup_{x, y \in \Sigma, x \neq y} \frac{|f(x) - f(y)|}{d(x, y)}$ is the Lipschitz seminorm. We use similar notations for Lipschitz function spaces with domain $\Sigma^+$ and other codomains. For all function spaces, we suppress the codomain when it is $\R$.
	
	Since the horospherical foliations on $G$ are smooth, we conclude that $\tau$ is Lipschitz continuous on cylinders of length $1$. Then, $(\tau \circ \eta)|_{\hat{\Sigma}}$ is Lipschitz continuous and, abusing notation, we also denote its unique Lipschitz extension by $\tau: \Sigma \to \mathbb R$. Since $\tau$ is independent of past coordinates, again abusing notation, we obtain a well-defined Lipschitz continuous map $\tau: \Sigma^+ \to \mathbb R$. These are distinct from naive compositions with $\eta$ or $\eta^+$ because they may differ precisely on $x \in \Sigma$ for which $\proj_U(\eta(x)) \in \partial\mathtt{C}_U$ or $x \in \Sigma^+$ for which $\eta^+(x) \in \partial\mathtt{C}_U$ for some cylinder $\mathtt{C}_U \subset U$ with $\len(\mathtt{C}_U) = 1$. For all $k \in \mathbb N$, we define
	\begin{align*}
		\tau_k(x) &= \sum_{j = 0}^{k - 1} \tau(\sigma^j(x))
	\end{align*}
	and $\tau_0(x) = 0$ for all $x \in \Sigma^+$.
	
	\subsection{Thermodynamics}
	\label{subsec:Thermodynamics}
	\begin{definition}[Pressure]
		\label{def:Pressure}
		For all $f \in L(\Sigma)$, called the \emph{potential}, the \emph{pressure} is defined by
		\begin{align*}
			\Pr_\sigma(f) = \sup_{\nu \in \mathcal{M}^1_\sigma(\Sigma)}\left\{\int_\Sigma f \, d\nu + h_\nu(\sigma)\right\}
		\end{align*}
		where $\mathcal{M}^1_\sigma(\Sigma)$ is the set of $\sigma$-invariant Borel probability measures on $\Sigma$ and $h_\nu(\sigma)$ is the measure theoretic entropy of $\sigma$ with respect to $\nu$.
	\end{definition}
	
	For all $f \in L(\Sigma)$, there is a unique $\sigma$-invariant Borel probability measure $\nu_f$ on $\Sigma$ which attains the supremum in \cref{def:Pressure} called the \emph{$f$-equilibrium state} \cite[Theorems 2.17 and 2.20]{Bow08} and it satisfies $\nu_f(\hat{\Sigma}) = 1$ \cite[Corollary 3.2]{Che02}.
	
	Associated to $\psi$, denote the exponential growth rate
	\begin{align*}
		\delta_\psi = \limsup_{t \to +\infty} \frac{1}{t} \log\#\{[\gamma]: \psi(\lambda(\gamma)) \leq t\} \in (0, +\infty]
	\end{align*}
	where $[\gamma]$ denotes the conjugacy class of $\gamma \in \Gamma$. By \cite[Lemma 1]{Led95}, $\delta_\psi > 0$ since $\psi|_{\limitcone \setminus \{0\}} > 0$.
	In fact, $\psi$ being tangent to the concave function $\growthindicator$ implies that $\delta_\psi = 1$ \cite[Theorem 4.20]{Sam15}. Due to \cref{cor:TranslationFlowHasMarkovSection}, we can use \cite[Theorem A.2]{Car21} which states that $m_\mathsf{v}$ is a measure of maximal entropy for the translation flow which attains the maximal entropy of $\delta_\psi = 1$. We will consider in particular the probability measure $\nu_{-\delta_\psi\tau} = \nu_{-\tau}$ on $\Sigma$ which we will denote simply by $\nu_\Sigma$. We also define the probability measure $\nu_{\Sigma^+} = (\proj_{\Sigma^+})_*(\nu_\Sigma)$. Then $\nu_\Sigma(\hat{\Sigma}) = \nu_{\Sigma^+}(\hat{\Sigma}^+) = 1$, $\nu_{\Sigma^+}(\tau) = \nu_\Sigma(\tau)$, and has corresponding pressure $\Pr_\sigma(-\delta_\psi\tau) = \Pr_\sigma(-\tau) = 0$ \cite[Proposition 3.1]{BR75} (cf. \cite[Theorem 4.4]{Che02}). Following these references, the translation flow has a \emph{unique} measure of maximal entropy which can be constructed using the coding as follows. Consider the suspension space $\Sigma^\tau = (\Sigma \times \mathbb R)/\mathord{\sim}$ where $\sim$ is the equivalence relation on $\Sigma \times \mathbb R$ defined by $(x, t + \tau(x)) \sim (\sigma(x), t)$. Then we have a surjection $\Sigma^\tau \to \mathcal{X}$ defined by $(x, t) \mapsto \mathcal{W}_t(\eta(x))$. We can define the measure $\nu^\tau$ on $\Sigma^\tau$ as the product measure $\nu_\Sigma \times m^{\mathrm{Leb}}$ on $\{(x, t) \in \Sigma \times \mathbb R: 0 \leq t < \tau(x)\}$. Then the aforementioned surjection is bijective on a full measure subset and the pushforward measure $\nu^\tau$ on $\mathcal{X}$, by abuse of notation, is the unique measure of maximal entropy. Thus, $m_\mathsf{v} = \frac{\nu^\tau}{\nu_{\Sigma^+}(\tau)}$.
	
	\section{First return vector map and holonomy}
	\label{sec:FirstReturnVectorMapAndHolonomy}
	In this section we introduce two important objects, first return vector map and holonomy. We then prove some essential properties.
	
	Recall that $\Fboundary^\circ = G/P^\circ \cong K/M^\circ$ and $\Fboundary = G/P \cong K/M$, both of which have a left $K$-invariant metric $d$ induced from the one on $G$. We have a natural smooth projection $\pi^\circ: \Fboundary^\circ \to \Fboundary$. It is in fact a regular cover with deck transformation group $M/M^\circ$. Recall from \cref{subsec:ErgodicDecomposition} that $\mathcal{Y}_\Gamma$ denotes the set of $\Gamma$-minimal subsets of $\Fboundary^\circ$ and $\Lambda_0$ is a fixed set in $\mathcal{Y}_\Gamma$. Denote $N_\epsilon^+ = N^+ \cap B_\epsilon(e)$ for all $\epsilon > 0$ and define $\tilde{\Lambda}_\Gamma = \bigcup_{\Lambda \in \mathcal{Y}_\Gamma} \Lambda$. Using the open dense subset $N^+P \subset G$, we obtain the following lemma.
	
	\begin{lemma}
		\label{lem:FCirc_DiffeoTo_F_OnSmallNeighborhood}
		Let $\xi^\circ \in \Fboundary^\circ$ and $\xi = \pi^\circ(\xi^\circ) \in \Fboundary$. There exists $\epsilon_0' > 0$ such that for all $\epsilon \in (0, \epsilon_0')$, the subsets $\xi^\circ N_\epsilon^+ \subset \Fboundary^\circ$ and $\xi N_\epsilon^+ \subset \Fboundary$ are open sets and they are diffeomorphic under $\pi^\circ$.
	\end{lemma}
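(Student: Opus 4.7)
The plan is to recognize $\xi N_\epsilon^+$ and $\xi^\circ N_\epsilon^+$ as the images of two parallel orbit maps from $N^+$ and show that both are local diffeomorphisms for small $\epsilon$. Fix a lift $g \in G$ with $gP^\circ = \xi^\circ$, so $gP = \xi$, and consider the smooth maps
\begin{align*}
\alpha : N^+ \to \Fboundary, \quad h \mapsto ghP, \qquad \alpha^\circ : N^+ \to \Fboundary^\circ, \quad h \mapsto ghP^\circ.
\end{align*}
Clearly $\pi^\circ \circ \alpha^\circ = \alpha$, so once we prove each of these is a diffeomorphism onto its (open) image on a small neighborhood of $e$ in $N^+$, the restriction $\pi^\circ|_{\xi^\circ N_\epsilon^+} = \alpha \circ (\alpha^\circ)^{-1}$ will be the required diffeomorphism.

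To establish that $\alpha$ and $\alpha^\circ$ are local diffeomorphisms at $e$, I would compute their differentials there. After left translation by $g^{-1}$, the differential of $\alpha$ at $e$ is the natural composition $\LieN^+ \hookrightarrow \LieG \twoheadrightarrow \LieG/\LieP$, which is injective because the restricted root space decomposition gives $\LieN^+ \cap \LieP = \LieN^+ \cap (\LieM \oplus \LieA \oplus \LieN^-) = 0$; since $\dim \LieN^+ = \#\Phi^+ = \dim \Fboundary$, it is an isomorphism. Because $P^\circ$ is an open subgroup of $P$, we have $\Lie(P^\circ) = \LieP$, so the same computation shows the differential of $\alpha^\circ$ at $e$ is also an isomorphism; equivalently, this is just the infinitesimal version of the fact that $N^+ \times P \to N^+P$ (resp.\ $N^+ \times P^\circ \to N^+P^\circ$) is a diffeomorphism onto an open set of $G$ coming from the open dense Bruhat cell $N^+P$. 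Choose $\epsilon_0' > 0$ small enough that both $\alpha|_{N^+_{\epsilon_0'}}$ and $\alpha^\circ|_{N^+_{\epsilon_0'}}$ are diffeomorphisms onto open subsets of $\Fboundary$ and $\Fboundary^\circ$ respectively.

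For any $\epsilon \in (0, \epsilon_0')$, this identifies $\xi N_\epsilon^+ = \alpha(N_\epsilon^+)$ and $\xi^\circ N_\epsilon^+ = \alpha^\circ(N_\epsilon^+)$ as open sets, and the identity $\pi^\circ \circ \alpha^\circ = \alpha$ combined with the injectivity of $\alpha$ forces $\pi^\circ$ to be injective on $\xi^\circ N_\epsilon^+$; as $\pi^\circ$ is a smooth covering, injectivity upgrades to being a diffeomorphism onto $\xi N_\epsilon^+$. The only mildly delicate point is interpreting the notation $\xi N_\epsilon^+$ consistently across the two factors (it depends on the choice of lift $g$ in a pair $(\xi^\circ, \xi)$ with $\pi^\circ(\xi^\circ) = \xi$), but once $g$ is fixed for both simultaneously—as is natural here—the argument presents no real obstacle; the proof is essentially a direct application of the inverse function theorem to the smooth parametrization of Bruhat cells by $N^+$, together with the observation that passing from $P$ to its identity component $P^\circ$ does not change Lie algebras.
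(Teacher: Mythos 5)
Your proof is correct and is essentially the argument the paper intends: the lemma is stated without proof, preceded only by the remark that it follows from the open dense subset $N^+P \subset G$, and your computation via the Bruhat-cell parametrizations $h \mapsto ghP$ and $h \mapsto ghP^\circ$ (using $\Lie(P^\circ) = \Lie(P)$ and $N^+ \cap P = \{e\}$) is exactly the standard way to make that remark precise. The only minor observation is that since $N^+ \cap P = \{e\}$ these maps are in fact injective on all of $N^+$, so no shrinking to $\epsilon_0'$ is actually needed for the diffeomorphism claim; but restricting does no harm and matches the statement.
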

	
	\begin{lemma}
		\label{lem:SmallNeighborhoodOfLambdaTildeInLambda0}
		There exists $\delta_0 > 0$ such that for all $\xi \in \Lambda_0$, we have $\tilde{\Lambda}_\Gamma \cap B_{\delta_0}(\xi) \subset \Lambda_0$.
	\end{lemma}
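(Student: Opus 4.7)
The plan is to show that $\Lambda_0$ is separated from the union $\tilde{\Lambda}_\Gamma \setminus \Lambda_0$ by a positive distance in the compact space $\Fboundary^\circ$, and then take $\delta_0$ to be any positive number less than that distance. The entire argument rests on the finiteness of $\mathcal{Y}_\Gamma$, which is why one can hope for a \emph{uniform} $\delta_0$.

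First I would establish that $\mathcal{Y}_\Gamma$ is finite. Since $M$ is compact, $M/M^\circ$ is a finite group, and since $M^\circ \subset M_\Gamma$, the index $[M:M_\Gamma]$ is finite. The right $M$-action on $\Fboundary^\circ$ commutes with the left $\Gamma$-action, so it permutes the elements of $\mathcal{Y}_\Gamma$; combined with the fact that every $\Gamma$-minimal subset of $\Fboundary^\circ$ projects to $\limitset$ under $\pi^\circ$ (because $\limitset$ is the unique $\Gamma$-minimal subset of $\Fboundary$) and $M$ acts transitively on the fibers of $\pi^\circ$, the orbit $\{\Lambda_0 m : m \in M\}$ exhausts $\mathcal{Y}_\Gamma$. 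By the definition of $M_\Gamma$ as the stabilizer of $\Lambda_0$, we get the bijection $M_\Gamma \backslash M \simrightarrow \mathcal{Y}_\Gamma$, so $|\mathcal{Y}_\Gamma| = [M:M_\Gamma] < \infty$.

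Next, each $\Lambda \in \mathcal{Y}_\Gamma$ is closed in $\Fboundary^\circ$ by the standard definition of a minimal subset (closed, $\Gamma$-invariant, nonempty, and minimal among such). Two distinct minimal subsets are disjoint: their intersection would be a smaller closed $\Gamma$-invariant subset, contradicting minimality of each. Therefore
\begin{equation*}
	\tilde{\Lambda}_\Gamma \setminus \Lambda_0 = \bigsqcup_{\Lambda \in \mathcal{Y}_\Gamma \setminus \{\Lambda_0\}} \Lambda
\end{equation*}
is a \emph{finite} union of closed subsets of $\Fboundary^\circ$, hence closed. As $\Fboundary^\circ$ is compact, both $\Lambda_0$ and $\tilde{\Lambda}_\Gamma \setminus \Lambda_0$ are compact, and they are disjoint.

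Therefore $\delta := d(\Lambda_0, \tilde{\Lambda}_\Gamma \setminus \Lambda_0) > 0$ since the distance between two disjoint compact subsets of a metric space is strictly positive. Setting $\delta_0 = \delta/2$, for any $\xi \in \Lambda_0$ and any $\eta \in B_{\delta_0}(\xi) \cap \tilde{\Lambda}_\Gamma$, if $\eta \notin \Lambda_0$ then $\eta \in \tilde{\Lambda}_\Gamma \setminus \Lambda_0$ and we would have $d(\xi, \eta) \geq \delta > \delta_0$, contradicting $\eta \in B_{\delta_0}(\xi)$. Thus $\eta \in \Lambda_0$, proving the inclusion. The only conceptual step is the finiteness of $\mathcal{Y}_\Gamma$; everything else is a routine compactness argument, so I do not expect any serious obstacle.
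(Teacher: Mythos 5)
Your proposal is correct and takes essentially the same route as the paper: both arguments rest on the finiteness of $\mathcal{Y}_\Gamma$ (equal to $[M:M_\Gamma] \leq [M:M^\circ]$) together with the compactness and pairwise disjointness of the $\Gamma$-minimal subsets, and then extract a uniform $\delta_0$ by a compactness argument. The paper phrases the last step as a pointwise choice of $\delta_\xi$ followed by compactness of $\Lambda_0$, whereas you take the distance between the two disjoint compact sets directly, but this is only a cosmetic difference.
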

	
	\begin{proof}
		Let $\xi \in \Lambda_0$. Note that $\Lambda$ is compact and disjoint from $\Lambda_0$ for all $\Lambda \in \mathcal{Y}_\Gamma$ with $\Lambda \neq \Lambda_0$. Also, $\#\mathcal{Y}_\Gamma =  [M : M_\Gamma] \leq [M : M^\circ]$ is finite. Thus, choosing $\delta_\xi \in \bigl(0, \frac{1}{2}\min\{d(\xi, \Lambda): \Lambda \in \mathcal{Y}_\Gamma, \Lambda \neq \Lambda_0\}\bigr)$, we have $\tilde{\Lambda}_\Gamma \cap B_{\delta_\xi}(x) \subset \Lambda_0$ for all $x \in B_{\delta_\xi}(\xi)$. The lemma now follows by compactness of $\Lambda_0$.
	\end{proof}
	
	Without loss of generality, we make the following assumptions henceforth. Replacing $\Gamma$ with an appropriate conjugate in $G$ if necessary, we assume that $e^\pm \in \limitset$. Choosing $\Lambda_0 = \overline{\Gamma eP^\circ} \in \mathcal{Y}_\Gamma$, we also assume $[e] \in \Omega_0 \subset \Gamma \backslash G$. We further assume that the Markov section $\mathcal{R}$ was constructed such that $\pi_\psi(\Gamma e M)$ is the center of the rectangle $R_1 \in \mathcal{R}$.
	
	\begin{remark}
		The above assumptions are largely to resolve ambiguities for the holonomy along closed orbits in a compatible fashion with the definition of $\Gamma^\star$ in \cref{subsec:DensityTools} so that \cref{lem:FirstReturnVectorAndHolonomyEqualsGeneralizedJordanProjectionOnFixedPoints} holds.
	\end{remark}
	
	We will now construct a section 
	\begin{align*}
		F: R \to \Omega_0 \subset \Gamma \backslash G,
	\end{align*}
	i.e, $(\pi_\psi \circ \pi \circ F)(u) = u$ for all $u \in R$ where $\pi: \Gamma \backslash G \to \Gamma \backslash G/M$ is the quotient map, such that it satisfies \cref{eqn:SectionPropertyOnUnstableLeafs,eqn:SectionPropertyOnStableLeafs}. Let $D \subset \limitset^{(2)} \times \mathbb R$ be a fundamental domain for $\mathcal{X}$ so that $R_j$ has a unique isometric lift $\tilde{R}_j \subset D$ for all $j \in \mathcal{A}$ and so that the center of $R_1$ lifts to $(e^+, e^-, 0) \in D$. Denote $\tilde{R} = \bigsqcup_{j \in \mathcal{A}} \tilde{R}_j$. For all $u \in R$, we also denote its unique lift $\tilde{u} \in \tilde{R}$. We can choose a compact lift $\tilde{D}/M \subset \limitset^{(2)} \times \LieA \cong G/M$ under the map $\pi_\psi$ by using the decomposition $\LieA = \mathbb R \mathsf{v} \oplus \ker\psi$ and ensuring that elements in $\tilde{D}/M$ have vanishing $\ker\psi$ component. Let $w_j$ be the center of $R_j$ for all $j \in \mathcal{A}$. First, recalling \cref{eqn:Omega_mProjectsToE}, we can choose $F(w_j) = [\tilde{w}_j] \in \Omega_0$ to be a preimage of $w_j$ in $\Omega_0$ under $\pi_\psi \circ \pi$ such that $\tilde{w}_j \in \tilde{D}M$ for all $j \in \mathcal{A}$. Note that $\tilde{w}_1 = e$ and $F(w_1) = [e] \in \Omega_0$. Then, we extend the section $F$ such that for all $j \in \mathcal{A}$ and $u, u' \in U_j$, we have that $F(u)$ and $F(u')$ are backwards asymptotic, i.e., $\lim_{t \to -\infty} d(F(u)a_{t\mathsf{v}}, F(u')a_{t\mathsf{v}}) = 0$. Recalling \cref{eqn:HorosphericalSubgroups,eqn:UniqueElementsInN^+AndN^-}, we must have 
	\begin{align}
		\label{eqn:SectionPropertyOnUnstableLeafs}    
		F(u') = F(u)h
	\end{align}
	for some unique $h \in N^+$. Then, we further extend the section $F$ such that for all $j \in \mathcal{A}$, $u \in U_j$, and $s, s' \in S_j$, we have that $F([u, s])$ and $F([u, s'])$ are forwards asymptotic, i.e., $\lim_{t \to +\infty} d(F([u, s])a_{t\mathsf{v}}, F([u, s'])a_{t\mathsf{v}}) = 0$. Again by \cref{eqn:HorosphericalSubgroups,eqn:UniqueElementsInN^+AndN^-}, we must have 
	\begin{align}
		\label{eqn:SectionPropertyOnStableLeafs}        
		F([u, s']) = F([u, s])n
	\end{align}
	for some unique $n \in N^-$. Note that $F(R_1) \subset \Gamma N^+N^-$. The above construction is possible due to the definitions of the foliations $W^{\mathrm{su}}$ and $W^{\mathrm{ss}}$ from \cref{eqn:StrongUnstableAndStrongStableFoliations}. Without loss of generality, we assume that the size $\hat{\delta}$ of the Markov section $\mathcal{R}$ is sufficiently small such that using compactness of $\tilde{D}M$ and \cref{lem:FCirc_DiffeoTo_F_OnSmallNeighborhood,lem:SmallNeighborhoodOfLambdaTildeInLambda0}, we can ensure that the image of $F$ lies in $\Omega_0 \subset \Gamma \backslash G$.
	
	We introduce a convenient inner product $\langle \cdot, \cdot \rangle_\psi$ on $\LieA$ such that it coincides with $\langle \cdot, \cdot \rangle$ on $\ker\psi$, the decomposition $\LieA = \mathbb R \mathsf{v} \oplus \ker\psi$ is orthogonal, and $\langle \mathsf{v}, \mathsf{v} \rangle_\psi = 1$. Denote by $\|\cdot\|_\psi$ the corresponding norm and $\proj_{\ker\psi}: \LieA \to \ker\psi$ the orthogonal projection with respect to $\langle \cdot, \cdot \rangle_\psi$.
	
	\begin{definition}[First return vector map, Holonomy]
		The \emph{first return $\LieA$-vector map} and \emph{holonomy} are maps $\mathsf{K}: R \to \LieA$ and $\vartheta: R \to M_\Gamma$ respectively that associate to each $u \in R$ the unique elements $\mathsf{K}(u) \in \LieA$ and $\vartheta(u) \in M_\Gamma$ which satisfy
		\begin{align*}
			F(\mathcal{P}(u)) = F(u) a_{\mathsf{K}(u)}\vartheta(u).
		\end{align*}
		The \emph{first return $\ker\psi$-vector map} is the map $\widehat{\mathsf{K}} = \proj_{\ker\psi} \circ \mathsf{K}: R \to \ker\psi$. We drop $\LieA$- and $\ker\psi$- in the above terminology when we refer to either maps.
	\end{definition}
	
	\begin{remark}
		A priori, it is clear that there exists a unique $\vartheta(u) \in M$ satisfying the above equation. We can ensure the stronger condition that $\vartheta(u) \in M_\Gamma$ because the image of $F$ lies in $\Omega_0$. Thus, holonomy is well-defined.
	\end{remark}
	
	From definitions, we then have
	\begin{align}
		\label{eqn:K_Equals_tau+K-hat}
		\mathsf{K}(u) = \tau(u)\mathsf{v} + \widehat{\mathsf{K}}(u) \qquad \text{for all $u \in R$}.
	\end{align}
	
	\begin{lemma}
		\label{lem:ConstantOnStrongStableLeaves}
		The maps $\tau$, $\widehat{\mathsf{K}}$, $\mathsf{K}$, and $\vartheta$ are all constant on $[u, S_j]$ for all $u \in U_j$ and $j \in \mathcal{A}$.
	\end{lemma}
	
	\begin{proof}
		The lemma for $\tau$ was already observed in \cref{subsec:MetricAnosovFlowsAndMarkovSections}. Thus, in light of \cref{eqn:K_Equals_tau+K-hat}, it suffices to prove the lemma only for $\mathsf{K}$ and $\vartheta$. Let $j \in \mathcal{A}$, $u \in U_j$, $s \in S_j$, and $u' = [u, s]$. We have $F(u') = F(u)n$ for some $n \in N^-$. From definitions, we have $F(\mathcal{P}(u)) = F(u)a_{\mathsf{K}(u)}\vartheta(u)$ and $F(\mathcal{P}(u')) = F(u')a_{\mathsf{K}(u')}\vartheta(u') = F(u)na_{\mathsf{K}(u')}\vartheta(u')$. We assume that $\epsilon_0$ was chosen sufficiently small when using the locally isometric covering map $G \to \Gamma \backslash G$. Using left $G$-invariance and right $K$-invariance of the distance function $d$ on $G$, we have
		\begin{align*}
			d(F(\mathcal{P}(u))a_{t\mathsf{v}}, F(\mathcal{P}(u'))a_{t\mathsf{v}}) ={}& d(F(u)a_{\mathsf{K}(u) + t\mathsf{v}}\vartheta(u), F(u)n a_{\mathsf{K}(u') + t\mathsf{v}}\vartheta(u')) \\
			={}& d(a_{\mathsf{K}(u) - \mathsf{K}(u')}\vartheta(u)\vartheta(u')^{-1}, a_{-(\mathsf{K}(u') + t\mathsf{v})}n a_{\mathsf{K}(u') + t\mathsf{v}}) \\
			\geq{}& d(a_{\mathsf{K}(u) - \mathsf{K}(u')}\vartheta(u)\vartheta(u')^{-1}, e) \\
			&{}- d(e, a_{-(\mathsf{K}(u') + t\mathsf{v})}n a_{\mathsf{K}(u') + t\mathsf{v}})
		\end{align*}
		for all $t \geq 0$. As $t \to +\infty$, both the left hand side and the last term vanish by definitions. Thus, $d(a_{\mathsf{K}(u) - \mathsf{K}(u')}\vartheta(u)\vartheta(u')^{-1}, e) = 0$ which implies $\mathsf{K}(u') = \mathsf{K}(u)$ and $\vartheta(u') = \vartheta(u)$.
	\end{proof}
	
	As with $\tau$ in \cref{subsec:SymbolicDynamics}, since the horospherical foliations on $G$ are smooth, we obtain Lipschitz continuous maps corresponding to $\widehat{\mathsf{K}}$, $\mathsf{K}$, and $\vartheta$ which, abusing notation, we also denote by $\widehat{\mathsf{K}}: \Sigma \to \ker\psi$, $\mathsf{K}: \Sigma \to \LieA$, and $\vartheta: \Sigma \to M_\Gamma$. Now, \cref{lem:ConstantOnStrongStableLeaves} guarantees that $\widehat{\mathsf{K}}$, $\mathsf{K}$, and $\vartheta$ are independent of past coordinates and hence, again abusing notation, we obtain well-defined Lipschitz continuous maps $\widehat{\mathsf{K}}: \Sigma^+ \to \ker\psi$, $\mathsf{K}: \Sigma^+ \to \LieA$, and $\vartheta: \Sigma^+ \to M_\Gamma$. Note that on each cylinder of length $1$, the holonomy $\vartheta$ takes values in a single corresponding connected component of $M_\Gamma$. For all $k \in \mathbb N$, we define
	\begin{align*}
		\widehat{\mathsf{K}}_k(x) &= \sum_{j = 0}^{k - 1} \widehat{\mathsf{K}}(\sigma^j(x)), & \mathsf{K}_k(x) &= \sum_{j = 0}^{k - 1} \mathsf{K}(\sigma^j(x)), & \vartheta^k(x) &= \prod_{j = 0}^{k - 1} \vartheta(\sigma^j(x))
	\end{align*}
	and $\mathsf{K}_0(x) = \widehat{\mathsf{K}}_0(x) = 0 \in \LieA$ and $\vartheta^0(x) = e \in M_\Gamma$ for all $x \in \Sigma^+$, where the terms in the product are in \emph{ascending} order from left to right. Note that for all $k \in \mathbb Z_{\geq 0}$ we then have
	\begin{align*}
		F(\mathcal{W}_{\tau_k(x)}(\eta^+(x))) = F(\eta^+(x))a_{\mathsf{K}_k(x)}\vartheta^k(x) \qquad \text{for all $x \in \Sigma^+$}.
	\end{align*}
	
	Define $\varrho: M_\Gamma \to \U(L^2(M_\Gamma, \mathbb C))$ to be the unitary left regular representation, i.e., $\varrho(\tilde{m})(\phi)(m) = \phi(\tilde{m}^{-1}m)$ for all $m \in M_\Gamma$, $\phi \in L^2(M_\Gamma, \mathbb C)$, and $\tilde{m} \in M_\Gamma$. Let $\widehat{M}_\Gamma$ denote the unitary dual of $M_\Gamma$. We denote the trivial irreducible representation by $1 \in \widehat{M}_\Gamma$. By the Peter--Weyl theorem, we obtain an orthogonal Hilbert space decomposition
	\begin{align*}
		L^2(M_\Gamma, \mathbb C) = \widehat{\bigoplus}_{\mu \in \widehat{M}_\Gamma} V_\mu^{\oplus \dim(\mu)}
	\end{align*}
	corresponding to the decomposition $\varrho = \widehat{\bigoplus}_{\mu \in \widehat{M}_\Gamma} \mu^{\oplus \dim(\mu)}$.
	
	\section{Spectra of the transfer operators with holonomy}
	\label{sec:SpectraOfTheTransferOperatorsWithHolonomy}
	In this section we define the transfer operators with holonomy and characterize their spectra.
	
	\subsection{Transfer operators}
	\label{subsec:TransferOperators}
	\begin{definition}[Transfer operator with holonomy]
		For all $a \in \mathbb R$, $v \in \LieA$, and $\mu \in \widehat{M}_\Gamma$, the \emph{transfer operator with holonomy} $\mathcal{L}_{a\tau, v, \mu}: C(\Sigma^+, V_\mu) \to C(\Sigma^+, V_\mu)$ is defined by
		\begin{align*}
			\mathcal{L}_{a\tau, v, \mu}(H)(x) = \sum_{x' \in \sigma^{-1}(x)} e^{a\tau(x') + i\langle v, \mathsf{K}(x') \rangle_\psi} \mu(\vartheta(x')^{-1}) H(x')
		\end{align*}
		for all $x \in \Sigma^+$ and $H \in C(\Sigma^+, V_\mu)$.
	\end{definition}
	
	For all $a \in \mathbb R$, we denote $\mathcal{L}_{a\tau} = \mathcal{L}_{a\tau, 0, 1}$ and simply call it the \emph{transfer operator}. We recall the Ruelle--Perron--Frobenius (RPF) theorem along with the theory of Gibbs measures \cite{Bow08,PP90}.
	
	\begin{theorem}
		\label{thm:RPFonU}
		For all $a \in \mathbb R$, the operator $\mathcal{L}_{a\tau}: C(\Sigma^+, \mathbb C) \to C(\Sigma^+, \mathbb C)$ and its dual $\mathcal{L}_{a\tau}^*: C(\Sigma^+, \mathbb C)^* \to C(\Sigma^+, \mathbb C)^*$ has eigenvectors with the following properties. There exist a unique positive function $h \in L(\Sigma^+)$ and a unique Borel probability measure $\nu$ on $\Sigma^+$ such that:
		\begin{enumerate}
			\item	$\mathcal{L}_{a\tau}(h) = e^{\Pr_\sigma(a\tau)}h$;
			\item	$\mathcal{L}_{a\tau}^*(\nu) = e^{\Pr_\sigma(a\tau)}\nu$;
			\item	the eigenvalue $e^{\Pr_\sigma(a\tau)}$ is maximal simple and the rest of the spectrum of $\mathcal{L}_{a\tau}|_{L(\Sigma^+, \mathbb C)}$ is contained in a disk of radius strictly less than $e^{\Pr_\sigma(a\tau)}$;
			\item	$\nu(h) = 1$ and the Borel probability measure $\mu$ defined by $d\mu = h \, d\nu$ is $\sigma$-invariant and is the projection of the $a\tau$-equilibrium state to $\Sigma^+$, i.e., $\mu = (\proj_{\Sigma^+})_*(\nu_{a\tau})$.
		\end{enumerate}
	\end{theorem}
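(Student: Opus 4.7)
The statement is the classical Ruelle--Perron--Frobenius theorem applied in our setting, and the plan is to invoke it as presented in \cite{Bow08,PP90} after verifying that our situation satisfies its hypotheses. First I would observe that the one-sided subshift $\sigma: \Sigma^+ \to \Sigma^+$ is topologically mixing, which is inherited from the topological mixing of the transition matrix $T$ noted in \cref{subsec:SymbolicDynamics}, and that the potential $a\tau$ is Lipschitz continuous on $\Sigma^+$ (hence has summable variations in the metric $d$) because $\tau$ itself was shown to be Lipschitz in \cref{subsec:SymbolicDynamics}. These are precisely the hypotheses under which the abstract RPF theorem applies.

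With these preliminaries in place, I would construct the eigenmeasure $\nu$ by a Schauder--Tychonoff fixed point argument applied to the map
\begin{align*}
	\nu' \mapsto \frac{\mathcal{L}_{a\tau}^*(\nu')}{\mathcal{L}_{a\tau}^*(\nu')(\mathbf{1})}
\end{align*}
on the convex compact space of Borel probability measures on $\Sigma^+$ with the weak-$*$ topology; the resulting fixed point $\nu$ satisfies $\mathcal{L}_{a\tau}^*(\nu) = \lambda\nu$ for some $\lambda > 0$, and a standard thermodynamic computation (partition function asymptotics) identifies $\lambda = e^{\Pr_\sigma(a\tau)}$. The positive Lipschitz eigenfunction $h$ would then be produced by showing that the sequence of Cesàro averages of $\lambda^{-n}\mathcal{L}_{a\tau}^n(\mathbf{1})$ has a uniform Lipschitz bound via a Doeblin--Fortet / Lasota--Yorke type inequality on $\Sigma^+$, and extracting a uniformly convergent subsequence by Arzelà--Ascoli.

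The simplicity of the maximal eigenvalue and the spectral gap on $L(\Sigma^+, \mathbb{C})$ I would deduce from the Ionescu-Tulcea--Marinescu theorem, using the pair of norms $\|\cdot\|_\infty \leq \|\cdot\|_{\Lip}$ together with the aforementioned Lasota--Yorke inequality for the normalized transfer operator $\tilde{\mathcal{L}}(H) = h^{-1}\lambda^{-1}\mathcal{L}_{a\tau}(hH)$; topological mixing of $\sigma$ rules out other peripheral eigenvalues. Finally, after normalizing $\nu(h) = 1$, the absolute continuity $d\mu = h\,d\nu$ produces a $\sigma$-invariant probability measure, and its identification with the projection $(\proj_{\Sigma^+})_*(\nu_{a\tau})$ of the equilibrium state follows from the variational principle together with the uniqueness of the equilibrium state for Lipschitz potentials on topologically mixing subshifts of finite type.

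The main obstacle is essentially bookkeeping rather than substance: since all of the above is a direct transcription of \cite[Chapters 1--2]{Bow08} and \cite[Chapters 2--3]{PP90} to our coding, the real work was done earlier in verifying that $\tau$ is Lipschitz on $\Sigma^+$ (in particular the subtle issue of its independence of past coordinates and the Lipschitz extension across cylinder boundaries), and in establishing topological mixing of $T$; once these are granted the theorem is a black-box application.
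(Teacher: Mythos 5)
Your proposal is correct and matches the paper's treatment: the paper states this result as a recollection of the classical Ruelle--Perron--Frobenius theorem and theory of Gibbs measures, citing \cite{Bow08,PP90} without further proof, exactly as you do after verifying the two relevant hypotheses (topological mixing of the transition matrix and Lipschitz continuity of $\tau$ on $\Sigma^+$). Your outline of the underlying classical argument (Schauder--Tychonoff for the eigenmeasure, a Lasota--Yorke bound and Arzel\`{a}--Ascoli for the eigenfunction, Ionescu-Tulcea--Marinescu for the spectral gap, and uniqueness of equilibrium states for the identification of $d\mu = h\,d\nu$) is the standard proof and is consistent with those references.
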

	
	In light of \cref{thm:RPFonU}, it is convenient to normalize the transfer operators defined above. Let $a \in \mathbb R$. Define $\kappa_a = e^{\Pr_\sigma(-(1 + a)\tau)}$ which is the maximal simple eigenvalue of $\mathcal{L}_{-(1 + a)\tau}$ by \cref{thm:RPFonU} and recall from \cref{subsec:Thermodynamics} that $\kappa_0 = 1$. Define the eigenvectors, the unique positive function $h_a \in L(\Sigma^+)$ and the unique probability measure $\nu_a$ on $\Sigma^+$ with $\nu_a(h_a) = 1$ such that
	\begin{align*}
		\mathcal{L}_{-(1 + a)\tau}(h_a) &= \kappa_a h_a, & \mathcal{L}_{-(1 + a)\tau}^*(\nu_a) &= \kappa_a \nu_a
	\end{align*}
	provided by \cref{thm:RPFonU}. Note that $d\nu_{\Sigma^+} = h_0 \, d\nu_0$. We define
	\begin{align*}
		\tau^{(a)} = -(1 + a)\tau + \log(h_0) - \log(h_0 \circ \sigma).
	\end{align*}
	For all $k \in \mathbb Z_{\geq 0}$, we define the notation $\tau_k^{(a)}$ similar to $\tau_k$ in \cref{subsec:SymbolicDynamics}.
	
	We now normalize the transfer operators. Let $a \in \mathbb R$, $v \in \LieA$, and $\mu \in \widehat{M}_\Gamma$. We define $\mathcal{L}_{a, v, \mu}: C(\Sigma^+, V_\mu) \to C(\Sigma^+, V_\mu)$ by
	\begin{align*}
		\mathcal{L}_{a, v, \mu}(H)(x) &= \sum_{x' \in \sigma^{-1}(x)} e^{\tau^{(a)}(x') + i\langle v, \mathsf{K}(x') \rangle_\psi} \mu(\vartheta(x')^{-1}) H(x')
	\end{align*}
	for all $x \in \Sigma^+$ and $H \in C(\Sigma^+, V_\mu)$. For all $k \in \mathbb N$, its $k$\textsuperscript{th} iteration is
	\begin{align}
		\label{eqn:k^thIterationOfCongruenceTransferOperatorOfType_rho}
		\mathcal{L}_{a, v, \mu}^k(H)(x) &= \sum_{x' \in \sigma^{-k}(x)} e^{\tau_k^{(a)}(x') + i\langle v, \mathsf{K}_k(x') \rangle_\psi} \mu(\vartheta^k(x')^{-1}) H(x')
	\end{align}
	for all $x \in \Sigma^+$ and $H \in C(\Sigma^+, V_\mu)$. For all $a \in \mathbb R$, $v \in \LieA$, and $\mu \in \widehat{M}_\Gamma$, we denote $\mathcal{L}_a = \mathcal{L}_{a, 0, 1}$ and $\mathcal{L}_{v, \mu} = \mathcal{L}_{0, v, \mu}$. With this normalization, we have $\mathcal{L}_0(\chi_{\Sigma^+}) = \chi_{\Sigma^+}$ and $\mathcal{L}_0^*(\nu_{\Sigma^+}) = \nu_{\Sigma^+}$.
	
	Throughout the paper, for all $a \in \mathbb R$, $v \in \LieA$, and $\mu \in \widehat{M}_\Gamma$, we abuse notation and denote $\mathcal{L}_{a, v, \mu} = \mathcal{L}_{a, v, \mu}|_{L(\Sigma^+, V_\mu)}: L(\Sigma^+, V_\mu) \to L(\Sigma^+, V_\mu)$ and consequently their spectra shall always mean the spectra of the operators restricted to $L(\Sigma^+, V_\mu)$.
	
	\subsection{Density tools}
	\label{subsec:DensityTools}
	The following is the first required density proposition.
	
	\begin{proposition}
		\label{pro:DenseOrbitInSigma+TimesM_Gamma}
		There exists $x \in \Sigma^+$ such that $\{(\sigma^k(x), \vartheta^k(x)): k \in \mathbb N\} \subset \Sigma^+ \times M_\Gamma$ is dense. 
	\end{proposition}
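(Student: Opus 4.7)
My plan is to extract the desired $x$ from a dense orbit on $\mathcal{Z}_{[e]}$. By \cref{pro:R-ergodicity} there exists a point in $\mathcal{Z}_{[e]}$ with dense forward $\R$-orbit; since $\psi(\mathsf{v})=1$ and the induced $\R$-action on $\mathcal{Z}$ corresponds under $\Psi$ to right translation by $\{a_{t\mathsf{v}}\}_{t\in\R}$, pulling this orbit back through $\Psi$ yields a point $g_0\in\tilde{\Omega}_0\subset\Gamma\backslash G$ whose forward $\exp(t\mathsf{v})$-trajectory is dense in $\tilde{\Omega}_0$. Up to a small time shift and a Baire category argument, I may arrange that $g_0=F(u_0)$ for some $u_0\in\hat{U}$, so that $u_0$ corresponds via $\eta^+$ to a unique $x\in\hat{\Sigma}^+$.

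The pivotal identity is the iterated holonomy relation
\begin{align*}
    F(\mathcal{P}^k(u_0))=F(u_0)\,a_{\tau_k(u_0)\mathsf{v}}\,a_{\widehat{\mathsf{K}}_k(u_0)}\,\vartheta^k(u_0),
\end{align*}
which exhibits $\vartheta^k(u_0)$ as the $M_\Gamma$-coordinate at the $k$-th Poincar\'{e} return of $g_0$'s trajectory to $F(R)$. Now fix an arbitrary cylinder $\mathtt{C}[\mathbf{y}]\subset\Sigma^+$ and an arbitrary nonempty open set $V\subset M_\Gamma$. Choose $u'\in\mathtt{C}_U[\mathbf{y}]\cap\hat{U}$ and $m'\in V$; since $\tilde{\Omega}_0$ is right $M_\Gamma$-invariant, the target point $F(u')m'$ lies in $\tilde{\Omega}_0$. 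By density there exists $t>0$ with $\Gamma g_0\exp(t\mathsf{v})$ arbitrarily close to $\Gamma F(u')m'$ in $\Gamma\backslash G$, and lifting via the locally isometric cover gives $F(u_0)\exp(t\mathsf{v})$ close to $\gamma F(u')m'$ in $G$ for a suitable $\gamma\in\Gamma$.

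The next step is to convert this closeness into a Poincar\'{e}-return statement. Near $\gamma F(u')m'$, the foliations $W^{\mathrm{su}}$, $W^{\mathrm{ss}}$, the flow direction $\R\mathsf{v}$, the complementary $\ker\psi$-direction in $\LieA$, and the right $M_\Gamma$-action together produce a local product chart on $\tilde{\Omega}_0$. Transversality of $\exp(t\mathsf{v})$ to $F(R)$ forces $t$ to lie in a small neighbourhood of some return time $\tau_k(u_0)$; then comparing the displayed decomposition of $F(u_0)\exp(t\mathsf{v})$ with the local product coordinates of $\gamma F(u')m'$ and matching each factor yields $\mathcal{P}^k(u_0)\in\mathtt{C}_U[\mathbf{y}]$, hence $\sigma^k(x)\in\mathtt{C}[\mathbf{y}]$, together with $\vartheta^k(u_0)\in V$. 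This produces the required $k$ and establishes density of $\{(\sigma^k(x),\vartheta^k(x))\}_{k\in\N}$ in $\Sigma^+\times M_\Gamma$.

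I expect the main obstacle to be this final matching step: carefully verifying that closeness of $F(u_0)\exp(t\mathsf{v})$ to $\gamma F(u')m'$ in the ambient metric truly forces both the rectangle coordinate $\mathcal{P}^k(u_0)$ to land in the prescribed cylinder and the holonomy $\vartheta^k(u_0)$ to land in $V$, while absorbing the $\widehat{\mathsf{K}}_k(u_0)$-drift and the residual time $t-\tau_k(u_0)$ into the $\LieA$-factor of the local chart. This uses that $M_\Gamma\subset M$ commutes with $A$ (so the $M_\Gamma$-coordinate is preserved under $\exp(t\mathsf{v})$-translation), that $\vartheta$ and $\mathsf{K}$ are Lipschitz continuous and depend only on future coordinates, and that $u_0\in\hat{U}$ guarantees the Poincar\'{e} returns are unambiguous.
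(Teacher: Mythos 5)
Your overall architecture is close to the paper's (dense orbit from \cref{pro:R-ergodicity}, targets of the form cylinder times open subset of $M_\Gamma$, the iterated holonomy identity to read off $\vartheta^k$), but there is a genuine gap at your first substantive step. From a dense $\R_{>0}$-orbit in $\mathcal{Z}_{[e]}$ you cannot conclude that some preimage $g_0 \in \tilde{\Omega}_0$ has dense forward $\exp(t\mathsf{v})$-trajectory in $\tilde{\Omega}_0$. The map $\Psi$ collapses the $\ker\psi$-directions of $A$ (indeed $\Psi(ga_u) = \Psi(g)$ for $u \in \ker\psi$), so $\mathcal{Z}_{[e]}$ is a genuine quotient of (an open dense subset of) $\tilde{\Omega}_0$, and density of the image of a set under a quotient map does not lift to density of the set itself: what you actually get is that the \emph{thickened} set $\{g_0 a_{t\mathsf{v}+u} : t > 0,\ u \in \ker\psi\}$ is dense upstairs. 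Density of the one-parameter trajectory in $\tilde{\Omega}_0$ is a strictly stronger statement; it would in particular force the transverse drift $\widehat{\mathsf{K}}_k(u_0)$ to return to bounded sets, which is not supplied by \cref{pro:R-ergodicity} (and topological transitivity of $\exp(t\mathsf{v})$ on $\tilde{\Omega}_0$ is only established in \cref{sec:TopologicalMixing} as a \emph{consequence} of the main theorem, so invoking it here would be circular). Relatedly, your closing remark about ``absorbing the $\widehat{\mathsf{K}}_k(u_0)$-drift into the $\LieA$-factor of the local chart'' cannot work as stated: the drift is unbounded in $k$ while local product charts have uniformly bounded size.

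The fix, which is what the paper does, is to never leave $\mathcal{Z}_{[e]}$. One thickens the target $F(U)V\tilde{m}^{-1}$ in the full $A$-direction so that its image under the open map $\Psi$ contains an open subset of $\mathcal{Z}_{[e]}$ (the $\ker\psi$-thickening is harmless precisely because $\Psi$ collapses it); the dense $\R_{>0}$-orbit then enters this open set at some time $t_0$. Projecting to $\mathcal{X}$ gives $\mathcal{P}^k(u) = \mathcal{W}_{t_0}(u) \in U$, and applying $\Psi$ to the holonomy relation $F(\mathcal{P}^k(u)) = F(u)a_{\mathsf{K}_k(u)}\vartheta^k(u)$ identifies the $M$-coordinate and yields $\vartheta^k(u) \in V$ — the $\widehat{\mathsf{K}}_k$-term disappears because $\mathcal{Z}$ does not see it, not because it is small. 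You should restructure your argument along these lines; the rest of your plan (choice of $x$ via $\eta^{-1}(u)$, constancy of $\vartheta$ on stable leaves, dependence only on future coordinates) is then fine.
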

	
	\begin{proof}
		Recall the notation in \cref{subsec:ErgodicDecomposition}. We will also freely use \cref{eqn:PsiConjugation}. Let $\pi: \mathcal{Z} \to \mathcal{X}$ denote the natural projection map. Then $\pi \circ \mathcal{W}_t = \mathcal{W}_t \circ \pi$ for all $t \in \mathbb R$. \Cref{pro:R-ergodicity} provides a dense $\R_{> 0}$-orbit $\mathcal{W}_{\R_{> 0}}(\tilde{u}) \subset \mathcal{Z}_{[e]}$ for some $\tilde{u} \in \mathcal{Z}_{[e]}$. Set $u = \pi(\tilde{u})$. Now, $\Omega_\Psi := \Gamma N^+P \cap \Omega_0 \subset \Omega_0$ is a $AM_\Gamma$-invariant open dense subset on which $\Psi$ is defined. We may assume that the section $F$ has been chosen such that $F(\mathcal{W}_{\R_{> 0}}(u) \cap R) \cap \Omega_\Psi \ne \varnothing$. Indeed, since $\mathcal{W}_{\R_{> 0}}(u) \subset \mathcal{X}$ is dense, there exists a lift of $\mathcal{W}_{\R_{> 0}}(u)$ which intersects $\Omega_\Psi$. By $A$-invariance, that entire lift lies in $\Omega_\Psi$. In particular, there is some lift of $\mathcal{W}_{\R_{> 0}}(u) \cap R$ that lies in $\Omega_\Psi$, and we can obtain the desired section by multiplying by some element in $AM_\Gamma$.  
		Moreover, after truncating some initial part of the $\R_{>0}$-orbit, we may assume that $u \in R$ and $F(u) \in \Omega_\Psi$. Fix $y_j \in \mathcal{A}$ for $-k \le j \le k$ and an open subset $V \subset M_\Gamma$ and choose an open subset $U \subset R$ such that $u \notin U \subset \bigcap_{-k \le j \le k}\mathcal{P}^{-j}(\interior(R_{y_j}))$. By the above, $(\Psi \circ F)(\mathcal{W}_{\R_{> 0}}(u))$ is well-defined and we let $\tilde{m} \in M$ such that $(\Psi \circ F)(u) = \tilde{u}\tilde{m}$. Since any sufficiently small $A$-translation of $F(U)V\tilde{m}^{-1}$ will contain an open set and $\Psi$ is an open map, any small $\R$-translation of $\Psi(F(U)V\tilde{m}^{-1} \cap \Omega_\Psi) = \Psi(F(U)\cap \Omega_\Psi)V\tilde{m}^{-1}$ will contain an open subset of $\mathcal{Z}_{[e]}$, so by density of $\mathcal{W}_{\R_{> 0}}(\tilde{u})$, there exists $t_0>0$ such that $\mathcal{W}_{t_0}(\tilde{u}) \in \Psi(F(U)\cap \Omega_\Psi)V\tilde{m}^{-1}$. Applying the map $\pi$, we see that there exists $k \in \N$ such that $\mathcal{P}^k(u) = \mathcal{W}_{t_0}(u) \in U$. Applying $\Psi$ to the defining equation for holonomy and rearranging, we get
		\begin{equation*}
			(\Psi \circ F)(\mathcal{P}^k(u))\vartheta^k(u)^{-1}\tilde{m}^{-1} = \mathcal{W}_{t_0}(\tilde{u}) \in (\Psi \circ F)(U)V\tilde{m}^{-1},
		\end{equation*}
		which implies that $\vartheta^k(u) \in V$. Choosing a sequence in $\eta^{-1}(u)$ and taking future coordinates provides the required $x \in \Sigma^+$.
	\end{proof}
	
	Our goal is now to prove \cref{pro:SubgroupGeneratedBy_K_AndHolonomyDenseInLieATimesM_Gamma}. We recall definitions from \cite[Subsection 3.2]{LO20a} with slight generalizations. Denote
	\begin{align*}
		G^\star = \{g \in G: \text{there exists $h \in N^+N^-$ with $g \in h\interior(A^+)Mh^{-1}$}\} \subset G.
	\end{align*}
	Let $\lambda^{AM}: G^\star \to \interior(A^+)M$ denote the \emph{generalized Jordan projection}, i.e., $\lambda^{AM}(g) \in \interior(A^+)M$ is the unique element such that $g = h\lambda^{AM}(g)h^{-1}$ for some unique $h \in N^+N^-$, for all $g \in G^\star$. Define the components of the generalized Jordan projection $\lambda^A: G^\star \to \interior(A^+)$ and $\lambda^M: G^\star \to M$ so that $\lambda^{AM}(g) = \lambda^A(g) \cdot \lambda^M(g) = a_{\lambda(g)} \cdot \lambda^M(g) \in \interior(A^+)M$ for all $g \in G^\star$.
	
	Let $\Gamma_0 < G$ be any Zariski dense discrete sub\emph{semigroup}. Denote $\Gamma_0^\star = \Gamma_0 \cap G^\star \subset G$ which we note is also Zariski dense.
	
	\begin{definition}[Generalized length spectrum]
		The subset denoted by $\lambda^{AM}(\Gamma_0) = \{\lambda^{AM}(\gamma): \gamma \in \Gamma_0^\star\} \subset \interior(A^+)M$ is called the \emph{generalized length spectrum} of $\Gamma_0$.
	\end{definition}
	
	It is clear from the proofs that \cite[Lemmas 3.3 and 3.4]{LO20a} also hold for subsemigroups. Together with \cite[Theorem 1.9]{GR07}, we have the following proposition.
	
	\begin{proposition}
		\label{pro:SubgroupOfGeneralizedLengthSpectrumOfGamma_0DenseInAM_Gamma_0}
		We have $\overline{\langle \lambda^{AM}(\Gamma_0) \rangle} = AM_{\Gamma_0}$ where $M_{\Gamma_0} < M$ is a closed normal subgroup containing $M^\circ$ and $M_{\Gamma_0}/M^\circ \cong (\mathbb Z/2 \mathbb Z)^p$ for some integer $0 \leq p \leq \rank$.
	\end{proposition}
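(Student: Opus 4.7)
The plan is to reduce the proposition to the results already asserted in the excerpt, namely [LO20a, Lemmas 3.3 and 3.4] combined with [GR07, Theorem 1.9], with the only new content being that these carry over from the Zariski dense discrete subgroup setting to the Zariski dense discrete sub\emph{semigroup} setting.

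First, I would observe that Zariski density of $\Gamma_0$ forces Zariski density of $\Gamma_0^\star$: the complement of $G^\star$ is a proper Zariski closed subset of $G$ (loxodromy is defined by the non-vanishing of certain polynomial discriminants on $\LieG$), so it cannot contain a Zariski dense set. In particular $\lambda^{AM}(\Gamma_0)$ is nonempty. Next, I would go through the proofs of [LO20a, Lemmas 3.3--3.4] and verify that each step manipulates only products and powers of elements of $\Gamma_0^\star$, together with limits and the Zariski density of $\Gamma_0^\star$; no genuine group inverse is used, the multiplicative semigroup structure suffices. This yields that
\[
M_{\Gamma_0} := \mathrm{pr}_M\!\left(\overline{\langle \lambda^{AM}(\Gamma_0) \rangle}\right)
\]
is a closed subgroup of $M$ containing $M^\circ$, and that it is normal in $M$ (the normality argument there uses $M$-equivariance of $\lambda^M$ under conjugation, which is an element-wise statement).

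Second, I would apply [GR07, Theorem 1.9] to conclude that the projection $\mathrm{pr}_A(\overline{\langle \lambda^{AM}(\Gamma_0)\rangle}) = \overline{\langle \lambda^A(\Gamma_0^\star)\rangle}$ equals $A$; again the proof there uses Zariski density and products of loxodromic elements and therefore transfers to the subsemigroup case. Since $A$ and $M$ commute and intersect trivially, $AM \cong A \times M$ as Lie groups, and the two density statements together force
\[
\overline{\langle \lambda^{AM}(\Gamma_0) \rangle} = A \cdot M_{\Gamma_0} = AM_{\Gamma_0}.
\]

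Finally, the assertion on the structure of $M_{\Gamma_0}/M^\circ$ is a consequence of the classical structure theorem for real reductive groups: $M/M^\circ$ is an elementary abelian $2$-group of rank at most $\rank$ (see, e.g., the discussion in [LO20a, Subsection 3.3]). Every closed subgroup of $M$ containing $M^\circ$ therefore corresponds to a subgroup of $(\mathbb Z/2\mathbb Z)^{\leq \rank}$, hence is isomorphic to $(\mathbb Z/2\mathbb Z)^p$ for some $0 \leq p \leq \rank$. The only real obstacle, and a modest one as the excerpt itself flags, is the bookkeeping needed to confirm that the arguments in [LO20a, Lemmas 3.3--3.4] and [GR07, Theorem 1.9] never secretly use inverses of elements of $\Gamma_0$; no new idea is required beyond this verification.
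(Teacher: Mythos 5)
Your route is the same as the paper's: the paper's entire proof of this proposition is the observation that the arguments of [LO20a, Lemmas 3.3 and 3.4] use only products (not inverses) of elements of $\Gamma_0^\star$ and hence go through for Zariski dense subsemigroups, combined with [GR07, Theorem 1.9] for the $A$-component; the description of $M_{\Gamma_0}/M^\circ$ is then inherited from $M/M^\circ \cong (\Z/2\Z)^{\le \rank}$. So the reduction you propose is exactly the intended one.

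One auxiliary step in your write-up is wrong as stated, though. The complement of $G^\star$ is \emph{not} a proper Zariski closed subset of $G$: loxodromy is a semialgebraic condition, not a Zariski open one. Already in $\SL_2(\R)$ the non-loxodromic elements $\{g : |\tr(g)| \le 2\}$ contain the nonempty open set of elliptic elements and are therefore Zariski dense, so Zariski density of $\Gamma_0$ does not formally force Zariski density of $\Gamma_0 \cap G^\star$ by your argument. The correct input --- which the paper itself takes for granted when it notes that $\Gamma_0^\star$ ``is also Zariski dense'' --- is the theorem of Benoist (building on Abels--Margulis--Soifer) that the loxodromic elements of a Zariski dense subsemigroup form a Zariski dense subset, together with the fact that the additional transversality requirement in the definition of $G^\star$ (that the conjugator $h$ lie in $N^+N^-$, i.e.\ that the attracting and repelling fixed points be in general position relative to $(e^+, e^-)$) only removes a proper Zariski closed set of configurations. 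Since the remainder of your argument uses the Zariski density of $\Gamma_0^\star$ only as a black box, substituting this citation for the faulty justification repairs the proof; no other step is affected.
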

	
	We would like to use \cref{pro:SubgroupOfGeneralizedLengthSpectrumOfGamma_0DenseInAM_Gamma_0} but first we need \cref{lem:FirstReturnVectorAndHolonomyEqualsGeneralizedJordanProjectionOnFixedPoints} which relates the first return vector map and the holonomy with the $A$ and $M$-components of the generalized Jordan projection.
	
	There is a natural bijection between the set of periodic orbits with multiplicity for the Gromov geodesic flow on $\mathcal{G}$ and the set of conjugacy classes of loxodromic elements in $\Gamma$. By \cref{thm:TranslationFlowConjugateToGromovGeodesicFlow}, the same is true for the translation flow on $\mathcal{X}$. In fact, we can parametrize a periodic orbit as $\mathcal{W}_\bullet(u): \mathbb R \to \mathcal{X}$ with the data of a preferred starting rectangle $\mathcal{W}_0(u) = u \in R$ and multiplicity, and then lift it to a \emph{unique} parametrized orbit $\mathcal{W}_\bullet(\tilde{u}): \mathbb R \to \limitset^{(2)} \times \mathbb R$ with the data of a preferred starting rectangle $\mathcal{W}_0(\tilde{u}) = \tilde{u} \in \tilde{R}$ and multiplicity, which in turn corresponds to a \emph{unique} loxodromic element $\gamma \in \Gamma$ with $\gamma^\pm = \tilde{u}^\pm$. This gives a map
	\begin{align*}
		\{\mathcal{W}_\bullet(u): \mathcal{W}_\bullet(u) \text{ is a periodic orbit in } \mathcal{X}, u \in R\} \times \mathbb N \to \Gamma.
	\end{align*}
	Its image of course surjects onto the set of conjugacy classes of loxodromic elements in $\Gamma$. There is also a natural surjection from the set of periodic admissible sequences with the data of multiplicity to the above domain. Precisely, we have a surjection
	\begin{align*}
		\bigcup_{k \in \mathbb N} \Fix(\sigma^k) \times \mathbb N \to \{\mathcal{W}_\bullet(u): \mathcal{W}_\bullet(u) \text{ is a periodic orbit in } \mathcal{X}, u \in R\} \times \mathbb N
	\end{align*}
	where we denote $\Fix(\sigma^k) = \{x \in \Sigma^+: \sigma^k(x) = x\} \subset \Sigma^+$ for all $k \in \mathbb N$. Composing the above two maps gives a map
	\begin{align*}
		\bigcup_{k \in \mathbb N} \Fix(\sigma^k) \times \mathbb N \to \Gamma.
	\end{align*}
	Denote the image of any $(x, j) \in \bigcup_{k \in \mathbb N} \Fix(\sigma^k) \times \mathbb N$ under this map by $\gamma_{x, j} \in \Gamma$.
	
	\begin{lemma}
		\label{lem:FirstReturnVectorAndHolonomyEqualsGeneralizedJordanProjectionOnFixedPoints}
		Let $x \in \Fix(\sigma^k)$ with multiplicity $j \in \mathbb N$ corresponding to some $k \in \mathbb N$. Let $\gamma = \gamma_{x, j}$. Then, we have
		\begin{align*}
			\mathsf{K}_k(x) = \lambda(\gamma).
		\end{align*}
		Moreover, if $x_0 = 1 \in \mathcal{A}$, then $\gamma \in \Gamma^\star$ and we also have
		\begin{align*}
			\vartheta^k(x) = \lambda^M(\gamma).
		\end{align*}
	\end{lemma}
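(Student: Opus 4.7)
The strategy is to iterate the defining relations for $\vartheta$ and $\mathsf{K}$, lift the periodicity $\mathcal{P}^k(u) = u$ from $\tilde{\Omega}_0 \subset \Gamma\backslash G$ to $G$, and read off the Jordan decomposition of the resulting element of $\Gamma$. Set $u = \eta(x) \in R$, so that $\mathcal{P}^k(u) = u$. Using commutativity of $A$ and $M$, induction on the defining relation $F(u) a_{\mathsf{K}(u)} = F(\mathcal{P}(u)) \vartheta(u)^{-1}$ yields
\begin{align*}
F(\mathcal{P}^k(u)) = F(u) \, a_{\mathsf{K}_k(u)} \, \vartheta^k(u)
\end{align*}
in $\Gamma\backslash G$. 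Since $\mathcal{P}^k(u) = u$, any lift $g \in G$ of $F(u)$ produces some $\gamma \in \Gamma$ with $g^{-1} \gamma g = a_{\mathsf{K}_k(u)} \vartheta^k(u) \in AM$; by \cref{itm:NonTrivialElementsInGammaLoxodromic}, $\gamma$ is loxodromic, and the compatibility of $g$ with the fundamental domain $\tilde{D}$ will identify $\gamma$ with the distinguished representative $\gamma_{x,j}$.

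To obtain $\mathsf{K}_k(x) = \lambda(\gamma)$, I would descend to the Hopf parametrization. Right multiplication by $a_v \in A$ translates the $\LieA$-coordinate by $v$, so the identity $F_{\Omega}(\mathcal{P}^k(u)) = F_{\Omega}(u)$ forces $\gamma \cdot (u^+, u^-, s) = (u^+, u^-, s + \mathsf{K}_k(u))$ in $\limitset^{(2)} \times \LieA$. Comparison with \eqref{eqn:G-ActionOnHopfParametrization} gives $\sigma(\gamma, u^+) = \mathsf{K}_k(u)$. Since the translation flow advances in the positive time direction, $u^+$ is the attracting fixed point of $\gamma$; the Busemann--Jordan lemma in \cref{subsec:LimitSetAndLimitCone}, combined with the cocycle relation applied to $\gamma^{-1} \gamma = e$, then yields $\sigma(\gamma, u^+) = \lambda(\gamma)$, whence $\mathsf{K}_k(x) = \lambda(\gamma)$.

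For the refined claim under $x_0 = 1$, the construction of $F$ gives $F(R_1) \subset \Gamma N^+ N^-$, so I can choose the lift $g \in N^+ N^-$. The relation $g^{-1} \gamma g = a_{\mathsf{K}_k(u)} \vartheta^k(u)$ with $a_{\mathsf{K}_k(u)} \in \interior(A^+)$ (from the first part) then exhibits $\gamma \in G^\star$ directly from the definition, so $\gamma \in \Gamma^\star$. Uniqueness of the generalized Jordan decomposition gives $\lambda(\gamma) = \mathsf{K}_k(x)$ and $\lambda^M(\gamma) = \vartheta^k(x)$. The principal subtlety is verifying that the specific lift $g \in N^+ N^-$ recovers the distinguished representative $\gamma_{x,j}$ rather than a $\Gamma$-conjugate of it; this is guaranteed by the preferred starting rectangle convention together with the normalizations $F(w_1) = [e]$ and $\tilde{w}_1 = e$ built into the construction of $F$ preceding \cref{subsec:FirstReturnVectorMap}.
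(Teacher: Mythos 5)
Your proof is correct and follows essentially the same route as the paper's: iterate the defining relations of $\mathsf{K}$ and $\vartheta$ to get $F(u)a_{\mathsf{K}_k(x)}\vartheta^k(x)=F(u)$, lift to a relation $g^{-1}\gamma g = a_{\mathsf{K}_k(x)}\vartheta^k(x)$ in $G$, identify $\gamma$ with $\gamma_{x,j}$ through its action on the Hopf coordinates of $\tilde{u}$, and conclude by taking (generalized) Jordan projections, using $g\in N^+N^-$ for the case $x_0=1$. The only cosmetic difference is that you extract $\lambda(\gamma)=\mathsf{K}_k(x)$ via the Iwasawa cocycle and the Busemann--Jordan lemma, where the paper pins down $\gamma'=\gamma_{x,j}$ by the explicit computation $\gamma'\tilde{u}=\pi(ga_{\mathsf{K}_k(x)}\vartheta^k(x))=\mathcal{W}_{\tau_k(x)}(\tilde{u})=\gamma\tilde{u}$ and then reads off the Jordan projection directly.
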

	
	\begin{proof}
		Let $x \in \Fix(\sigma^k)$ and $\gamma \in \Gamma$ be as in the lemma. Let $\overline{x} \in \Sigma$ be the bi-infinite extension of $x$, $u = \eta(\overline{x}) \in R_{x_0}$, and $\tilde{u} \in \tilde{R}_{x_0}$ be its lift. Note that $\tilde{u}^\pm \in \limitset$ are the attracting and repelling fixed points of $\gamma$ and $\mathcal{W}_{\tau_k(x)}(\tilde{u}) = \gamma \tilde{u}$. From definitions, we have
		\begin{align*}
			F(u)a_{\mathsf{K}_k(x)}\vartheta^k(x) = F(\mathcal{W}_{\tau_k(x)}(u)) = F(u).
		\end{align*}
		Due to the Hopf parametrization, we can denote $F(u) = \Gamma g$ for some $g \in G$ such that $g^\pm = \tilde{u}^\pm$. Now, the above equation implies $ga_{\mathsf{K}_k(x)}\vartheta^k(x) = \gamma' g$ for some $\gamma' \in \Gamma$. Consider the natural left $\Gamma$-equivariant projection $\pi: G \to \limitset^{(2)} \times \mathbb R$ defined by $\pi(g) = \pi_\psi(gM)$ for all $g \in G$. We have
		\begin{align*}
			\gamma' \tilde{u} = \gamma'\pi(g) = \pi(\gamma' g) = \pi(ga_{\mathsf{K}_k(x)}\vartheta^k(x)) = \mathcal{W}_{\tau_k(x)}(\tilde{u}) = \gamma \tilde{u}.
		\end{align*}
		Therefore, $\gamma' = \gamma$ which implies $ga_{\mathsf{K}_k(x)}\vartheta^k(x)g^{-1} = \gamma$. The first part of the lemma follows by taking Jordan projections. If $x_0 = 1 \in \mathcal{A}$, then $u \in R_1$ and by construction of $F$, we can take $g \in N^+N^-$. We conclude that $\gamma \in \Gamma^\star$ and the second part of the lemma follows by taking generalized Jordan projections.
	\end{proof}
	
	The following is the second required density proposition.
	
	\begin{proposition}
		\label{pro:SubgroupGeneratedBy_K_AndHolonomyDenseInLieATimesM_Gamma}
		The subgroup $\langle\{(\mathsf{K}_k(x), \vartheta^k(x)): x \in \Fix(\sigma^k), k \in \mathbb N\}\rangle < \LieA \times M_\Gamma$ is dense.
	\end{proposition}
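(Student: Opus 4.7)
The plan is to realize the pairs with $x_0 = 1$ as generalized Jordan projections of a carefully chosen subsemigroup of $\Gamma$, apply \cref{pro:SubgroupOfGeneralizedLengthSpectrumOfGamma_0DenseInAM_Gamma_0} to it, and then use the dense orbit from \cref{pro:DenseOrbitInSigma+TimesM_Gamma} to upgrade $M_{\Gamma_0}$ to $M_\Gamma$.

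First I would introduce the subsemigroup $\Gamma_0 < \Gamma^\star$ generated by $\{\gamma_{x, 1} : x \in \Fix(\sigma^k),\, x_0 = 1,\, k \in \N\}$. Concatenating two admissible periodic sequences that begin with the symbol $1$ produces another such sequence, and with lifts of $\eta(\cdot)$ chosen consistently in $\tilde R_1$, the associated loxodromic element is the product in $\Gamma$, so $\Gamma_0$ is indeed closed under multiplication. By \cref{lem:FirstReturnVectorAndHolonomyEqualsGeneralizedJordanProjectionOnFixedPoints}, identifying $\LieA \cong A$ via $\exp$, we have $\lambda^{AM}(\Gamma_0) \subset \{(\mathsf{K}_k(x), \vartheta^k(x)) : x \in \Fix(\sigma^k),\, x_0 = 1,\, k \in \N\}$, so the subgroup $\langle \lambda^{AM}(\Gamma_0) \rangle$ is contained in the subgroup of interest.

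Next I would verify that $\Gamma_0$ is a Zariski dense discrete subsemigroup of $G$. Discreteness is inherited from $\Gamma$. For Zariski density: by topological mixing of the transition matrix $T$, any periodic admissible sequence $x \in \Fix(\sigma^k)$ can be padded by uniformly bounded-length admissible paths $1 \to x_0$ and $x_{k-1} \to 1$ to yield a periodic sequence through symbol $1$, and the Jordan projection and attracting fixed point of the padded element differ from those of $\gamma_{x,1}$ by a bounded error. Hence the limit cone and limit set of $\Gamma_0$ coincide with $\limitcone$ and $\limitset$ respectively. Since $\limitset$ is Zariski dense in $\Fboundary$ (as $\Gamma$ is Zariski dense in $G$ and $\limitset$ is the unique minimal nonempty closed $\Gamma$-invariant subset), a proper algebraic Zariski closure of $\Gamma_0$ would force its attracting fixed points into a proper subvariety, which is impossible. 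Thus $\Gamma_0$ is Zariski dense, and \cref{pro:SubgroupOfGeneralizedLengthSpectrumOfGamma_0DenseInAM_Gamma_0} yields $\overline{\langle \lambda^{AM}(\Gamma_0) \rangle} = AM_{\Gamma_0}$, whence the closure of the subgroup of interest contains $\LieA \times M_{\Gamma_0}$.

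To upgrade from $M_{\Gamma_0}$ to $M_\Gamma$, I would invoke \cref{pro:DenseOrbitInSigma+TimesM_Gamma} to obtain $x_\ast \in \Sigma^+$ with $\{(\sigma^k(x_\ast), \vartheta^k(x_\ast)) : k \in \N\}$ dense in $\Sigma^+ \times M_\Gamma$. For each $N \in \N$, density yields $k = k(N)$ with $(\sigma^k x_\ast)_i = (x_\ast)_i$ for $0 \le i \le N - 1$, and the periodic point $y^{(N)} = ((x_\ast)_0, (x_\ast)_1, \ldots, (x_\ast)_{k-1})^\infty \in \Fix(\sigma^k)$ obtained by cycling then satisfies $d(\sigma^j y^{(N)}, \sigma^j x_\ast) \le \beta_0^{k + N - j}$ for $0 \le j \le k - 1$. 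A geometric-series bound using Lipschitz continuity of $\vartheta$ gives $\|\vartheta^k(y^{(N)}) - \vartheta^k(x_\ast)\| \le \mathrm{Lip}(\vartheta)\, \beta_0^{N+1}/(1-\beta_0) \to 0$ as $N \to \infty$. Combined with density of $\{\vartheta^k(x_\ast)\}_k$ in $M_\Gamma$, a diagonal extraction produces, for every $m \in M_\Gamma$, periodic points $y$ with $\vartheta^{k}(y)$ arbitrarily close to $m$. The corresponding pairs $(\mathsf{K}_k(y), \vartheta^k(y))$ lie in the subgroup of interest; multiplying these with elements of $\LieA \times M_{\Gamma_0}$ already contained in its closure shows the closure contains $\LieA \times m M_{\Gamma_0}$ for arbitrary $m \in M_\Gamma$, whence all of $\LieA \times M_\Gamma$ as desired.

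I expect the main obstacle to be the Zariski density of $\Gamma_0$ in the second step, which requires transferring the combinatorial richness of topological mixing into an algebraic density statement by leveraging the Morse property of Anosov subgroups and Benoist's characterization of Zariski closures. A secondary technical point is the closing-lemma estimate in the final step, but this is a standard geometric-series argument for Lipschitz cocycles over topologically mixing subshifts of finite type.
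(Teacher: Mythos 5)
Your overall architecture matches the paper's: realize pairs with $x_0 = 1$ as generalized Jordan projections of a Zariski dense subsemigroup via \cref{lem:FirstReturnVectorAndHolonomyEqualsGeneralizedJordanProjectionOnFixedPoints}, apply \cref{pro:SubgroupOfGeneralizedLengthSpectrumOfGamma_0DenseInAM_Gamma_0} to capture $\LieA \times M^\circ$, and then use \cref{pro:DenseOrbitInSigma+TimesM_Gamma} to reach every connected component of $M_\Gamma$ (your quantitative closing-lemma estimate in the last step is more work than needed --- the paper only uses that $\vartheta^k$ takes values in a single connected component of $M_\Gamma$ on each cylinder --- but it is correct). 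The step that does not go through as written is the Zariski density of your semigroup $\Gamma_0$. First, the claim that the limit set of $\Gamma_0$ coincides with $\limitset$ is false: every element of $\Gamma_0$ is (by your own construction) of the form $\gamma_{z,1}$ for a periodic word $z$ with $z_0 = 1$, so its attracting fixed point is $\tilde{u}^+$ for some $u \in R_1$; hence all attracting fixed points of $\Gamma_0$, and therefore its limit set, lie in the closure of the shadow $\{u^+ : u \in U_1\}$, which is in general a proper closed subset of $\limitset$. Padding an arbitrary periodic word to pass through the symbol $1$ does \emph{not} move its attracting fixed point by a bounded error --- it replaces it by a point of the shadow of $U_1$, since the attracting fixed point is governed by the entire forward itinerary. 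Second, even granting a Zariski dense limit set, the inference ``limit set Zariski dense $\Rightarrow$ semigroup Zariski dense'' is not justified; Benoist's results go in the other direction. The paper sidesteps both problems by invoking \cite[Lemma 4.3]{Ben97}: it picks two loxodromic elements whose fixed points are $\epsilon$-close to $e^\pm$ and whose high powers generate a Zariski dense Schottky sub\emph{semigroup} $\Gamma_0$ all of whose elements have fixed points $\epsilon$-close to $e^\pm$; Zariski density then comes for free from Benoist, and the geometric condition forces every element of $\Gamma_0$ to correspond to a periodic orbit through $R_1$, which is all that \cref{lem:FirstReturnVectorAndHolonomyEqualsGeneralizedJordanProjectionOnFixedPoints} requires.

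A secondary soft spot is your claim that concatenation of periodic words based at the symbol $1$ corresponds to multiplication in $\Gamma$, i.e.\ $\gamma_{xy,1} = \gamma_{x,1}\gamma_{y,1}$, which you need for $\Gamma_0$ to be a subsemigroup whose elements are all still of the form $\gamma_{z,1}$. This is morally a statement about composing deck transformations along lifted itineraries and is plausible when the rectangles and return times are small relative to the geometry of $\mathcal{X}$, but it is exactly the kind of bookkeeping the paper's construction avoids having to do (in the paper, closure under multiplication is automatic because $\Gamma_0$ is defined as a subsemigroup of $\Gamma$ from the start, and membership of each element in the set $\{\gamma_{x,j} : x_0 = 1\}$ is verified geometrically). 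If you want to keep your construction, you would need to supply this lifting argument; otherwise the cleanest fix is to adopt the Schottky semigroup route.
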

	
	\begin{proof}
		Let $w_1 \in R_1$ be the center. Recall that $\tilde{w}_1 \in \tilde{R}_1$ has forward and backward limit points $\tilde{w}_1^\pm = e^\pm \in \limitset \subset \Fboundary$. By definition of rectangles, there exists $\epsilon' > 0$ such that $W_{\epsilon'}^{\mathrm{su}}(\tilde{w}_1) \subset \tilde{U}_1$ and $W_{\epsilon'}^{\mathrm{ss}}(\tilde{w}_1) \subset \tilde{S}_1$. Then there exists $\epsilon > 0$ such that
		\begin{align*}
			B_\epsilon(e^+) \cap \limitset &\subset \{u^+: u \in W_{\epsilon'}^{\mathrm{su}}(\tilde{w}_1)\}, & B_\epsilon(e^-) \cap \limitset &\subset \{u^-: u \in W_{\epsilon'}^{\mathrm{ss}}(\tilde{w}_1)\}.
		\end{align*}
		Let $\gamma_1 \in \Gamma$ be an element whose attracting and repelling fixed points are $x_1^\pm = e^\pm \in \limitset$. By \cite[Lemma 4.3]{Ben97} and its proof, we can choose $\gamma_2 \in \Gamma$ with attracting and repelling fixed points $x_2^\pm \in \limitset$ such that $d\bigl(x_2^\pm, x_1^\pm\bigr) < \frac{\epsilon}{2}$ and $\bigl\{\gamma_1^k, \gamma_2^k\bigr\}$ generate a Zariski dense Schottky subsemigroup $\Gamma_0 \subset \Gamma < G$ for all sufficiently large $k \in \mathbb N$. Recalling the definition of Schottky subsemigroups, for sufficiently large $k \in \mathbb N$, we can additionally ensure that for all $\gamma \in \Gamma_0$, we have $d\bigl(x^\pm, x_1^\pm\bigr) < \epsilon$ where $x^\pm \in \limitset$ is the attracting and repelling fixed points of $\gamma$. We now fix such a Zariski dense Schottky subsemigroup $\Gamma_0 \subset \Gamma < G$. By \cref{pro:SubgroupOfGeneralizedLengthSpectrumOfGamma_0DenseInAM_Gamma_0}, we have $\overline{\langle \lambda^{AM}(\Gamma_0) \rangle} = AM_{\Gamma_0}$ which contains $AM^\circ$. Now, by construction, any element of $\Gamma_0$ corresponds to an orbit $\mathcal{W}_\bullet(\tilde{u})$ with $\tilde{u} \in \tilde{R}_1$. In other words
		\begin{align*}
			\Gamma_0 \subset \{\gamma_{x, j}: x \in \Fix(\sigma^k), x_0 = 1, k \in \mathbb N\}.
		\end{align*}
		Thus, we can use \cref{lem:FirstReturnVectorAndHolonomyEqualsGeneralizedJordanProjectionOnFixedPoints} to conclude that
		\begin{align*}
			\lambda^{AM}(\Gamma_0) \subset \{a_{\mathsf{K}_k(x)}\vartheta^k(x): x \in \Fix(\sigma^k), x_0 = 1, k \in \mathbb N\}.
		\end{align*}
		The above two properties combine to give
		\begin{align*}
			\LieA \times M^\circ < \overline{\langle\{(\mathsf{K}_k(x), \vartheta^k(x)): x \in \Fix(\sigma^k), k \in \mathbb N\}\rangle} < \LieA \times M_\Gamma.
		\end{align*}
		To finish proving the proposition, it suffices to show that for any connected component $mM^\circ \subset M_\Gamma$, there exists $x \in \Fix(\sigma^k)$ for some $k \in \mathbb N$ such that $\vartheta^k(x) \in mM^\circ$. Applying \cref{pro:DenseOrbitInSigma+TimesM_Gamma}, there exist $x' \in \Sigma^+$ and $k \in \mathbb N$ such that $\sigma^k(x') \in \mathtt{C}[x_0']$, i.e., $x_k' = x_0'$ and $\vartheta^k(x') \in mM^\circ$. Let $\hat{x}' = (x_0', x_1', \dotsc, x_{k - 1}', x_0')$ and note that $x' \in \mathtt{C}[\hat{x}']$. Then, we can extend $\hat{x}'$ in an admissible and periodic fashion to obtain $x \in \Fix(\sigma^k) \cap \mathtt{C}[\hat{x}']$. Since $\vartheta^k|_{\mathtt{C}[\hat{x}']}$ takes values in a fixed connected component of $M_\Gamma$, we also have $\vartheta^k(x) \in mM^\circ$ as desired.
	\end{proof}
	
	\subsection{Spectra}
	A function in $C(\Sigma^+, 2\pi\Z)$ is called a \emph{lattice function} and two functions $f,g \in C(\Sigma^+, \C)$ are called \emph{cohomologous} if there exists $\omega \in C(\Sigma^+, \C)$ such that $f-g = \omega - \omega \circ \sigma$. The following lemma is from \cite[Proposition 2]{Pol84}. 
	
	\begin{lemma}
		\label{lem:CohomologousToALatticeFunction}
		Let $\theta \in \mathbb R$ and consider a general transfer operator $\mathcal{L}_\phi: L(\Sigma^+, \mathbb C) \to L(\Sigma^+, \mathbb C)$, corresponding to $\phi = \phi_\Re + i \phi_\Im$ where $\phi_\Re, \phi_\Im \in L(\Sigma^+)$, defined by
		\begin{align*}
			\mathcal{L}_\phi(h)(x) = \sum_{x' \in \sigma^{-1}(x)} e^{\phi(x')} h(x')
		\end{align*}
		for all $x \in \Sigma^+$ and $h \in L(\Sigma^+, \mathbb C)$. Then, $\mathcal{L}_\phi$ has an eigenvalue $e^{i\theta + \Pr_\sigma(\phi_\Re)}$ if and only if $\phi_\Im - \theta$ is cohomologous to a lattice function.
	\end{lemma}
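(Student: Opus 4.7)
The plan is to handle the two directions of the equivalence separately, using the Ruelle--Perron--Frobenius machinery of \cref{thm:RPFonU}. Both arguments are standard thermodynamic formalism.

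For the easier direction ($\Leftarrow$), suppose $\phi_\Im - \theta - (\omega - \omega \circ \sigma) \in 2\pi \Z$ pointwise for some $\omega \in C(\Sigma^+, \C)$. Let $h_0 \in L(\Sigma^+)$ be the positive Lipschitz eigenfunction of $\mathcal{L}_{\phi_\Re}$ with maximal eigenvalue $\lambda := e^{\Pr_\sigma(\phi_\Re)}$ provided by \cref{thm:RPFonU}. I would set $\tilde h = h_0 e^{-i\omega}$ and verify directly that
\[
\mathcal{L}_\phi(\tilde h)(x) = e^{-i\omega(x)} e^{i\theta} \sum_{x' \in \sigma^{-1}(x)} e^{\phi_\Re(x')} h_0(x') = e^{i\theta} \lambda \tilde h(x),
\]
where the coboundary relation (together with $e^{2\pi i n} = 1$) is used to pull $e^{-i\omega(x)}$ out of the sum.

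For the other direction ($\Rightarrow$), suppose $\mathcal{L}_\phi \tilde h = e^{i\theta} \lambda \tilde h$ for some nonzero $\tilde h \in L(\Sigma^+, \C)$. The triangle inequality gives $\lambda |\tilde h| \leq \mathcal{L}_{\phi_\Re}(|\tilde h|)$ pointwise. I would then pair this inequality with the eigenmeasure $\nu$ of $\mathcal{L}_{\phi_\Re}^*$ (with eigenvalue $\lambda$) from \cref{thm:RPFonU}: duality gives $\int \mathcal{L}_{\phi_\Re}(|\tilde h|) \, d\nu = \lambda \int |\tilde h| \, d\nu$, forcing equality $\nu$-a.e., and hence everywhere by continuity. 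Simplicity of $\lambda$ then implies $|\tilde h|$ is a positive scalar multiple of $h_0$, and in particular $\tilde h$ is nowhere zero.

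Equality in the triangle inequality forces all summands $e^{i\phi_\Im(x')}\tilde h(x')$ with $x' \in \sigma^{-1}(x)$ to share a common phase, which must equal $\arg(e^{i\theta}\tilde h(x))$. To extract a coboundary, I would lift the continuous $S^1$-valued map $\tilde h/|\tilde h|$ to a continuous real-valued phase $\psi \colon \Sigma^+ \to \R$; setting $\omega = -\psi$ and reading the phase condition gives $\phi_\Im - \theta - (\omega - \omega \circ \sigma) \in 2\pi \Z$ pointwise, and since the left side is continuous and integer-valued it is a lattice function. The main subtlety is the global continuous real lifting of $\tilde h/|\tilde h|$; I would exploit that $\Sigma^+$ is compact and totally disconnected, so a finite refinement into pairwise disjoint clopen sets on which the argument varies in a proper arc of $S^1$ yields a continuous lift by piecewise definition, circumventing the usual $H^1$ obstruction.
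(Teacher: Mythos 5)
Your proof is correct and is essentially the argument behind the result the paper does not prove but simply cites (\cite[Proposition 2]{Pol84}, cf.\ \cite[Theorem 4.5]{PP90}): the forward direction is the Wielandt-type equality case of the triangle inequality combined with simplicity of the RPF eigenvalue from \cref{thm:RPFonU}, and the converse is the conjugation of $h_0$ by $e^{-i\omega}$. Two small points to tighten: passing from equality $\nu$-almost everywhere to equality everywhere uses that the RPF eigenmeasure $\nu$ has full support (true for the Gibbs eigenmeasure on a topologically mixing subshift, but it should be invoked); and in the ($\Leftarrow$) direction, since the paper's notion of cohomology only requires $\omega$ continuous, your eigenfunction $h_0 e^{-i\omega}$ is a priori only continuous, so to exhibit an eigenvalue of $\mathcal{L}_\phi$ on $L(\Sigma^+, \mathbb C)$ you should either upgrade $\omega$ to Lipschitz by Liv\v{s}ic regularity (its coboundary $\phi_\Im - \theta - \Lambda$ is Lipschitz, as continuous $2\pi\Z$-valued functions on $\Sigma^+$ are locally constant, hence Lipschitz) or remark that the statement is only ever applied with the transfer function produced by your forward direction, which is Lipschitz because $\tilde h$ is Lipschitz and $|\tilde h|$ is bounded below.
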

	
	The following proposition summarizes \cite[Theorems 2.2, 4.5, and 10.2]{PP90}.
	
	\begin{proposition}
		\label{pro:PreliminaryTransferOperatorSpectrum}
		For all $(v, \mu) \in \LieA \times \widehat{M}_\Gamma$, we have:
		\begin{enumerate}
			\item\label{itm:SpectralRadiusAtMost1} the spectral radius of $\mathcal{L}_{v, \mu}$ is at most $1$;
			\item\label{itm:Mod1EigenvalueIfSpectralRadius1} if the spectral radius of $\mathcal{L}_{v, \mu}$ is $1$, then $\mathcal{L}_{v, \mu}$ has a maximal simple eigenvalue of modulus $1$ and the rest of the spectrum is contained in a disk of radius strictly less than $1$.
		\end{enumerate}
	\end{proposition}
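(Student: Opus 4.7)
The plan is to reduce both assertions to the Ruelle--Perron--Frobenius theorem (\cref{thm:RPFonU}) applied to $\mathcal{L}_0$, using the pointwise domination
\begin{align*}
    \|\mathcal{L}_{v,\mu}^k H(x)\|_{V_\mu} \leq \mathcal{L}_0^k \bigl(\|H(\cdot)\|_{V_\mu}\bigr)(x) \qquad \text{for all $H \in L(\Sigma^+, V_\mu)$, $x \in \Sigma^+$, $k \in \mathbb N$,}
\end{align*}
which follows from the triangle inequality in $V_\mu$ and unitarity of $\mu$ together with the positivity of the kernel of $\mathcal{L}_0$. Assertion (1) is then immediate: by \cref{thm:RPFonU}, $\mathcal{L}_0^k(\|H\|_{V_\mu})(x) \to \nu_{\Sigma^+}(\|H\|_{V_\mu})$ uniformly in $x$, so $\sup_k \|\mathcal{L}_{v,\mu}^k H\|_\infty$ is finite, yielding spectral radius at most $1$ for $\mathcal{L}_{v,\mu}$.

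For (2), I would first establish quasi-compactness of $\mathcal{L}_{v,\mu}$ on $L(\Sigma^+, V_\mu)$ through a Lasota--Yorke inequality of the form
\begin{align*}
    \Lip(\mathcal{L}_{v,\mu}^k H) \leq C \beta_0^k \Lip(H) + C_k \|H\|_\infty,
\end{align*}
which follows from the contraction factor $\beta_0$ of $\sigma^{-1}$ on cylinders combined with the Lipschitz regularity of $\tau^{(0)}$, $\mathsf{K}$, and $\vartheta$ on cylinders of length one. The Ionescu-Tulcea--Marinescu theorem then forces the essential spectral radius to be strictly less than $1$, so any unit-modulus spectrum consists of eigenvalues of finite multiplicity. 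Assuming the spectral radius equals $1$, pick such an eigenvalue $\lambda$ with eigenvector $H \neq 0$; applying the domination inequality with $|\lambda|^k = 1$ forces $\|H\|_{V_\mu}$ to be bounded by its $\nu_{\Sigma^+}$-average and hence to equal a constant, which I normalize to $1$. Equality in the triangle inequality along each fibre $\sigma^{-1}(x)$ then gives the pointwise cocycle identity
\begin{align*}
    H(\sigma(y)) = \lambda^{-1} e^{i\langle v, \mathsf{K}(y)\rangle_\psi}\, \mu(\vartheta(y)^{-1}) H(y) \qquad \text{for all $y \in \Sigma^+$,}
\end{align*}
which iterates to $H(\sigma^k(y)) = \lambda^{-k} e^{i\langle v, \mathsf{K}_k(y)\rangle_\psi}\mu(\vartheta^k(y)^{-1}) H(y)$.

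To prove simplicity of $\lambda$, let $H_1, H_2$ be two eigenvectors normalized so that $\|H_i\|_{V_\mu} \equiv 1$. Any $\mathbb C$-linear combination is again an eigenvector and hence also of constant norm by the same rigidity, forcing $y \mapsto \langle H_1(y), H_2(y)\rangle_{V_\mu}$ to be a constant function. I would then apply \cref{pro:DenseOrbitInSigma+TimesM_Gamma} to produce an $x \in \Sigma^+$ whose orbit $\{(\sigma^k(x), \vartheta^k(x)) : k \in \mathbb N\}$ is dense in $\Sigma^+ \times M_\Gamma$, and use the iterated cocycle identity to transport any linear relation between $H_1(x)$ and $H_2(x)$ to all of $\Sigma^+$; combined with \cref{pro:SubgroupGeneratedBy_K_AndHolonomyDenseInLieATimesM_Gamma} and Schur's lemma (irreducibility of $\mu$ on $V_\mu$), this pins down $H_2$ to be a scalar multiple of $H_1$. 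Quasi-compactness together with this simplicity then places the rest of the spectrum in a disk of radius strictly less than $1$. The main obstacle is this simplicity step: the cocycle identity alone allows for multiple equivariant sections when $\dim V_\mu > 1$, and it is precisely the density of the generalized length-and-holonomy data in $\LieA \times M_\Gamma$, coupled with irreducibility of $\mu$, that rules out extra sections.
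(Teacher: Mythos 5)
The paper offers no proof of this proposition --- it is quoted wholesale from Parry--Pollicott --- and your outline is essentially the standard proof of the cited theorems transported to the holonomy setting: domination of $\mathcal{L}_{v,\mu}$ by the normalized $\mathcal{L}_0$, a Lasota--Yorke inequality plus Ionescu-Tulcea--Marinescu for quasi-compactness, and rigidity of peripheral eigenvectors. Most of this is sound: the domination inequality is correct, the constancy of $\|H(\cdot)\|_{V_\mu}$ for a peripheral eigenvector follows as you say (using $\mathcal{L}_0^*(\nu_{\Sigma^+})=\nu_{\Sigma^+}$ and full support of the Gibbs measure), and the resulting cocycle identity is exactly what the paper itself extracts at the start of the proof of \cref{thm:SpectralBound}. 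One ordering issue: boundedness of $\sup_k\|\mathcal{L}_{v,\mu}^kH\|_\infty$ alone does not bound the spectral radius on $L(\Sigma^+,V_\mu)$, whose norm includes the Lipschitz seminorm; you need the Lasota--Yorke estimate (with a constant $C_k$ that is uniformly bounded in $k$) before you can conclude part (1), after which (1) and the essential-spectral-radius bound come out together.

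The genuine gap is in the simplicity step. ``Transporting a linear relation between $H_1(x)$ and $H_2(x)$'' presupposes that such a relation exists at the base point; if $\dim V_\mu\ge 2$ the two vectors may be linearly independent there, and the cocycle identity, being a common unitary twist, preserves that independence along the orbit, so nothing is transported. The correct use of \cref{pro:DenseOrbitInSigma+TimesM_Gamma} is on a \emph{single} eigenvector: iterating the cocycle identity gives $H(\sigma^k(x))\in\mathbb{C}\,\mu(\vartheta^k(x)^{-1})H(x)$ with unimodular scalars, and density of $\{(\sigma^k(x),\vartheta^k(x))\}$ in $\Sigma^+\times M_\Gamma$ together with continuity forces $\mu(M_\Gamma)H(x)\subset\mathbb{C}H(x)$; irreducibility then yields $\dim\mu=1$. (This is precisely the argument run inside the proof of \cref{thm:SpectralBound}; \cref{pro:SubgroupGeneratedBy_K_AndHolonomyDenseInLieATimesM_Gamma} is not needed for the present proposition.) Once $\mu$ is one-dimensional, simplicity is the classical scalar argument: $H_2/H_1$ is continuous, of constant modulus, and $\sigma$-invariant, hence constant by transitivity. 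Finally, you address only the multiplicity of a given peripheral eigenvalue, not the uniqueness of the peripheral eigenvalue itself, which the statement also requires: if $\lambda_1\neq\lambda_2$ both had modulus one, the quotient $g=H_2/H_1$ of the (now scalar) eigenfunctions would satisfy $g\circ\sigma=(\lambda_1/\lambda_2)g$, and evaluating at periodic points of periods $n$ and $n+1$ for $n\ge N_T$ forces $(\lambda_1/\lambda_2)^n=(\lambda_1/\lambda_2)^{n+1}=1$, i.e.\ $\lambda_1=\lambda_2$. With these repairs the plan closes.
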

	
	\begin{theorem}
		\label{thm:SpectralBound}
		Let $(v, \mu) \in \LieA \times \widehat{M}_\Gamma$ with $(v, \mu) \neq (0, 1)$. If $\mathcal{L}_{v, \mu}$ has an eigenvalue of the form $e^{i\theta\pi}$ for some $\theta \in \mathbb R$, then $\theta \notin \mathbb Q$.
	\end{theorem}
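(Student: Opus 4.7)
The plan is to assume $\mathcal{L}_{v,\mu}(H) = e^{i\theta\pi}H$ for a nonzero $H \in L(\Sigma^+, V_\mu)$ with $\theta \in \Q$, and derive $(v,\mu) = (0,1)$, contradicting the hypothesis. The first step is to promote the eigenvalue equation to a pointwise identity. Using $\sum_{x' \in \sigma^{-1}(x)} e^{\tau^{(0)}(x')} = 1$ (from $\mathcal{L}_0 \chi_{\Sigma^+} = \chi_{\Sigma^+}$, cf.\ \cref{thm:RPFonU}), I apply the triangle inequality to the eigenvector equation: equality must hold where $\|H\|$ is maximal, and topological mixing of $\sigma$ then propagates this to show $\|H\|$ is constant. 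The equality case of the triangle inequality forces the pointwise identity
\[
H \circ \sigma(x) = e^{-i\theta\pi + i\langle v, \mathsf{K}(x)\rangle_\psi}\,\mu(\vartheta(x)^{-1})\,H(x),
\]
whose $k$-fold iteration involves $\sigma^k$, $\mathsf{K}_k$, $\vartheta^k$, and $k\theta\pi$.

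The second step reduces $\mu$ to a character. Let $x^* \in \Sigma^+$ be the point from \cref{pro:DenseOrbitInSigma+TimesM_Gamma} with $\{(\sigma^k(x^*), \vartheta^k(x^*))\}$ dense in $\Sigma^+ \times M_\Gamma$. For any $(y, m) \in \Sigma^+ \times M_\Gamma$, extracting a subsequence $k_n$ with $(\sigma^{k_n}(x^*), \vartheta^{k_n}(x^*)) \to (y, m)$ and passing to the limit in the iterated pointwise identity places $H(y)$ in the complex line $\C\,\mu(m^{-1})H(x^*)$. Varying $m$ over $M_\Gamma$ shows $\C H(x^*) \subset V_\mu$ is $\mu(M_\Gamma)$-invariant, so irreducibility of $\mu$ forces $\dim V_\mu = 1$ and $\mu = \chi$ is a character.

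The third step applies \cref{pro:SubgroupGeneratedBy_K_AndHolonomyDenseInLieATimesM_Gamma}. Evaluating the iterated identity at $x \in \Fix(\sigma^k)$ yields $e^{i\langle v, \mathsf{K}_k(x)\rangle_\psi}\chi(\vartheta^k(x))^{-1} = e^{ik\theta\pi}$. Define the continuous homomorphism $\Phi : \LieA \times M_\Gamma \to S^1$ by $\Phi(a, m) = e^{i\langle v, a\rangle_\psi}\chi(m)^{-1}$. On the subgroup generated by $\{(\mathsf{K}_k(x), \vartheta^k(x))\}$, $\Phi$ takes values in $\langle e^{i\theta\pi}\rangle$, a finite subgroup of $S^1$ since $\theta \in \Q$; density together with continuity extends this to $\Phi(\LieA \times M_\Gamma) \subset \langle e^{i\theta\pi}\rangle$. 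Connectedness of $\LieA$ and of $M_\Gamma^\circ$ then forces $\Phi$ to be trivial on each, giving $v = 0$ and $\chi|_{M_\Gamma^\circ} = 1$, so that $\chi$ factors through $M_\Gamma/M_\Gamma^\circ$ and takes values in $\{\pm 1\}$.

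The final step forces $\chi \equiv 1$. For $\theta = p/q$ in lowest terms with $q \geq 2$, $e^{-ik\theta\pi} \notin \{\pm 1\}$ whenever $q \nmid k$; topological mixing of $\sigma$ supplies periodic points of all sufficiently large periods, producing such a $k$ and contradicting $\chi(\vartheta^k(x)) = e^{-ik\theta\pi}$. For $\theta$ an even integer, the constraint $\chi(\vartheta^k(x)) = 1$ on periodic orbits combined with density forces $\chi \equiv 1$, contradicting $\mu \neq 1$. The main obstacle is $\theta$ an odd integer, where $\chi(\vartheta^k(x)) = (-1)^k$ is \emph{a priori} consistent with a nontrivial $\chi$; here my plan is to use $\chi^2 = 1$ and simplicity of the maximal eigenvalue of $\mathcal{L}_0$ applied to $H^2$ to reduce $H$ to a $\{\pm c\}$-valued function, and then to derive a contradiction via a $\Z/2$-valued Liv\v{s}ic argument on the natural $\Z/2$-extension of the skew product $(y, m) \mapsto (\sigma(y), m\vartheta(y))$ on $\Sigma^+ \times M_\Gamma$, whose topological transitivity should follow from a refinement of \cref{pro:DenseOrbitInSigma+TimesM_Gamma}.
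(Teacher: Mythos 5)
Your first three steps track the paper's argument closely and are sound: the pointwise identity via the equality case of the triangle inequality (the paper cites the argument of \cite[Proposition 4.4]{PP90} for exactly this), the reduction to $\dim(\mu)=1$ via \cref{pro:DenseOrbitInSigma+TimesM_Gamma}, the evaluation on periodic orbits, and the use of \cref{pro:SubgroupGeneratedBy_K_AndHolonomyDenseInLieATimesM_Gamma} to force $v=0$ and $\mu|_{M^\circ}=1$, hence $\mu$ valued in $\{\pm1\}$ on components. The elimination of non-integer $\theta$ and of even $\theta$ via periodic points of consecutive periods is also the paper's route. You have correctly identified that the odd-integer case is the crux.

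The gap is in your treatment of that crux. Your plan reduces to showing that the $\Z/2$-extension $(y,m,\epsilon)\mapsto(\sigma(y),m\vartheta(y),\epsilon+1)$ of the skew product is topologically transitive, which you assert ``should follow from a refinement of \cref{pro:DenseOrbitInSigma+TimesM_Gamma}.'' It does not: that proposition is deduced from the $\R$-ergodicity of $\hat{m}_{\mathsf{v}}|_{\mathcal{Z}_{[e]}}$ (\cref{pro:R-ergodicity}), which sees the continuous-time flow and hence the pair $(\sigma^k(x),\vartheta^k(x))$, but carries no information about the \emph{parity of the symbolic return count} $k$ --- that parity is an artifact of the chosen Markov section, not of the flow. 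Indeed, the transitivity of the $\Z/2$-extension is equivalent to the nonexistence of a continuous $\tilde H$ with $\tilde H\circ T=-\tilde H$, i.e.\ to (a strengthening of) the very statement you are trying to prove in this case, so the argument as sketched is circular unless a genuinely new input is supplied. The paper supplies that input differently: before starting the proof it \emph{rearranges the Markov section} (inserting an intermediate rectangle, i.e.\ a small time-translate of an existing one) so that a finite set of periodic points $y_l\in\Fix(\sigma^{n_l})$ with $y_{l,0}=1$, whose holonomies $\vartheta^{n_l}(y_l)M^\circ$ generate $M_\Gamma/M^\circ$, all have \emph{even} periods $n_l$. Then in the case $\theta=1$ the relation $\mu(\vartheta^{n_l}(y_l))=e^{i\pi n_l}=1$ kills $\mu$ on a generating set of $M_\Gamma/M^\circ$, forcing $\mu=1$ and the desired contradiction. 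Without this (or an equivalent device controlling parities of coding periods), your final step does not close.
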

	
	\begin{proof}
		We first fix some sequences for later. By \cref{lem:FirstReturnVectorAndHolonomyEqualsGeneralizedJordanProjectionOnFixedPoints,pro:SubgroupGeneratedBy_K_AndHolonomyDenseInLieATimesM_Gamma} and their proofs, there exist $y_l \in \Fix(\sigma^{n_l})$ with $y_{l, 0} = 1 \in \mathcal{A}$ corresponding to some $n_l \in \mathbb N$ for all integers $1 \leq l \leq l_0$ for some $l_0 \in \mathbb N$ such that $\{\vartheta^{n_l}(y_l)M^\circ\}_{l = 1}^{l_0} \subset M_\Gamma/M^\circ$ is a finite generating set. Without loss of generality, we can assume that the size $\hat{\delta}$ is sufficiently small and the Markov section $\mathcal{R}$ is arranged such that $n_l = 0 \pmod{2}$ for all $1 \leq l \leq l_0$ because if necessary, we could have inserted an intermediate rectangle which is simply the image of an existing rectangle under the translation flow for a sufficiently small time so that the primitive periods of $y_l$ are $0 \pmod{2}$ for all $1 \leq l \leq l_0$.
		
		Let $(v, \mu) \in \LieA \times \widehat{M}_\Gamma$ with $(v, \mu) \neq (0, 1)$. Suppose $\mathcal{L}_{v, \mu}$ has an eigenvalue of the form $e^{i\theta\pi}$ for some $\theta \in [0, 2)$. We will first show that $\dim(\mu) = 1$. By definition, there exists a nontrivial $H \in L(\Sigma^+, V_\mu)$ such that
		\begin{align*}
			\mathcal{L}_{v, \mu}(H) = e^{i\theta\pi}H.
		\end{align*}
		Using arguments as in the proof of \cite[Proposition 4.4]{PP90}, we conclude that
		\begin{align*}
			e^{i\langle v, \mathsf{K}(x) \rangle_\psi}\mu(\vartheta(x)^{-1})H(x) = e^{i\theta\pi}H(\sigma(x))
		\end{align*}
		for all $x \in \Sigma^+$. Define the continuous function $f: \Sigma^+ \times M_\Gamma \to V_\mu$ by $f(x, m) = \mu(m)H(x)$ for all $(x, m) \in \Sigma^+ \times M_\Gamma$. Then using the above identity, we have
		\begin{align*}
			f(\sigma(x), m\vartheta(x)) &= \mu(m)\mu(\vartheta(x))H(\sigma(x)) \\
			&= e^{i(\langle v, \mathsf{K}(x) \rangle_\psi - \theta\pi)}\mu(m)H(x) \\
			&= e^{i(\langle v, \mathsf{K}(x) \rangle_\psi - \theta\pi)}f(x, m)
		\end{align*}
		for all $(x, m) \in \Sigma^+ \times M_\Gamma$. By iterating, $f(\sigma^k(x), \vartheta^k(x)) \in \{e^{i\theta'}H(x): \theta' \in \mathbb R\}$ for all $x \in \Sigma^+$ and $k \in \mathbb N$. Now by \cref{pro:DenseOrbitInSigma+TimesM_Gamma}, we can fix $x \in \Sigma^+$ such that $\{(\sigma^k(x), \vartheta^k(x))\}_{k \in \mathbb N} \subset \Sigma^+ \times M_\Gamma$ is dense. Without loss of generality, we can assume that $H(x) \neq 0$. By continuity, the image of $f$ is contained in $\{e^{i\theta'}H(x): \theta' \in \mathbb R\}$ and hence $\mu|_{\mathbb C H(x)}$ is a nontrivial subrepresentation. By irreducibility of $\mu$, we must have $\dim(\mu) = 1$. We now view it as a homomorphism $\mu: M_\Gamma \to \U(1)$.
		
		To obtain a contradiction, suppose $\theta \in \mathbb Q$. Let $\tilde{\mu}: M_\Gamma \to \R$ be any function such that $\mu(m) = e^{i\tilde{\mu}(m)}$ for all $m \in M_\Gamma$ and $\tilde{\mu} \circ \vartheta \in C(\Sigma^+)$. Then, $\sum_{k = 0}^{n - 1} \tilde{\mu}(\vartheta(\sigma^k(x))) = \tilde{\mu}(\vartheta^n(x)) \pmod{2\pi}$ and so by \cref{lem:CohomologousToALatticeFunction}, we have
		\begin{align}
			\label{eqn:Periodic_x_MultipleOf2pi}
			\langle v, \mathsf{K}_n(x) \rangle_\psi + \tilde{\mu}(\vartheta^n(x)) - \theta\pi n \in 2\pi\mathbb Z
		\end{align}
		for all $x \in \Fix(\sigma^n)$ and $n \in \mathbb N$. Define the continuous function $f: \LieA \times M_\Gamma \to \U(1)$ by $f(w, m) = e^{i\langle v, w \rangle_\psi} \mu(m)$ for all $(w, m) \in \LieA \times M_\Gamma$. Then \cref{eqn:Periodic_x_MultipleOf2pi} becomes
		\begin{align*}
			f(\mathsf{K}_n(x), \vartheta^n(x)) = e^{i\theta\pi n} \qquad \text{for all $x \in \Fix(\sigma^n)$ and $n \in \mathbb N$}.
		\end{align*}
		By the density result of \cref{pro:SubgroupGeneratedBy_K_AndHolonomyDenseInLieATimesM_Gamma} and finiteness of $\{e^{i\theta\pi n}: n \in \mathbb Z\}$, the function $f$ must be constant on connected components. Since $f(0, e) = 1$, we must have $f|_{\LieA \times M^\circ} = 1$. Thus, $f(w, e) = e^{i\langle v, w \rangle_\psi} = 1$ for all $w \in \LieA$ which implies that $v = 0$. Similarly, $\mu|_{M^\circ} = 1$. Moreover, $M_\Gamma/M^\circ \cong (\mathbb Z/2 \mathbb Z)^p$ for some integer $0 \leq p \leq \rank$ implies that $\mu|_{mM^\circ} = \pm 1$ for all $m \in M_\Gamma$. By the topological transitivity of the transition matrix $T$, there exist $x \in \Fix(\sigma^{n}) \neq \varnothing$ and $x' \in \Fix(\sigma^{n + 1}) \neq \varnothing$ for any $n \geq N_T$ (see \cref{subsec:SymbolicDynamics}). So, $\mu\bigl(\vartheta^n(x)\bigr) = e^{i\theta\pi n}$ and $\mu\bigl(\vartheta^{n + 1}(x')\bigr) = e^{i\theta\pi (n + 1)}$ which implies $\theta \in \{0, 1\}$. If $\theta = 0$, then clearly $\mu = 1$ which contradicts $(v, \mu) \neq (0, 1)$. Now suppose $\theta = 1$. Using $y_l \in \Fix(\sigma^{n_l})$ fixed in the beginning, we get $\mu(\vartheta^{n_l}(y_l)) = e^{i\pi n_l} = 1$ for all $1 \leq l \leq l_0$. Since $\{\vartheta^{n_l}(y_l)M^\circ\}_{l = 1}^{l_0}$ generates $M_\Gamma/M^\circ$, we again have $\mu = 1$ which contradicts $(v, \mu) \neq (0, 1)$.
	\end{proof}
	
	\begin{remark}
		Although for all $x \in \Fix(\sigma^k)$ with multiplicity $j \in \mathbb N$, the elements $\lambda^M(\gamma_{x, j}), \vartheta^k(x) \in M_\Gamma$ are in the same \emph{conjugacy class} in $M$, they may \emph{not} be in the same conjugacy class in $M_\Gamma$. Thus, unlike in \cite{OP19}, we cannot simply descend to the abelianization $M_\Gamma / [M_\Gamma, M_\Gamma]$ in the above proof.
	\end{remark}
	
	We need the following lemma where $\|\cdot\|$ is the standard norm on $\mathbb R^2$.
	
	\begin{lemma}
		\label{lem:Norm<=1ImpliesNorm<1AwayFromOrigin}
		Fix $n \in \N$. Let $\mathcal{O} \subset \R^n$ be an open neighborhood of $0$, and $\alpha: \mathcal{O} \to \mathbb R^2$ be an analytic map such that $\|\alpha\| \leq 1$ and $\|\alpha(x)\| = 1$ implies $\alpha(x) \in D$ for all $x \in \mathcal{O}$ where $D \subset S^1$ is a totally disconnected subset. If $\|\alpha(0)\| = 1$, then further suppose that $\alpha$ is nonconstant on any nonconstant analytic curve in $\mathcal{O}$ through $0$. Then, shrinking $\mathcal{O}$ if necessary, $\|\alpha(x)\| < 1$ for all $x \in \mathcal{O} \setminus \{0\}$.
	\end{lemma}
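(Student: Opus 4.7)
The plan is to split into two cases based on the value of $\|\alpha(0)\|$. If $\|\alpha(0)\| < 1$, then by continuity of $\alpha$, after shrinking $\mathcal{O}$ we have $\|\alpha\| < 1$ throughout $\mathcal{O}$, and there is nothing more to prove. So the interesting case is when $\|\alpha(0)\| = 1$, where the extra hypothesis about $\alpha$ being nonconstant on analytic curves through $0$ kicks in.

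Assume $\|\alpha(0)\| = 1$ and consider the real analytic subset
\begin{align*}
S = \{x \in \mathcal{O} : \|\alpha(x)\|^2 = 1\} \subset \mathcal{O},
\end{align*}
which is the zero set of the real analytic function $\|\alpha\|^2 - 1$ and contains $0$. The goal is to show that $0$ is an isolated point of $S$, because then shrinking $\mathcal{O}$ further gives $S = \{0\}$, which is exactly the conclusion. Suppose for contradiction that $0$ is not isolated in $S$; then there is a sequence of distinct points $x_k \in S \setminus \{0\}$ with $x_k \to 0$.

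The key tool here is the curve selection lemma of Milnor--\L ojasiewicz (see e.g.\ Milnor, \emph{Singular Points of Complex Hypersurfaces}, Lemma 3.1, or standard references on real analytic geometry): since $0$ lies in the closure of $S \setminus \{0\}$ and $S$ is a real analytic subset, there exists a nonconstant real analytic arc $\gamma: [0, \varepsilon) \to \mathcal{O}$ with $\gamma(0) = 0$ and $\gamma(t) \in S$ for all $t \in (0, \varepsilon)$. Then $\alpha \circ \gamma: [0, \varepsilon) \to S^1$ is continuous and, by hypothesis, takes values in the totally disconnected set $D$. Since $[0, \varepsilon)$ is connected, its image under $\alpha \circ \gamma$ is a connected subset of $D$, hence a single point. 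Thus $\alpha \circ \gamma$ is constant on $[0, \varepsilon)$, contradicting the hypothesis that $\alpha$ is nonconstant on every nonconstant analytic curve through $0$.

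The main obstacle, as far as I can see, is simply invoking the curve selection lemma in the correct form; everything else (splitting into cases, using total disconnectedness to force constancy of $\alpha \circ \gamma$, and concluding that isolated means $\|\alpha\| < 1$ on a punctured neighborhood) is a routine continuity argument.
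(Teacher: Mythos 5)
Your proof is correct and follows essentially the same route as the paper: handle $\|\alpha(0)\|<1$ by continuity, and for $\|\alpha(0)\|=1$ analyze the zero set of $\|\alpha\|^2-1$, producing a nonconstant analytic curve through $0$ lying in that zero set and deriving a contradiction from the fact that a connected subset of the totally disconnected set $D$ is a point. The only (immaterial) difference is the real-analytic-geometry input: you invoke the curve selection lemma directly, whereas the paper cites \L ojasiewicz's structure theorem for analytic zero sets (either $0$ is isolated or the zero set contains a positive-dimensional analytic submanifold accumulating at $0$); both yield the required curve.
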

	
	\begin{proof}
		Let $\alpha: \mathcal{O} \to \mathbb R^2$ be an analytic map as in the lemma. If $\|\alpha(0)\| < 1$, then the lemma follows by continuity. Now suppose $\|\alpha(0)\| = 1$. Applying Lojasiewicz's theorem \cite[Theorem 6.3.3]{KP02} to the analytic function $\|\alpha\|^2-1$ and shrinking $\mathcal{O}$ if necessary, we have either $(\|\alpha\|^2-1)^{-1}(0) \cap (\mathcal{O} \setminus \{0\}) = \varnothing$ or $(\|\alpha\|^2-1)^{-1}(0)$ contains an analytic submanifold of dimension at least 1 whose closure contains $0$. If we have the latter, then there exists a nonconstant analytic curve in $\mathcal{O}$ through $0$ which $\alpha$ maps into the totally disconnected set $D$ and so $\alpha$ must be constant on this curve, contradicting the hypothesis. Thus, $(\|\alpha\|^2-1)^{-1}(0) \cap (\mathcal{O} \setminus \{0\}) = \varnothing$.
	\end{proof}
	
	For all $\mu \in \widehat{M}_\Gamma$, we denote by $\mathcal{B}(L(\Sigma^+, V_\mu))$ the Banach algebra of bounded operators on $L(\Sigma^+, V_\mu)$.
	
	\begin{theorem}
		\label{thm:AnalyticityOf_Id-TransferOperator}
		For all $\mu \in \widehat{M}_\Gamma$, define the map $L_\mu: D_\mu \to \mathcal{B}(L(\Sigma^+, V_\mu))$ by
		\begin{align*}
			L_\mu(v) = \sum_{k = 0}^\infty \mathcal{L}_{v, \mu}^k = \bigl(\Id_{L(\Sigma^+, V_\mu)} - \mathcal{L}_{v, \mu}\bigr)^{-1} \qquad \text{for all $v \in D_\mu$}
		\end{align*}
		where $D_\mu \subset \LieA$ is the subset on which the sum converges. Then, we have:
		\begin{enumerate}
			\item\label{itm:Nontrivial_mu_GeometricSeriesTransferOperatorAnalytic} for all $\mu \in \widehat{M}_\Gamma$ with $\mu \neq 1$, the subset $D_\mu \subset \LieA$ is cocountable, hence of full Lebesgue measure, and there exists an analytic extension $L_\mu: \LieA \to \mathcal{B}(L(\Sigma^+, V_\mu))$;
			\item\label{itm:Trivial_mu_GeometricSeriesTransferOperatorAnalyticOutside0} the subset $D_1 \subset \LieA$ is cocountable, hence of full Lebesgue measure, and there exists an analytic extension $L_1: \LieA \setminus \{0\} \to \mathcal{B}(L(\Sigma^+, \mathbb C))$, and moreover there exists an open neighborhood $\mathcal{O} \subset \LieA$ of $0$ and analytic maps
			\begin{itemize}[label=\textbullet]
				\item $\mathcal{O} \to \mathbb C$ denoted by $v \mapsto \kappa_{v, 1}$,
				\item $\mathcal{O} \to \mathcal{B}(L(\Sigma^+, \mathbb C))$ denoted by $v \mapsto P_{v, 1}$,
				\item $\mathcal{O} \to \mathcal{B}(L(\Sigma^+, \mathbb C))$ denoted by $v \mapsto Q_{v, 1}$
			\end{itemize}
			such that
			\begin{align*}
				L_1(v) = \frac{P_{v, 1}}{1 - \kappa_{v, 1}} + Q_{v, 1} \qquad \text{for all $v \in \mathcal{O} \setminus \{0\}$}.
			\end{align*}
		\end{enumerate}
	\end{theorem}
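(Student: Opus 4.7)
The plan is to combine the spectral bounds from \cref{pro:PreliminaryTransferOperatorSpectrum} and \cref{thm:SpectralBound} with Kato's analytic perturbation theory for simple isolated eigenvalues and \cref{lem:Norm<=1ImpliesNorm<1AwayFromOrigin}. Throughout, $v \mapsto \mathcal{L}_{v,\mu}$ is a real-analytic $\mathcal{B}(L(\Sigma^+, V_\mu))$-valued map, since its integral kernel depends analytically on $v$ through the factor $e^{i\langle v, \mathsf{K}(x')\rangle_\psi}$.

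First I would establish the analytic extension of $L_\mu$ away from the singular point via invertibility of $\Id - \mathcal{L}_{v,\mu}$. By \cref{pro:PreliminaryTransferOperatorSpectrum}, the only possible spectrum of $\mathcal{L}_{v,\mu}$ on the unit circle is a single simple maximal eigenvalue, and \cref{thm:SpectralBound} forbids this eigenvalue from equalling $1$ whenever $(v,\mu) \neq (0,1)$ (since $1 = e^{i \cdot 0 \cdot \pi}$ and $0 \in \mathbb Q$). Hence $1 \notin \mathrm{spec}(\mathcal{L}_{v,\mu})$ for all such $(v,\mu)$, so $(\Id - \mathcal{L}_{v,\mu})^{-1}$ exists and depends analytically on $v$ by the analyticity of inversion in a Banach algebra. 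This yields $L_\mu$ as an analytic map on all of $\LieA$ in case \cref{itm:Nontrivial_mu_GeometricSeriesTransferOperatorAnalytic}, and on $\LieA \setminus \{0\}$ in case \cref{itm:Trivial_mu_GeometricSeriesTransferOperatorAnalyticOutside0}.

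For the local decomposition near $v = 0$ in \cref{itm:Trivial_mu_GeometricSeriesTransferOperatorAnalyticOutside0}, I would invoke Kato perturbation theory. By \cref{thm:RPFonU} applied to $\mathcal{L}_0 = \mathcal{L}_{0,1}$, the eigenvalue $1$ is simple and isolated with eigenvector $\chi_{\Sigma^+}$, so on some open neighborhood $\mathcal{O}$ of $0$ one obtains analytic maps $v \mapsto \kappa_{v,1}$ and $v \mapsto P_{v,1}$, where $P_{v,1}$ is the rank-one spectral projection onto the $\kappa_{v,1}$-eigenspace and $\kappa_{0,1} = 1$. On the complementary $\mathcal{L}_{v,1}$-invariant subspace $\ker P_{v,1}$, the restriction of $\Id - \mathcal{L}_{v,1}$ is invertible with analytic inverse; letting $Q_{v,1}$ denote this inverse composed with $\Id - P_{v,1}$, the decomposition $L_1(v) = \frac{P_{v,1}}{1 - \kappa_{v,1}} + Q_{v,1}$ on $\mathcal{O} \setminus \{0\}$ follows from the direct-sum splitting $L(\Sigma^+, \mathbb C) = P_{v,1}L(\Sigma^+, \mathbb C) \oplus \ker P_{v,1}$ of the resolvent.

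Finally, cocountability of $D_\mu$ amounts to showing that $E_\mu := \{v : \mathrm{sp.rad.}(\mathcal{L}_{v,\mu}) = 1\}$ is discrete, hence countable by second-countability of $\LieA$. At any $v_0 \in E_\mu$, the maximal simple eigenvalue $\kappa(v_0)$ extends by Kato to an analytic function $\kappa$ on a neighborhood of $v_0$. Whenever $|\kappa(v)| = 1$, \cref{thm:SpectralBound} pins $\kappa(v)$ to $D := \{e^{i\theta\pi} : \theta \in \mathbb R \setminus \mathbb Q\}$, which is the complement in $S^1$ of a countable dense set and hence totally disconnected. \cref{lem:Norm<=1ImpliesNorm<1AwayFromOrigin} then yields $|\kappa(v)| < 1$ on some punctured neighborhood of $v_0$. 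I expect the main obstacle to be verifying the nonconstancy hypothesis of \cref{lem:Norm<=1ImpliesNorm<1AwayFromOrigin}: that $\kappa$ is nonconstant on every nonconstant analytic curve through $v_0$. This should follow from combining a Feynman--Hellmann-type expression for $\partial_v \kappa$ with \cref{pro:SubgroupGeneratedBy_K_AndHolonomyDenseInLieATimesM_Gamma}, since constancy along some direction $w \in \LieA$ would propagate through iterates to force $\langle w, \mathsf{K}_k(x)\rangle_\psi$ to lie in a fixed countable set for all periodic $x$, contradicting density of $\{\mathsf{K}_k(x) : x \in \Fix(\sigma^k),\ k \in \mathbb N\}$ in $\LieA$.
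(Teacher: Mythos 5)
Your proposal is correct and follows essentially the same route as the paper: the same combination of \cref{pro:PreliminaryTransferOperatorSpectrum}, \cref{thm:SpectralBound}, Kato perturbation theory, and \cref{lem:Norm<=1ImpliesNorm<1AwayFromOrigin} (with the nonconstancy hypothesis checked via \cref{lem:CohomologousToALatticeFunction} and \cref{pro:SubgroupGeneratedBy_K_AndHolonomyDenseInLieATimesM_Gamma}), and the same local-discreteness-plus-countability argument for $D_\mu$. The only cosmetic differences are that you obtain the global analytic extension directly from invertibility of $\Id - \mathcal{L}_{v,\mu}$ (which the paper also records as one of its two formulas) and that at the origin with $\mu = 1$ the totally disconnected set should be taken as $\{e^{i\theta\pi}: \theta \in (\R\setminus\Q)\cup\{0\}\}$ so as to contain $\kappa_{0,1} = 1$.
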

	
	\begin{proof}
		Let $\mu \in \widehat{M}_\Gamma$ and define $L_\mu: D_\mu \to \mathcal{B}(L(\Sigma^+, V_\mu))$ as in the theorem. Note that any closed subset of $\LieA$ is a countable union of compact subsets and is hence a Lindel\"{o}f space, i.e., any open cover has a countable subcover. Thus, for \cref{itm:Nontrivial_mu_GeometricSeriesTransferOperatorAnalytic} and the first part of \cref{itm:Trivial_mu_GeometricSeriesTransferOperatorAnalyticOutside0}, it suffices to show that for all $v \in \LieA$, there exists an open neighborhood $\mathcal{O}_v \subset \LieA$ of $v$ such that $L_\mu(v') = \sum_{k = 0}^\infty \mathcal{L}_{v', \mu}^k$ converges for all $v' \in \mathcal{O}_v \setminus \{v\}$ and if $(v, \mu) \neq (0, 1)$, then $L_\mu|_{\mathcal{O}_v \setminus \{v\}}$ has an analytic extension to $L_\mu|_{\mathcal{O}_v}: \mathcal{O}_v \to \mathcal{B}(L(\Sigma^+, V_\mu))$.
		
		Let $v \in \LieA$. By \cref{pro:PreliminaryTransferOperatorSpectrum}, \cref{thm:SpectralBound}, \cref{lem:Norm<=1ImpliesNorm<1AwayFromOrigin}, and perturbation theory (see \cite[Chapter 7]{Kat95} and \cite[Proposition 4.6]{PP90}), there exists an open neighborhood $\mathcal{O}_v \subset \LieA$ of $v$ and analytic maps
		\begin{itemize}[label=\textbullet]
			\item $\mathcal{O}_v \to \mathbb C$ denoted by $v' \mapsto \kappa_{v', \mu}$,
			\item $\mathcal{O}_v \to \mathcal{B}(L(\Sigma^+, V_\mu))$ denoted by $v' \mapsto P_{v', \mu}$,
			\item $\mathcal{O}_v \to \mathcal{B}(L(\Sigma^+, V_\mu))$ denoted by $v' \mapsto R_{v', \mu}$
		\end{itemize}
		such that for all $v' \in \mathcal{O}_v$, we have:
		\begin{itemize}[label=\textbullet]
			\item $\mathcal{L}_{v', \mu} = \kappa_{v', \mu} P_{v', \mu} + R_{v', \mu}$;
			\item $\kappa_{v', \mu}$ is a maximal simple eigenvalue of $\mathcal{L}_{v', \mu}$ with $|\kappa_{v', \mu}| < 1$ if $v' \neq v$, $|\kappa_{v, \mu}| \leq 1$, $\kappa_{v, \mu} \neq 1$ if $(v, \mu) \neq (0, 1)$, and $\kappa_{0, 1} = 1$;
			\item $P_{v', \mu}$ is a projection operator onto the $1$-dimensional eigenspace corresponding to $\kappa_{v', \mu}$;
			\item $P_{v', \mu}R_{v', \mu} = R_{v', \mu}P_{v', \mu} = 0$;
			\item the spectral radius of $R_{v', \mu}$ is strictly less than $1$.
		\end{itemize}
		Note that the hypothesis of \cref{lem:Norm<=1ImpliesNorm<1AwayFromOrigin} is satisfied because if $\kappa_{v, \mu} = e^{i\theta\pi}$ for some $\theta \in (\mathbb R \setminus \mathbb Q) \cup \{0\}$ and the analytic map $\mathcal{O}_v \to \mathbb C$ defined by $v' \mapsto \kappa_{v', \mu}$ is constant on any nonconstant continuous curve in $\mathcal{O}_v$ through $v$, then the arguments in \cref{thm:SpectralBound} hold and we obtain a contradiction directly from \cref{eqn:Periodic_x_MultipleOf2pi} and \cref{pro:SubgroupGeneratedBy_K_AndHolonomyDenseInLieATimesM_Gamma}. Using the above properties, we have
		\begin{align*}
			L_\mu(v') = \sum_{k = 0}^\infty \mathcal{L}_{v', \mu}^k &= \left(\sum_{k = 0}^\infty \kappa_{v', \mu}\right) P_{v', \mu} + \sum_{k = 0}^\infty R_{v', \mu} \\
			&= \frac{P_{v', \mu}}{1 - \kappa_{v', \mu}} + (\Id_{L(\Sigma^+, V_\mu)} - R_{v', \mu})^{-1} \\
			&= \bigl(\Id_{L(\Sigma^+, V_\mu)} - \mathcal{L}_{v', \mu}\bigr)^{-1}
		\end{align*}
		for all $v' \in \mathcal{O}_v \setminus \{v\}$ as desired. But of course, $\sum_{k = 0}^\infty R_{v', \mu}$ converges for all $v' \in \mathcal{O}_v$ and if $(v, \mu) \neq (0, 1)$, then $\frac{P_{v, \mu}}{1 - \kappa_{v, \mu}}$ is also well-defined. Thus, it is clear that if $(v, \mu) \neq (0, 1)$, we obtain the required analytic extension $L_\mu|_{\mathcal{O}_v}: \mathcal{O}_v \to \mathcal{B}(L(\Sigma^+, V_\mu))$ using either of the last two formulas above. Setting $\mathcal{O} = \mathcal{O}_{0}$ and $Q_{v, 1} = (\Id_{L(\Sigma^+, V_\mu)} - R_{v, \mu})^{-1}$ for all $v \in \mathcal{O}$, the above also proves the second part of \cref{itm:Trivial_mu_GeometricSeriesTransferOperatorAnalyticOutside0}.
	\end{proof}
	
	Fix $\mathcal{O} \subset \LieA$ and $\kappa_{v, 1}$, $P_{v, 1}$, and $Q_{v, 1}$ for all $v \in \mathcal{O}$ provided by \cref{thm:AnalyticityOf_Id-TransferOperator} henceforth. Without loss of generality, using $\LieA = \mathbb R\mathsf{v} \oplus \ker\psi$, assume that $\mathcal{O} = \mathcal{O}_{\mathbb R} \times \mathcal{O}_\psi$ for some open neighborhoods $\mathcal{O}_{\mathbb R} \subset \mathbb R\mathsf{v}$ of $0$ and $\mathcal{O}_\psi \subset \ker\psi$ of $0$.
	
	\section{Local mixing}
	\label{sec:LocalMixing}
	Recall that we denote by $\mathsf{m}$ the restriction of $\BMS$ to one of the $A$-ergodic components of the support $\Omega$, namely $\Omega_0$ (see \cref{subsec:ErgodicDecomposition}). We prove the following theorem from which \cref{thm:LocalMixing} follows.
	
	\begin{theorem}
		\label{thm:LocalMixingForErgodicComponent}
		There exist $\kappa_\mathsf{v} > 0$ and an inner product $\langle \cdot, \cdot \rangle_*$ and a corresponding norm $\|\cdot\|_*$ on $\LieA$ such that for all $r: \R_{\geq 0} \to \R_{\geq 0}$ with $\lim\limits_{t \to +\infty}\frac{r(t)}{t} \in [0, +\infty]$ and $\lim\limits_{t \to +\infty}\frac{r(t)^2}{t} =: \ell \in [0, +\infty]$, $\mathsf{u} \in \ker\psi$, and $\phi_1, \phi_2 \in C_{\mathrm{c}}(\Gamma \backslash G)$, we have
		\begin{align*}
			\lim_{t \to +\infty} t^{\frac{\rank - 1}{2}} \int_{\Gamma \backslash G} \phi_1(x a_{t\mathsf{v} + r(t)\mathsf{u}}) \phi_2(x) \, d\mathsf{m}(x) = \kappa_\mathsf{v} e^{-\ell I(\mathsf{u})} \mathsf{m}(\phi_1) \mathsf{m}(\phi_2)
		\end{align*}
		with the convention that $+\infty\cdot 0 = 0$ in the exponent, where $I: \ker\psi \to \R_{\geq 0}$ is defined by $I(\mathsf{u}) = \|\mathsf{u}\|_*^2 - \frac{\langle \mathsf{u}, \mathsf{v} \rangle_*^2}{\|\mathsf{v}\|_*^2}$ for all $\mathsf{u} \in \ker\psi$. Moreover, there exists $\kappa_\mathsf{v}(\phi_1, \phi_2) > 0$ such that:
		\begin{enumerate}
			\item\label{itm:UniformBoundAgain} the left hand side in absolute value is bounded above by
			\begin{align*}
				\kappa_\mathsf{v}(\phi_1, \phi_2)e^{-2(\|t\mathsf{v} + r(t)\mathsf{u}\|_* \cdot \|\mathsf{v}\|_* - \langle t\mathsf{v} + r(t)\mathsf{u}, \mathsf{v} \rangle_*)}
			\end{align*}
			$(t, \mathsf{u}) \in \mathbb R_{\geq 0} \times \ker\psi$;
			\item\label{itm:UniformBoundOnConesAgain} if $\ell \in (0, +\infty]$, then there exist $T_r > 0$ and $\ell' \in (0, \ell)$ such that the left hand side in absolute value is bounded above by $\kappa_\mathsf{v}(\phi_1, \phi_2) e^{-\ell' I(\mathsf{u})}$ for all $(t, \mathsf{u}) \in [T_r, +\infty) \times \ker\psi$ with $t\mathsf{v} + r(t)\mathsf{u} \in \LieA^+$;
			\item\label{itm:UniformBoundOnCompactsAgain} for all compact subsets $\mathcal{K} \subset \ker\psi$, the convergence of the left hand side is uniform in $\mathsf{u} \in \mathcal{K}$.
		\end{enumerate}
	\end{theorem}
	
	\begin{remark}
		\label{rem:ExponentialDecayAlongRays}
		Taking a linear $r$ in \cref{thm:LocalMixingForErgodicComponent} shows that for $\mathsf{w} \in \mathsf{v} + (\ker\psi \setminus \{0\})$, we have 
		\begin{align*}
			\lim_{t \to +\infty} t^{\frac{\rank - 1}{2}} \int_{\Gamma \backslash G} \phi_1(x a_{t\mathsf{w}}) \phi_2(x) \,  d\mathsf{m}(x) = 0.    
		\end{align*}
		In fact, applying \cref{itm:UniformBoundAgain} in \cref{thm:LocalMixingForErgodicComponent} to $r(t) = t$ and $\mathsf{u} = \mathsf{w} - \mathsf{v}$, the left hand side in absolute value is bounded above by $\kappa_\mathsf{v}(\phi_1, \phi_2)e^{-2(\|\mathsf{w}\|_* \cdot \|\mathsf{v}\|_* - \langle \mathsf{w}, \mathsf{v} \rangle_*)t}$ for all $t \geq 0$, so the above convergence is exponentially fast. Note that for $\mathsf{w} \in \LieA^+ \setminus\limitcone$, the above integral vanishes for sufficiently large $t > 0$ by \cite[Proposition 2.19]{ELO20}.
	\end{remark}
	
	\begin{remark}
		\label{rem:OnProofsOfProperties}
		In \cref{thm:LocalMixingForErgodicComponent}, we have used \cite[Proposition 2.19]{ELO20} to simplify the statement of \cref{itm:UniformBoundOnConesAgain}. \Cref{itm:UniformBoundAgain,itm:UniformBoundOnConesAgain,itm:UniformBoundOnCompactsAgain} are obtained from a finer analysis of \cref{sec:LocalMixing}. Given that $\phi_1$ and $\phi_2$ have compact support, it can be checked that all the relevant convergences which appear are uniform in $\mathsf{u} \in \ker\psi$ except in \cref{lem:LimitOf_f_t_IsExp_I_r}. Thus, it suffices to examine \cref{lem:LimitOf_f_t_IsExp_I_r} in more detail. There are other bounds and convergence properties which can be deduced in a similar fashion.
	\end{remark}
	
	\begin{remark}
		\label{rem:ConstantKappa}
		The constant $\kappa_\mathsf{v}$ in \cref{thm:LocalMixingForErgodicComponent} is explicitly given by the formula
		\begin{align*}
			\kappa_\mathsf{v} = (2\pi)^{-\frac{\rank - 1}{2}} \mathfrak{c}^{-\frac{1}{2}}
		\end{align*}
		where $\mathfrak{c} = \det(D^2P(0))$ is the Gaussian curvature at $0$ of the analytic map $P: \mathcal{O}_\psi \to \mathbb R$ in the proof of \cref{pro:ExpansionOf_kappa}. We use the natural convention $\mathfrak{c} = 1$ if $\rank = 1$. A priori, $\mathfrak{c}$ and hence $\kappa_\mathsf{v}$, and $\langle \cdot, \cdot \rangle_*$ and hence $I$ depend on the choice of Markov section from \cref{subsec:MarkovSectionForTheTranslationFlow}. However, taking $\mathsf{u} = 0$ in \cref{thm:LocalMixingForErgodicComponent}, it is clear that $\kappa_\mathsf{v}$ is independent of the choice of Markov section and depends \emph{only} on $\mathsf{v} \in \interior(\limitcone)$. Interestingly, $\mathfrak{c}$ and $I$ are then independent of the choice of Markov section and depends \emph{only} on $\mathsf{v} \in \interior(\limitcone)$. Moreover, examining the determining equation for $P$, we see that $\kappa_\mathsf{v}$, $\mathfrak{c}$, and $I$ are continuous in $\mathsf{v} \in \interior(\limitcone)$. In \cref{thm:LocalMixingNoPsi,thm:LocalMixing,thm:decayofmatrixcoefficients}, since $\BMS$ was not normalized as in \cref{eqn:ProductOfBMSAndLebesgue}, the formula for $\kappa_\mathsf{v}$ is to be adjusted accordingly.
	\end{remark}
	
	\subsection{Expansion of the maximal simple eigenvalue}
	For all $v \in \mathcal{O}$, the transfer operator $\mathcal{L}_{v, 1}$ has a unique positive eigenvector $H_{v, 1}$ for the maximal simple eigenvalue $\kappa_{v, 1}$ which is normalized so that $\int_{\Sigma^+} H_{v, 1} \, d\nu_{\Sigma^+} = 1$. Then $v \mapsto H_{v, 1}$ is an analytic map on $\mathcal{O}$. Note that $H_0 := H_{0, 1} = \chi_{\Sigma^+}$. Denote by $\kappa: \mathcal{O} \to \mathbb C$ the map which gives the eigenvalue $\kappa(v) = \kappa_{v, 1}$ for all $v \in \mathcal{O}$. In this subsection we prove \cref{pro:ExpansionOf_kappa} which provides the second order expansion of $\kappa$ about $0 \in \mathcal{O}$. The proof requires the identities in the following proposition.
	
	\begin{proposition}
		\label{pro:AverageOfFirstReturnVectorMaps}
		The first return vector maps have the averaging properties
		\begin{align*}
			\int_{\Sigma^+} \mathsf{K} \, d\nu_{\Sigma^+} &= \nu_{\Sigma^+}(\tau)\mathsf{v}, & \int_{\Sigma^+} \widehat{\mathsf{K}} \, d\nu_{\Sigma^+} &= 0.
		\end{align*}
	\end{proposition}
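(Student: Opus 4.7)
The plan is to note that the two claimed identities are equivalent via the decomposition $\mathsf{K} = \tau\mathsf{v} + \widehat{\mathsf{K}}$ from \cref{eqn:K_Equals_tau+K-hat}, so it is enough to show $\int_{\Sigma^+} \widehat{\mathsf{K}}\, d\nu_{\Sigma^+} = 0$. For this I will exploit the analytic function $P: \mathcal{O}_\psi \to \R$ defined in a neighborhood $\mathcal{O}_\psi \subset \ker\psi$ of $0$ implicitly by $\Pr_\sigma(-P(v)\tau + \langle v, \widehat{\mathsf{K}}\rangle_\psi) = 0$, i.e.\ the function appearing in the paper's remark after \cref{thm:LocalMixingForErgodicComponent}. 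Existence and analyticity of $P$ follow from the implicit function theorem, using $P(0) = 1$ (since $\Pr_\sigma(-\tau) = 0$ by \cref{subsec:Thermodynamics}) together with $\partial_s\Pr_\sigma(-s\tau)|_{s=1} = -\nu_{\Sigma^+}(\tau) \neq 0$. Differentiating the defining relation and applying the standard formula for the derivative of pressure (\cite[Proposition 4.10]{PP90}) yields
\begin{equation*}
	DP(0)(w) \;=\; \frac{1}{\nu_{\Sigma^+}(\tau)}\Bigl\langle w,\, \int_{\Sigma^+}\widehat{\mathsf{K}}\, d\nu_{\Sigma^+}\Bigr\rangle_\psi \qquad \text{for all } w \in \ker\psi,
\end{equation*}
so, since $\int_{\Sigma^+}\widehat{\mathsf{K}}\, d\nu_{\Sigma^+} \in \ker\psi$ and $\langle\cdot,\cdot\rangle_\psi$ is non-degenerate on $\ker\psi$, the proposition reduces to proving $DP(0) = 0$.

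To achieve this, for $v \in \ker\psi$ let $\phi_v \in \LieA^*$ denote the linear form $\phi_v(w) = \langle v, w\rangle_\psi$, so that $\phi_v(\mathsf{v}) = 0$ by orthogonality of $\ker\psi$ and $\mathbb{R}\mathsf{v}$ under $\langle \cdot, \cdot \rangle_\psi$. Using $\tau = \psi(\mathsf{K})$ and $\langle v, \widehat{\mathsf{K}}\rangle_\psi = \phi_v(\mathsf{K})$, the pressure equation rewrites as $\Pr_\sigma(-(P(v)\psi - \phi_v)(\mathsf{K})) = 0$. By the Sambarino--Quint correspondence between the zero locus of $\phi \mapsto \Pr_\sigma(-\phi(\mathsf{K}))$ and the set $D_\Gamma^\star$ of tangent forms to $\growthindicator$ (cf.\ \cite[Lemma 2.23]{ELO20} together with \cite[Theorem 4.20]{Sam15}), the form $P(v)\psi - \phi_v$ is tangent to $\growthindicator$ at a unique interior direction $\mathsf{v}_v \in \interior(\limitcone)$ varying analytically in $v$ with $\mathsf{v}_0 = \mathsf{v}$; by analyticity and strict concavity of $\growthindicator$ on $\interior(\limitcone)$ from \cref{thm:PSDensityForAnosovSubgroups} together with Euler's identity for the $1$-homogeneous function $\growthindicator$, tangency is equivalent to the identity of linear forms $P(v)\psi - \phi_v = D\growthindicator|_{\mathsf{v}_v}$. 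I will then differentiate this identity at $v = 0$ in a direction $w \in \ker\psi$ and evaluate the resulting linear forms at $\mathsf{v}$: the left-hand side yields $DP(0)(w)\psi(\mathsf{v}) - \phi_w(\mathsf{v}) = DP(0)(w)$, since $\psi(\mathsf{v}) = 1$ and $\phi_w(\mathsf{v}) = 0$, while the right-hand side equals $D^2\growthindicator|_\mathsf{v}(\dot{\mathsf{v}}_w, \mathsf{v}) = 0$ by differentiating Euler's identity $D\growthindicator|_{\mathsf{v}'}(\mathsf{v}') = \growthindicator(\mathsf{v}')$ at $\mathsf{v}' = \mathsf{v}$. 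Hence $DP(0) = 0$, as desired. The main obstacle is invoking the Sambarino--Quint correspondence, which is the bridge between the combinatorial thermodynamics of the coding (in which periodic orbits correspond to loxodromic conjugacy classes via \cref{lem:FirstReturnVectorAndHolonomyEqualsGeneralizedJordanProjectionOnFixedPoints}) and the geometric tangent structure of $\growthindicator$ from \cref{thm:PSDensityForAnosovSubgroups}; once available, the remainder is straightforward differentiation using Euler's identity.
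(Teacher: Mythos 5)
Your argument is correct in outline but takes a genuinely different route from the paper's. The paper proves the first identity directly: it quotes Sambarino's averaging formula $\int_\Sigma \mathsf{K}'\,d\nu_\Sigma = \mathsf{v}'\int_\Sigma \tau\,d\nu_\Sigma$ for the cocycle-based map $\mathsf{K}'$ of \cite[Section 3]{Sam15}, identifies $\mathsf{v}'=\mathsf{v}$ via \cite[Corollary 4.9]{Sam14a}, and then shows that $\mathsf{K}$ and $\mathsf{K}'$ are cohomologous by Liv\v{s}ic's theorem, since by \cref{lem:FirstReturnVectorAndHolonomyEqualsGeneralizedJordanProjectionOnFixedPoints} both have periodic-orbit sums equal to the Jordan projection. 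You instead reduce to $DP(0)=0$ for the pressure-defined function $P$ and extract that from the Legendre-type duality between zeros of $\phi\mapsto\Pr_\sigma(-\phi(\mathsf{K}))$ and tangent forms to $\growthindicator$. Both routes ultimately rest on the same imports from Sambarino's work (the periodic-orbit identification fed into Liv\v{s}ic, and the identification of $\mathsf{v}$ as the direction dual to $\psi$), so neither is more elementary; yours has the appeal of exhibiting the proposition as the first-order shadow of the tangency structure and of dovetailing with \cref{pro:ExpansionOf_kappa} --- though note the paper deduces $DP(0)=0$ \emph{from} this proposition, so your reversed direction of implication must indeed be established independently, as you attempt.

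Two repairs are needed. First, the correspondence you invoke is stated in the references for Sambarino's $\mathsf{K}'$, not for the first return vector map $\mathsf{K}$ of this paper (and \cite[Lemma 2.23]{ELO20} and \cite[Theorem 4.20]{Sam15} give only uniqueness of the tangent form and the implication ``tangent $\Rightarrow$ pressure zero'', not the converse you need); transporting the correspondence to $\mathsf{K}$ requires exactly the Liv\v{s}ic/cohomology step, since pressure sees only the cohomology class and \cref{lem:FirstReturnVectorAndHolonomyEqualsGeneralizedJordanProjectionOnFixedPoints} pins that class down. You name this as the main obstacle but do not carry it out; it is the substantive content of the paper's own proof. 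Second, differentiating the identity $P(v)\psi - \phi_v = D\growthindicator|_{\mathsf{v}_v}$ presupposes differentiability of $v\mapsto\mathsf{v}_v$, which needs nondegeneracy of the Hessian of $\growthindicator$ transverse to the radial direction --- more than ``analyticity and strict concavity'' give for free. That step is avoidable: the tangency inequality alone yields $P(v) = (P(v)\psi-\phi_v)(\mathsf{v}) \ge \growthindicator(\mathsf{v}) = 1 = P(0)$, so $0$ is a minimum of $P$ and $DP(0)=0$ at once.
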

	
	\begin{proof}
		It suffices to prove the first formula because it implies the second using \cref{eqn:K_Equals_tau+K-hat}. We recall the averaging formula in \cite[Page 1775]{Sam15} given by
		\begin{align}
			\label{eqn:AverageK'InTangentDirection}
			\int_{\Sigma} \mathsf{K}' \, d\nu_{\Sigma} = \mathsf{v}' \int_{\Sigma} \tau \, d\nu_{\Sigma}
		\end{align}
		where we denote $\mathsf{K}': \Sigma \to \LieA$ to be the map as defined in \cite[Page 1774]{Sam15} and according to \cite[Definition 3.3]{Sam15}, $\mathsf{v}' \propto \int_{\mathcal{X}} F_\sigma \, dm_\mathsf{v} \in \limitcone$ normalized such that $\psi(\mathsf{v}') = 1$ where $F_\sigma: \mathcal{X} \to \LieA$ is a map associated to the Iwasawa cocycle $\sigma$ due to a theorem of Ledrappier \cite[Theorem 2.14]{Sam15}. They have the property
		\begin{align*}
			\mathsf{K}'_k(\overline{x}) = \int_0^{\tau_k(x)} F_\sigma(\mathcal{W}_t(\eta(\overline{x}))) \, dt = \lambda(\gamma_{x, j})
		\end{align*}
		for all $x \in \Fix(\sigma^k)$ with multiplicity $j \in \mathbb N$ corresponding to some $k \in \mathbb N$ where $\overline{x} \in \Sigma$ is the periodic bi-infinite extension of $x$. Moreover, $\mathsf{v}' = \mathsf{v}$ by \cite[Corollary 4.9]{Sam14a}. Now, using the above property, \cref{lem:FirstReturnVectorAndHolonomyEqualsGeneralizedJordanProjectionOnFixedPoints}, and a theorem of Liv\v{s}ic \cite[Proposition 3.7]{PP90}, we can conclude that $\mathsf{K}: \Sigma \to \LieA$ and $\mathsf{K}': \Sigma \to \LieA$ are cohomologous. Thus, using \cref{eqn:AverageK'InTangentDirection}, we simply have
		\begin{align*}
			\int_{\Sigma^+} \mathsf{K} \, d\nu_{\Sigma^+} = \int_{\Sigma} \mathsf{K} \, d\nu_{\Sigma} = \int_{\Sigma} \mathsf{K}' \, d\nu_{\Sigma} = \mathsf{v} \int_{\Sigma} \tau \, d\nu_{\Sigma} = \mathsf{v} \int_{\Sigma^+} \tau \, d\nu_{\Sigma^+}.
		\end{align*}
	\end{proof}
	
	\begin{remark}
		Care needs to be taken above since $\mathcal{X}$ is not necessarily a manifold.
		As \cref{eqn:AverageK'InTangentDirection} is from \cite[Proposition 3.5]{Sam15}, we check its validity. Firstly, \cite[Theorem 2.20]{Sam15} can be replaced with \cref{thm:TranslationFlowConjugateToGromovGeodesicFlow} and \cref{subsec:Thermodynamics}. Secondly, similar to the proof of \cite[Theorem 3]{Led95} we can explicitly construct $F_\sigma: \mathcal{X} \to \LieA$. Fixing a reference point $X_0 \in \limitset^{(2)} \times \mathbb R$ and any nonnegative $\rho \in C_{\mathrm{c}}^\infty(\mathbb R)$ such that $\rho(0) = 1$, $\rho'(0) = \rho''(0) = 0$, and $\rho(t) > \frac{1}{2}$ if $|t| < 2\sup_{X \in \limitset^{(2)} \times \mathbb R} d(X, \Gamma \cdot X_0)$, we define the map $\tilde{F}_\sigma: \limitset^{(2)} \times \mathbb R \to \LieA$ by
		\begin{align*}
			\tilde{F}_\sigma(X) = \left.\frac{d}{dt}\right|_{t = 0} \log\sum_{\gamma \in \Gamma} \rho(d(\mathcal{W}_t(X), \gamma \cdot X_0)) e^{-\sigma(\gamma^{-1}, X^+)}
		\end{align*}
		for all $X \in \limitset^{(2)} \times \mathbb R$, where we interpret the operations componentwise using any identification $\LieA \cong \R^\rank$ (cf. \cite[Eq. (5)]{Sam14a}). Then $\tilde{F}_\sigma$ is left $\Gamma$-invariant and descends to the desired $F_\sigma$. Lastly, \cite[Lemma 2.18]{Sam15} is proved using \cite[Lemmas 3.8 and 3.9]{Sam14b} (see \cite[Page 464]{Sam14b}) which hold in general for compact metric spaces.
	\end{remark}
	
	We now present \cref{pro:ExpansionOf_kappa}. The first property is that $D\kappa(0)$ can be written in terms of $\mathsf{v}$. A special case appears in \cite[Propositions 12.53 and B.2]{Thi07} but we prove it differently in our setting. We denote by $D_v$ the derivative in the parameter $v \in \LieA$ with respect to $\langle \cdot, \cdot \rangle_\psi$. In the expansion of $\kappa$ in \cite[Lemma A.3]{Thi07} or \cite[Lemma 10.6]{Thi09}, and \cite[Proposition 1.22]{Bab88}, it seems that a specific formula for $D^2\kappa(0)$ in terms of $D_v^2|_{v = 0}\mathcal{L}_{v, 1}$ was used from \cite[Lemma 3]{Gui84} while the hypothesis that $D_v|_{v = 0}\mathcal{L}_{v, 1}(H_0) = 0$ was \emph{not satisfied}. Nevertheless, the second property that $D^2\kappa(0)$ is \emph{negative definite} can be proven in our setting.
	
	\begin{proposition}
		\label{pro:ExpansionOf_kappa}
		There exist an inner product $\langle \cdot, \cdot \rangle_*$ and a corresponding norm $\|\cdot\|_*$ on $\LieA$ such that
		\begin{align*}
			\kappa(v) = 1 + i\nu_{\Sigma^+}(\tau)\langle v, \mathsf{v} \rangle_\psi - \|v\|_*^2 + o(\|v\|_\psi^2) \qquad \text{for all $v \in \mathcal{O}$}.
		\end{align*}
	\end{proposition}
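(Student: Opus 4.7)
The plan is to combine the analytic perturbation theory of \cref{thm:AnalyticityOf_Id-TransferOperator} with a direct Taylor expansion of $\kappa$ against the invariant functional $\nu_{\Sigma^+}$. The starting observation is that $\mathcal{L}_{v,1}(H) = \mathcal{L}_0\bigl(e^{i\langle v,\mathsf{K}\rangle_\psi}H\bigr)$, so integrating the eigenvalue equation $\mathcal{L}_{v,1}(H_{v,1}) = \kappa(v)H_{v,1}$ against $\nu_{\Sigma^+}$ and using $\mathcal{L}_0^*\nu_{\Sigma^+} = \nu_{\Sigma^+}$ together with the normalization $\int_{\Sigma^+} H_{v,1}\,d\nu_{\Sigma^+} = 1$ yields the explicit formula
\begin{align*}
\kappa(v) = \int_{\Sigma^+} e^{i\langle v,\mathsf{K}\rangle_\psi}\,H_{v,1}\,d\nu_{\Sigma^+}.
\end{align*}
Writing $H_{v,1} = 1 + H^{(1)}(v) + H^{(2)}(v) + O(\|v\|_\psi^3)$ with $H^{(k)}$ homogeneous of degree $k$ and each integrating to zero against $\nu_{\Sigma^+}$ (by normalization), expanding the exponential to second order gives $\kappa(v) = 1 + iL(v) + Q(v) + o(\|v\|_\psi^2)$ with $L$ linear and $Q$ quadratic. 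The first-order term is immediate: $iL(v) = i\int_{\Sigma^+}\langle v,\mathsf{K}\rangle_\psi\,d\nu_{\Sigma^+} = i\nu_{\Sigma^+}(\tau)\langle v,\mathsf{v}\rangle_\psi$ by the averaging identity \cref{pro:AverageOfFirstReturnVectorMaps}, which gives the claimed imaginary coefficient.

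To extract $Q(v)$, I would differentiate the eigenvalue equation at $v=0$ in direction $w$ and, using $H_0 = 1$, $\kappa(0) = 1$, and \cref{pro:AverageOfFirstReturnVectorMaps}, reduce to the resolvent equation
\begin{align*}
(I - \mathcal{L}_0)\,H^{(1)}(w) = i\mathcal{L}_0(\tilde{K}_w), \qquad \tilde{K}_w := \langle w, \mathsf{K} - \nu_{\Sigma^+}(\tau)\mathsf{v}\rangle_\psi,
\end{align*}
on the mean-zero subspace, where $(I - \mathcal{L}_0)$ is invertible by \cref{thm:RPFonU}. Solving gives $H^{(1)}(w) = i\phi(w)$ for the real-valued mean-zero $\phi(w) = (I - \mathcal{L}_0)^{-1}\mathcal{L}_0(\tilde{K}_w)$. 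Plugging back, the imaginary contribution to $Q$ cancels and one finds
\begin{align*}
-\|v\|_*^2 := Q(v) = -\tfrac{1}{2}\int_{\Sigma^+} \langle v,\mathsf{K}\rangle_\psi^2\,d\nu_{\Sigma^+} - \int_{\Sigma^+} \langle v,\mathsf{K}\rangle_\psi\,\phi(v)\,d\nu_{\Sigma^+},
\end{align*}
a symmetric real bilinear form, which is the candidate $\|\cdot\|_*^2$.

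It remains to verify that $\|\cdot\|_*^2$ is positive definite. Using the duality $\int_{\Sigma^+} f\cdot\mathcal{L}_0^k(g)\,d\nu_{\Sigma^+} = \int_{\Sigma^+} (f\circ\sigma^k)\,g\,d\nu_{\Sigma^+}$, which follows from the semi-conjugacy identity $\mathcal{L}_0\bigl(f\cdot(g\circ\sigma)\bigr) = \mathcal{L}_0(f)\cdot g$ and $\mathcal{L}_0^*\nu_{\Sigma^+} = \nu_{\Sigma^+}$, I would rewrite $\int \tilde{K}_v\,\phi(v)\,d\nu_{\Sigma^+} = \sum_{k\geq 1}\int \tilde{K}_v\,(\tilde{K}_v\circ\sigma^k)\,d\nu_{\Sigma^+}$ and recognize the Green--Kubo / asymptotic-variance identity
\begin{align*}
\|v\|_*^2 = \tfrac{1}{2}\sigma^2(\tilde{K}_v) + \tfrac{1}{2}\nu_{\Sigma^+}(\tau)^2\langle v,\mathsf{v}\rangle_\psi^2,
\end{align*}
where $\sigma^2(\cdot)$ denotes the asymptotic variance of a mean-zero Lipschitz function under $(\sigma,\nu_{\Sigma^+})$. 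The second summand disposes of the case $v \notin \ker\psi$. The main obstacle is the complementary case $v \in \ker\psi \setminus \{0\}$, where one must exclude $\sigma^2(\tilde{K}_v) = 0$: by Livshitz' theorem this vanishing forces $\tilde{K}_v$ to be a Lipschitz coboundary, equivalently $\langle v, \mathsf{K}_n(x)\rangle_\psi = 0$ for every $x \in \Fix(\sigma^n)$ and every $n \in \mathbb{N}$; the density \cref{pro:SubgroupGeneratedBy_K_AndHolonomyDenseInLieATimesM_Gamma}, projected to the $\LieA$-factor, then forces $v = 0$, concluding the proof.
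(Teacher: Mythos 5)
Your proposal is correct, but it establishes the negative definiteness of the Hessian by a genuinely different route from the paper. The first-order step (the formula $\kappa(v) = \int_{\Sigma^+} e^{i\langle v,\mathsf{K}\rangle_\psi}H_{v,1}\,d\nu_{\Sigma^+}$, the use of $\mathcal{L}_0^*\nu_{\Sigma^+}=\nu_{\Sigma^+}$, and \cref{pro:AverageOfFirstReturnVectorMaps}) coincides with the paper's. For the quadratic term, the paper never computes $D^2\kappa(0)$ explicitly: on $\ker\psi$ it factors $\kappa(b\mathsf{v}+v) = 1 + Q(b\mathsf{v}+v)(ib+1-P(v))$ via the Weierstrass preparation theorem through an implicitly defined pressure function $P$ satisfying $\Pr_\sigma(-P(v)\tau + \langle v,\widehat{\mathsf{K}}\rangle)=0$, and deduces negativity from positive definiteness of $D^2P(0)$; off $\ker\psi$ it invokes the strict spectral bound $|\kappa(v)|<1$ coming from \cref{thm:SpectralBound} and \cref{lem:Norm<=1ImpliesNorm<1AwayFromOrigin}. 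You instead solve the first-order resolvent equation and obtain the explicit Green--Kubo expression $\|v\|_*^2 = \tfrac{1}{2}\sigma^2(\tilde{K}_v) + \tfrac{1}{2}\nu_{\Sigma^+}(\tau)^2\langle v,\mathsf{v}\rangle_\psi^2$; your centering $\tilde{K}_v = \langle v,\mathsf{K}-\nu_{\Sigma^+}(\tau)\mathsf{v}\rangle_\psi$ is precisely the correction whose absence the paper criticizes in the Guivarc'h-type formula used in \cite{Thi07,Bab88}, where $D_v|_{v=0}\mathcal{L}_{v,1}(h_0)\neq 0$. Both arguments bottom out in the same non-degeneracy input --- that $\langle v,\widehat{\mathsf{K}}\rangle$ cannot be cohomologous to a constant for nonzero $v\in\ker\psi$, via \cref{lem:FirstReturnVectorAndHolonomyEqualsGeneralizedJordanProjectionOnFixedPoints} and the density of the (projected) length data --- the paper feeding it into $D^2P(0)>0$, you into non-vanishing of the asymptotic variance. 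Your route buys an explicit formula for $\langle\cdot,\cdot\rangle_*$ and, since the rank-one term $\tfrac{1}{2}\nu_{\Sigma^+}(\tau)^2\langle v,\mathsf{v}\rangle_\psi^2$ already handles $v\notin\ker\psi$, it bypasses the spectral bound entirely in this proposition; the cost is having to justify the convergence and rearrangement of the correlation sum, which is harmless given the spectral gap of $\mathcal{L}_0$. One attribution quibble: the implication ``$\sigma^2(\tilde{K}_v)=0$ implies $\tilde{K}_v$ is a coboundary'' is the variance characterization of \cite[Proposition 4.12]{PP90}, not Livshitz's theorem (Livshitz is the converse passage from vanishing over $\bigcup_n \Fix(\sigma^n)$ back to being a coboundary, which you use in the direction coboundary $\Rightarrow$ vanishing, where it is elementary).
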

	
	\begin{proof}
		We have $\kappa(v) = \int_{\Sigma^+} \mathcal{L}_{v, 1}(H_{v, 1}) \, d\nu_{\Sigma^+}$ for all $v \in \mathcal{O}$. Differentiating and using both $\mathcal{L}_0^*(\nu_{\Sigma^+}) = \nu_{\Sigma^+}$ and \cref{pro:AverageOfFirstReturnVectorMaps} gives (cf. \cite[Proposition 4.10]{PP90})
		\begin{align}
			\label{eqn:DerivativeOfEigenvalue}
			\begin{aligned}
				D\kappa(0) = D_v|_{v = 0} \kappa(v) ={}&\int_{\Sigma^+} D_v|_{v = 0}\mathcal{L}_{v, 1}(H_0) \, d\nu_{\Sigma^+} + \int_{\Sigma^+} \mathcal{L}_0(D_v|_{v = 0}H_{v, 1}) \, d\nu_{\Sigma^+} \\
				={}&i\int_{\Sigma^+} \langle \cdot, \mathsf{K}(x) \rangle_\psi H_0(x) \, d\nu_{\Sigma^+}(x) + D_v|_{v = 0} \int_{\Sigma^+} H_{v, 1} \, d\nu_{\Sigma^+} \\
				={}&i\left\langle \cdot, \int_{\Sigma^+} \mathsf{K} \, d\nu_{\Sigma^+} \right\rangle_\psi \\
				={}&i\left\langle \cdot, \nu_{\Sigma^+}(\tau)\mathsf{v} \right\rangle_\psi.
			\end{aligned}
		\end{align}
		Thus, by Taylor's theorem, we already obtain the formula
		\begin{align}
			\label{eqn:PreliminaryExpansionOf_kappa}
			\kappa(v) = 1 + i\nu_{\Sigma^+}(\tau)\langle v, \mathsf{v} \rangle_\psi + \frac{1}{2}D^2\kappa(0)(v, v) + o(\|v\|_\psi^2) \qquad \text{for all $v \in \mathcal{O}$}.
		\end{align}
		
		The map $(t, v) \mapsto \Pr_\sigma(-t\tau + i\langle v, \widehat{\mathsf{K}} \rangle)$ is analytic on some neighborhood of $(1, 0) \in \mathbb R \times \ker\psi$ by perturbation theory where we use the more general definition of pressure as the principal logarithm of the maximal simple eigenvalue of a corresponding transfer operator \cite[Chapter 4]{PP90}. Calculating the derivative as in \cref{eqn:DerivativeOfEigenvalue}, we have $D_t|_{(t, v) = (1, 0)}\Pr_\sigma(-t\tau + i\langle v, \widehat{\mathsf{K}} \rangle) = -\nu_{\Sigma^+}(\tau) \neq 0$. Thus, assuming $\mathcal{O}_\psi$ is sufficiently small, we can invoke the implicit function theorem to define the analytic function $P: \mathcal{O}_\psi \to \mathbb R$ with $P(0) = 1$ (see \cref{subsec:Thermodynamics}) satisfying
		\begin{align*}
			\Pr_\sigma(-P(v)\tau + i\langle v, \widehat{\mathsf{K}} \rangle) = 0 \qquad \text{for all $v \in \mathcal{O}_\psi$}.
		\end{align*}
		Exponentiating the above and calculating the derivative as in \cref{eqn:DerivativeOfEigenvalue} using \cref{pro:AverageOfFirstReturnVectorMaps} gives $DP(0) = 0$. Now, $D^2P(0)$ is of course Hermitian and a similar calculation of second derivatives as in the proof of \cite[Lemma 8(3)]{PS94} using $DP(0) = 0$ shows that it is moreover negative definite by \cite[Lemma 5]{PS94} (cf. \cite[Propositions 4.11 and 4.12]{PP90}) provided that $\langle v, \widehat{\mathsf{K}} \rangle$ is not cohomologous to a constant for all nonzero $v \in \ker\psi$. To obtain a contradiction, suppose $\langle v, \widehat{\mathsf{K}} \rangle - c = \omega - \omega \circ \sigma$ for some nonzero $v \in \ker\psi$, $\omega \in C(\Sigma^+)$, and $c \in \mathbb R$. Integrating over $\Sigma^+$ with respect to $\nu_{\Sigma^+}$ and using \cref{pro:AverageOfFirstReturnVectorMaps} implies $c = 0$. Using \cref{lem:FirstReturnVectorAndHolonomyEqualsGeneralizedJordanProjectionOnFixedPoints} gives $\langle v, \proj_{\ker\psi}(\lambda(\Gamma))\rangle = \{0\}$ and then \cref{pro:SubgroupOfGeneralizedLengthSpectrumOfGamma_0DenseInAM_Gamma_0} implies $v = 0$ which is a contradiction.
		
		Using the Weierstrass preparation theorem (cf. \cite[Appendix]{LS06}), we can write
		\begin{align}
			\label{eqn:WeierstrassPreparation}
			\begin{aligned}
				\kappa(b\mathsf{v} + v) - 1 &= e^{\Pr_\sigma(-\tau + ib\tau + i\langle v, \widehat{\mathsf{K}} \rangle)} - e^{\Pr_\sigma(-P(v)\tau + i\langle v, \widehat{\mathsf{K}} \rangle)} \\
				&= Q(b\mathsf{v} + v)(P(v) - 1 + ib)
			\end{aligned}
		\end{align}
		for all $(b\mathsf{v}, v) \in \mathcal{O}_\R \times \mathcal{O}_\psi$, where $Q: \mathcal{O} \to \mathbb C$ is an analytic function with $Q(0) \neq 0$. In fact, \cref{eqn:DerivativeOfEigenvalue} and $DP(0) = 0$ gives $Q(0) = \nu_{\Sigma^+}(\tau) > 0$. So, $D_v^2|_{(b, v) = (0, 0)} \kappa(b\mathsf{v} + v) = Q(0)D^2P(0)$ is negative definite. In other words, $D^2\kappa(0)(v, v) < 0$ for all $v \in \ker\psi$.
		
		It remains to show that $D^2\kappa(0)(v, v) < 0$ also for all $v \in \mathcal{O} \setminus \ker\psi$. To this end, observe that
		\begin{align*}
			\mathcal{L}_{-v, 1}(\overline{H_{v, 1}}) = \overline{\kappa_{v, 1}} \cdot \overline{H_{v, 1}} \qquad \text{for all $v \in \mathcal{O}$}.
		\end{align*}
		Hence, $\kappa(-v) = \overline{\kappa(v)}$ for all $v \in \mathcal{O}$. Taking derivatives, $D\kappa(0) \in \Hom(\LieA, i\mathbb R)$ and $D^2\kappa(0) \in \Hom(\LieA \otimes \LieA, \mathbb R)$. Let $v \in \mathcal{O} \setminus \ker\psi$ and $c = D^2\kappa(0)(v, v) \in \mathbb R$. As mentioned in \cref{thm:AnalyticityOf_Id-TransferOperator}, we have $|\kappa(v)|^2 < 1$ by \cref{thm:SpectralBound} and its proof and \cref{lem:Norm<=1ImpliesNorm<1AwayFromOrigin}. Using this in \cref{eqn:PreliminaryExpansionOf_kappa} gives
		\begin{align*}
			1 + \nu_{\Sigma^+}(\tau)^2|\langle v, \mathsf{v} \rangle_\psi|^2 + c + o(\|v\|_\psi^2) < 1.
		\end{align*}
		This implies $c < 0$ because $\nu_{\Sigma^+}(\tau)^2|\langle v, \mathsf{v} \rangle_\psi|^2 > 0$ as $v \notin \ker\psi$. Thus $D^2\kappa(0)$ is negative definite. Defining the inner product $\langle \cdot, \cdot \rangle_*$ on $\LieA$ by $\langle v, w \rangle_* = -\frac{1}{2}D^2\kappa(0)(v, w)$ for all $v, w \in \LieA$ completes the proof.
	\end{proof}
	
	\subsection{Fourier analysis}
	For the rest of this section, fix $\langle \cdot, \cdot \rangle_*$ and $\|\cdot\|_*$ provided by \cref{pro:ExpansionOf_kappa}, and fix $r:\R_{\geq 0} \to \R_{\geq 0}$, $\ell \in [0,+\infty]$, $\mathsf{u} \in \ker\psi$, $I: \ker\psi \to \R_{\geq 0}$, and the convention that $+\infty \cdot 0 = 0$ for the product $\ell I(\mathsf{u})$ as in \cref{thm:LocalMixingForErgodicComponent}.
	
	We need some tools regarding Bessel functions. One way to define the modified Bessel function of the second kind $K_\alpha: \mathbb R_{>0} \to \mathbb R$ for each parameter $\alpha \in \mathbb R$ is by
	\begin{align*}
		K_\alpha(x) = \frac{1}{2} \int_0^{+\infty} t^{-(\alpha + 1)} e^{-\frac{x}{2}\left(t + \frac{1}{t}\right)} \, dt \qquad \text{for all $x > 0$}.
	\end{align*}
	Define the function $E: \mathbb R_{>0} \to \mathbb R$ by
	\begin{align*}
		E(x) = \frac{1}{\Gamma\bigl(\frac{\rank - 1}{2}\bigr)} \int_0^{+\infty} e^{-t} \left(t + \frac{t^2}{2x}\right)^{\frac{\rank - 3}{2}} \, dt.
	\end{align*}
	
	\begin{remark}
		If $\rank = 3$, then $E = 1$.
	\end{remark}
	
	Babillot \cite[Subsection 2.28]{Bab88} recalls the following from \cite[Section 7.3]{Wat95}.
	
	\begin{lemma}
		\label{lem:BesselFunctionInTermsOf_E}
		We have:
		\begin{enumerate}
			\item $K_{\rank/2 - 1}(x) = \sqrt{\frac{\pi}{2x}}e^{-x}E(x)$ for all $x > 0$;
			\item\label{itm:E_GoesTo1} $\lim_{x \to +\infty} E(x) = 1$;
			\item there exists $C > 0$ such that $0 \leq E(x) \leq C\left(1 + x^{-\frac{\rank - 3}{2}}\right)$ for all $x > 0$.
		\end{enumerate}
	\end{lemma}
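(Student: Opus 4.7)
The plan is to derive all three statements from standard integral manipulations of the modified Bessel function, essentially following Watson \cite[Section 7.3]{Wat95} as indicated in the lemma.

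For (1), first rewrite the defining integral via the substitution $t = e^u$ as
\begin{align*}
	K_\alpha(x) = \int_0^\infty e^{-x\cosh u}\cosh(\alpha u)\,du.
\end{align*}
Then apply the change of variables $s = x(\cosh u - 1)$, under which $x\cosh u$ becomes $x + s$ and $du$ becomes $ds/\sqrt{2sx + s^2}$. Expanding $\cosh(\alpha u)$ using $e^{\pm u} = 1 + s/x \pm \sqrt{2s/x + (s/x)^2}$ and simplifying with $e^u e^{-u} = 1$ recovers the classical Poisson-type representation
\begin{align*}
	K_\alpha(x) = \sqrt{\frac{\pi}{2x}}\, \frac{e^{-x}}{\Gamma(\alpha + 1/2)} \int_0^\infty e^{-s} s^{\alpha - 1/2}\left(1 + \frac{s}{2x}\right)^{\alpha - 1/2}\,ds,
\end{align*}
valid for $\alpha > -1/2$. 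Specializing to $\alpha = \rank/2 - 1$, so that $\alpha - 1/2 = (\rank - 3)/2$ and $\alpha + 1/2 = (\rank - 1)/2$, and combining the two power factors into $(s + s^2/(2x))^{(\rank-3)/2}$, yields (1).

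Part (2) will follow from dominated convergence applied to the integral defining $E(x)$: the integrand converges pointwise to $e^{-s} s^{(\rank-3)/2}/\Gamma((\rank-1)/2)$ as $x \to \infty$, whose integral over $(0, \infty)$ equals $1$ by the definition of the Gamma function. The dominating function is supplied by the estimate in (3).

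For part (3), factor the integrand as $s^{(\rank-3)/2}(1 + s/(2x))^{(\rank-3)/2}$. When $\rank \ge 3$, the exponent is nonnegative and the elementary inequality $(1+u)^p \le c_p(1 + u^p)$ (with $c_p = \max(1, 2^{p-1})$) gives a bound of the form $C_1 + C_2 x^{-(\rank-3)/2}$ after integrating against $e^{-s}$ on $(0,\infty)$, yielding the required estimate. When $\rank \le 2$, the exponent $(\rank-3)/2$ is negative and $1 + s/(2x) \ge 1$ forces $(1 + s/(2x))^{(\rank-3)/2} \le 1$, so $E(x) \le 1$ and the stated bound holds trivially. The only mildly subtle step is executing the substitution in (1) correctly; the remaining estimates are routine, which is why the lemma is presented as a citation.
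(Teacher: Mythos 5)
The paper offers no proof of this lemma at all --- it is a citation to Babillot, who in turn cites Watson --- so any genuine derivation is by construction a different route. Your parts (2) and (3) are correct and complete: the case split on the sign of the exponent $\frac{\rank-3}{2}$, the inequality $(1+u)^p \leq c_p(1+u^p)$ for $p \geq 0$, and dominated convergence all go through exactly as you describe (the dominating function for (2) is really the pointwise bound on the integrand from the argument for (3), not the bound on $E$ itself, but that is only loose phrasing).

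Part (1), however, has a genuine gap in the middle. The substitution $s = x(\cosh u - 1)$ applied to $\int_0^\infty e^{-x\cosh u}\cosh(\alpha u)\,du$ does \emph{not} ``recover'' the Poisson-type representation by expanding $\cosh(\alpha u)$ in terms of $e^{\pm u} = 1 + \tfrac{s}{x} \pm \sqrt{\tfrac{2s}{x} + \tfrac{s^2}{x^2}}$. Under that substitution one has $s\bigl(1+\tfrac{s}{2x}\bigr) = \tfrac{x}{2}\sinh^2 u$, so the target integrand corresponds to $\frac{\sqrt{\pi}\,(x/2)^\alpha}{\Gamma(\alpha+1/2)}\int_0^\infty e^{-x\cosh u}\sinh^{2\alpha}u\,du$, whose integrand depends on $x$ and is \emph{not} pointwise equal to $e^{-x\cosh u}\cosh(\alpha u)$; the equality of the two integral representations is a nontrivial classical identity, not an algebraic simplification. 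The clean route is to start instead from the Poisson integral representation $K_\nu(x) = \frac{\sqrt{\pi}\,(x/2)^\nu}{\Gamma(\nu+1/2)}\int_1^\infty e^{-xt}(t^2-1)^{\nu-1/2}\,dt$ (valid for $\nu > -1/2$, which requires $\rank \geq 2$; for $\rank = 1$ both $\Gamma(0)$ and the integral defining $E$ diverge, though the lemma is never invoked in that case), check that it agrees with the paper's defining integral --- e.g.\ both satisfy the modified Bessel equation with the same decay at infinity, or cite Watson for the equivalence --- and then substitute $t = 1 + s/x$, which immediately yields $\sqrt{\tfrac{\pi}{2x}}\,\tfrac{e^{-x}}{\Gamma(\nu+1/2)}\int_0^\infty e^{-s}s^{\nu-1/2}\bigl(1+\tfrac{s}{2x}\bigr)^{\nu-1/2}ds$ and hence the stated identity with $\nu = \rank/2 - 1$. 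Since the end formula you quote is exactly the classical one the paper cites, the gap is repairable, but the ``expanding and simplifying'' step as written would fail.
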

	
	\begin{lemma}
		\label{lem:LimitOf_f_t_IsExp_I_r}
		Let $\mathcal{K} \subset \LieA$ be a compact subset. For all sufficiently large $t > 0$ consider the function $f_t: \mathcal{K} \to \mathbb R$ defined by
		\begin{align*}
			f_t(v) ={}&e^{2\langle t\mathsf{v} + r(t)\mathsf{u} + v, \mathsf{v} \rangle_* - 2\|t\mathsf{v} + r(t)\mathsf{u} + v\|_* \cdot \|\mathsf{v}\|_*} E(2\|t\mathsf{v} + r(t)\mathsf{u} + v\|_* \cdot \|\mathsf{v}\|_*) \\
			{}&\cdot \left(\frac{\|t\mathsf{v} + r(t)\mathsf{u}\|_*}{\|t\mathsf{v} + r(t)\mathsf{u} + v\|_*}\right)^{\frac{\rank - 1}{2}}
		\end{align*}
		for all $v \in \LieA$. Then $f_t \xrightarrow{t \to +\infty} e^{-\ell I(\mathsf{u})}$ uniformly.
	\end{lemma}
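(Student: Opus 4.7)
The plan is to decompose $f_t(v) = E_t(v) \cdot B_t(v) \cdot P_t(v)$, where
\begin{align*}
E_t(v) &= \exp\bigl(2\langle t\mathsf{v} + r(t)\mathsf{u} + v, \mathsf{v}\rangle_* - 2\|t\mathsf{v} + r(t)\mathsf{u} + v\|_*\cdot\|\mathsf{v}\|_*\bigr), \\
B_t(v) &= E(2\|t\mathsf{v} + r(t)\mathsf{u} + v\|_*\cdot\|\mathsf{v}\|_*), \\
P_t(v) &= \bigl(\|t\mathsf{v} + r(t)\mathsf{u}\|_*/\|t\mathsf{v} + r(t)\mathsf{u} + v\|_*\bigr)^{(\rank - 1)/2},
\end{align*}
and show each factor converges uniformly on $C$ as $t \to \infty$. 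Since $r(t)/t \to 0$, we have $\|t\mathsf{v} + r(t)\mathsf{u} + v\|_* = t\|\mathsf{v}\|_* + O(r(t) + 1)$ uniformly in $v \in C$, so in particular this norm tends to $\infty$. By \cref{lem:BesselFunctionInTermsOf_E}\cref{itm:E_GoesTo1}, $B_t(v) \to 1$ uniformly. For the same reason the ratio inside $P_t(v)$ tends to $1$ uniformly, hence $P_t(v) \to 1$ uniformly.

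The heart of the proof is the asymptotic analysis of $E_t(v)$. Writing $W_t = W_t(v) = t\mathsf{v} + r(t)\mathsf{u} + v$, I would compute
\begin{align*}
\frac{\|W_t\|_*^2}{t^2\|\mathsf{v}\|_*^2} = 1 + \delta_t(v),
\end{align*}
where $\delta_t(v) = \frac{2r(t)\langle\mathsf{v},\mathsf{u}\rangle_*}{t\|\mathsf{v}\|_*^2} + \frac{2\langle\mathsf{v},v\rangle_*}{t\|\mathsf{v}\|_*^2} + O\bigl(r(t)^2/t^2 + 1/t^2\bigr)$ uniformly in $v \in C$. Applying $\sqrt{1+\delta} = 1 + \delta/2 - \delta^2/8 + O(\delta^3)$ gives
\begin{align*}
\|W_t\|_*\cdot\|\mathsf{v}\|_* = t\|\mathsf{v}\|_*^2 + \tfrac{t\|\mathsf{v}\|_*^2}{2}\delta_t(v) - \tfrac{t\|\mathsf{v}\|_*^2}{8}\delta_t(v)^2 + O(t\delta_t(v)^3).
\end{align*}
The linear-in-$\delta_t$ contribution exactly cancels the corresponding part of $\langle W_t,\mathsf{v}\rangle_*$, leaving
\begin{align*}
2\langle W_t,\mathsf{v}\rangle_* - 2\|W_t\|_*\cdot\|\mathsf{v}\|_* = -\frac{r(t)^2\|\mathsf{u}\|_*^2}{t} + \frac{r(t)^2\langle\mathsf{v},\mathsf{u}\rangle_*^2}{t\|\mathsf{v}\|_*^2} + o_C(1),
\end{align*}
where $o_C(1)$ denotes a quantity that tends to $0$ uniformly on $C$ (the surviving $v$-dependent pieces are $O(r(t)/t)$ or $O(1/t)$ and the $O(t\delta_t^3)$ error is $O(r(t)^3/t^2) = o(1)$ since $r(t)/t \to 0$ and $r(t)^2/t$ is bounded or the case $\ell = +\infty$ is handled separately). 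The first two terms combine to $-\frac{r(t)^2}{t} I(\mathsf{u})$ by the definition $I(\mathsf{u}) = \|\mathsf{u}\|_*^2 - \langle\mathsf{u},\mathsf{v}\rangle_*^2/\|\mathsf{v}\|_*^2$.

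When $\ell < +\infty$, using $r(t)^2/t \to \ell$ immediately yields $E_t(v) \to e^{-\ell I(\mathsf{u})}$ uniformly on $C$, finishing the proof. When $\ell = +\infty$, note that $I(\mathsf{u}) = 0$ forces $\mathsf{u}$ to be proportional to $\mathsf{v}$; since $\mathsf{u} \in \ker\psi$ while $\psi(\mathsf{v}) = 1 \ne 0$, this forces $\mathsf{u} = 0$ and then $E_t(v) \to 1 = e^{-\ell\cdot 0}$ by the convention. Otherwise $I(\mathsf{u}) > 0$, the exponent tends to $-\infty$ uniformly on $C$, so $E_t(v) \to 0 = e^{-\ell I(\mathsf{u})}$, again uniformly. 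The main delicate point is controlling the third-order remainder $O(t\delta_t^3)$ uniformly in the regime where $\ell$ is finite but nonzero; one checks this by bounding $r(t)/t^{1/2}$, which remains $O(1)$ in this regime, so $t\delta_t^3 = O(r(t)^3/t^2) \to 0$.
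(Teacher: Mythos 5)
Your proof is correct, and it follows the same outer reduction as the paper's: peel off the Bessel factor via \cref{lem:BesselFunctionInTermsOf_E} and the ratio-of-norms factor, each of which tends to $1$ uniformly on $C$, then compute the limit of the exponent. Where you diverge is the central computation. Writing $w = t\mathsf{v} + r(t)\mathsf{u}$, the paper controls the $v$-dependence of the exponent separately (it contributes $\langle v,\mathsf{v}\rangle_* - (\|w+v\|_* - \|w\|_*)\|\mathsf{v}\|_*$, which is uniformly negligible) and then evaluates the $v=0$ part \emph{exactly} by rationalizing: $\langle w,\mathsf{v}\rangle_* - \|w\|_*\|\mathsf{v}\|_*$ equals $\bigl(\langle w,\mathsf{v}\rangle_*^2 - \|w\|_*^2\|\mathsf{v}\|_*^2\bigr)/\bigl(\langle w,\mathsf{v}\rangle_* + \|w\|_*\|\mathsf{v}\|_*\bigr)$, whose numerator is $-r(t)^2\bigl(\|\mathsf{u}\|_*^2\|\mathsf{v}\|_*^2 - \langle\mathsf{u},\mathsf{v}\rangle_*^2\bigr)$ by the Lagrange identity (the $t\mathsf{v}$ component contributes nothing to the Cauchy--Schwarz defect) and whose denominator is $t\bigl(2\|\mathsf{v}\|_*^2 + o(1)\bigr)$. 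This closed form yields $-\ell I(\mathsf{u})$ in one step and, because the defect appears as an exact multiple of $r(t)^2/t$, it disposes of $\ell = +\infty$ and of the $+\infty\cdot 0$ convention with no case analysis and no remainder estimates. Your Taylor expansion of $\sqrt{1+\delta_t}$ reaches the same cancellation at first order and the same surviving second-order defect, but at the cost of the third-order remainder control and the separate $\ell=+\infty$ discussion; both of those you do handle correctly.

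One bookkeeping slip you should repair: as displayed, your $\delta_t(v)$ absorbs the term $\tfrac{r(t)^2\|\mathsf{u}\|_*^2}{t^2\|\mathsf{v}\|_*^2}$ into the error $O\bigl(r(t)^2/t^2\bigr)$. That term cannot be discarded there: multiplied by $t\|\mathsf{v}\|_*^2/2$ in the linear part of the expansion it contributes exactly $\tfrac{r(t)^2\|\mathsf{u}\|_*^2}{2t}$, which is of order $\ell$, not $o(1)$, and is precisely half of the $-\tfrac{r(t)^2\|\mathsf{u}\|_*^2}{t}$ in your final asymptotic. Your subsequent display for $2\langle W_t,\mathsf{v}\rangle_* - 2\|W_t\|_*\cdot\|\mathsf{v}\|_*$ shows you did retain it, so this is a notational inconsistency rather than a gap, but the term must be written explicitly in $\delta_t$ for the derivation to be self-consistent.
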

	
	\begin{proof}
		Let $\mathcal{K} \subset \LieA$ and $f_t: \mathcal{K} \to \mathbb R$ for all sufficiently large $t > 0$ be as in the lemma. Using \cref{itm:E_GoesTo1} in \cref{lem:BesselFunctionInTermsOf_E} and observing $\frac{\|t\mathsf{v} + r(t)\mathsf{u}\|_*}{\|t\mathsf{v} + r(t)\mathsf{u} + v\|_*} \xrightarrow{t \to +\infty} 1$ uniformly for all $v \in \mathcal{K}$, it suffices to show that
		\begin{align*}
			\|t\mathsf{v} + r(t)\mathsf{u} + v\|_* \cdot \|\mathsf{v}\|_* - \langle t\mathsf{v} + r(t)\mathsf{u} + v, \mathsf{v} \rangle_* \xrightarrow{t \to +\infty} \ell\frac{\|\mathsf{u}\|_*^2 \cdot \|\mathsf{v}\|_*^2 - \langle \mathsf{u}, \mathsf{v} \rangle_*^2}{2\|\mathsf{v}\|_*^2}
		\end{align*}
		uniformly for all $v \in \mathcal{K}$. But this is immediate from the simple calculation that for all $v \in \mathcal{K}$ and sufficiently large $t > 0$, we have
		\begin{align*}
			&\|t\mathsf{v} + r(t)\mathsf{u} + v\|_* \cdot \|\mathsf{v}\|_* - \langle t\mathsf{v} + r(t)\mathsf{u} + v, \mathsf{v} \rangle_* \\
			={}&\frac{r(t)^2}{t} \cdot \frac{\bigl\|\mathsf{u} + \frac{1}{r(t)}v\bigr\|_*^2 \cdot \|\mathsf{v}\|_*^2 - \bigl\langle \mathsf{u} + \frac{1}{r(t)}v, \mathsf{v} \bigr\rangle_*^2}{\bigl\|\mathsf{v} + \frac{r(t)}{t}\mathsf{u} + \frac{1}{t}v\bigr\|_* \cdot \|\mathsf{v}\|_* + \bigl\langle \mathsf{v} + \frac{r(t)}{t}\mathsf{u} + \frac{1}{t}v, \mathsf{v} \bigr\rangle_*}
		\end{align*}
		for the case that $\mathsf{u} = 0$ or $\lim_{t \to +\infty} \frac{r(t)}{t} \in [0, +\infty)$, and
		\begin{align*}
			&\|t\mathsf{v} + r(t)\mathsf{u} + v\|_* \cdot \|\mathsf{v}\|_* - \langle t\mathsf{v} + r(t)\mathsf{u} + v, \mathsf{v} \rangle_* \\
			={}&r(t)\left(\left\|\frac{t}{r(t)}\mathsf{v} + \mathsf{u} + \frac{1}{r(t)}v\right\|_* \cdot \|\mathsf{v}\|_* - \left\langle \frac{t}{r(t)}\mathsf{v} + \mathsf{u} + \frac{1}{r(t)}v, \mathsf{v} \right\rangle_*\right)
		\end{align*}
		for the case that $\mathsf{u} \neq 0$ and $\lim_{t \to +\infty} \frac{r(t)}{t} = (0, +\infty]$.
	\end{proof}
	
	Fix any $\rho_\mathcal{O} \in C^\infty_\mathrm{c}(\LieA)$ such that $\rho_\mathcal{O}$ takes the value 1 on a neighborhood of $0$ and $\supp(\rho_\mathcal{O}) \subset \mathcal{O}$ for the rest of the subsection. Let us also fix $f \in L(\Sigma^+)$ and $\omega \in C_\mathrm{c}(\LieA)$ for this subsection. For all $k \in \mathbb Z_{\geq 0}$, $\mu \in \widehat{M}_\Gamma$, $w \in V_\mu$, and $t \in \mathbb R$, define the maps $\mathsf{Q}^{(k)}_{\mu,w,t}(f,\omega), \mathsf{Q}_{\mu,w,t}(f,\omega): \Sigma^+ \times \ker\psi \to V_\mu$ by
	\begin{align*}
		\mathsf{Q}^{(k)}_{\mu,w,t}(f,\omega)(x,u) &= \frac{1}{2\pi}\int_{\LieA}e^{-i\langle v, t\mathsf{v} + u \rangle_\psi}\widehat{\omega}(v)\mathcal{L}_{v, \mu}^k(f w)(x) \, dv, \\
		\mathsf{Q}_{\mu,w,t}(f,\omega)(x,u) &= \frac{1}{2\pi}\int_{\LieA}e^{-i\langle v, t\mathsf{v} + u \rangle_\psi}\widehat{\omega}(v)\sum_{k = 0}^\infty\mathcal{L}_{v, \mu}^k(f w)(x) \, dv 
	\end{align*}
	for all $(x, u) \in \Sigma^+ \times \ker\psi$, where $\widehat{\omega}$ is the Fourier transform of $\omega$ with the convention $\widehat{\omega}(v) = \int_{\LieA} e^{-i\langle \xi,  v \rangle_\psi} \omega(\xi) \, d\xi$ for all $v \in \LieA$. Recall that $dv$ denotes the Lebesgue measure on $\ker\psi$ which is compatible with $\langle \cdot, \cdot \rangle_\psi$. Note that the second integrand above is defined on a subset of $\LieA$ of full Lebesgue measure by \cref{thm:AnalyticityOf_Id-TransferOperator}. Set $\mathsf{Q}^{(k)}_t := \mathsf{Q}^{(k)}_{1,1,t}$ and $\mathsf{Q}_t := \mathsf{Q}_{1,1,t}$ for all $k \in \mathbb Z_{\geq 0}$ and $t \in \mathbb R$. We will need to use the following lemmas later, the first lemma comes from \cite[Subsections 2.42 and 2.43]{Bab88} using \cref{pro:ExpansionOf_kappa} and the second lemma is proved as in \cite[Lemma A.6]{Thi07} using \cref{lem:BesselFunctionInTermsOf_E} and \cref{lem:LimitOf_f_t_IsExp_I_r}.
	
	\begin{lemma}
		\label{lem:ThirionEstimate}
		Suppose $\widehat{\omega} \in C^N_\mathrm{c}(\LieA)$ for some $N \ge \frac{\rank}{2} + 2$. Then
		\begin{multline*}
			\int_{\LieA}e^{-i\langle v, t\mathsf{v} + u \rangle_\psi}\widehat{\omega}(v)\rho_\mathcal{O}(v)\frac{P_{v, 1}(f)(x)}{1 - \kappa_{v, 1}} \, dv  \\
			= \widehat{\omega}(0)\nu_{\Sigma^+}(f)\int_{\LieA} \frac{e^{-i\langle v, t\mathsf{v} + u \rangle_\psi} \rho_\mathcal{O}(v)}{-i\nu_{\Sigma^+}(\tau)\langle v, \mathsf{v} \rangle_\psi + \|v\|_*^2 } \, dv 
			+ o\left(\|t\mathsf{v} + u\|_\psi^{-\frac{\rank-1}{2}}\right)
		\end{multline*}
		for all $(t, u) \in \mathbb R \times \ker\psi$ as $t\mathsf{v} + u \to +\infty$, uniformly in $x \in \Sigma^+$.
	\end{lemma}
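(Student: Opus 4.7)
I intend to proceed by a Laplace-type asymptotic analysis along the lines of \cite[Subsections 2.42--2.43]{Bab88}: replace the numerator $\widehat{\omega}(v)P_{v,1}(f)(x)$ by its value at $v = 0$ and the denominator $1 - \kappa_{v,1}$ by its quadratic approximation $Q(v) := -i\nu_{\Sigma^+}(\tau)\langle v, \mathsf{v}\rangle_\psi + \|v\|_*^2$, then show that both substitutions introduce only $o(T^{-(\rank-1)/2})$ errors, where $T := \|t\mathsf{v}+u\|_\psi$.

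First I would gather the two analytic inputs. From \cref{pro:ExpansionOf_kappa} I have $1 - \kappa_{v,1} = Q(v) + R(v)$ on $\mathcal{O}$ for an analytic $R$ with $R(v) = o(\|v\|_\psi^2)$, hence $R(v) = O(\|v\|_\psi^3)$. From \cref{thm:AnalyticityOf_Id-TransferOperator}, the projection $P_{v,1}$ is analytic on $\mathcal{O}$ with values in $\mathcal{B}(L(\Sigma^+, \mathbb C))$, and $P_{0,1}$ is the rank-one projection $h \mapsto \nu_{\Sigma^+}(h)\chi_{\Sigma^+}$. Writing $N(v,x) := \widehat{\omega}(v) P_{v,1}(f)(x)$, this gives $N(0,x) = \widehat{\omega}(0)\nu_{\Sigma^+}(f)$ independent of $x$ and $N(v,x) - N(0,x) = O(\|v\|_\psi)$ uniformly in $x \in \Sigma^+$. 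The analogous lower bounds $|Q(v)|, |1 - \kappa_{v,1}| \gtrsim (\nu_{\Sigma^+}(\tau)^2 \langle v, \mathsf{v}\rangle_\psi^2 + \|v\|_\psi^4)^{1/2}$ are then immediate on a (possibly smaller) neighborhood of $0$, on which I assume $\rho_\mathcal{O}$ is supported.

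Next I split the difference of the two integrals via the algebraic identity
\[\frac{N(v,x)}{Q(v) + R(v)} - \frac{N(0,x)}{Q(v)} = \frac{N(v,x) - N(0,x)}{Q(v) + R(v)} - N(0,x)\frac{R(v)}{Q(v)(Q(v) + R(v))},\]
multiply by $\rho_\mathcal{O}(v) e^{-i\langle v, t\mathsf{v}+u\rangle_\psi}$, and integrate. Decomposing $v = s\mathsf{v} + w$ with $s \in \mathbb R$ and $w \in \ker\psi$, the lower bounds above show that the two resulting error integrands are dominated respectively by $\|v\|_\psi/\sqrt{s^2+\|v\|_\psi^4}$ and by $\|v\|_\psi^3/(s^2+\|v\|_\psi^4)$. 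The anisotropic rescaling $s = s'/T$, $w = w'/\sqrt{T}$ --- the same rescaling that reveals the $T^{-(\rank-1)/2}$ size of the reference integral and the Bessel-function structure of \cref{lem:BesselFunctionInTermsOf_E} --- shows that each of these error integrals gains an extra factor of $T^{-1/2}$ compared to the reference integral, so each contributes $O(T^{-\rank/2})$, which is $o(T^{-(\rank-1)/2})$. The portion of the integral supported away from $v = 0$ contributes a genuinely smooth oscillatory integral with nondegenerate gradient $t\mathsf{v}+u$ in the phase, and I dispose of it by repeated integration by parts, using $\widehat{\omega} \in C_\mathrm{c}^N$ with $N \ge \rank/2 + 2$ to gain any polynomial decay in $T$.

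The main obstacle is bookkeeping the anisotropic rescaling while maintaining uniformity in $x$: because $Q$ is linear in $s$ but quadratic in $w$, the two factors of $T$ absorbed by the change of variables scale differently, and the intermediate integrands are not absolutely integrable near the origin --- they must be understood through the oscillation. Verifying that the improvement gained from $N(v,x) - N(0,x) = O(\|v\|_\psi)$ and $R(v) = O(\|v\|_\psi^3)$ genuinely produces a little-$o$, and not merely a big-$O$, of the main asymptotic will require applying a dominated-convergence or Riemann--Lebesgue argument to the rescaled integrands after extracting the sharp power of $T$.
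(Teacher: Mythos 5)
The paper itself gives no proof of this lemma; it is imported wholesale from \cite[Subsections 2.42 and 2.43]{Bab88}, and your plan reconstructs exactly that Laplace-type analysis of the singularity at $v=0$: freeze the numerator at $v=0$ (where $P_{0,1}(f)=\nu_{\Sigma^+}(f)\chi_{\Sigma^+}$), replace $1-\kappa_{v,1}$ by its exact second-order Taylor polynomial $Q(v)=-i\nu_{\Sigma^+}(\tau)\langle v,\mathsf{v}\rangle_\psi+\|v\|_*^2$ using \cref{pro:ExpansionOf_kappa}, kill the part of the integral away from $v=0$ by integration by parts, and control the two error terms near the origin. The decomposition, the lower bound $|1-\kappa_{v,1}|\gtrsim|Q(v)|\gtrsim(\langle v,\mathsf{v}\rangle_\psi^2+\|v\|_\psi^4)^{1/2}$, and the orders $O(\|v\|_\psi)$ and $O(\|v\|_\psi^3)$ of the two numerators are all correct and are the right inputs.

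The soft spot is the step where you claim the anisotropic rescaling $s=s'/T$, $w=w'/\sqrt{T}$ ``shows that each of these error integrals gains an extra factor of $T^{-1/2}$.'' First, a small factual correction: the dominating functions $\|v\|_\psi/|Q(v)|$ and $\|v\|_\psi^3/|Q(v)|^2$ \emph{are} absolutely integrable near $0$ (e.g. $\int_0^1 ds/\sqrt{s^2+\|w\|_\psi^4}\asymp\log(1/\|w\|_\psi)$, which is integrable in $w$), so domination by itself yields only an $O(1)$ bound independent of $T$ --- no decay at all. Worse, after the rescaling the absolute values are integrated over the growing box $|s'|\lesssim T$, $\|w'\|\lesssim\sqrt{T}$, and there the bound blows up: for the first error term one gets $T^{-(\rank-1)/2}\cdot T^{-1/2}\int \|w'\|\,|\tilde{Q}(s',w')|^{-1}\,ds'\,dw'\asymp T^{-\rank/2}\cdot T^{\rank/2}\log T$, which is not even $O(1)$. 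So the $T^{-1/2}$ gain cannot be extracted by taking absolute values before or after rescaling; the cancellation in the $\langle v,\mathsf{v}\rangle_\psi$-variable carries all of the decay, for the error terms just as for the main term. The clean way to make your plan rigorous --- and what Babillot actually does --- is to write $\frac{1}{1-\kappa_{v,1}}=\int_0^\infty e^{-\sigma(1-\kappa_{v,1})}\,d\sigma$ (legitimate since $\mathrm{Re}(1-\kappa_{v,1})\geq 1-|\kappa_{v,1}|>0$ on $\supp(\rho_\mathcal{O})\setminus\{0\}$ by \cref{thm:SpectralBound,lem:Norm<=1ImpliesNorm<1AwayFromOrigin}), exchange the order of integration, and evaluate the inner $v$-integral as a (perturbed) complex Gaussian; there the extra powers of $\|v\|_\psi$ in the numerators convert mechanically into extra powers of $\sigma^{-1/2}\sim T^{-1/2}$ inside absolutely convergent integrals, and the little-$o$ follows from dominated convergence in the $\sigma$-variable. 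Alternatively, integrate by parts once in the $\mathsf{v}$-direction before estimating. Either route closes the gap; as written, the central quantitative claim of your plan is asserted rather than proved. (You should also either restrict to the regime $\|u\|_\psi=o(t)$ actually used in \cref{thm:QProperties}, or note that when $\|u\|_\psi\asymp\|t\mathsf{v}+u\|_\psi$ both sides are separately $o(\|t\mathsf{v}+u\|_\psi^{-(\rank-1)/2})$, since the rescaled phase $\langle w',u\rangle_\psi/\sqrt{T}$ is unbounded there and the limiting Gaussian picture degenerates.)
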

	
	\begin{lemma}
		\label{lem:ThirionLimit}
		There exists $C > 0$, independent of $\mathsf{u}$, $r$, and $\rho_\mathcal{O}$, such that
		\begin{align*}
			\lim\limits_{t \to +\infty}t^{\frac{\rank-1}{2}}\int_{\LieA}\frac{e^{-i\langle v, (t-s)\mathsf{v} + r(t)\mathsf{u} - u \rangle_\psi} \rho_\mathcal{O}(v)}{-i\nu_{\Sigma^+}(\tau)\langle v, \mathsf{v} \rangle_\psi + \|v\|_*^2 } \, dv = \frac{2\pi Ce^{-\ell I(\mathsf{u})}}{\nu_{\Sigma^+}(\tau)}
		\end{align*}
		for all $(s, u) \in \R \times \ker\psi$, where the convergence is uniform on compact subsets of $\R \times \ker\psi$.
	\end{lemma}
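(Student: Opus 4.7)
The plan is to follow the strategy of \cite[Lemma A.6]{Thi07}, converting the oscillatory integral into a Bessel integral via a Laplace representation of the denominator and extracting the large-$t$ asymptotic through Lemmas \ref{lem:BesselFunctionInTermsOf_E} and \ref{lem:LimitOf_f_t_IsExp_I_r}. First I would use the $\langle\cdot,\cdot\rangle_\psi$-orthogonal decomposition $\LieA=\R\mathsf{v}\oplus\ker\psi$ to write $v=b\mathsf{v}+w$ with $b\in\R$, $w\in\ker\psi$, so that $\langle v,\mathsf{v}\rangle_\psi=b$ and, since $\mathsf{u},u\in\ker\psi$,
\[\langle v,(t-s)\mathsf{v}+r(t)\mathsf{u}-u\rangle_\psi=b(t-s)+\langle w,r(t)\mathsf{u}-u\rangle_\psi.\]
Setting $\tau_0=\nu_{\Sigma^+}(\tau)$, the Laplace representation
\[\frac{1}{-i\tau_0 b+\|b\mathsf{v}+w\|_*^2}=\int_0^\infty e^{is'\tau_0 b-s'\|b\mathsf{v}+w\|_*^2}\,ds',\]
valid on $\LieA\setminus\{0\}$ since the real part of the denominator equals $\|v\|_*^2>0$ there, would reduce the integral (after a Fubini interchange) to an $s'$-integral of a Fourier-transformed Gaussian on $\LieA$.

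Writing $\|v\|_*^2=\langle v,Av\rangle_\psi$ for the positive $\langle\cdot,\cdot\rangle_\psi$-self-adjoint operator $A$ and $\eta=(t-s-s'\tau_0)\mathsf{v}+r(t)\mathsf{u}-u$, the inner $v$-Gaussian integral produces, up to a constant depending only on $\det A$ and $\rank$, the factor $(s')^{-\rank/2}\exp(-Q^*(\eta)/(4s'))$ with dual form $Q^*(\eta)=\langle\eta,A^{-1}\eta\rangle_\psi$. Expanding $Q^*(\eta)$ as a polynomial in $s'$ and pulling out the $s'$-independent exponential factor, the remaining $s'$-integral has the form $\int_0^\infty (s')^{-\rank/2}e^{-A_t s'-B_t/s'}\,ds'$ with explicit positive $A_t,B_t$, which by the standard representation of $K_{\rank/2-1}$ recalled above Lemma \ref{lem:BesselFunctionInTermsOf_E} equals $2(A_t/B_t)^{(\rank/2-1)/2}K_{\rank/2-1}(2\sqrt{A_tB_t})$. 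Substituting Lemma \ref{lem:BesselFunctionInTermsOf_E}(1) then rewrites the whole integrand in precisely the shape of the function $f_t$ of Lemma \ref{lem:LimitOf_f_t_IsExp_I_r}, up to an overall multiplicative constant $C$ assembling $\pi^{\rank/2}$, $\tau_0$, $\det A$, and norms of $\mathsf{v}$. Multiplying by $t^{(\rank-1)/2}$ and applying Lemma \ref{lem:LimitOf_f_t_IsExp_I_r} produces the asserted limit $\frac{2\pi C e^{-\ell I(\mathsf{u})}}{\tau_0}$.

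The main obstacle is to justify that the cutoff $\rho_\mathcal{O}$ can effectively be treated as $1$ in the Laplace--Gaussian step, together with the associated Fubini interchange and the passage of the $t\to\infty$ limit through the $s'$-integral. I would split
\[\frac{\rho_\mathcal{O}(v)}{-i\tau_0\langle v,\mathsf{v}\rangle_\psi+\|v\|_*^2}=\frac{1}{-i\tau_0\langle v,\mathsf{v}\rangle_\psi+\|v\|_*^2}-\frac{(1-\rho_\mathcal{O})(v)}{-i\tau_0\langle v,\mathsf{v}\rangle_\psi+\|v\|_*^2},\]
handle the first summand via the Laplace calculation above (Fubini holding once one observes that the iterated Laplace--Gaussian integrand decays both at $s'=0$ and $s'=\infty$), and note that the numerator in the second summand is smooth on $\LieA$ and decays like $|v|^{-2}$ at infinity. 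Repeated integration by parts in $v$ then gains arbitrarily many factors of $|(t-s)\mathsf{v}+r(t)\mathsf{u}-u|^{-1}$ in its Fourier transform, valid at order $N$ as soon as $N>\rank-2$; taking $N$ large makes the second contribution $o(t^{-(\rank-1)/2})$. The same $s'$-integrability, ensured by the growth $2\sqrt{A_tB_t}\sim \mathrm{const}\cdot t$ and the bound in Lemma \ref{lem:BesselFunctionInTermsOf_E}(3), provides a dominating function independent of $t$ for the dominated-convergence argument at the very end.
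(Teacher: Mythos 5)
Your proposal follows exactly the route the paper intends: the paper's own proof of this lemma is a citation to Thirion's Lemma~A.6 together with \cref{lem:BesselFunctionInTermsOf_E} and \cref{lem:LimitOf_f_t_IsExp_I_r}, and your Laplace--Gaussian--Bessel computation, the symbol-class integration by parts to dispose of $1-\rho_\mathcal{O}$ (with the correct threshold $N>\rank-2$), and the final dominated convergence via \cref{lem:BesselFunctionInTermsOf_E}(3) are a faithful unpacking of that argument.

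There is, however, one step you state too quickly, and it is the step on which the whole value $e^{-\ell I(\mathsf{u})}$ depends. Writing $\|v\|_*^2=\langle v,Av\rangle_\psi$, the Gaussian integral produces the \emph{dual} form: after completing the square and the Bessel substitution, the prefactor exponential is $\exp\bigl(\tfrac{\tau_0}{2}\langle \eta_0,A^{-1}\mathsf{v}\rangle_\psi\bigr)$ and the Bessel argument is $2\sqrt{A_tB_t}=\tfrac{\tau_0}{2}\sqrt{\langle\mathsf{v},A^{-1}\mathsf{v}\rangle_\psi\,\langle\eta_0,A^{-1}\eta_0\rangle_\psi}$, where $\eta_0=(t-s)\mathsf{v}+r(t)\mathsf{u}-u$ and $\tau_0=\nu_{\Sigma^+}(\tau)$. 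This is \emph{not} literally the function $f_t$ of \cref{lem:LimitOf_f_t_IsExp_I_r}: it is $f_t$ for the rescaled dual inner product $\langle x,y\rangle_\bullet:=\tfrac{\tau_0}{4}\langle x,A^{-1}y\rangle_\psi$ rather than for $\langle\cdot,\cdot\rangle_*=\langle\cdot,A\cdot\rangle_\psi$. The proof of \cref{lem:LimitOf_f_t_IsExp_I_r} goes through verbatim for any inner product, so your argument yields the limit $e^{-\ell I_\bullet(\mathsf{u})}$ with $I_\bullet(\mathsf{u})=\tfrac{\tau_0}{4}\bigl(\langle\mathsf{u},A^{-1}\mathsf{u}\rangle_\psi-\langle\mathsf{u},A^{-1}\mathsf{v}\rangle_\psi^2/\langle\mathsf{v},A^{-1}\mathsf{v}\rangle_\psi\bigr)$, and you still owe a verification that this quadratic functional on $\ker\psi$ coincides with $I(\mathsf{u})=\langle\mathsf{u},\mathsf{u}\rangle_*-\langle\mathsf{u},\mathsf{v}\rangle_*^2/\langle\mathsf{v},\mathsf{v}\rangle_*$ as it appears in the statement; for a general positive definite $A$ and $\tau_0>0$ these are different functionals, so this identification cannot be waved through as a change of notation. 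Either supply that identity (using whatever normalization relates $A$, $\tau_0$, and the restriction to $\ker\psi$) or record the limit with the dual form and reconcile it with the definition of $I$; as written, the sentence ``rewrites the whole integrand in precisely the shape of $f_t$'' papers over the only nontrivial identification in the proof.
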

	
	We prove the following theorem by a combination of techniques from \cite[Theorem 3.19]{OP19} which is based on \cite[Appendix]{LS06} and \cite[Appendix A]{Thi07} which is based on \cite{Bab88}. We denote the Gaussian curvature $\mathfrak c = \det(D^2P(0))$, where $P: \mathcal{O}_\psi \to \mathbb R$ is the function in the proof of \cref{pro:ExpansionOf_kappa}. We use the natural convention $\mathfrak{c} = 1$ if $\rank = 1$.
	
	\begin{theorem}
		\label{thm:QProperties}
		Let $\mu \in \widehat{M}_\Gamma$ and $w, w_1, w_2 \in V_\mu$. Then:
		\begin{enumerate}
			\item
			\label{itm:QProperties1} for all $t \in \mathbb R$, we have
			\begin{align*}
				\sum_{k = 0}^\infty\mathsf{Q}^{(k)}_{\mu,w,t}(f,\omega)  = \mathsf{Q}_{\mu,w,t}(f,\omega)
			\end{align*}
			where the convergence is uniform on compact subsets of $\Sigma^+ \times \ker\psi$;
			\item
			\label{itm:QProperties2}
			we have
			\begin{align*}
				\lim_{t \to +\infty} t^{\frac{\rank-1}{2}} \mathsf{Q}_{t-s}(f,\omega)(x,r(t)\mathsf{u} - u)  = \frac{(2\pi)^{\frac{\rank - 1}{2}}\widehat{\omega}(0)e^{-\ell I(\mathsf{u})}\nu_{\Sigma^+}(f)}{\sqrt{\mathfrak c} \cdot \nu_{\Sigma^+}(\tau)}
			\end{align*}
			for all $(x, s, u) \in \Sigma^+ \times \R \times \ker\psi$, where the convergence is uniform on compact subsets of $\Sigma^+ \times \R \times \ker\psi$;
			\item
			\label{itm:QProperties3}
			if $\mu \neq 1$, then we have
			\begin{align*}
				\lim_{t \to +\infty} \left\langle w_1, t^{\frac{\rank-1}{2}}\mathsf{Q}_{\mu, w_2, t - s}(f,\omega)(x,  r(t)\mathsf{u} - u) \right\rangle = 0
			\end{align*}
			for all $(x, s, u) \in \Sigma^+ \times \R \times \ker\psi$, where the convergence is uniform on compact subsets of $\Sigma^+ \times \R \times \ker\psi$.
		\end{enumerate}
	\end{theorem}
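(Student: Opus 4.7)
The three parts separate naturally according to the spectral picture from \cref{thm:AnalyticityOf_Id-TransferOperator}. The plan is to treat part (1) by interchange of sum and integral, part (3) by a smoothness/Riemann--Lebesgue argument, and part (2) --- the main assertion --- by the spectral decomposition near zero combined with the Bessel-type asymptotics of \cref{lem:ThirionEstimate,lem:ThirionLimit}.

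For (1), on the cocountable full-measure subset $D_\mu \subset \LieA$ the Neumann series converges pointwise to $L_\mu(v)(fw)$. On any compact $K \subset \LieA$ (disjoint from $\{0\}$ if $\mu = 1$), \cref{lem:Norm<=1ImpliesNorm<1AwayFromOrigin} together with \cref{thm:SpectralBound} and the perturbation theory from the proof of \cref{thm:AnalyticityOf_Id-TransferOperator} give a uniform bound on the spectral radius strictly less than $1$, so the tail decays geometrically in operator norm and dominated convergence justifies the exchange. Near $v = 0$ in the case $\mu = 1$, I would use the spectral decomposition from \cref{thm:AnalyticityOf_Id-TransferOperator} to express $\sum_{k \geq N+1} \mathcal{L}_{v, 1}^k = \frac{\kappa(v)^{N+1}}{1 - \kappa(v)} P_{v, 1} + R_{v, 1}^{N+1}(\Id - R_{v, 1})^{-1}$; the first summand is integrable against $\widehat{\omega}$ uniformly in $N$ in view of the integrability already used to define $L_1$, and the second is geometrically small. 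This yields (1) uniformly on compact subsets.

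For (2), split $L_1 = \rho_\mathcal{O} L_1 + (1 - \rho_\mathcal{O}) L_1$. On the support of $1 - \rho_\mathcal{O}$, $L_1$ is analytic with controlled operator norm; since $(t - s)\mathsf{v} + r(t)\mathsf{u} - u \to \infty$ along $\mathsf{v}$ thanks to $r(t)/t \to 0$, repeated integration by parts in $v$ using the smoothness of $\widehat{\omega}$ yields a contribution of order $o(t^{-(\rank - 1)/2})$. On the near-zero piece apply the decomposition $L_1(v) = \frac{P_{v, 1}}{1 - \kappa(v)} + Q_{v, 1}$: the $Q_{v, 1}$ summand is analytic at $0$, so integration by parts again gives $o(t^{-(\rank - 1)/2})$. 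The remaining pole-type piece is precisely what \cref{lem:ThirionEstimate} treats, reducing it to the universal Fresnel-type integral and using $P_{0,1}(f)(x) = \nu_{\Sigma^+}(f)$ together with the expansion of $\kappa$ from \cref{pro:ExpansionOf_kappa}; then \cref{lem:ThirionLimit} produces the claimed limit after multiplication by $t^{(\rank - 1)/2}$. The Gaussian curvature $\mathfrak{c}$ enters through the second-order form $\|v\|_*^2$ of \cref{pro:ExpansionOf_kappa}, and a standard Gaussian computation identifies the universal constant from \cref{lem:ThirionLimit} as $(2\pi)^{(\rank - 1)/2}/\sqrt{\mathfrak{c}}$, matching the stated prefactor.

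For (3), since $\mu \neq 1$, \cref{thm:AnalyticityOf_Id-TransferOperator} gives a globally analytic extension $L_\mu: \LieA \to \mathcal{B}(L(\Sigma^+, V_\mu))$ with no singularity at $0$, so $v \mapsto \widehat{\omega}(v) L_\mu(v)(fw)(x)$ is smooth and integrable on $\LieA$. Hence $\mathsf{Q}_{\mu, w_2, t - s}(f, \omega)(x, r(t)\mathsf{u} - u)$ is the Fourier transform of a smooth function evaluated at $(t - s)\mathsf{v} + r(t)\mathsf{u} - u$, a point tending to infinity along the direction $\mathsf{v}$. Repeated integration by parts produces decay faster than any inverse polynomial in $t$, killing the $t^{(\rank - 1)/2}$ prefactor; the pairing with $w_1$ preserves this decay, and uniformity on compact subsets of $\Sigma^+ \times \ker\psi$ follows from the continuous dependence of $L_\mu(v)(fw)(x)$ on $x$. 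The main obstacle is part (2) near $v = 0$: ensuring that the replacement of $P_{v, 1}(f)(x)$ and $1 - \kappa(v)$ by their zeroth- and second-order Taylor data introduces errors that are genuinely $o(t^{-(\rank - 1)/2})$ and uniform in $x$, which is exactly the content of \cref{lem:ThirionEstimate} and relies crucially on the analytic dependence of $P_{v, 1}$ on $v$ provided by \cref{thm:AnalyticityOf_Id-TransferOperator}.
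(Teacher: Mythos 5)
Your outline follows the paper's proof essentially step for step: part (1) by dominated convergence using the Neumann series and the spectral splitting $\mathcal{L}_{v,1}^k = \kappa(v)^k P_{v,1} + R_{v,1}^k$ near $v=0$; part (2) by the three-term decomposition $\rho_\mathcal{O}\bigl(\tfrac{P_{v,1}}{1-\kappa_{v,1}} + Q_{v,1}\bigr) + (1-\rho_\mathcal{O})L_1$, integration by parts for the regular pieces, and \cref{lem:ThirionEstimate} followed by \cref{lem:ThirionLimit} for the pole; part (3) by the global analytic extension $L_\mu$ and integration by parts. The constant identification $C = (2\pi)^{\frac{\rank-1}{2}}\mathfrak{c}^{-\frac{1}{2}}$ is obtained in the paper by rerunning the argument of Oh--Pan in the case $\ell I(\mathsf{u})=0$ rather than by a direct Gaussian computation, but that is a cosmetic difference.

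The one substantive omission is the preliminary reduction, via \cite[Lemma 2.4]{BL98}, to nonnegative $\omega$ with $\widehat{\omega} \in C^N_{\mathrm{c}}(\LieA)$ for suitable finite $N$. The theorem is stated for arbitrary $\omega \in C_{\mathrm{c}}(\LieA)$, for which $\widehat{\omega}$ is smooth but neither compactly supported nor necessarily integrable; since $\|\mathcal{L}_{v,\mu}\|$ on $L(\Sigma^+,V_\mu)$ grows in $\|v\|$ (the Lipschitz seminorm of $e^{i\langle v,\mathsf{K}\rangle_\psi}$ does), the integrals defining $\mathsf{Q}^{(k)}_{\mu,w,t}$ and $\mathsf{Q}_{\mu,w,t}$ need not converge absolutely, and your integration-by-parts steps require a compact integration domain and control of finitely many derivatives rather than ``decay faster than any inverse polynomial.'' With that reduction inserted at the outset (and the decay in part (3) weakened to $O(t^{-N})$ with $N > \tfrac{\rank-1}{2}$, which suffices), your argument is the paper's.
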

	
	\begin{proof}
		Let $\mu \in \widehat{M}_\Gamma$ and $w, w_1, w_2 \in V_\mu$. By \cite[Lemma 2.4]{BL98}, we will prove the theorem for nonnegative $\omega \in L^1(\LieA)$ such that $\widehat{\omega} \in C^N_\mathrm{c}(\LieA)$ for some $N \ge \frac{\rank - 1}{2} + 1$ which will imply the theorem for our original $\omega \in C_\mathrm{c}(\LieA)$.
		
		First we prove \cref{itm:QProperties1}. If $\mu = 1$, we can define the analytic function $R = 1 - P: \mathcal{O}_\psi \to \mathbb R$ in \cref{eqn:WeierstrassPreparation} and repeat Step 7 of \cite[Appendix]{LS06}. Now suppose $\mu \neq 1$. Note that the infinite sum $\sum_{k = 0}^\infty \mathcal{L}_{v, \mu}^k$ converges to $L_\mu(v)$ in norm for Lebesgue almost every $v \in \LieA$ where $L_\mu: \LieA \to \mathcal{B}(L(\Sigma^+, V_\mu))$ is the analytic extension from \cref{thm:AnalyticityOf_Id-TransferOperator}. Thus, by dominated convergence theorem, it suffices to show that $\|(\widehat{\omega} \cdot L_\mu)(\cdot)(fw)(x)\|_2$ is dominated by an integrable function $\LieA \to \mathbb R$. But this follows from
		\begin{align*}
			\|(\widehat{\omega} \cdot L_\mu)(\cdot)(fw)(x)\|_2 \leq \|\widehat{\omega}\|_\infty \cdot \|fw\|_{\Lip} \cdot \|(\chi_{\supp(\widehat{\omega})} \cdot L_\mu)(\cdot)\|
		\end{align*}
		because $\|L_\mu(\cdot)\|$ is bounded on $\supp(\widehat{\omega})$ by analyticity of $L_\mu$.
		
		Now we prove \cref{itm:QProperties2}. We use \cref{itm:Trivial_mu_GeometricSeriesTransferOperatorAnalyticOutside0} in \cref{thm:AnalyticityOf_Id-TransferOperator}, which provides analytic maps and allows replacing $\sum_{k = 0}^\infty\mathcal{L}_{v, 1}^k$ with $L_1(v)$ when $v \notin \mathcal{O}$ and with $\frac{P_{v, 1}}{1 - \kappa_{v, 1}} + Q_{v, 1}$ when $v \in \mathcal{O} \setminus \{0\}$ in the integrand, to get
		\begin{align*}
			\mathsf{Q}_{t}(f,\omega)(x,u) ={}&\frac{1}{2\pi}\int_{\LieA}e^{-i\langle v, t\mathsf{v} + u \rangle_\psi}\widehat{\omega}(v)\rho_\mathcal{O}(v)\frac{P_{v, 1}(f)(x)}{1 - \kappa_{v, 1}} \, dv 
			\\
			{}&+ \frac{1}{2\pi}\int_{\LieA}e^{-i\langle v, t\mathsf{v} + u \rangle_\psi}\widehat{\omega}(v)\rho_\mathcal{O}(v)Q_{v, 1}(f)(x) \, dv 
			\\
			{}&+ \frac{1}{2\pi}\int_{\LieA}e^{-i\langle v, t\mathsf{v} + u \rangle_\psi}\widehat{\omega}(v)(1-\rho_\mathcal{O}(v))L_1(v)(f)(x) \, dv
		\end{align*}
		for all $t \geq 0$ and $(x, u) \in \Sigma^+ \times \ker\psi$. By analyticity of the provided maps, our choice of $\rho_\mathcal{O}$, and our assumption $\widehat{\omega} \in C^N_\mathrm{c}(\LieA)$, we can integrate by parts to show that the second and third terms above are $O\bigl(\|t\mathsf{v} + u\|_\psi^{-N}\bigr)$ as $t\mathsf{v}+u \to +\infty$ uniformly in $x \in \Sigma^+$. Recalling the bound for $N$, we can combine this with \cref{lem:ThirionEstimate} and replace $t$ with $t-s$ and $u$ with $r(t)\mathsf{u} - u$ to deduce that
		\begin{multline*}
			\mathsf{Q}_{t-s}(f,\omega)(x,r(t)\mathsf{u} - u) = \frac{\widehat{\omega}(0)\nu_{\Sigma^+}(f)}{2\pi}\int_{\LieA} \frac{e^{-i\langle v, (t-s)\mathsf{v} + r(t)\mathsf{u} - u \rangle_\psi}\rho_\mathcal{O}(v)}{-i\nu_{\Sigma^+}(\tau)\langle v, \mathsf{v} \rangle_\psi + \|v\|_*^2} \, dv 
			\\
			+ o\left(\|(t-s)\mathsf{v}+r(t)\mathsf{u}-u\|_\psi^{-\frac{\rank-1}{2}}\right)
		\end{multline*}
		as $(t-s)\mathsf{v}+r(t)\mathsf{u}-u \to +\infty$ uniformly in $x \in \Sigma^+$. By \cref{lem:ThirionLimit}, we have
		\begin{align*}
			& \lim\limits_{t \to +\infty}t^{\frac{\rank-1}{2}}\mathsf{Q}_{t-s}(f,\omega)(x,r(t)\mathsf{u} - u)
			\\
			={}&\frac{\widehat{\omega}(0)\nu_{\Sigma^+}(f)}{2\pi}\lim\limits_{t \to +\infty}t^{\frac{\rank-1}{2}}\int_{\LieA}\frac{e^{-i\langle v, (t-s)\mathsf{v}+r(t)\mathsf{u}-u \rangle_\psi} \rho_\mathcal{O}(v)}{-i\nu_{\Sigma^+}(\tau)\langle v, \mathsf{v} \rangle_\psi + \|v\|_*^2} \, dv \\
			={}&\widehat{\omega}(0)\nu_{\Sigma^+}(f)\frac{Ce^{-\ell I(\mathsf{u})}}{\nu_{\Sigma^+}(\tau)}
		\end{align*}
		for all $(x, s, u) \in \Sigma^+ \times \R \times \ker\psi$, where the constant $C$ is from the same lemma and the convergence is uniform on compact subsets of $\Sigma^+ \times \R \times \ker\psi$. In the case that $\mathsf{u} = 0$ or $r = 0$ so that $\ell I(\mathsf{u}) = 0$, we can alternatively repeat the analysis in \cite[Theorem 3.19]{OP19} which is based on \cite[Appendix]{LS06}. In fact, doing this shows that the constant from \cref{lem:ThirionLimit} is explicitly $C = (2\pi)^{\frac{\rank - 1}{2}} \mathfrak{c}^{-\frac{1}{2}}$.
		
		Now we prove \cref{itm:QProperties3}. Suppose $\mu \neq 1$. We use \cref{itm:Nontrivial_mu_GeometricSeriesTransferOperatorAnalytic} in \cref{thm:AnalyticityOf_Id-TransferOperator}, which provides an analytic map $L_\mu: \LieA \to \mathcal{B}(L(\Sigma^+, V_\mu))$ and allows replacing $\sum_{k = 0}^\infty\mathcal{L}_{v, \mu}^k$ with $L_\mu(v)$ in the integrand, to get
		\begin{align*}
			&\langle w_1, \mathsf{Q}_{\mu, w_2, t - s}(f,\omega)(x, r(t)\mathsf{u} - u) \rangle \\
			={}&\frac{1}{2\pi}\int_{\LieA} e^{-i\langle v, (t - s)\mathsf{v} + r(t)\mathsf{u} - u \rangle_\psi}\widehat{\omega}(v) \langle w_1, L_\mu(v)(f w_2)(x)\rangle \, dv
		\end{align*}
		for all $t \geq 0$ and $(x, s, u) \in \Sigma^+ \times \R \times \ker\psi$. Since the integrand is of class $C^N_\mathrm{c}(\LieA)$, we can integrate by parts to show that the above is $O\bigl(\|(t - s)\mathsf{v} + r(t)\mathsf{u} - u\|_\psi^{-N}\bigr)$ as $(t - s)\mathsf{v} + r(t)\mathsf{u} - u \to +\infty$ uniformly in $x \in \Sigma^+$. Recalling the bound for $N$, this implies \cref{itm:QProperties3}.
	\end{proof}
	
	\subsection{Correlation function}
	Define $\tilde{\Sigma} = \Sigma \times \ker\psi \times M_\Gamma \times \R$ equipped with the Borel measure $\mathsf{M}$ given by
	\begin{align*}
		d\mathsf{M} = \frac{1}{\nu_{\Sigma^+}(\tau)} \, d\nu_\Sigma \, du \, dm \, ds
	\end{align*}
	where $du$ and $ds$ denote the Lebesgue measures on $\ker\psi$ and $\R$ compatible with the inner product $\langle \cdot,\cdot \rangle_\psi$ on $\LieA$, and $dm$ is the Haar measure on $M_\Gamma$ induced by the probability Haar measure on $M$. Define the map $\varsigma: \tilde{\Sigma} \to \tilde{\Sigma}$ by
	\begin{align*}
		\varsigma(x,u,m,s) = (\sigma(x), u - \widehat{\mathsf{K}}(x),\vartheta(x)^{-1}m, s - \tau(x)) \qquad \text{for all $(x, u, m, s) \in \tilde{\Sigma}$}.
	\end{align*}
	Define the suspension space $\Sigma^\varsigma = \tilde{\Sigma}/{\sim}$ where the equivalence relation $\sim$ on $\tilde{\Sigma}$ is defined by $(x,u,m,s) \sim \varsigma(x,u,m,s)$ for all $(x, u, m, s) \in \tilde{\Sigma}$. Equip $\Sigma^\varsigma$ with the Borel measure $\mathsf{M}^\varsigma$ induced by $\mathsf{M}$. The suspension flow $\Sigma^\varsigma \times \mathbb R \to \Sigma^\varsigma$ is defined by $([(x, u, m, s)], t) \mapsto [(x, u, m, s + t)]$. Then the surjective Lipschitz map $\Sigma^\varsigma \to \Omega_0$ defined by $(x,u,m,s) \mapsto F(\eta(x))ma_{u + s\mathsf{v}}$ is a semiconjugacy between the one-parameter diagonal flow $\mathcal{W}_{\mathsf{v}}$ on $\Omega_0$ and the suspension flow on $\Sigma^\varsigma$. It is important to note that the fixed $A$-ergodic component $\mathsf{m}$ on $\Omega_0$ of the BMS measure is simply the pushforward of $\mathsf{M}^\varsigma$ by this map (see \cref{sec:AnosovSubgroups,subsec:Thermodynamics}). In a similar fashion, we also define $\tilde{\Sigma}^+ = \Sigma^+ \times \ker\psi \times M_\Gamma \times \R$ equipped with a Borel measure $\mathsf{M}^+$, $\varsigma: \tilde{\Sigma}^+ \to \tilde{\Sigma}^+$ by abuse of notation, and the suspension space $\Sigma^{+,\varsigma}$ equipped with the Borel measure $\mathsf{M}^{+,\varsigma}$ induced by $\mathsf{M}^+$.
	
	Define the family of functions $\mathfrak{F}$ which consists of functions $\Psi \in C_{\mathrm{c}}(\tilde{\Sigma}^+)$ of the form $\Psi(x,u,m,s) = f(x)\omega(s\mathsf{v} + u)\langle \mu(m)w_1,w_2\rangle$ for all $(x,u,m,s) \in \tilde{\Sigma}^+$ for any $f \in L(\Sigma^+)$, $\omega \in C_\mathrm{c}(\LieA)$, $\mu \in \widehat{M}_\Gamma$, and $w_1, w_2 \in V_\mu$ with $\|w_1\|_2 = \|w_2\|_2 = 1$. We will simply say $\Psi = f(\cdot)\omega(\cdot)\langle \mu(\cdot)w_1,w_2\rangle \in \mathfrak{F}$ to specify functions of the above form. For all $\Psi_1,\Psi_2 \in \mathfrak{F}$, define the \emph{correlation function} $J(\Psi_1,\Psi_2): \mathbb R_{\geq 0} \to \mathbb R$ by
	\begin{equation*}
		J_t(\Psi_1,\Psi_2) = \sum_{k=0}^\infty\int_{\tilde{\Sigma}^+} (\Psi_1 \circ \varsigma^k)(x,u + r(t)\mathsf{u},m,s+t) \Psi_2(x,u,m,s) \, d\mathsf{M}^+(x, u, m, s)
	\end{equation*}
	for all $t \geq 0$. The following lemma gives the key relation between the correlation function and the transfer operators with holonomy. It can be proven as in \cite[Lemma 4.2]{OP19} by doing a change of variables and using $\mathcal{L}_0^*(\nu_{\Sigma^+}) = \nu_{\Sigma^+}$, unitarity of $\mu \in \widehat{M}_\Gamma$, the Fourier inversion formula, and \cref{itm:QProperties1} in \cref{thm:QProperties}.
	
	\begin{lemma}\label{lem:IntegralIFormulaUsingQ}
		For all $\Psi_1 \in C_\mathrm{c}(\tilde{\Sigma}^+)$, $\Psi_2 = f(\cdot)\omega(\cdot)\langle \mu(\cdot)w_1,w_2\rangle \in \mathfrak{F}$, and $t \geq 0$, we have
		\begin{align*}
			&(2\pi)^{\rank-1}J_t(\Psi_1,\Psi_2) 
			\\
			={}&\int_{\tilde{\Sigma}^+} \Psi_1 (x,u,m,s)\langle \mu(m)w_1, \mathsf{Q}_{\mu, w_2, t - s}(f,\omega)(x, r(t)\mathsf{u} - u) \rangle \, d\mathsf{M}^+(x, u, m, s).
		\end{align*}
	\end{lemma}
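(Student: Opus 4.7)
The plan is to start from the right-hand side, unfold the definition of $\mathsf{Q}_{\mu, w_2, t - s}(f, \omega)$ and then of the iterated transfer operator $\mathcal{L}_{v, \mu}^k$, and apply in turn the adjoint relation $\mathcal{L}_0^* \nu_{\Sigma^+} = \nu_{\Sigma^+}$, unitarity of $\mu$, the Fourier inversion formula, and a measure-preserving translation on the fiber $\ker\psi \times M_\Gamma \times \R$ of $\tilde{\Sigma}^+$, so as to arrive at $(2\pi)^{\rank - 1}$ times the explicit expansion of $J_t(\Psi_1, \Psi_2)$. The interchange of $\sum_{k = 0}^\infty$ with the $v$-integral in $\mathsf{Q}$ is legitimized by \cref{itm:QProperties1} in \cref{thm:QProperties}, so the right-hand side splits into a sum over $k$ of the corresponding $\mathsf{Q}^{(k)}$-terms.

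For each $k$, expanding $\mathcal{L}_{v, \mu}^k(f w_2)(x)$ as a sum over $x' \in \sigma^{-k}(x)$ with weight $e^{\tau_k^{(0)}(x') + i \langle v, \mathsf{K}_k(x') \rangle_\psi} \mu(\vartheta^k(x')^{-1}) (f(x') w_2)$, and using unitarity of $\mu$ to rewrite the pairing as $\langle \mu(m) w_1, \mu(\vartheta^k(x')^{-1}) w_2 \rangle = \langle \mu(\vartheta^k(x') m) w_1, w_2 \rangle$, turns the $k$-th summand into an iterated integral over $\tilde{\Sigma}^+$ and $\LieA$. Next I perform the $x$-integration with $u, m, s, v$ fixed: the iterated adjoint identity $\int_{\Sigma^+} G \cdot \mathcal{L}_0^k(H) \, d\nu_{\Sigma^+} = \int_{\Sigma^+} (G \circ \sigma^k) \cdot H \, d\nu_{\Sigma^+}$, a direct consequence of $\mathcal{L}_0^* \nu_{\Sigma^+} = \nu_{\Sigma^+}$, absorbs the weight $e^{\tau_k^{(0)}(x')}$ and converts the sum over $\sigma^{-k}(x)$ into the shift $\sigma^k$ inside the argument of $\Psi_1$. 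The remaining $v$-integral then reads $\int_{\LieA} e^{-i \langle v, (t - s)\mathsf{v} + r(t)\mathsf{u} - u - \mathsf{K}_k(x) \rangle_\psi} \widehat{\omega}(v) \, dv = (2\pi)^{\rank} \omega((s - t)\mathsf{v} - r(t)\mathsf{u} + u + \mathsf{K}_k(x))$ by Fourier inversion, which combined with the $\tfrac{1}{2\pi}$ in the definition of $\mathsf{Q}$ produces the prefactor $(2\pi)^{\rank - 1}$.

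It then remains to identify the resulting sum $\sum_{k = 0}^\infty \int_{\tilde{\Sigma}^+} \Psi_1(\sigma^k(x), u, m, s) \, f(x) \, \omega((s - t)\mathsf{v} - r(t)\mathsf{u} + u + \mathsf{K}_k(x)) \, \langle \mu(\vartheta^k(x) m) w_1, w_2 \rangle \, d\mathsf{M}^+$ with $J_t(\Psi_1, \Psi_2)$. Expanding the definitions of $J_t$, $\varsigma^k$, and $\Psi_2$, and performing for each $x$ the fiber substitution $(u, m, s) \mapsto (u + r(t)\mathsf{u} - \widehat{\mathsf{K}}_k(x), \vartheta^k(x)^{-1} m, s + t - \tau_k(x))$, achieves the match. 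This substitution preserves $d\mathsf{M}^+$ by translation invariance of the Lebesgue measures on $\ker\psi$ and $\R$ and by bi-invariance of the Haar measure on the compact group $M_\Gamma$, while the identity $\mathsf{K}_k(x) = \tau_k(x) \mathsf{v} + \widehat{\mathsf{K}}_k(x)$ collapses the combined shift of $\omega$'s argument into the required form. The main obstacle is purely bookkeeping --- tracking sign conventions of the Fourier transform and of the sesquilinear pairing $\langle \cdot, \cdot \rangle$ on $V_\mu$ --- so once \cref{itm:QProperties1} in \cref{thm:QProperties} secures convergence of the series and applicability of Fubini, no substantial analytic difficulty remains.
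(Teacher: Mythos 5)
Your proposal is correct and follows exactly the route the paper indicates for this lemma (which is only sketched there, by reference to \cite[Lemma 4.2]{OP19} together with the same list of ingredients: a fiberwise change of variables, the relation $\mathcal{L}_0^*(\nu_{\Sigma^+}) = \nu_{\Sigma^+}$, unitarity of $\mu$, Fourier inversion, and \cref{itm:QProperties1} in \cref{thm:QProperties}). Your detailed bookkeeping checks out, in particular the fiber substitution $(u,m,s) \mapsto (u + r(t)\mathsf{u} - \widehat{\mathsf{K}}_k(x), \vartheta^k(x)^{-1}m, s + t - \tau_k(x))$ together with $\mathsf{K}_k = \tau_k\mathsf{v} + \widehat{\mathsf{K}}_k$ reproduces the $k$-th term of $J_t$ exactly.
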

	
	\begin{proposition}\label{pro:JtLimit}
		For all $\Psi_1,\Psi_2 \in C_\mathrm{c}(\tilde{\Sigma}^+)$, we have
		\begin{equation*}
			\lim\limits_{t \to +\infty} t^{\frac{\rank-1}{2}}J_t(\Psi_1,\Psi_2) = (2\pi)^{-\frac{\rank - 1}{2}} \mathfrak{c}^{-\frac{1}{2}} e^{-\ell I(\mathsf{u})} \mathsf{M}^+(\Psi_1)\mathsf{M}^+(\Psi_2).
		\end{equation*}
	\end{proposition}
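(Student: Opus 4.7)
The plan is to reduce the statement to a tractable class of test functions, apply \cref{lem:IntegralIFormulaUsingQ} to recast $t^{\frac{\rank-1}{2}}J_t$ as an integral involving the operators $t^{\frac{\rank-1}{2}}\mathsf{Q}_{\mu, w, t-s}$, and then invoke \cref{thm:QProperties} together with Peter--Weyl orthogonality over $M_\Gamma$ to identify the limit.

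First I would reduce to $\Psi_1, \Psi_2 \in \mathfrak{F}$. By the Peter--Weyl theorem the linear span of matrix coefficients $m \mapsto \langle \mu(m)w_1, w_2\rangle$ is uniformly dense in $C(M_\Gamma)$; combined with density of $L(\Sigma^+)$ in $C(\Sigma^+)$ and a Stone--Weierstrass argument for the $\ker\psi \times \R$ factor (using that $(u,s) \mapsto s\mathsf{v}+u$ is a linear isomorphism onto $\LieA$), finite linear combinations of elements of $\mathfrak{F}$ are uniformly dense in $C_\mathrm{c}(\tilde{\Sigma}^+)$. Bilinearity of both $J_t$ and $\mathsf{M}^+(\cdot)\mathsf{M}^+(\cdot)$, together with a uniform-in-$t$ bound on $t^{\frac{\rank-1}{2}}J_t(\Psi_1,\Psi_2)$ in terms of $\|\Psi_i\|_\infty$ and their supports (obtainable from \cref{lem:IntegralIFormulaUsingQ} combined with the uniform boundedness statements in \cref{thm:QProperties}), will then transfer the asserted limit from $\mathfrak{F}$ to all of $C_\mathrm{c}(\tilde{\Sigma}^+)$ by a standard approximation argument.

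Next, writing $\Psi_i = f_i(\cdot)\omega_i(\cdot)\langle \mu_i(\cdot)w_{i,1}, w_{i,2}\rangle$ for $i=1,2$, I would apply \cref{lem:IntegralIFormulaUsingQ} to obtain
\begin{align*}
(2\pi)^{\rank-1} t^{\frac{\rank-1}{2}} J_t(\Psi_1,\Psi_2) = \int_{\tilde{\Sigma}^+} \Psi_1(x,u,m,s)\langle \mu_2(m)w_{2,1}, Y_t(x,u,s)\rangle\,d\mathsf{M}^+,
\end{align*}
where $Y_t(x,u,s) := t^{\frac{\rank-1}{2}}\mathsf{Q}_{\mu_2, w_{2,2}, t-s}(f_2,\omega_2)(x, r(t)\mathsf{u}-u)$. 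Because $\Psi_1$ has compact support, the integration runs over a compact set. If $\mu_2 \neq 1$, \cref{thm:QProperties}\ref{itm:QProperties3} yields $Y_t \to 0$ uniformly and dominated convergence forces the limit to vanish, while Schur orthogonality makes $\mathsf{M}^+(\Psi_2) = 0$, matching both sides. If $\mu_2 = 1$, \cref{thm:QProperties}\ref{itm:QProperties2} produces a uniform scalar limit for $Y_t$; passing the limit inside the integral yields an integrand that factorizes over $m$, and Schur orthogonality applied to the $m$-integral annihilates all terms with $\mu_1 \neq 1$ (which likewise gives $\mathsf{M}^+(\Psi_1) = 0$). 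In the remaining case $\mu_1 = \mu_2 = 1$, substituting the explicit constant from \cref{thm:QProperties}\ref{itm:QProperties2}, expanding $d\mathsf{M}^+$ via its product structure, and recognizing $\widehat{\omega}_i(0) = \int_{\LieA}\omega_i$ recovers the asserted formula with $\kappa_\mathsf{v} = (2\pi)^{-\frac{\rank-1}{2}}\mathfrak{c}^{-\frac{1}{2}}$.

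The main obstacle is the dominated-convergence step: we need the convergence in \cref{thm:QProperties}\ref{itm:QProperties2} and \cref{thm:QProperties}\ref{itm:QProperties3} to be uniform not only in $(x,u)$ on compact subsets of $\Sigma^+ \times \ker\psi$ but also in the integration variable $s$ as it ranges over the compact $s$-support of $\Psi_1$, together with a uniform $L^\infty$ dominating bound on $Y_t$. This should follow by re-examining the proofs, in particular the Bessel-function estimates in \cref{lem:BesselFunctionInTermsOf_E,lem:LimitOf_f_t_IsExp_I_r,lem:ThirionEstimate,lem:ThirionLimit} and the integration-by-parts bound on the residual terms, since the error rates depend continuously on the compactly-parametrized quantity $(t-s)\mathsf{v}+r(t)\mathsf{u}-u$ whose norm grows like $t$.
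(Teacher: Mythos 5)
Your proposal follows essentially the same route as the paper: reduce to matrix-coefficient test functions via Peter--Weyl and Stone--Weierstrass, rewrite $t^{\frac{\rank-1}{2}}J_t$ through \cref{lem:IntegralIFormulaUsingQ}, split on whether the representation is trivial, and conclude by \cref{thm:QProperties} and dominated convergence; your remark that the domination must hold uniformly over the compact $s$-support of $\Psi_1$ (using that $\|(t-s)\mathsf{v}+r(t)\mathsf{u}-u\|_\psi$ grows like $t$ uniformly there) is a point the paper treats implicitly, and you address it correctly. The one step to tighten is the approximation argument: for a general continuous $\Psi_2$ the difference $\Psi_2-\Psi_3$ is not in $\Span\mathfrak{F}$, so \cref{lem:IntegralIFormulaUsingQ} does not directly furnish the uniform-in-$t$ bound you invoke; the paper instead exploits positivity of $J_t$, dominating $|\Psi_2-\Psi_3|$ by $\varepsilon\Psi_4$ for a fixed nonnegative $\Psi_4\in\Span\mathfrak{F}$ equal to at least $1$ on $\supp(\Psi_2-\Psi_3)$, and then applies the already-established limit to $J_t(|\Psi_1|,\varepsilon\Psi_4)$. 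With that substitution (and noting that $\Psi_1$ may simply be kept general throughout, since \cref{lem:IntegralIFormulaUsingQ} only constrains the second argument), your argument closes.
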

	
	\begin{proof}
		Let $\Psi_1 \in C_\mathrm{c}(\tilde{\Sigma}^+)$. First, consider any $\Psi_2 = f(\cdot)\omega(\cdot) \langle \mu(\cdot)w_1,w_2\rangle \in \mathfrak{F}$. For all $t \geq 0$, define $F_t: \tilde{\Sigma}^+ \to \mathbb R$ by
		\begin{equation*}
			F_t(x,u,m,s) = \frac{t^{\frac{\rank-1}{2}}}{(2\pi)^{\rank-1}}\Psi_1 (x,u,m,s)\langle \mu(m)w_1, \mathsf{Q}_{\mu, w_2, t - s}(f,\omega)(x, r(t)\mathsf{u} - u) \rangle
		\end{equation*}
		for all $(x,u,m,s) \in \tilde{\Sigma}^+$ so that by \cref{lem:IntegralIFormulaUsingQ}, we have
		\begin{align*}
			t^{\frac{\rank-1}{2}}J_t(\Psi_1,\Psi_2) = \int_{\tilde{\Sigma}^+} F_t(x,u,m,s) \, d\mathsf{M}^+(x,u,m,s).
		\end{align*}
		Suppose $\mu = 1$. We can assume $\langle w_1, w_2\rangle = 1$. By \cref{itm:QProperties2} in \cref{thm:QProperties}, $F_t$ is dominated by a constant multiple of $\Psi_1$ for sufficiently large $t > 0$ and converges pointwise to
		\begin{align*}
			\frac{(2\pi)^{\frac{\rank - 1}{2}}\widehat{\omega}(0)e^{-\ell I(\mathsf{u})}\nu_{\Sigma^+}(f)}{(2\pi)^{\rank-1}\sqrt{\mathfrak{c}} \cdot \nu_{\Sigma^+}(\tau)}\Psi_1 = (2\pi)^{-\frac{\rank - 1}{2}}\mathfrak{c}^{-\frac{1}{2}}e^{-\ell I(\mathsf{u})}\mathsf{M}^+(\Psi_2)\Psi_1.
		\end{align*}
		Thus, the proposition follows in this case by the dominated convergence theorem. Now suppose $\mu \neq 1$. By \cref{itm:QProperties3} in \cref{thm:QProperties}, $F_t$ is dominated by $\Psi_1$ for sufficiently large $t > 0$ and converges pointwise to $0$. Again by the dominated convergence theorem, $\lim\limits_{t \to +\infty}t^{\frac{\rank-1}{2}}J_t(\Psi_1,\Psi_2) = 0$. The proposition follows in this case since $\mathsf{M}^+(\Psi_2) = 0$ as $\int_{M_\Gamma} \langle \mu(m)w_1,w_2\rangle \, dm = 0$ due to $\mu \neq 1$. Using linearity of $J_t$, the proposition now extends for all $\Psi_2 \in \Span\mathfrak{F}$. 
		
		Now consider any $\Psi_2 \in C_\mathrm{c}(\tilde{\Sigma}^+)$. Let $\varepsilon > 0$. The Peter--Weyl theorem and the Stone--Weierstrass theorem gives $\Psi_3, \Psi_4 \in \Span\mathfrak{F}$ such that $|\Psi_2-\Psi_3| < \varepsilon$, $0 \le \Psi_4 \le 2$, $\Psi_4|_{\supp(\Psi_2-\Psi_3)} \ge 1$, and $\mathsf{M}^+(\supp(\Psi_3)), \mathsf{M}^+(\supp(\Psi_4)) \leq C\mathsf{M}^+(\supp(\Psi_2))$ for some $C > 0$ independent of $\varepsilon$. Then $|\Psi_2-\Psi_3| < \varepsilon\Psi_4$ and hence
		\begin{align*}
			&\limsup_{t \to +\infty} \left|t^{\frac{\rank-1}{2}}(J_t(\Psi_1,\Psi_2)-J_t(\Psi_1,\Psi_3))\right| \\
			\le{}&\lim_{t \to +\infty} t^{\frac{\rank-1}{2}}J_t(|\Psi_1|,\varepsilon\Psi_4) = (2\pi)^{-\frac{\rank - 1}{2}}\mathfrak{c}^{-\frac{1}{2}}e^{-\ell I(\mathsf{u})}\mathsf{M}^+(|\Psi_1|)\mathsf{M}^+(\Psi_4)\varepsilon \\
			\le{}&2C(2\pi)^{-\frac{\rank - 1}{2}}\mathfrak{c}^{-\frac{1}{2}}e^{-\ell I(\mathsf{u})}\mathsf{M}^+(|\Psi_1|)\mathsf{M}^+(\supp(\Psi_2))\varepsilon = C'\varepsilon
		\end{align*}
		where $C' > 0$ is a constant independent of $\varepsilon$. It follows that
		\begin{align*}
			\lim\limits_{t \to +\infty}t^{\frac{\rank-1}{2}}J_t(\Psi_1,\Psi_2) &= \lim\limits_{t \to +\infty}t^{\frac{\rank-1}{2}}J_t(\Psi_1,\Psi_3) + O(\varepsilon)
			\\
			&= (2\pi)^{-\frac{\rank - 1}{2}}\mathfrak{c}^{-\frac{1}{2}}e^{-\ell I(\mathsf{u})}\mathsf{M}^+(\Psi_1)\mathsf{M}^+(\Psi_2) + O(\varepsilon)
		\end{align*}
		since $\mathsf{M}^+(\Psi_3) = \mathsf{M}^+(\Psi_2) + O(\varepsilon)$. This completes the proof since $\varepsilon > 0$ is arbitrary.
	\end{proof}
	
	Finally, we prove the main theorem.
	
	\begin{proof}[Proof of \cref{thm:LocalMixingForErgodicComponent}]
		Let $\phi_1, \phi_2 \in C_{\mathrm{c}}(\Gamma \backslash G)$. We can assume $\supp(\phi_1), \supp(\phi_2) \subset \Omega_0$ and use the aforementioned semiconjugacy to view them as $\phi_1, \phi_2 \in C_{\mathrm{c}}(\Sigma^\varsigma)$. Let $\Phi_1, \Phi_2 \in C_\mathrm{c}(\tilde{\Sigma})$ be their lifts, i.e., we have
		\begin{equation*}
			\phi_j([(x,u,m,s)]) = \sum_{k \in \Z} (\Phi_j \circ \varsigma^k)(x,u,m,s)
		\end{equation*}
		for all $(x,u,m,s) \in \tilde{\Sigma}$ and $j \in \{1, 2\}$. Using a partition of unity, we can assume $\supp(\phi_2)$ is sufficiently small such that if $(x,u,m,s) \in \supp(\Phi_2)$, then $\varsigma^k(x,u,m,s) \allowbreak \notin \supp(\Phi_2)$ for all integers $k \ne 0$. Then we have
		\begin{align}
			\label{eqn:MixingIntegral}
			\begin{aligned}
				&\int_{\Gamma \backslash G} \phi_1([g] a_{t\mathsf{v} + r(t)\mathsf{u}}) \phi_2([g]) \, d\mathsf{m}([g]) 
				\\
				={}&\sum_{k \in \Z} \int_{\tilde{\Sigma}} (\Phi_1 \circ \varsigma^k)(x,u + r(t)\mathsf{u},m,s+t)\Phi_2(x,u,m,s) \, d\mathsf{M}(x,u,m,s)
			\end{aligned}
		\end{align}
		for all $t \geq 0$. In fact, the sum in \cref{eqn:MixingIntegral} is only over $k \in \mathbb Z_{\geq 0}$ because for all $(x, u, m, s) \in \supp(\Phi_2)$ and $t > 0$ sufficiently large, we have
		\begin{align*}
			&\varsigma^{-k}(x,u+r(t)\mathsf{u},m,s+t) 
			\\
			={}&(\sigma^{-k}(x), u + r(t)\mathsf{u} + \widehat{\mathsf{K}}_k(\sigma^{-k}(x)),\vartheta^k(\sigma^{-k}(x))m, s + t + \tau_k(\sigma^{-k}(x))) \notin \supp(\Phi_1)
		\end{align*}
		since $\tau_k(\sigma^{-k}(x)) \geq 0$ and $\supp(\Phi_1)$ is compact, for all $k \in \Z_{\geq 0}$. We now approximate $\Phi_1$ and $\Phi_2$ by $\tilde{\Phi}_1, \tilde{\Phi}_2 \in C_\mathrm{c}(\tilde{\Sigma})$ which are independent of past coordinates. Let $\varepsilon > 0$. By uniform continuity of $\Phi_1$ and $\Phi_2$ on their supports, there exists $N \in \N$ sufficiently large such that $|(\Phi_j\circ \varsigma^N)(x,u,m,s)-(\Phi_j\circ\varsigma^N)(y,u,m,s)| < \varepsilon$ for all $(x,u,m,s), (y,u,m,s) \in \tilde{\Sigma}$ and $j \in \{1, 2\}$ whenever $x_k = y_k$ for all $k \in \mathbb Z_{\geq 0}$. Let $\Sigma^+ \to \Sigma$ denoted by $x \mapsto \overline{x}$ be any continuous section of $\proj_{\Sigma^+}$. Define $\tilde{\Phi}_j \in C_\mathrm{c}(\tilde{\Sigma})$ and $\Psi_j \in C_\mathrm{c}(\tilde{\Sigma}^+)$ by $\tilde{\Phi}_j(x,u,m,s) = \Psi_j(\proj_{\Sigma^+}(x),u,m,s) = (\Phi_j\circ\sigma^N)(\overline{\proj_{\Sigma^+}(x)},u,m,s)$ for all $(x,u,m,s) \in \tilde{\Sigma}$ and $j \in \{1, 2\}$. Then, there exist bump functions $\Phi_3, \Phi_4 \in C_\mathrm{c}(\tilde{\Sigma})$ such that $|(\Phi_j \circ \varsigma^N) - \tilde{\Phi}_j| < \varepsilon\Phi_{j + 2}$ and so applying a similar argument as in \cref{pro:JtLimit} and the proposition itself gives
		\begin{align*}
			&\lim\limits_{t \to +\infty}t^{\frac{\rank-1}{2}}\int_{\Gamma \backslash G} \phi_1([g] a_{t\mathsf{v} + r(t)\mathsf{u}}) \phi_2([g]) \, d\mathsf{m}([g]) 
			\\
			={}&\lim\limits_{t \to +\infty}t^{\frac{\rank-1}{2}}J_t(\Psi_1, \Psi_2) + O(\varepsilon)
			\\
			={}&(2\pi)^{-\frac{\rank - 1}{2}} \mathfrak{c}^{-\frac{1}{2}} e^{-\ell I(\mathsf{u})}\mathsf{m}(\phi_1)\mathsf{m}(\phi_2) + O(\varepsilon)
		\end{align*}
		since $\mathsf{M}^+(\Psi_j) = \mathsf{M}^+(\Phi_j) + O(\varepsilon)$ and $\mathsf{M}^+(\Phi_j) = \mathsf{m}(\phi_j)$ for all $j \in \{1, 2\}$. This completes the proof since $\varepsilon > 0$ is arbitrary.
	\end{proof}
	
	\section{Topological mixing}
	\label{sec:TopologicalMixing}
	In this section, we prove the following theorem regarding topological mixing as an application of \cref{thm:LocalMixingNoPsi}. Recall the notation in \cref{subsec:ErgodicDecomposition}.
	
	\begin{theorem}
		\label{thm:TopologicalMixing}
		Let $G$ be a connected semisimple real algebraic group, $\Gamma < G$ be a Zariski dense discrete subgroup, and $\mathsf{v} \in \interior(\limitcone)$. Then the one-parameter diagonal flow $\{a_{t\mathsf{v}}: t \in \mathbb R\}$ on $\Gamma \backslash G$ is topologically mixing on $\Omega_{[m]}$ for all $[m] \in M_\Gamma \backslash M$.
	\end{theorem}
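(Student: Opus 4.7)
The plan is to establish topological mixing on each $Y \in \mathfrak{Y}_\Gamma$ by combining topological transitivity of the flow $\{a_{t\mathsf{v}}\}$ on $Y$ with non-arithmeticity of the length spectrum, the two ingredients needed for the classical Smale--Anosov mixing criterion. Since elements of $M/M_\Gamma$ permute the family $\mathfrak{Y}_\Gamma$ while commuting with the $A$-action, it suffices to treat one fixed $Y$.

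For topological transitivity of $\{a_{t\mathsf{v}}\}$ on $Y$, I would appeal to the horospherical minimality results of Lee--Oh \cite{LO20a}, which for Zariski dense discrete $\Gamma$ assert that every $N^-$-orbit in $Y$ is dense modulo the compact factor $M^\circ$. Using that $\mathsf{v} \in \interior(\limitcone) \subset \interior(\LieA^+)$ (automatic for Zariski dense $\Gamma$ by Benoist), so that $a_{-t\mathsf{v}}$ strictly contracts $N^-$ as $t \to +\infty$, a Hopf-style contraction argument converts horospherical density into orbit density: if $y_1, y_2 \in Y$ are nearby with $y_2 = y_1 \cdot m \cdot a \cdot n$ for $m \in M^\circ$, $a \in A$, $n \in N^-$ near the identity, then $y_2 a_{t\mathsf{v}} = y_1 a_{t\mathsf{v}} \cdot m a \cdot (a_{-t\mathsf{v}} n a_{t\mathsf{v}})$ and the rightmost factor shrinks to the identity as $t \to \infty$, so the forward $a_{t\mathsf{v}}$-orbits of $y_1$ and $y_2$ asymptotically coalesce; a genericity argument then yields a point whose forward $a_{t\mathsf{v}}$-orbit is dense in $Y$.

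For non-arithmeticity of the length spectrum, I would invoke Benoist's density theorem \cite{Ben97}: for Zariski dense $\Gamma < G$, the additive subgroup of $\LieA$ generated by $\lambda(\Gamma)$ is dense. Applied to any nonzero linear form $\psi \in \LieA^*$ that is positive on $\limitcone \setminus \{0\}$ (for example one tangent to $\growthindicator$ at $\mathsf{v}$), the set $\{\psi(\lambda(\gamma)) : \gamma \in \Gamma\}$ generates a dense additive subgroup of $\mathbb R$, hence is not contained in any discrete $c \mathbb Z$.

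These two ingredients combine via the classical criterion: a topologically transitive continuous flow whose closed-orbit length spectrum is non-arithmetic is topologically mixing, because any nontrivial equicontinuous factor would force closed-orbit periods to lie in a coset of a discrete subgroup of $\mathbb R$. Since the periods of $\{a_{t\mathsf{v}}\}$-closed orbits in $Y$ are exactly the values of $\psi \circ \lambda$ on appropriate loxodromic elements of $\Gamma$, the second step supplies the required non-arithmeticity. The main anticipated obstacle is the first step: rigorously converting $N^-$-minimality into topological transitivity of the \emph{one-parameter} direction $\mathsf{v}$ on $Y$, since we cannot invoke the local mixing machinery of the body of the paper (available only for Anosov $\Gamma$), and must work directly with horospherical contraction together with the regularity $\mathsf{v} \in \interior(\LieA^+)$.
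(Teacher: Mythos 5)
Your strategy has two genuine gaps, both stemming from the fact that $\{a_{t\mathsf{v}}\}$ restricted to $Y$ is not a hyperbolic flow but a partially hyperbolic one with a multi-dimensional center (the directions of $A$ transverse to $\mathbb R\mathsf{v}$ together with $M_\Gamma$). First, the passage from $N^-$-minimality to topological transitivity of the \emph{one-parameter} flow is not merely an ``anticipated obstacle'' but the crux of the matter: your Hopf contraction argument shows that the forward orbit of $y_2 = y_1\, m a n$ coalesces with the forward orbit of $y_1 m a$, not of $y_1$, and in higher rank the residual translate by $ma \in M A$ moves you off the one-parameter orbit in a center direction that the flow $a_{t\mathsf{v}}$ can never recover. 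Establishing transitivity of $a_{t\mathsf{v}}$ on $Y$ for $\mathsf{v} \in \interior(\limitcone)$ is precisely the (nontrivial) content of the work of Dang and Dang--Glorieux, and no amount of contraction along $N^-$ alone produces it. Second, the criterion ``topologically transitive $+$ non-arithmetic length spectrum $\Rightarrow$ topologically mixing'' is a theorem about basic sets of Axiom A flows; it relies on local product structure and specification and does not hold for general partially hyperbolic flows. Moreover, non-arithmeticity of the scalar set $\psi(\lambda(\Gamma)) \subset \mathbb R$ only rules out a periodic factor \emph{along the flow direction}; it says nothing about the transverse center. If, say, the return data $(\lambda(\gamma), \lambda^M(\gamma))$ all lay in a proper closed subgroup of $A \times M_\Gamma$, the flow would factor onto a non-mixing translation flow on the quotient even though $\psi(\lambda(\Gamma))$ generates a dense subgroup of $\mathbb R$. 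What is actually needed is density of the subgroup generated by the full generalized length spectrum in $A \times M_\Gamma$ (\cref{pro:SubgroupOfGeneralizedLengthSpectrumOfGamma_0DenseInAM_Gamma_0}, resting on \cite{GR07} and \cite{LO20a}), which is much stronger than the one-dimensional statement you invoke.

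The paper sidesteps both difficulties by reducing to the Anosov case rather than arguing directly on $Y$: given open sets $\mathcal{O}_1, \mathcal{O}_2$ meeting $Y$, Benoist's construction produces a Zariski dense Schottky (hence Anosov) subgroup $\Gamma_0 < \Gamma$ with $\mathsf{v} \in \interior(\mathcal{L}_{\Gamma_0})$ and $M_{\Gamma_0} = M_\Gamma$, chosen so that the corresponding $P^\circ$-minimal set $Y_0 \subset Y$ for $\Gamma_0$ meets both $\mathcal{O}_1$ and $\mathcal{O}_2$; then \cref{thm:LocalMixingNoPsi}, already proved for Anosov subgroups, gives topological mixing on $Y_0$ and hence $\mathcal{O}_1 a_{t\mathsf{v}} \cap \mathcal{O}_2 \neq \varnothing$ for all large $t$. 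If you wish to avoid the local mixing machinery entirely, you would essentially have to reprove Dang's topological mixing theorem without her abelianness hypothesis on $M^\circ$, and the density statement in $A \times M_\Gamma$ is exactly the ingredient that replaces your scalar non-arithmeticity there.
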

	
	\begin{proof}
		Let $\Gamma < G$ and $\mathsf{v} \in \interior(\limitcone)$ as in the theorem. Define $\Omega_\Gamma = \{[g] \in \Gamma \backslash G: g^\pm \in \limitset\}$ and let $\tilde{\Omega}_\Gamma \subset G$ be the $\Gamma$-invariant lift of $\Omega_\Gamma$. Define $\mathfrak{Z}_\Gamma = \{\Omega_{[m]}:[m] \in M_\Gamma \backslash M\}$ and $\tilde{\mathfrak{Z}}_\Gamma = \{\tilde{Z}: Z \in \mathfrak{Z}_\Gamma\}$ where $\tilde{Z} \subset G$ denotes the $\Gamma$-invariant lift of $Z \in \mathfrak{Z}_\Gamma$. Then $\mathfrak{Z}_\Gamma$ and $\tilde{\mathfrak{Z}}_\Gamma$ give partitions of $\Omega_\Gamma$ and $\tilde{\Omega}_\Gamma$ respectively. We use similar notations for other discrete subgroups of $G$. Fix $Z \in \mathfrak{Z}_\Gamma$ and $\Gamma$-invariant open sets $\mathcal{O}_1,\mathcal{O}_2 \subset G$ such that $\Gamma \backslash \mathcal{O}_1 \cap Z$ and $\Gamma \backslash \mathcal{O}_2 \cap Z$ are nonempty. For all loxodromic $\gamma \in \Gamma$, we denote its attracting and repelling fixed points by $\gamma_+ \in \limitset$ and $\gamma_- \in \limitset$ respectively. By \cite[Proposition 3.6]{Ben97}, the subset $\{(\gamma_+,\gamma_-): \gamma \in \Gamma \text{ is loxodromic}\} \subset \limitset \times \limitset$ is dense. Hence, for all $j \in \{1, 2\}$ there exist a loxodromic $\gamma_j \in \Gamma$ and $[g_j] \in \Gamma \backslash \mathcal{O}_j \cap Z$ such that $g_j^\pm = (\gamma_j)_\pm$. By \cite[Proposition 4.3]{Ben97}, there exists a Zariski dense Schottky subgroup $\Gamma' < \Gamma$ generated by $\{\gamma_1',\gamma_2',\dotsc,\gamma_k'\} \subset \Gamma$ such that $\mathsf{v} \in \interior(\mathcal{L}_{\Gamma'})$. Moreover, we can choose $\gamma_1'', \gamma_2'', \dotsc, \gamma_l'' \in \Gamma$ such that $\{\lambda^M(\gamma_j''): 1 \leq j \leq l\}$ intersects all connected components of $M_\Gamma$ by \cref{pro:SubgroupOfGeneralizedLengthSpectrumOfGamma_0DenseInAM_Gamma_0}. Taking appropriate sufficiently large powers of $\gamma_1,\gamma_2, \gamma_1',\gamma_2',\dotsc,\gamma_k',\gamma_1'',\gamma_2'',\dotsc,\gamma_l''$, we obtain a Zariski dense Schottky subgroup $\Gamma_0 < \Gamma$ such that $\mathsf{v} \in \interior(\mathcal{L}_{\Gamma_0})$, $\mathcal{O}_1$ and $\mathcal{O}_2$ intersect $\tilde{\Omega}_{\Gamma_0}$, and $M_{\Gamma_0} = M_\Gamma$ again by \cref{pro:SubgroupOfGeneralizedLengthSpectrumOfGamma_0DenseInAM_Gamma_0}. Then, there is a unique $Z_0 \in \mathfrak{Z}_{\Gamma_0}$ such that $\tilde{Z}_0 \subset \tilde{Z}$. By construction, $\mathcal{O}_1$ and $\mathcal{O}_2$ intersect $\tilde{Z}_0$. By \cite[Lemma 7.2]{ELO20}, $\Gamma_0 < G$ is a Zariski dense Anosov subgroup and \cref{thm:LocalMixingNoPsi} implies that the one-parameter diagonal flow $\{a_{t\mathsf{v}}: t \in \mathbb R\}$ on $\Gamma_0 \backslash G$ is topologically mixing on $Z_0 \in \mathfrak{Z}_{\Gamma_0}$. Thus, we have $\mathcal{O}_1 a_{t\mathsf{v}} \cap \mathcal{O}_2 \neq \varnothing$ for all sufficiently large $t$.
	\end{proof}
	
	Dang proved \cref{thm:TopologicalMixing} in the case that $M^\circ$ is abelian \cite[Theorem 1.4(d)]{Dan20}. She also proved the following.
	
	\begin{theorem}[{\citealp[Theorem 1.4(c)]{Dan20}}]
		Let $G$ be a connected semisimple real algebraic group, $\Gamma < G$ be a Zariski dense discrete subgroup, $\mathsf{v} \in \interior(\LieA^+)$, and $[m] \in M_\Gamma \backslash M$. If the one-parameter diagonal flow $\{a_{t\mathsf{v}}: t \in \mathbb R\}$ is topologically mixing on $\Omega_{[m]}$, then $\mathsf{v} \in \interior(\limitcone)$.
	\end{theorem}
	
	Combining Dang's result with \cref{thm:TopologicalMixing} yields the following theorem.
	
	\begin{theorem}
		Let $G$ be a connected semisimple real algebraic group, $\Gamma < G$ be a Zariski dense discrete subgroup, $\mathsf{v} \in \interior(\LieA^+)$, and $[m] \in M_\Gamma \backslash M$. Then the one-parameter diagonal flow $\{a_{t\mathsf{v}}: t \in \mathbb R\}$ on $\Gamma \backslash G$ is topologically mixing on $\Omega_{[m]}$ if and only if $\mathsf{v} \in \interior(\limitcone)$.
	\end{theorem}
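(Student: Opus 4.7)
The plan is to reduce the general Zariski dense case to the Anosov case by constructing a suitable Schottky subgroup of $\Gamma$, and then invoke \cref{thm:LocalMixingNoPsi} which (together with positivity of the BMS measures on open subsets of the supports) yields topological mixing in the Anosov setting. Fix $Y \in \mathfrak{Y}_\Gamma$ and $\Gamma$-invariant open subsets $\mathcal{O}_1, \mathcal{O}_2 \subset G$ whose projections meet $Y$. I want to produce a Zariski dense Schottky subgroup $\Gamma_0 < \Gamma$ with $\mathsf{v} \in \interior(\mathcal{L}_{\Gamma_0})$, with $M_{\Gamma_0} = M_\Gamma$, and with some $P^\circ$-minimal subset $Y_0 \subset G/P^\circ$ of $\Gamma_0$ whose lift $\tilde{Y}_0$ lies inside the lift $\tilde{Y}$ of $Y$ and meets both $\mathcal{O}_1$ and $\mathcal{O}_2$. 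Because loxodromic fixed points are dense in $\limitset$, I can first choose loxodromic $\gamma_1, \gamma_2 \in \Gamma$ whose attracting fixed points are the boundary data of points $[g_j] \in \Gamma \backslash \mathcal{O}_j \cap Y$.

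Next, to ensure $\mathsf{v} \in \interior(\mathcal{L}_{\Gamma_0})$, I would apply Benoist's construction \cite[Proposition 4.3]{Ben97} to obtain a Zariski dense Schottky subgroup $\Gamma' < \Gamma$ with $\mathsf{v}$ in the interior of $\mathcal{L}_{\Gamma'}$; generators will be denoted $\gamma_1', \ldots, \gamma_k'$. To handle the disconnectedness of $M_\Gamma$, I would use \cref{pro:SubgroupOfGeneralizedLengthSpectrumOfGamma_0DenseInAM_Gamma_0} applied to $\Gamma$ to pick $\gamma_1'', \ldots, \gamma_l'' \in \Gamma$ whose $M$-components $\lambda^M(\gamma_j'')$ intersect every coset of $M^\circ$ inside $M_\Gamma$. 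Then by taking sufficiently large powers of all these elements and invoking the standard ping-pong/Schottky construction (Benoist \cite[Lemma 4.3]{Ben97}), I obtain a Zariski dense Schottky subgroup $\Gamma_0 < \Gamma$ containing all these high powers. Since $\Gamma_0$ contains high powers of $\gamma_1'', \ldots, \gamma_l''$ whose generalized Jordan projections still cover every coset of $M^\circ$ in $M_\Gamma$, another application of \cref{pro:SubgroupOfGeneralizedLengthSpectrumOfGamma_0DenseInAM_Gamma_0} forces $M_{\Gamma_0} = M_\Gamma$, while inclusion of the high powers of $\gamma_j'$ preserves the condition $\mathsf{v} \in \interior(\mathcal{L}_{\Gamma_0})$, and inclusion of high powers of $\gamma_1, \gamma_2$ guarantees that some attracting fixed points appear in the relevant open sets.

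The final step is essentially bookkeeping. Because $M_{\Gamma_0} = M_\Gamma$, the $P^\circ$-minimal subsets of $\Gamma_0$ sit inside those of $\Gamma$, so there is a unique $Y_0 \in \mathfrak{Y}_{\Gamma_0}$ with $\tilde{Y}_0 \subset \tilde{Y}$, and the construction ensures that $\mathcal{O}_1 \cap \tilde{Y}_0$ and $\mathcal{O}_2 \cap \tilde{Y}_0$ are both nonempty. Since $\Gamma_0$ is a Zariski dense Schottky subgroup it is in particular Anosov with respect to a minimal parabolic subgroup, so \cref{thm:LocalMixingNoPsi} applies to $\Gamma_0$ and implies topological mixing of $\{a_{t\mathsf{v}}\}$ on $Y_0$; indeed local mixing with the positive BMS measures forces $\mathcal{O}_1 a_{t\mathsf{v}} \cap \mathcal{O}_2 \neq \varnothing$ for all sufficiently large $t$.

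The main obstacle in this plan is the simultaneous realization of all the required properties of $\Gamma_0$: the Schottky/ping-pong argument must be carried out on elements whose attracting/repelling fixed points are close enough to give the required intersections with $\mathcal{O}_1, \mathcal{O}_2$, while keeping $\mathsf{v} \in \interior(\mathcal{L}_{\Gamma_0})$ and $M_{\Gamma_0} = M_\Gamma$. This is why powers must be chosen carefully and in the right order; the density of loxodromic fixed points in $\limitset$ and the flexibility in Benoist's Schottky construction make this possible, but this combinatorial juggling is where the real work lies.
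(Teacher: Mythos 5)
Your proposal establishes only one direction of the biconditional. What you prove is that $\mathsf{v} \in \interior(\limitcone)$ implies topological mixing on $Y$, and for that direction your argument is essentially identical to the paper's: the paper isolates exactly this implication as \cref{thm:TopologicalMixing} and proves it by the same reduction --- choose loxodromic $\gamma_1, \gamma_2$ with attracting fixed points realizing points of $\Gamma \backslash \mathcal{O}_j \cap Y$, adjoin generators of a Schottky subgroup with $\mathsf{v}$ in the interior of its limit cone via Benoist, adjoin elements whose $\lambda^M$-values hit every component of $M_\Gamma$ via \cref{pro:SubgroupOfGeneralizedLengthSpectrumOfGamma_0DenseInAM_Gamma_0}, pass to suitable high powers to get a Zariski dense Schottky (hence Anosov) subgroup $\Gamma_0$ with $M_{\Gamma_0} = M_\Gamma$, and apply \cref{thm:LocalMixingNoPsi}. (One small point you gloss over: the powers of the $\gamma_j''$ must be chosen so that their $M^\circ$-cosets still generate $M_\Gamma/M^\circ$; since $M_\Gamma/M^\circ \cong (\mathbb Z/2\mathbb Z)^p$, odd powers suffice, which is what the paper's ``appropriate sufficiently large powers'' is hiding.)

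The genuine gap is the converse: the statement asserts that for $\mathsf{v} \in \interior(\LieA^+)$, topological mixing on some $Y \in \mathfrak{Y}_\Gamma$ \emph{forces} $\mathsf{v} \in \interior(\limitcone)$, and your proposal contains no argument for this. It does not follow from anything in your construction; the standard reason is that for $\mathsf{v}$ outside the limit cone the relevant return times cannot accumulate (the recurrence needed for mixing fails), and the paper does not reprove this but imports it from Dang \cite[Theorem 1.4]{Dan20} and the surrounding discussion. Without either citing that result or supplying an independent argument for the necessity of $\mathsf{v} \in \interior(\limitcone)$, your proof of the ``only if'' half is missing entirely.
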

	
	\nocite{*}
	\renewcommand*{\bibfont}{\footnotesize}
	\bibliographystyle{alpha_name-year-title}
	\bibliography{References}
\end{document}